\numberwithin{equation}{section}
\newcommand{\Sp}{{\rm Sp}}
\newcommand{\SU}{{\rm SU}}
\newcommand{\U}{{\rm U}}
\renewcommand{\epsilon}{\varepsilon}
\newcommand{\del}{\partial}
\renewcommand{\Im}{\mathop{\mathrm{Im}}}
\newcommand{\ind}{\mathop{\mathrm{index}}}
\renewcommand{\Re}{\mathop{\mathrm{Re}}}
\newcommand{\Hess}{\mathrm{Hess}}
\newcommand{\vol}{\mathrm{vol}}
\newcommand{\sign}{\mathrm{sign}}
\def\<{\mathopen{}\left<}
\def\>{\right>\mathclose{}}
\def\({\mathopen{}\left(}
\def\){\right)\mathclose{}}
\newtheorem{theorem}{Theorem}
\newtheorem{conjecture}[theorem]{Conjecture}
\newtheorem{corollary}[theorem]{Corollary}
\newtheorem{proposition}[theorem]{Proposition}
\newtheorem{lemma}[theorem]{Lemma}
\theoremstyle{definition}
\newtheorem{example}{Example}
\newtheorem{remark}[example]{Remark}
\theoremstyle{definition}
\newtheorem{definition}[example]{Definition}
\numberwithin{theorem}{section}
\numberwithin{example}{section}
\numberwithin{equation}{section}
\numberwithin{figure}{section}
\author{Jason D. Lotay} 
\address[Jason D. Lotay]{University of Oxford, U.K.}
\urladdr{\href{http://people.maths.ox.ac.uk/lotay/}{http://people.maths.ox.ac.uk/lotay/}}
\email{jason.lotay@maths.ox.ac.uk}
\author{Goncalo Oliveira} 
\address[Gon\c{c}alo Oliveira]{Universidade Federal Fluminense IME--GMA, Niter\'oi, Brazil}
\urladdr{\href{https://sites.google.com/view/goncalo-oliveira-math-webpage/home}{https://sites.google.com/view/goncalo-oliveira-math-webpage/home}}
\email{{galato97@gmail.com}}
\title[Special Lagrangians, LMCF and the Gibbons--Hawking ansatz]{Special Lagrangians, Lagrangian mean curvature flow and the Gibbons--Hawking ansatz} 
\date{}
\begin{document}

\begin{abstract}
The Gibbons--Hawking ansatz provides a large family of circle-invariant hyperk\"ahler 4-manifolds, and thus Calabi--Yau 2-folds.  In this setting, we prove versions of the Thomas conjecture on existence of special Lagrangian representatives of Hamiltonian isotopy classes of Lagrangians, and the Thomas--Yau conjecture on long-time existence of the Lagrangian mean curvature flow.   
We also make observations concerning closed geodesics, curve shortening flow and minimal surfaces.
\end{abstract}

\maketitle
\tableofcontents

\section{Introduction}

\subsection*{Context}

In Calabi--Yau manifolds, which are K\"ahler and Ricci-flat, there is a distinguished class of submanifolds known as special Lagrangians, which are important in mathematics and theoretical physics, particularly in relation to Mirror Symmetry.  Special Lagrangians are Lagrangian and calibrated, thus volume-minimizing in their homology class. Therefore, special Lagrangians 
are of significant interest from both the symplectic and Riemannian viewpoints.  A central question is whether or not a Lagrangian admits a (unique) special Lagrangian in its Hamiltonian isotopy class. From the variational perspective, this is related to the existence and uniqueness of a volume-minimizer in the given class of Lagrangians.  Answering this question is also crucial for the construction of putative enumerative invariants for Calabi--Yau manifolds (see e.g.~\cite{JoyceCounting}).

Loosely speaking, under Mirror Symmetry, Lagrangians are conjectured to correspond to holomorphic connections on a complex bundle and special Lagrangians to Hermitian--Yang--Mills connections.  
Motivated by this, together with the Donaldson--Uhlenbeck--Yau theorem relating Hermitian--Yang--Mills connections and stable bundles, Thomas \cite{Thomas} conjectured that a unique special Lagrangian exists in a given Hamiltonian isotopy class of Lagrangians if and only if a certain stability condition holds.  This conjecture recasts the challenging nonlinear PDE problem for existence of special Lagrangians into an alternative topological question of stability of a Hamiltonian isotopy class of Lagrangians. 

Given that special Lagrangians are volume-minimizing, a natural approach to studying them is to use the gradient flow for the volume functional, namely the mean curvature flow.  In fact, mean curvature flow in Calabi--Yau manifolds preserves the Lagrangian condition \cite{Smoczyk}, leading to the notion of Lagrangian mean curvature flow, whose critical points are special Lagrangians.
  Given the success of Hermitian--Yang--Mills flow in studying the existence problem for Hermitian--Yang--Mills connections, together with the Mirror Symmetry considerations discussed above, 
Thomas--Yau \cite{ThomasYau} conjectured that certain stability conditions for a Hamiltonian isotopy class of a given Lagrangian should imply the long-time existence and convergence of Lagrangian mean curvature flow starting at the given Lagrangian.  Moreover, the flow should converge to the unique special Lagrangian from the original Thomas conjecture.  This motivates studying the relationship between the stability conditions in the Thomas--Yau conjecture and the behaviour of the Lagrangian mean curvature flow, particularly in light of the ground-breaking work of Neves \cite{NevesSingularities}, which shows that finite-time singularities form in the flow for any Hamiltonian isotopy class.  It is further conjectured that there is a deeper connection between Lagrangian mean curvature flow and stability, namely to Bridgeland stability conditions on Fukaya categories \cite{JoyceConjectures}.

\subsection*{Main results} 
A Riemannian 4-manifold $(X^{4},g)$ is hyperk\"ahler if it is equipped with three compatible symplectic structures $(\omega_1,\omega_2,\omega_3)$ such that the associated almost complex structures $(I_1,I_2,I_3)$ satisfy the quaternionic relations. These conditions force $(I_1,I_2,I_3)$ to be integrable, and the holonomy group of the Levi-Civita connection  to be contained in $\Sp(1)\cong\SU(2)$.  Thus, hyperk\"ahler 4-manifolds are essentially the same as Calabi--Yau 2-folds:   we see that $(g,I=I_1,\omega=\omega_1)$ is a K\"ahler structure, $g$ is Ricci-flat and $\Omega=\omega_2+i\omega_3$ is a holomorphic volume form of constant norm. 

The Gibbons--Hawking ansatz provides a large family of hyperk\"ahler 4-manifolds, including the well-known Eguchi--Hanson and Taub--NUT metrics, and describes all hyperk\"ahler $4$-manifolds admitting a tri-Hamiltonian circle action \cite{Bielawski}.  In particular, one obtains infinite families of ALE (asymptotically locally Euclidean) and ALF (asymptotically locally flat) gravitational instantons: complete hyperk\"ahler 4-manifolds whose Riemann curvature has finite $L^2$ norm, with maximal (i.e.~quartic) or cubic volume growth in the ALE or ALF cases respectively.   Hyperk\"ahler 4-manifolds arising from the Gibbon--Hawking ansatz, which we shall view as Calabi--Yau 2-folds $(X,\omega,\Omega)$, therefore provide a fertile testing ground for the Thomas and Thomas--Yau conjectures.
The key object in the stability conditions in these conjectures is the Lagrangian angle, which we now define.   

\begin{definition}\label{dfn:Lag.intro}
An oriented surface $L^2$ in $(X^4,\omega,\Omega)$ is Lagrangian if $\omega|_L\equiv 0$, and its Lagrangian angle $e^{i\beta}:L\to\mathbb{S}^1$ is defined by the condition
\begin{equation}\label{eq:Lag.angle}
e^{-i\beta}\Omega
|_L=\vol_L,
\end{equation}
where $\vol_L$ is the Riemannian volume form on $L$ with respect to the induced metric $g|_L$.  A special Lagrangian is then an oriented Lagrangian whose Lagrangian angle is constant, or, equivalently, calibrated by $\text{Re}(e^{-i\beta}\Omega
)
$ for a constant $\beta\in\mathbb{R}$.  

An oriented Lagrangian $L$ is therefore Hamiltonian isotopic to a special Lagrangian only if it is zero Maslov, i.e.~there is a function $\beta:L\to\mathbb{R}$ (called a grading of $L$) so that $e^{i\beta}$ is the Lagrangian angle.  We say that a zero Maslov Lagrangian $L$ is almost calibrated if there is a grading $\beta$ of $L$ whose variation is less than $\pi-\delta$ for some $\delta>0$.  If $L$ is compact, the almost calibrated condition is equivalent to saying   there is a constant $\beta_0$ so that $\text{Re}(e^{-i\beta_0}\Omega)|_L$ is a volume form on $L$.
\end{definition}

\noindent  Lagrangian mean curvature flow of a zero Maslov Lagrangian stays within its Hamiltonian isotopy class, and in this case the evolution of the Lagrangian angle is given by the heat equation, which means that the almost calibrated condition is  preserved by the flow.  The almost calibrated condition is geometrically natural and appears in several contexts, e.g.~\cites{Donaldson, LambertLotaySchulze, Solomon, Thomas, Wang}.

The stability conditions in the Thomas and Thomas--Yau conjectures can   be roughly phrased as follows. A compact zero Maslov Lagrangian $L$ is unstable if it is Hamiltonian isotopic to a graded Lagrangian connect sum $L_1\# L_2$ of two   zero Maslov Lagrangians satisfying a certain global condition on their Lagrangian angles; otherwise, $L$ is stable.  The global conditions on the Lagrangian angles of $L_1,L_2$ relate to their total variation or, roughly speaking, to their average values over $L,L_1,L_2$.  For the precise definitions we refer to Definitions \ref{dfn:stability} and \ref{dfn:flow_stability}, but we point out that the graded Lagrangian connect sums $L_1\# L_2$ and $L_2\# L_1$ are, in general, not Hamiltonian isotopic.

\subsubsection*{Thomas conjecture} 
We now state our first main result, which proves a version of the Thomas conjecture \cite[Conjecture 5.2]{Thomas}. 
The notion of stability used here is given in Definition \ref{dfn:stability}.  Recall that  hyperk\"ahler 4-manifolds given by the Gibbons--Hawking ansatz have a circle symmetry, and our results concern  Lagrangians invariant under this circle action.

\begin{theorem}\label{thm:Intro.SL.stable}
	Let $X^4$ be an ALE or ALF manifold arising from the Gibbons--Hawking ansatz and let $L^2\subseteq X^4$ be a compact, embedded, zero Maslov, circle-invariant  Lagrangian. 
	Then, $L$ can be isotoped via a circle-invariant Hamiltonian to a 
 special Lagrangian if and only if it is stable with respect to circle-invariant Hamiltonian isotopies. In this situation, the special Lagrangian in the circle-invariant Hamiltonian isotopy class of $L$ is unique.
\end{theorem}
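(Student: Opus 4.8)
The plan is to use the Gibbons--Hawking description of $X$ to reduce the statement to a two-dimensional problem about curves in a plane, in which ``special Lagrangian'' becomes ``straight segment'' and ``circle-invariant Hamiltonian isotopy'' becomes ``area-preserving isotopy''. Recall (see \cite{Bielawski}) that such an $X$ is the total space of a circle fibration $\pi\colon X\to U\subseteq\R^3$ with fibre collapsing at finitely many centres $p_1,\dots,p_n$, and $g=h(dx_1^2+dx_2^2+dx_3^2)+h^{-1}\theta^2$, $\omega=\omega_1=dx_1\wedge\theta+h\,dx_2\wedge dx_3$, $\Omega=\omega_2+i\omega_3=(dx_2+i\,dx_3)\wedge\theta+h(dx_3\wedge dx_1+i\,dx_1\wedge dx_2)$, for $h>0$ harmonic with $d\theta=\ast_3 dh$. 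A circle-invariant surface is $L=\pi^{-1}(\gamma)$ for a curve $\gamma\subset U$ whose endpoints, if any, lie at centres. One checks: $\omega|_L\equiv 0$ forces $\gamma$ into an affine plane $\Pi=\{x_1=c\}\cong\R^2$; on such $L$ one has $\vol_L=ds_\gamma\wedge d\psi$ with $ds_\gamma$ the \emph{Euclidean} arclength element of $\gamma$ and $d\psi$ the fibre-length element; and $\Omega|_L=e^{i\beta}\vol_L$ with $e^{i\beta}$ the Euclidean unit tangent of $\gamma$, so the Lagrangian angle of $L$ is the turning angle of $\gamma$. Thus $L$ is special Lagrangian iff $\gamma$ is a straight segment; and since $L$ is compact, embedded and zero Maslov while an embedded closed planar curve has turning number $\pm1$ (Hopf's Umlaufsatz), $\gamma$ must be an embedded arc joining two distinct centres $p,p'\in\Pi$ and avoiding the others, so $L\cong S^2$. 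Moreover a circle-invariant Hamiltonian descends to $\bar H$ on $U$ whose flow preserves each $\{x_1=c\}$, fixes every centre (the flow commutes with the circle action, hence fixes its isolated fixed points), and on $\Pi$ is the Hamiltonian flow of $\bar H|_\Pi$ for the area form $\mu:=h\,dx_2\wedge dx_3$ (which near a centre $q\in\Pi$ is the conical form $\tfrac12\,dr\wedge d\phi$). Hence circle-invariant Hamiltonian isotopies of $L$ are exactly $\mu$-area-preserving isotopies of $\gamma$, rel endpoints, in $\Pi$ punctured at the other centres; equivalently, $\pi^{-1}(\overline{pp'})$ is the area minimiser in the class precisely when $\gamma$ is $\mu$-Hamiltonian isotopic to the Euclidean length minimiser $\overline{pp'}$.

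Uniqueness is then immediate: a special circle-invariant Lagrangian Hamiltonian isotopic to $L$ is $\pi^{-1}$ of a straight segment joining the isotopy-fixed centres $p,p'$, and there is a unique such segment. It remains to show, in the reduced picture, that $\gamma$ is $\mu$-area-preserving isotopic rel endpoints to $\overline{pp'}$ if and only if $L$ is stable (Definition~\ref{dfn:stability}). The degenerate case where $\overline{pp'}$ passes through another centre $q$ is handled by exhibiting $L$ as the connect sum $\pi^{-1}(\overline{pq})\#\pi^{-1}(\overline{qp'})$ of equal-phase special pieces, which is destabilising; so $L$ is unstable and, having no nonsingular special representative, both sides fail.

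For the ``only if'' direction (contrapositively, unstable $\Rightarrow$ no special representative): if $L\simeq L_1\#L_2$ is a destabilising connect sum, circle-invariance gives $L_j=\pi^{-1}(\gamma_j)$ with $\gamma_1,\gamma_2$ embedded arcs meeting at a common centre $q$, so $L\simeq\pi^{-1}(\gamma)$ with $\gamma=\gamma_1\cup\gamma_2$ smoothed off $q$; were $L$ also isotopic to the special $\pi^{-1}(\overline{pp'})$, then $\gamma$ and $\overline{pp'}$ would be $\mu$-area-preserving isotopic rel endpoints in $\Pi$ minus the other centres, which I would contradict by showing that the global Lagrangian-angle inequality in Definition~\ref{dfn:stability} (whose circle-invariant content compares the directions of $\overline{pq},\overline{qp'},\overline{pp'}$ with the location of $q$, using that the mean turning angle of a straight arc is its direction) forces $\gamma_1\ast\gamma_2$ either to wind nontrivially around some centre relative to $\overline{pp'}$ — obstructing the smooth isotopy class — or to bound nonzero signed $\mu$-area against $\overline{pp'}$ near some centre — obstructing the flux. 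For the ``if'' direction I argue contrapositively as well: if $\gamma$ is not even smoothly isotopic rel endpoints to $\overline{pp'}$ in $\Pi$ minus the other centres, some centre $q$ is encircled, and cutting $\gamma$ at a point nearest $q$ realises it, up to area-preserving isotopy, as a concatenation $\gamma_1\ast\gamma_2$ through $q$ whose connect sum $\pi^{-1}(\gamma_1)\#\pi^{-1}(\gamma_2)$ meets the angle condition; if instead $\gamma$ is smoothly isotopic to $\overline{pp'}$ but carries nonzero flux around some centre, that centre is used the same way, its sign matching the required order of the phases; in either case $L$ is unstable. When $L$ \emph{is} stable, I produce the special Lagrangian by Moser's method: take an ambient isotopy of $\Pi$ carrying $\overline{pp'}$ to $\gamma$ and fixing the centres, pull back $\mu$, and use the vanishing flux to correct it to a $\mu$-area-preserving isotopy by integrating a time-dependent Hamiltonian vector field, with care near the conical points.

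The main obstacle I expect is this two-dimensional, analytic core: establishing the dictionary between $\mu$-Hamiltonian isotopy classes of arcs in $(\Pi,\mu)$ with marked points and the combinatorial data (smooth isotopy class together with a flux vector), controlling the conical degeneration of $\mu$ at the centres in both the compactness and Moser arguments, and — the most delicate point — matching the resulting obstruction precisely with the global Lagrangian-angle condition of Definition~\ref{dfn:stability}, pinning down the inequalities on the correct side so that the non-commutativity $L_1\#L_2\not\simeq L_2\#L_1$ is respected.
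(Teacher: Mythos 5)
Your reduction to planar curves, the identification of the Lagrangian angle with the Euclidean turning angle of $\gamma$, the conclusion that $L$ must be a sphere over an embedded arc joining two centres (ruling out tori via the turning number), and the characterisation of special Lagrangians as straight segments all agree with the paper's route (Lemmas \ref{lem:Lagrangian}, \ref{lem:Lagrangian_Angle_Curvature}, \ref{lem:SL.stable.1}, \ref{lem:SL.stable.3}). The genuine gap is in your dictionary for circle-invariant Hamiltonian isotopies. You classify them as ``$\mu$-area-preserving isotopies of $\gamma$ rel endpoints'' and build your argument around a two-part invariant (smooth isotopy class \emph{plus} a flux vector), with a Moser step to kill the flux. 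But there is no flux invariant here: since $L\cong S^2$ has $b^1(L)=0$, for any circle-invariant \emph{Lagrangian} isotopy the closed $1$-form $\iota_{\partial_t\Phi_t}\omega|_{L_t}$ is exact, so the isotopy is automatically Hamiltonian, and averaging the Hamiltonian over $\U(1)$ makes it circle-invariant; hence the circle-invariant Hamiltonian isotopy class of $L$ is determined by nothing more than the smooth isotopy class of $\gamma$ rel endpoints in the plane punctured at the other centres (this is exactly Lemma \ref{lem:SL.stable.2}). Concretely, the signed $\mu$-area swept by the arc under the projected ambient Hamiltonian isotopy equals $\int_0^1\bigl(\bar H_t(p')-\bar H_t(p)\bigr)\,dt$, which is unconstrained because the endpoints are distinct fixed points of the flow; so the ``nonzero signed $\mu$-area'' you invoke as an obstruction is not one.

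This is not merely extra machinery: it breaks the logic of your ``if'' direction. Stability in Definition \ref{dfn:stability} is a condition on Lagrangian angles (directions of chords) and carries no information about swept areas, so with your classification you could not deduce from stability that the putative flux vanishes, and your Moser correction has nothing to feed on; conversely, in the ``only if'' direction you would be allowed to declare Lagrangians obstructed by flux that are in fact Hamiltonian isotopic to the special Lagrangian. Once the spurious flux is removed, the problem becomes purely topological --- can $\gamma$ be isotoped to the chord $\overline{pp'}$ rel endpoints without crossing the other centres? --- and the remaining work (which you correctly flag as delicate but do not carry out) is to match this with the angle inequality $\tau(L_1)\geq\tau(L_2)$, using that the cohomological phase of a circle-invariant piece is the direction of its chord and that the graded connect sum passes the centre in a prescribed (clockwise) manner; this is the content of Propositions \ref{prop:SL.stable.1} and \ref{prop:SL.stable.2}, argued via the Jordan curve theorem and a choice of first-crossed singularity along a generic isotopy.
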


\subsubsection*{Thomas--Yau conjecture} Our second main result proves a version of the Thomas--Yau conjecture \cite[Conjecture 7.3]{ThomasYau}.  Since the notion of stability used here, given in Definition \ref{dfn:flow_stability}, is different to that arising in Theorem \ref{thm:Intro.SL.stable}, we refer to it as flow stability for clarity.

\begin{theorem}\label{thm:Intro.LMCF.stable}
	Let $X^4$ be an ALE or ALF hyperk\"ahler 4-manifold arising from the Gibbons--Hawking ansatz and let $L^2\subseteq X^4$ be a compact, embedded, almost calibrated, circle-invariant Lagrangian. If $L$ is flow stable, the Lagrangian mean curvature flow starting at $L$ exists for all time and converges smoothly to the unique circle invariant special Lagrangian given by Theorem \ref{thm:Intro.SL.stable}.
\end{theorem}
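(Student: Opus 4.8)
The plan is to use the tri-Hamiltonian circle symmetry to reduce the Lagrangian mean curvature flow to a classical curve flow in a flat plane, and then to analyse that flow. Writing the Gibbons--Hawking data on the complement of the monopole points in $U\subseteq\R^3$ as $g = V|d\vec x|^2 + V^{-1}\theta^2$ with $d\theta = \ast_{\R^3}dV$ and, in suitable orientations, $\omega_1 = dx_1\wedge\theta + V\,dx_2\wedge dx_3$ and cyclically for $\omega_2,\omega_3$, any circle-invariant surface has the form $L = \pi^{-1}(\gamma)$ for a curve $\gamma$ in $\R^3$, and a direct computation shows $\omega_1|_L\equiv 0$ exactly when $\gamma$ lies in an affine plane $\{x_1 = c\}$. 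In that case one finds $\Omega|_L = e^{i\phi}\vol_L$ with $\phi$ the Euclidean tangent angle of $\gamma$ in the plane, and $\vol_L$ equal to the Euclidean line element of $\gamma$ times the fibre circle; hence the Lagrangian angle of $L$ agrees with $\phi$, the almost calibrated condition becomes $\mathrm{osc}_\gamma\,\phi < \pi-\delta$, special Lagrangians correspond to straight segments, and $\mathrm{Area}(L)$ is a fixed multiple of the Euclidean length of $\gamma$. A compact embedded $L$ forces $\gamma$ to be an embedded arc whose endpoints are monopole points; and since an embedded closed planar curve has turning number $\pm 1$ and so cannot be zero Maslov, $\gamma$ joins two monopole points $p_j,p_k$ in the plane $\{x_1=c\}$ and $L\cong S^2$. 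A further computation from $H = I_1\nabla^L\beta$ shows that Lagrangian mean curvature flow of $L$ projects to the fixed-endpoint weighted curve shortening flow $\partial_t\gamma = V^{-1}\kappa\,\nu$, $\kappa$ and $\nu$ being the Euclidean curvature and unit normal of $\gamma$, which is the gradient flow of Euclidean length for the $V$-weighted $L^2$ metric. Finally, the circle-invariant Hamiltonian isotopy class of $L$ is recorded by $\{p_j,p_k\}$ together with the homotopy class of $\gamma$ rel endpoints in $\{x_1=c\}\setminus\{p_l: l\ne j,k\}$, and flow stability (Definition \ref{dfn:flow_stability}) translates into this being the class that contains the straight segment $[p_j,p_k]$, i.e.\ that segment meets no other monopole point.

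Next I would run and control the flow. Short-time existence of Lagrangian mean curvature flow of a compact surface in the smooth manifold $X$, preservation of the circle symmetry, and preservation of embeddedness are standard, so $L_t = \pi^{-1}(\gamma_t)$ stays a circle-invariant embedded $S^2$ with the endpoints of $\gamma_t$ pinned --- by continuity --- at $p_j,p_k$. The Lagrangian angle evolves by the heat equation on $L_t$, so the maximum principle keeps $\mathrm{osc}\,\phi(\cdot,t)$ non-increasing; hence $L_t$ stays almost calibrated with the same $\delta$, and $\gamma_t$ stays a graph of uniformly bounded gradient over a fixed line. Feeding this into interior parabolic estimates for the scalar quasilinear equation of the graph --- uniformly parabolic away from the monopole points, the apparent degeneracy $V^{-1}\to 0$ at $p_j,p_k$ being resolved by working upstairs, where $X$, the flow and the smooth circle-invariant surface $L_t$ through the fixed points $p_j,p_k$ are all regular --- gives uniform $C^\infty$ bounds as long as $\gamma_t$ avoids the remaining monopole points. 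Combined with the monotonicity $\tfrac{d}{dt}\mathrm{Area}(L_t) = -\int_{L_t}|H|^2\le 0$, this yields long-time existence: a finite-time singularity would force either a curvature blow-up, excluded by the estimates, or $\gamma_t$ to touch some $p_l$, impossible in finite time since $V^{-1}$ vanishes there (a Gr\"onwall estimate for $\mathrm{dist}(\gamma_t,p_l)$). This is exactly where the two-dimensional reduction is essential: finite-time singularities of almost calibrated Lagrangian mean curvature flow occur in general \cite{NevesSingularities}, but embedded planar curve flows with fixed endpoints do not develop them.

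For convergence, one uses that $\int_0^\infty\!\int_{L_t}|H|^2\,dt<\infty$, so along some $t_i\to\infty$ we have $\int_{L_{t_i}}|H|^2\to 0$, whence $\phi(\cdot,t_i)$ becomes constant and $\gamma_{t_i}$ subconverges to a straight segment joining $p_j$ and $p_k$; by flow stability this segment is $[p_j,p_k]$, which avoids every other monopole point and is therefore the unique circle-invariant special Lagrangian of Theorem \ref{thm:Intro.SL.stable} (flow stability in particular implies the stability hypothesis used there). Since $[p_j,p_k]$ stays away from the monopole points, near it the flow is uniformly parabolic and the Euclidean length functional is smooth with a strict local minimum at the segment, so a {\L}ojasiewicz--Simon inequality --- or an elementary direct argument for this one-dimensional flow --- upgrades subconvergence to smooth convergence of the whole flow $L_t$ to the special Lagrangian.

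The main obstacle is the a priori control of the flow near the monopole points $p_j,p_k$: there the reduced planar equation genuinely degenerates and the circle fibre over $\gamma_t$ collapses, so uniform regularity up to those points must be extracted from the smooth flow $L_t\subseteq X$ via the local model of an $S^1$-invariant Lagrangian through a fixed point of the circle action. Making that boundary regularity rigorous, establishing the ``no collision in finite time'' with the remaining monopole points, and checking that the homotopy-class bookkeeping of the reduction really matches Definition \ref{dfn:flow_stability} are where the substantive work lies; once the reduction and the almost calibrated oscillation bound are in place, the interior long-time existence and convergence follow classical curve-shortening-flow techniques.
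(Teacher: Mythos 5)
Your reduction to the weighted planar flow $\partial_t\gamma=\phi^{-1}\gamma''$, the identification of the Lagrangian angle with the Euclidean tangent angle, and the preservation of the almost calibrated oscillation bound all match the paper (Proposition \ref{prop:curve.flow}, Lemma \ref{lem:almost.calibrated}). The genuine gap is exactly at the point you flag as ``routine boundary regularity'': the possible finite-time singularity at the fixed points $p_j,p_k$. Saying that the degeneracy is ``resolved by working upstairs, where $X$, the flow and $L_t$ are all regular'' is not an argument --- regularity of the ambient space and of the initial surface gives no a priori bound on the second fundamental form of $L_t$ near $\pi^{-1}(p_j)$, and the correct blow-up quantity is $\phi^{-1}\kappa^2$, not $\kappa$ (Proposition \ref{prop:curve.flow.blow.up}); near the endpoints $\phi^{-1}\to 0$, so bounding the Euclidean curvature of the graph does not control the surface upstairs and vice versa. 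Your blanket claim that ``embedded planar curve flows with fixed endpoints do not develop finite-time singularities'' has no justification for this degenerate weight, and the whole technical core of the paper is precisely ruling out endpoint singularities: a Type~I blow-up is excluded via \cite{Wang} and the angle bound, an interior Type~II blow-up via the Euclidean curve-shortening analysis (Lemma \ref{lem:p1p2}), and the endpoint Type~II cases require the classification of exact, almost calibrated ancient solutions of Lagrangian mean curvature flow in \cite{LambertLotaySchulze} to exclude the plane, the pair of planes and the Lawlor neck (Lemmas \ref{lem:blow.up.case.1} and \ref{lem:blow.up.case.2}). None of this is replaced by interior parabolic estimates for the graph, which only control the flow away from the monopole points.

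A second, related gap is your translation of flow stability. Definition \ref{dfn:flow_stability} is a quantitative condition on cohomological phases and areas of all graded connect-sum decompositions, not the statement that the isotopy class of $\gamma$ rel endpoints contains the straight segment (that is the Thomas-type stability of Theorem \ref{thm:Intro.SL.stable}, which is weaker). The paper uses flow stability through Remark \ref{rem:Curve_Stability} and Lemma \ref{lem:avoid.sings}: since the angle oscillation is non-increasing and the length is decreasing along the flow, conditions (a)/(b) persist and force $\gamma_t$ to stay a definite distance from every singularity $p_l$, $l\neq j,k$, for as long as the flow exists. Your substitute --- a Gr\"onwall estimate for $\mathrm{dist}(\gamma_t,p_l)$ using $V^{-1}(p_l)=0$ --- is circular: the normal speed is $\phi^{-1}|\kappa|$ and without the curvature bound you are trying to prove there is no way to close that estimate, and homotopy-class bookkeeping alone does not prevent the curve from touching $p_l$. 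So the avoidance of the other monopole points and the exclusion of endpoint singularities, the two places where the hypotheses (flow stability and almost calibrated) actually enter, are both missing from the proposal; the remaining interior existence and convergence steps are fine and essentially as in the paper.
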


\noindent For some of the manifolds considered in Theorem \ref{thm:Intro.SL.stable}, Thomas--Yau (c.f.~\cite[Theorem 7.6]{ThomasYau}) proved a version of the Thomas--Yau conjecture, but for a different, non-Ricci-flat, K\"ahler metric, and therefore not for the Lagrangian mean curvature flow,  but a modified flow instead (which is, in fact, the Maslov flow of \cite{LotayPacini}).  Whilst their arguments can probably be adapted to the genuine Calabi--Yau metric and Lagrangian mean curvature flow, they also require a  stronger assumption on the Lagrangian angle than almost calibrated for their result to hold.  

\begin{remark}
Open sets in the moduli space of Ricci-flat K\"ahler metrics on a K3 surface can be constructed via gluing methods, which include using ALE and ALF manifolds given by the Gibbons--Hawking ansatz as local models (see e.g.~\cites{DonaldsonGluing,FoscoloGluing}).  In particular, neighbourhoods of special Lagrangian 2-spheres in these K3 surfaces are well approximated by Gibbons--Hawking metrics.  We therefore expect that versions of Theorems \ref{thm:Intro.SL.stable} and \ref{thm:Intro.LMCF.stable} hold in these neighbourhoods via modification of the techniques presented here.  
\end{remark}

\subsubsection*{Eguchi--Hanson and multi-Taub--NUT on $T^*S^2$} As an immediate application of our results, suppose on $T^*S^2$ we take the Eguchi--Hanson metric in Example \ref{ex:Eguchi-Hanson} or the multi-Taub--NUT metric with $k=2$ as in Example \ref{ex:MultiTN}.  Then, for any compatible Calabi--Yau structure on $T^*S^2$ (i.e.~inducing these metrics) so that the zero section is special Lagrangian, the stability conditions in Theorems  \ref{thm:Intro.SL.stable} and \ref{thm:Intro.LMCF.stable} are vacuous, so we have the following result.

\begin{corollary}\label{cor:Intro.T*S2}
Let $T^*S^2$ be endowed with the Eguchi--Hanson metric or the multi-Taub--NUT metric and choose a compatible Calabi--Yau structure  on $T^*S^2$ so that the zero section $S^2$ is special Lagrangian.  Let $L\subseteq T^*S^2$ be a compact, embedded, zero Maslov, circle-invariant Lagrangian.  There is a circle-invariant Hamiltonian isotopy from $L$ to $S^2$, which is the unique special Lagrangian in $T^*S^2$, and if $L$ is almost calibrated then Lagrangian mean curvature flow starting at $L$ exists for all time and converges to $S^2$.
\end{corollary}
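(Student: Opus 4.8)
The plan is to read off Corollary~\ref{cor:Intro.T*S2} from Theorems~\ref{thm:Intro.SL.stable} and~\ref{thm:Intro.LMCF.stable}, the only substantive point being to verify that the stability and flow stability hypotheses are automatically satisfied here. First I would record, using Examples~\ref{ex:Eguchi-Hanson} and~\ref{ex:MultiTN}, that the Eguchi--Hanson metric and the multi-Taub--NUT metric with $k=2$ both arise from the Gibbons--Hawking ansatz over $\mathbb{R}^3$ with exactly two monopole points $p_1,p_2$, the former giving an ALE and the latter an ALF manifold. Thus Theorems~\ref{thm:Intro.SL.stable} and~\ref{thm:Intro.LMCF.stable} apply to any compact, embedded, zero Maslov (respectively almost calibrated), circle-invariant Lagrangian $L\subseteq T^*S^2$, and it remains to rule out any destabilising decomposition of $L$ in the sense of Definitions~\ref{dfn:stability} and~\ref{dfn:flow_stability}.

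Next I would unwind those definitions. A destabilising decomposition would present $L$, up to circle-invariant Hamiltonian isotopy, as a graded Lagrangian connect sum $L_1\#L_2$ of two compact, embedded, zero Maslov (respectively almost calibrated), circle-invariant Lagrangians $L_1,L_2$ whose Lagrangian angles satisfy the relevant global inequality on their total variations or average values. I would then bring in the description of circle-invariant Lagrangians developed earlier in the paper: over a two-monopole base, a compact, embedded, circle-invariant surface projects to a curve in $\mathbb{R}^3$ that is either an embedded loop avoiding $p_1,p_2$ or an embedded arc joining $p_1$ to $p_2$, and imposing the Lagrangian and zero Maslov conditions forces any such surface --- in particular each $L_i$, and $L$ itself --- to be circle-invariantly Hamiltonian isotopic to the zero section $S^2$ with either orientation. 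Hence $[L_1],[L_2],[L]$ all lie in $\{\pm[S^2]\}\subseteq H_2(T^*S^2;\mathbb{Z})\cong\mathbb{Z}\langle[S^2]\rangle$.

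The required contradiction is then homological: Lagrangian connect sum adds fundamental classes, so $[L]=[L_1]+[L_2]$ would lie in $\{-2,0,2\}\cdot[S^2]$, which is incompatible with $[L]=\pm[S^2]$. (Equivalently, all the pieces have the same average phase $\arg\int_{S^2}\Omega$, so the strict inequality between average Lagrangian angles demanded by the instability condition cannot hold.) Therefore $L$ has no destabilising decomposition, so it is stable and flow stable, and Theorems~\ref{thm:Intro.SL.stable} and~\ref{thm:Intro.LMCF.stable} yield a circle-invariant Hamiltonian isotopy of $L$ to a special Lagrangian that is unique in its circle-invariant Hamiltonian isotopy class, together with long-time existence and smooth convergence of Lagrangian mean curvature flow from $L$ to it. Since the zero section is, by hypothesis, special Lagrangian and lies --- by the classification just used --- in the same circle-invariant Hamiltonian isotopy class as $L$, uniqueness identifies the special Lagrangian and the flow limit with $S^2$.

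The main obstacle I anticipate is the middle step: establishing circle-equivariantly that the only compact, embedded, zero Maslov, circle-invariant Lagrangians in $T^*S^2$ (hence the only possible connect-sum factors) are, up to circle-invariant Hamiltonian isotopy, the zero section with either orientation. This rests on the analysis of the two-monopole Gibbons--Hawking geometry --- in particular, showing no loop in $\mathbb{R}^3\setminus\{p_1,p_2\}$ produces a zero Maslov Lagrangian torus, so that an Euler characteristic count forces both factors to be spheres --- together with careful tracking of the grading across the connect-sum neck. Once this classification is available, the homological (or average-phase) obstruction is immediate and the corollary follows by direct appeal to Theorems~\ref{thm:Intro.SL.stable} and~\ref{thm:Intro.LMCF.stable}.
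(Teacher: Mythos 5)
Your proposal is correct and follows essentially the paper's own route: the paper justifies this corollary by the one-line observation that, with only the two monopole points $p_1,p_2$, the stability hypotheses of Theorems \ref{thm:Intro.SL.stable} and \ref{thm:Intro.LMCF.stable} are vacuous, and your classification of compact, embedded, circle-invariant, zero Maslov Lagrangians (no zero Maslov tori; every such sphere projects to an arc from $p_1$ to $p_2$, hence lies in $\mathrm{Ham}^{\U(1)}(S^2)$ and has class $\pm[S^2]$) together with additivity of homology under connect sum is a legitimate way of making that vacuousness precise. One blemish: the parenthetical ``same average phase'' argument is backwards, since Definition \ref{dfn:stability} demands only the weak inequality $\tau(L_1)\geq\tau(L_2)$, which equal phases would \emph{satisfy}; the genuine reason is that no circle-invariant connect-sum decomposition exists at all (there is no third singularity of $\phi$ to serve as the neck point, which is the paper's implicit reasoning, and is also what your homological count shows), so drop that aside. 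A second, minor, point: Definition \ref{dfn:flow_stability} quantifies over all decompositions, not just circle-invariant ones, but this is harmless here both because the proof of Theorem \ref{thm:LMCF.stable} only uses the circle-invariant reformulation (Remark \ref{rem:Curve_Stability} and Lemma \ref{lem:avoid.sings}, the latter automatic when $k=2$) and because your obstruction extends to arbitrary compact embedded pieces via $[L_i]\cdot[L_i]=-\chi(L_i)$, which rules out classes $n[S^2]$ with $|n|\geq 2$, while class $0$ is excluded by the almost calibrated condition.
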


\noindent The long-time existence and convergence of Lagrangian mean curvature flow in Corollary \ref{cor:Intro.T*S2} is striking in that no assumption other than almost calibrated is required, so $L$ need not even be $C^0$-close to the zero section.  In particular, this is much stronger than the main flow results in \cites{LotaySchulze,TsaiWangFlows} for the special case of the Eguchi--Hanson metric and invariant Lagrangian initial condition.  Moreover, Corollary \ref{cor:Intro.T*S2} is sharp, since \cite{NevesSingularities} shows that without the almost calibrated assumption the flow fails, in that a finite-time singularity occurs,  even assuming that $L$ is circle-invariant.

\subsection*{Outline of the proofs of Theorems \ref{thm:Intro.SL.stable} and \ref{thm:Intro.LMCF.stable}}  We now briefly describe the main steps in the proofs of our main results.

\subsubsection*{Projection} The hyperk\"ahler 4-manifolds $X$ given by the Gibbons--Hawking ansatz have a projection 
\begin{equation}\label{eq:Moment_Map_Projection}
\mu=(\mu_1,\mu_2,\mu_3):X \to \mathbb{R}^3,
\end{equation}
which is, in fact, the hyperk\"ahler moment map of the circle action. Away from a discrete set of points $S \subset \mathbb{R}^3$ (which is finite for ALE and ALF gravitational instantons), \eqref{eq:Moment_Map_Projection} is  a circle bundle, while $\mu^{-1}(p)$ is a point for each $p\in S$.  The hyperk\"ahler metric on $X$ is determined by a harmonic function $\phi$ on $\mathbb{R}^3$ with singularities at the points in $S$. 

\subsubsection*{Curves} Any circle-invariant surface $L$ in $X$ is the pre-image of a curve $\gamma$ in $\mathbb{R}^3$. For instance, the pre-image of curve with endpoints in $S$ (and otherwise not containing points in $S$) is a  2-sphere, whilst the pre-image of a simple closed curve not intersecting $S$ is a 2-torus.  Moreover, the circle-invariant surface $L$ is Lagrangian with respect to some member of the hyperk\"ahler family of symplectic forms on $X$ if and only if $\gamma$ is planar (c.f.~Lemma \ref{lem:Lagrangian}). 

Up to an overall translation and rotation of $\mathbb{R}^3$, it suffices to consider circle-invariant Lagrangians 
which are lifts of curves $\gamma\subset\mathbb{R}^3$ lying in the plane $P$ where $\mu_3=0$. Since we restrict to oriented Lagrangians,  $\gamma$ comes equipped with an orientation, and we let $\gamma'$  denote the tangent velocity vector of any oriented parametrization of $\gamma$.

\subsubsection*{Stability} In this setting, all compact, zero Maslov, circle-invariant Lagrangians $L$ are of the form $\mu^{-1}(\gamma)$ for $\gamma$ a curve in the plane $P$ with endpoints in $S$.  Moreover, $L$ is embedded if and only if $\gamma$ meets no other points of $S$, and thus $L$ is a 2-sphere.  We show that a grading $\beta$ of $L$ can be identified with the angle that $\gamma'$ makes with a specific line in the plane $P$.  This enables us to relate the stability conditions arising in Theorems \ref{thm:Intro.SL.stable} and \ref{thm:Intro.LMCF.stable} to properties of the curve $\gamma$.

For Theorem \ref{thm:Intro.SL.stable}, we show that $L=\mu^{-1}(\gamma)$ is stable in the sense of the statement if and only if $\gamma$ can be isotoped through planar curves to the straight line $\ell$ connecting the endpoints $p_1,p_2\in S$ of $\gamma$.  Moreover, we show that $\mu^{-1}(\ell)$ is the unique circle-invariant special Lagrangian in the homology class $[L]$.  Theorem \ref{thm:Intro.SL.stable} then follows.

\subsubsection*{Flows} For Theorem \ref{thm:Intro.LMCF.stable}, one starts with a Lagrangian $L$ of the form $\mu^{-1}(\gamma)$ as before, which is now almost calibrated and flow stable.  
The key observation is that mean curvature flow $L_t$ starting at $L_0=L$ descends through $\mu$ to a modified curve shortening flow $\gamma_t$ starting at $\gamma_0=\gamma$: this modified flow depends on the harmonic function $\phi$ defining the hyperk\"ahler metric.  

 We show that $\gamma_t$ remains in the plane $P$ (which implies $L_t$ continues to be Lagrangian).  Given that the area of $L_t$ decreases, and the Lagrangian angle satisfies the heat equation, we show that flow stability of $L$ implies that  $\gamma_t$ must stay away from all other points of $S$ other than the endpoints $p_1,p_2$ of $\gamma$ (which are fixed by the flow).  Using blow-up analysis for both the flows $L_t$ and $\gamma_t$, together with a recent classification of ancient solutions to Lagrangian mean curvature flow in \cite{LambertLotaySchulze}, we show that no finite-time singularities can occur along the flow: it is here that one crucially needs the almost calibrated condition.  The convergence of the flow $\gamma_t$ to the straight line $\ell$ connecting $p_1$ and $p_2$, and thus $L_t$ to the special Lagrangian $\mu^{-1}(\ell)$ quickly follows, and hence Theorem \ref{thm:Intro.LMCF.stable} is proved.

\begin{remark}
 In proving Theorems \ref{thm:Intro.SL.stable} and \ref{thm:Intro.LMCF.stable}, we use that the harmonic function $\phi$ in the Gibbons--Hawking ansatz has finitely many singularities, which restricts us to the ALE and ALF gravitational instantons.  However, one may be able in certain cases to obtain similar, perhaps weaker, results when there are infinitely many singularities, e.g.~for the (incomplete) Ooguri--Vafa metric in Example \ref{ex:OV} and the (complete) Anderson--Kronheimer--LeBrun metrics in Example \ref{ex:AKL}.
\end{remark}

\subsection*{Summary}  We now briefly summarize the contents of this article.
\begin{itemize}
\item In Section \ref{sec:GH_Ansatz} we recall the Gibbons--Hawking ansatz and give several important examples of hyperk\"ahler 4-manifolds arising from it.  We also carry out some computations which will prove useful in the course of our work.  
\item Section \ref{sec:Geodesic_Orbits} investigates geodesic orbits of the circle action, which are therefore closed.  Closed geodesics are a classical object of study in Riemannian geometry, and all of the complete hyperk\"ahler 4-manifolds in $\S$\ref{sec:GH_Ansatz} are simply connected, so one is motivated to study the existence, number and properties of closed geodesics in this setting. 

We relate geodesic orbits to the  singularities of the  function $\phi$, and  prove  that if $\phi$ has $k \geq 2$ generically placed singularities, then there are at least $k-1$ geodesic orbits. We then give several examples: e.g.~for $k$ collinear singularities there are precisely $k-1$ such geodesics, the Ooguri--Vafa metric has a unique one, and the Anderson--Kronheimer--LeBrun metrics have infinitely many. Through examples, we also show that in the same   manifold there are hyperk\"ahler metrics with different numbers of   geodesic orbits, and study their index as critical points for length. We close  
 $\S$\ref{sec:Geodesic_Orbits} by studying  curve shortening flow of circle orbits   in several explicit examples, and relate our results to dynamics of monopoles on $\mathbb{R}^3$.

\item Section \ref{sec:Minimal_Surfaces} investigates circle-invariant minimal surfaces in Gibbons--Hawkings hyperk\"ahler 4-manifolds. We prove that every  degree 2 homology class is represented by a circle-invariant area-minimizing surface.   Since these surfaces generate the topology of gravitational instantons, one is motivated to study their uniqueness amongst minimal surfaces and the mean curvature flow of surfaces, for which the area-minimizing surfaces are natural critical points. 
  
Mean curvature flow in higher codimension is underdeveloped and poses a greater challenge than the hypersurface case.  In the circle-invariant setting, we show that mean curvature flow of surfaces reduces to a weighted curve shortening flow in $\mathbb{R}^3$. 
 We also show that, in many situations, the area-minimizing surfaces generating the second homology are locally isolated as minimal surfaces, and dynamically  stable under mean curvature flow.

\item In Section \ref{sec:Lagrangians} we investigate properties of circle-invariant Lagrangians in hyperk\"ahler 4-manifolds given by the Gibbons--Hawking ansatz.  We prove our first main result, Theorem \ref{thm:Intro.SL.stable}, which is a version of the Thomas Conjecture. We also discuss Jordan--H\"older filtrations and decompositions of graded Lagrangians in our context, and give a simple description of Seidel's symplectically knotted 2-spheres \cite{Seidel}, which we prove  are knotted in the circle-invariant setting by elementary observations, with no need for Lagrangian Floer homology.

\item In Section \ref{sec:Lagrangian_Mean_Curvature_Flow} we investigate the Lagrangian mean curvature flow in Gibbons--Hawking hyperk\"ahler 4-manifolds through the weighted curve shortening flow in $\mathbb{R}^2$ it induces.  We show that generic, embedded, circle-invariant, Lagrangian tori collapse to a circle orbit in finite time. We then prove our second main result, Theorem \ref{thm:Intro.LMCF.stable}, which is a version of the Thomas--Yau Conjecture.
\item In Appendix \ref{app:spheres}, we provide a sufficient condition so that the induced metric on a circle-invariant minimal sphere is positively curved. \item In Appendix \ref{app:hessian}, we compute the Hessian of any circle-invariant function, and deduce that there are no compact minimal submanifolds in Euclidean or Taub--NUT $\mathbb{R}^4$. 
\end{itemize}

\subsection*{Acknowledgements}

The first author would like to thank Ben Lambert for useful comments on some of the material in Section \ref{sec:Lagrangian_Mean_Curvature_Flow}. The second author thanks Grace Mwakyoma for help in making some of the figures in Section \ref{sec:Lagrangians}.  The first author was partially supported by Leverhulme Research Project Grant RPG-2016-174 during the course of this project.
The second author was supported by Funda\c{c}\~ao Serrapilheira 1812-27395, by CNPq grants 428959/2018-0 and 307475/2018-2, and FAPERJ through the program Jovem Cientista do Nosso Estado E-26/202.793/2019.

\section{The Gibbons--Hawking ansatz}\label{sec:GH_Ansatz}

\subsection{Definitions}\label{ssec:defns} We start this introductory section by recalling the Gibbons--Hawking ansatz.  We shall use the notation in this section throughout the article. Let $(X^4, g)$ be hyperk\"ahler and equipped with a circle action preserving the three symplectic (in fact, K\"ahler) forms $\omega_1,\omega_2,\omega_3$ associated with the three complex structures $I_1,I_2,I_3$ satisfying the quaternionic relations; i.e.~
$$I_1^2=I_2^2=I_3^2=I_1I_2I_3=-1
\quad\text{and}\quad\omega_i(\cdot,\cdot)=g(I_i\cdot,\cdot)$$
for $i=1,2,3$.  Denote by $\xi$ the infinitesimal generator of the $\U(1)$ action and let $\hat{X} \subset X $ be the open dense set where the action is free. Then, $\hat{X}$ can be regarded as a $\U(1)$-bundle 
over an open $3$-manifold $Y^{3}$, i.e.
$$\mu: \hat{X} \rightarrow Y^3.$$
We equip this bundle with a connection whose horizontal spaces are $g$-orthogonal to $\xi$, which we encode as $\eta \in \Omega^1(\hat{X}, \mathbb{R})$ such that $\ker(\eta)=\xi^{\perp}$ (identifying the Lie algebra of $\U(1)$ with $\mathbb{R}$). Then 
\begin{equation}\label{eq:xi}
 \eta(\xi)=1\quad\text{and}\quad\iota_{\xi} d \eta =0.
\end{equation}
Let $a:\hat{X}\to \mathbb{R}$ be a positive $\U(1)$-invariant function and define the 1-forms $\alpha_i:= I_i (a \eta)$, for $i=1,2,3$. The hyperk\"ahler metric $g$ may then be written on $\hat{X}$ as
\begin{equation}\label{eq:HypMetric}
g= a^2\eta^2+\alpha_1^2+\alpha_2^2+\alpha_3^2,
\end{equation}
and the associated symplectic forms are given by:
\begin{align}\label{eq:hypforms}
\omega_1 & =  a \eta \wedge \alpha_1 + \alpha_2 \wedge \alpha_3 ,\quad 
\omega_2 
=  a \eta \wedge \alpha_2 + \alpha_3 \wedge \alpha_1 ,\quad  
\omega_3 
=  a \eta \wedge \alpha_3 + \alpha_1 \wedge \alpha_2.
\end{align} 
Fixing the volume form $a\eta \wedge \alpha_{1}\wedge\alpha_{2}\wedge\alpha_{3}$, the forms $\omega_1,\omega_2,\omega_3$ give a trivialization of $\Lambda^2_+\hat{X}$, the bundle of self-dual 2-forms on $\hat{X}$. Conversely, if we define 2-forms $\omega_1,\omega_2,\omega_3$ as in \eqref{eq:hypforms} and fix the volume form $a\eta\wedge\alpha_1\wedge\alpha_2\wedge\alpha_3$, we can recover the metric $g$ as in \eqref{eq:HypMetric}, and it follows from \cite[Lemma 4.1]{Atiyah} that $g$ is hyperk\"ahler if and only if all the $\omega_i$ are closed. Using this characterization we shall now prove the following.

\begin{proposition}\label{prop:MainHK}
	The metric $g$ in \eqref{eq:HypMetric} equips $X$ with a hyperk\"ahler structure so that the $\U(1)$ action given by $\xi$ in \eqref{eq:xi} preserves $g$ and 
	$\omega_1,\omega_2,\omega_3$ in \eqref{eq:hypforms} if and only if the following hold.
	\begin{enumerate}
		\item[(a)] The symmetric $2$-tensor 
		$$g_E=a^2(\alpha_1^2+\alpha_2^2+\alpha_3^2)
				$$
		is the pullback of a flat metric on $Y^3$.
		\item[(b)] The pair $(\eta,\phi)$, 
		where $\phi=a^{-2}$, is a Dirac monopole on $Y^3$, i.e.
		\begin{equation}\label{eq:Bogomolnyi}
		\ast_E d\eta=- d\phi,
				\end{equation}
		where $\ast_E$ denotes the Hodge star operator associated with the metric $g_E$ 		on $Y^3$ from \emph{(a)}.
		
		\item[(c)] There are local coordinates $(\mu_1,\mu_2,\mu_3)$ on $Y^3$ such that $\alpha_i=\phi^{\frac{1}{2}}d\mu_i$, and the hyperk\"ahler metric can be written as
		\begin{equation}\label{eq:HypMetric2}
		g=\frac{1}{\phi}\eta^2 
		+\phi\left( d \mu_1^2  + d \mu_2^2 + d \mu_3^2 \right).
		\end{equation}
  	\end{enumerate}
\end{proposition}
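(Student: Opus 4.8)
The plan is to invoke the criterion recalled immediately before the statement (\cite[Lemma 4.1]{Atiyah}): with the volume form $a\eta\wedge\alpha_1\wedge\alpha_2\wedge\alpha_3$ fixed, $g$ in \eqref{eq:HypMetric} is hyperk\"ahler if and only if $\omega_1,\omega_2,\omega_3$ in \eqref{eq:hypforms} are all closed. As $a$, $\eta$ and the $\alpha_i$ are $\U(1)$-invariant, so are $g$ and the $\omega_i$; hence the $\U(1)$-invariance claimed in the statement is automatic, and the task reduces to proving that $d\omega_1=d\omega_2=d\omega_3=0$ is equivalent to (a)+(b)+(c). I will use three elementary facts: the $\alpha_i$ are horizontal (being $g$-orthogonal to $\xi$, since $I_i$ is skew and $\xi$ is dual to $a^2\eta$) and invariant, so they descend to $1$-forms on $Y^3$, and $\alpha_1^2+\alpha_2^2+\alpha_3^2$ descends to a metric there with $\alpha_1,\alpha_2,\alpha_3$ an orthonormal coframe; $a$ descends to a positive function on $Y^3$, so $g_E=a^2(\alpha_1^2+\alpha_2^2+\alpha_3^2)=(a\alpha_1)^2+(a\alpha_2)^2+(a\alpha_3)^2$ descends; and, by the condition $\iota_\xi d\eta=0$ in \eqref{eq:xi}, $d\eta$ is basic, $d\eta=\pi^*F$ for a $2$-form $F$ on $Y^3$.

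First I would extract (a) and (c). Contracting \eqref{eq:hypforms} with $\xi$ and using $\eta(\xi)=1$, $\iota_\xi\alpha_i=0$ gives $\iota_\xi\omega_i=a\alpha_i$; since $\mathcal{L}_\xi\omega_i=0$, Cartan's formula gives $d(a\alpha_i)=d\iota_\xi\omega_i=-\iota_\xi d\omega_i$, so $d\omega_i=0$ forces $d(a\alpha_i)=0$ for $i=1,2,3$. Thus $g_E$ admits, locally, an orthonormal coframe $\{a\alpha_1,a\alpha_2,a\alpha_3\}$ of closed $1$-forms; by the Poincar\'e lemma there are local functions $\mu_i$ with $a\alpha_i=d\mu_i$, these are local coordinates on $Y^3$ (the $d\mu_i$ being independent), and in them $g_E=d\mu_1^2+d\mu_2^2+d\mu_3^2$. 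Hence $g_E$ is flat, which is (a); and with $\phi=a^{-2}$ we get $\alpha_i=\phi^{1/2}d\mu_i$, so substituting into \eqref{eq:HypMetric} shows $g$ has the form \eqref{eq:HypMetric2}, which is (c).

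Next I would extract the monopole equation (b) and treat the converse. Substituting $\alpha_i=\phi^{1/2}d\mu_i$ into \eqref{eq:hypforms} gives $\omega_i=\eta\wedge d\mu_i+\phi\,d\mu_{i+1}\wedge d\mu_{i+2}$ (indices mod $3$), so $d\omega_i=d\eta\wedge d\mu_i+d\phi\wedge d\mu_{i+1}\wedge d\mu_{i+2}$. Writing $d\eta=\pi^*F$ with $F=\sum_i c_i\,d\mu_{i+1}\wedge d\mu_{i+2}$ in the $\mu$-coordinates, and $d\phi=\sum_i(\partial\phi/\partial\mu_i)\,d\mu_i$, a one-line computation gives $d\omega_i=(c_i+\partial\phi/\partial\mu_i)\,d\mu_1\wedge d\mu_2\wedge d\mu_3$. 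Hence $d\omega_1=d\omega_2=d\omega_3=0$ holds precisely when $c_i=-\partial\phi/\partial\mu_i$ for all $i$, i.e.\ $\ast_E d\eta=-d\phi$ (with $\ast_E$ for the orientation $d\mu_1\wedge d\mu_2\wedge d\mu_3$ induced by the fixed volume form), which is \eqref{eq:Bogomolnyi} --- condition (b). This gives the forward implication. For the converse, suppose (a)+(b)+(c): by (c) we may write $\alpha_i=\phi^{1/2}d\mu_i$, so $\omega_i=\eta\wedge d\mu_i+\phi\,d\mu_{i+1}\wedge d\mu_{i+2}$ and $d\omega_i=(c_i+\partial\phi/\partial\mu_i)\,d\mu_1\wedge d\mu_2\wedge d\mu_3$ exactly as above, vanishing by (b); then $g$ is hyperk\"ahler by the recalled criterion, with invariance automatic. (In fact (a) is redundant once (c) holds, since $a^2\phi=1$.)

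The one step needing care --- everything else is routine bookkeeping --- is the passage from ``$d(a\alpha_i)=0$ for all $i$'' to the flatness of $g_E$. The clean way is not a curvature computation but the Poincar\'e lemma: a local orthonormal coframe consisting of \emph{closed} $1$-forms must be $\{d\mu_i\}$ for local coordinates $\mu_i$, in which the metric is manifestly Euclidean. One must also check throughout that $a\alpha_i$, $d\eta$ and the $\mu_i$ really do descend to $Y^3$, so that the contractions and Cartan manipulations on the $4$-manifold $\hat X$ are legitimate; this is precisely where $\eta(\xi)=1$, $\iota_\xi d\eta=0$ and the $\U(1)$-invariance of $a$ get used.
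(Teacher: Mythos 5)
Your proposal is correct and follows essentially the same route as the paper: invoke the Atiyah--Hitchin closedness criterion, use $\mathcal{L}_\xi\omega_i=0$ and Cartan's formula to get $d(a\alpha_i)=0$, obtain the coordinates $\mu_i$ via the Poincar\'e lemma (whence flatness of $g_E$ and the form \eqref{eq:HypMetric2}), and identify the remaining content of $d\omega_i=0$ with the Bogomolnyi equation \eqref{eq:Bogomolnyi}. The only cosmetic difference is that you extract \eqref{eq:Bogomolnyi} in the $\mu$-coordinates after establishing (c), while the paper derives it directly from the coframe identity $a\alpha_i\wedge d\eta=2\,a^{-3}da\wedge a\alpha_j\wedge a\alpha_k$; the content is identical.
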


Note that the metric on $Y^3$ so that $\mu:\hat{X}\to Y$ is a Riemannian submersion is $a^{-2}g_E$, which is conformally flat.  We also observe that $d(\phi \eta)$ is a self dual $2$-form on $(X^4 , g)$.

\begin{remark}
	If $Y^3$ is simply connected, then the $\U(1)$ action is hyperhamiltonian. In this case, the coordinates $(\mu_1,\mu_2,\mu_3)$ can be taken to be global and form the hyperk\"ahler moment map
	$$\mu: X \rightarrow \mathbb{R}^3.$$
	Up to a covering we can always assume $Y^3$ is simply connected and so an open set in $\mathbb{R}^3$. 
\end{remark}

\begin{proof}[Proof of Proposition \ref{prop:MainHK}]
	As we already remarked, the metric $g$ is hyperk\"ahler if and only if $d\omega_i=0$ for all $i=1,2,3$, and the $\U(1)$ action preserves $\omega_i$ if and only if $L_{\xi} \omega_i=0$, again for $i=1,2,3$. These   conditions imply that $d \iota_{\xi} \omega_i = d (a \alpha_i)=0$, for all $i$, which is the same as $g_E$ in (a) being flat. 
 
Using \eqref{eq:hypforms} and  the fact that the $d \omega_i$ vanish gives, for $(i,j,k)$ a cyclic permutation of $(1,2,3)$,
	\begin{equation*}
	a\alpha_i \wedge d \eta = 2 \frac{da}{a^3} \wedge a\alpha_j \wedge a\alpha_k.
	\end{equation*}
	This is easily seen to be the Bogomolnyi equation \eqref{eq:Bogomolnyi} with $ \phi=a^{-2}$, yielding (b). 
	The fact that $d ( a \alpha_i )=0$ implies, via the Poincar\'e lemma, the local existence of real valued functions $\mu_i$ on $X$, such that $a \alpha_i = d \mu_i$, for $i=1,2,3$.  By definition, the $\mu_i$ satisfy $\iota_{\xi} \omega_i= d \mu_i$ and so descend to local coordinates on $Y$.  Then (c) is proven by rewriting  \eqref{eq:HypMetric} in terms of $\phi$ and $\mu$.
\end{proof}

\subsection{Examples} We will now describe some key examples of hyperk\"ahler metrics which arise from the Gibbons--Hawking ansatz.

\begin{example}[Flat Metric]\label{ex:flat}
	Let $Y= \mathbb{R}^3 \backslash \lbrace 0 \rbrace$, let $r$ be the Euclidean distance from $0$ in $\mathbb{R}^3$, and 
	\begin{equation}\label{eq:phi.flat}
	 \phi= \frac{1}{2r}.
	\end{equation}
 Then $\hat{X}=\mathbb{R}^4\setminus\{0\}$ is the pullback to $\mathbb{R}^3 \backslash \lbrace 0 \rbrace $ of the Hopf bundle and $\eta$ is the unique $\SU(2)$-invariant connection on $\hat{X}$. In this case $g$ can be extended over $0$ where the circle action collapses, and we obtain the flat metric on $X=\mathbb{R}^4$.\footnote{Placing a charge $k \in \mathbb{N}$ other than $1$ at the origin, i.e.~taking $\phi=\frac{k}{2r}$, leads to  the same metric, but the connection form $\eta$ is $k$ times the connection form on the Hopf bundle.} Indeed, writing the metric in polar coordinates on $\mathbb{R}^3$ first with $r$ as the radial coordinate, and then changing to polar coordinates on $\mathbb{R}^4$ with radial coordinate $\rho=\sqrt{2r}$, we have 
	\begin{align*}
	g & =  2r \eta^2 + \frac{dr^2}{2r} + \frac{r}{2} g_{\mathbb{S}^2}  =  d\rho^2 + \rho^2 \left( \eta^2 + \frac{1}{4} g_{\mathbb{S}^2} \right) =  d\rho^2 + \rho^2 g_{\mathbb{S}^3}.
	\end{align*}
\end{example}

\begin{example}[Eguchi--Hanson]\label{ex:Eguchi-Hanson}
	Let $p \in \mathbb{R}^3\setminus\{0\}$, let $-p$ be its antipodal point and let $Y= \mathbb{R}^3 \backslash \lbrace p,-p \rbrace$. Then, setting
\begin{equation}\label{eq:phi.EH}
	\phi= \frac{1}{2 \vert x - p \vert_{g_E}} + \frac{1}{2 \vert x + p \vert_{g_E}}
	\end{equation}
	gives the Eguchi--Hanson metric. This metric will also extend smoothly by adding back two points. 
	
	This manifold contains a minimal $2$-sphere which we may think of in the following way. At the endpoints of the straight line joining $p$ to $-p$ the circle action collapses, but is free at any other point of the line. Hence, the preimage under $\mu$ of this line is a finite cylinder with the boundary circles collapsed to points, i.e.~a 2-sphere.

	 In fact, the Eguchi--Hanson space is $T^*S^2$ with the minimal $2$-sphere being the zero section.  It is straightforward to see that there are complex structures with respect to which it is either special Lagrangian or holomorphic.
	 
Moreover, since $\lim_{r\to \infty}\phi=0$ and $T^*S^2\setminus S^2\cong (0,\infty)\times\mathbb{RP}^3$, the Eguchi--Hanson metric is asymptotic to the flat metric on $\mathbb{R}^4/\{\pm1\}$, and so has maximal volume growth and is ALE (asymptotically locally Euclidean).  We  also see from \eqref{eq:phi.flat}--\eqref{eq:phi.EH} that $\lim_{p\to 0}\phi=\frac{1}{r}$, so the limit as $p$ tends to $0$ of the Eguchi--Hanson space is the flat orbifold $\mathbb{R}^4/\{\pm 1\}$. 
\end{example}

\begin{example}[Multi-Eguchi--Hanson]\label{ex:MultiEH}
	We can generalize Example \ref{ex:Eguchi-Hanson} by choosing $k\geq 2$ points $p_1, ..., p_k$ in $\mathbb{R}^3$, letting $Y= \mathbb{R}^3 \backslash \lbrace p_1, ... , p_k \rbrace$, and choosing
\begin{equation}\label{eq:phi.MultiEH}
	\phi= \sum_{i=1}^k\frac{1}{2 \vert x - p_i \vert_{g_E}} 
\end{equation}
	so as to obtain the multi-Eguchi--Hanson metric. 
	As in the Eguchi--Hanson case the metric extends smoothly over the points which were removed.
	
	  We can draw a collection of lines joining $p_1$ to $p_2$, then $p_2$ to $p_3$ and so on, and their preimages form a bouquet of $k-1$ minimal two spheres which generate $H_2(X, \mathbb{Z})$.  Again, for each 2-sphere, there are complex structures so that the 2-sphere is either complex or special Lagrangian.
	  
We still have $\lim_{r\to\infty}\phi=0$,  so   the multi-Eguchi--Hanson metric is asymptotic to the flat metric on $\mathbb{R}^4/\mathbb{Z}_k$, so is again ALE.  We also obtain a flat orbifold $\mathbb{R}^4/\mathbb{Z}_k$ in the limit as the $p_i$ tend to $0$.
\end{example}

\begin{example}[Taub--NUT]\label{ex:TN}
 Let $m>0$ be constant and set
\begin{equation}\label{eq:phi.TN}
 \phi= m +\frac{1}{2r}.
 \end{equation}
This can be completed by adding in a point at the origin, which topologically gives $X=\mathbb{R}^4$ again, and is called the Taub--NUT space.

Notice that taking $\phi=m$ in the Gibbons--Hawking ansatz gives $X=S^1\times\mathbb{R}^3$ with the product metric, where the size of the circle is governed by $m$ (tending to zero as $m\to\infty$).  Since $\lim_{r \rightarrow \infty} \phi=m$, 
we deduce that the Taub--NUT metric is asymptotic to the product metric on $S^1\times\mathbb{R}^3$, and hence has cubic volume growth and is ALF (asymptotically locally flat).

Comparing \eqref{eq:phi.flat} and  \eqref{eq:phi.TN}, we see that the Taub--NUT metric limits to the Euclidean metric on $\mathbb{R}^4$ as $m\to 0$.
\end{example}

\begin{example}[Multi-Taub--NUT]\label{ex:MultiTN}
	Let $p_1, ..., p_k$ be $k\geq 2$ points in $\mathbb{R}^3$ and $Y= \mathbb{R}^3 \backslash \lbrace p_1, ... , p_k \rbrace$. Then, letting $m>0$ be constant,  we obtain the multi-Taub--NUT metric by setting
\begin{equation}\label{eq:phi.MultiTN}
	\phi= m + \sum_{i=1}^k\frac{1}{2 \vert x - p_i \vert_{g_E}}.
\end{equation}
 This can also be smoothly extended by adding back the points on which the circle action degenerates, and it contains a bouquet of $k-1$ minimal 2-spheres as in Example \ref{ex:MultiEH}.

 As in Example \ref{ex:TN}, the multi-Taub--NUT metric is ALF.  Moreover, 
 in the limit as $m\to 0$ we have that $\phi$ in \eqref{eq:phi.MultiTN} becomes as in \eqref{eq:phi.MultiEH} in Example \ref{ex:MultiEH}, and so the multi-Eguchi--Hanson metric appears as a limit of multi-Taub--NUT metrics.  
\end{example}

\begin{example}[Ooguri--Vafa]\label{ex:OV}
There is a natural extension of the Taub--NUT/Multi-Taub--NUT metrics where $\phi$ has infinitely many collinear singularities so that the resulting metric becomes $2\pi$-periodic.  This metric is known as the Ooguri--Vafa metric and is of central importance in the study of degenerations of elliptically fibred K3 surfaces and its relation to the SYZ conjecture.  The description that we now make for this metric is adapted from \cite[$\S$3]{GrossWilson}.

 One can explicitly define the following function on $\mathbb{R}^3$:
\begin{equation}\label{eq:phi.OV}
\phi=m+\frac{1}{2 r}+\frac{1}{2}\sum_{k\in\mathbb{Z}\setminus\{0\}}^{\infty}\frac{1}{|x-(2\pi k,0,0)|}-\frac{1}{2\pi k},
\end{equation}
where $m>0$ is given by $\pi m=\log(4\pi)-\gamma$,
where $\gamma$ is Euler's constant, for convenience.  The function $\phi$ in \eqref{eq:phi.OV} is not positive on $\mathbb{R}^3$, but is positive on $\mathbb{R}\times B$ for some sufficiently small ball $B$ around $0$ in the $(x^2,x^3)$-plane.  Since it is $2\pi$-periodic in the $x^1$-direction by construction, $\phi$ is naturally defined on $\mathbb{S}^1\times B$, where the coordinate on the unit circle $\mathbb{S}^1\subseteq\mathbb{C}$ is $e^{ix^1}$.

Hence, one obtains an incomplete hyperk\"ahler metric on $X=\hat{X}\cup\{0\}$, where $\hat{X}$ is a $\U(1)$-bundle $\mu:\hat{X}\to (\mathbb{S}^1\times B)\setminus\{(1,0,0)\}$. 
\end{example}

\begin{example}[Anderson--Kronheimer--LeBrun]\label{ex:AKL}  There is also a natural extension of the Eguchi--Hanson/multi-Eguchi--Hanson metrics where $\phi$ has infinitely many singularities. Suppose we take infinitely many points $\lbrace p_i \rbrace_{i=0}^{+\infty}$ in $\mathbb{R}^3$ so that 
$$\sum_{i=1}^{\infty} \frac{1}{|p_i-p_0|} < \infty.$$
It is shown by Anderson--Kronheimer--LeBrun \cite{Anderson1989} that taking
$$\phi=\sum_{i=1}^\infty\frac{1}{2 \vert x - p_i \vert_{g_E}}$$
in the Gibbons--Hawking ansatz defines a complete hyperk\"ahler 4-manifold with infinite topology.
\end{example}

\subsection{Structure equations}\label{sec:Structure_Eqs}

To facilitate our later computations it will be useful to have the structure equations for the Levi-Civita connection of the hyperk\"ahler metric in the Gibbons--Hawking ansatz.  We can summarize our result as follows. 

\begin{lemma}\label{lem:Covariant_Derivatives}
	Let $\lbrace e_{a} \rbrace_{a=0}^3$ denote the orthonormal framing on $(\hat{X},g)$ whose dual coframing $\lbrace e^{a} \rbrace_{a=0}^3$ is given by $$e^0=\phi^{-1/2} \eta\quad\text{and}\quad e^i=\phi^{1/2} d\mu_i$$  for $i=1,2,3$.  Using Latin characters $i,j,k\in\{1,2,3\}$, the permutation symbol $\epsilon_{ijk}$ and the summation convention for repeated indices, we have that the covariant derivatives $\nabla_{e_a}e_b$ satisfy
	\begin{align}
	\nabla_{e_0} e_0 & = \frac{1}{2\phi^{3/2}}  \frac{\partial \phi}{\partial \mu_i}  e_i,\label{eq:nabla00} \\
	\nabla_{e_i} e_0 & = \frac{1}{2\phi^{3/2}} \epsilon_{ijk} \frac{\partial \phi}{\partial \mu_j}  e_k , \label{eq:nablai0}\\
	\nabla_{e_0} e_i & = \frac{1}{2\phi^{3/2}} \left( - \frac{\partial \phi}{\partial \mu_i}   e_0 +   \epsilon_{ijk} \frac{\partial \phi}{\partial \mu_j}  e_k   \right),  \label{eq:nable0i}\\
	\nabla_{e_i} e_j & = \frac{1}{2\phi^{3/2}} \left( \epsilon_{ijk} \frac{\partial \phi}{\partial \mu_k}  e_0 + \frac{\partial \phi}{\partial \mu_j} e_i - \delta_{ij} \frac{\partial \phi}{\partial \mu_k} e_k \right).\label{eq:nablaij}
	\end{align}
\end{lemma}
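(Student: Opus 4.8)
The plan is to compute the connection 1-forms $\theta^{a}{}_{b}$ of the Levi-Civita connection directly from the coframing $e^{0}=\phi^{-1/2}\eta$, $e^{i}=\phi^{1/2}\,d\mu_{i}$ using Cartan's first structure equation $de^{a}=-\theta^{a}{}_{b}\wedge e^{b}$ together with metric compatibility $\theta^{a}{}_{b}+\theta^{b}{}_{a}=0$, and then read off $\nabla_{e_{a}}e_{b}=\theta^{c}{}_{b}(e_{a})\,e_{c}$. First I would record the exterior derivatives of the coframe. Writing $\phi_{i}=\partial\phi/\partial\mu_{i}$, one has $de^{i}=d(\phi^{1/2})\wedge d\mu_{i}=\tfrac12\phi^{-1/2}\phi_{j}\,d\mu_{j}\wedge d\mu_{i}=\tfrac{1}{2\phi^{3/2}}\phi_{j}\,e^{j}\wedge e^{i}$. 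For $de^{0}$ one uses the Bogomolny equation \eqref{eq:Bogomolnyi}, which in the coordinates $(\mu_{1},\mu_{2},\mu_{3})$ and with respect to the flat metric reads $d\eta=\ast_{E}(-d\phi)$, i.e.\ $d\eta=-\phi_{i}\,d\mu_{j}\wedge d\mu_{k}$ summed cyclically over $(i,j,k)$. Hence $de^{0}=-\tfrac12\phi^{-3/2}\phi_{i}\,d\mu_{i}\wedge\eta+\phi^{-1/2}d\eta=\tfrac{1}{2\phi^{3/2}}\phi_{i}\,e^{0}\wedge e^{i}-\tfrac{1}{2\phi^{3/2}}\,\epsilon_{ijk}\phi_{i}\,e^{j}\wedge e^{k}$ (being careful with the factors of $\phi$ when converting $d\mu$'s to $e$'s and with the cyclic-sum conventions).

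Next I would make an ansatz for the connection forms that is antisymmetric and has the right index structure, namely
\begin{align*}
\theta^{0}{}_{i}&=A\,\phi_{i}\,e^{0}+B\,\epsilon_{ijk}\phi_{j}\,e^{k},\\
\theta^{i}{}_{j}&=-\theta^{0}{}_{i}(e_{0})\text{-type terms plus }C\,\epsilon_{ijk}\phi_{k}\,e^{0}+D(\phi_{j}e^{i}-\delta_{ij}\phi_{k}e^{k}),
\end{align*}
with undetermined constants (all carrying the common factor $\tfrac{1}{2\phi^{3/2}}$), and then pin the constants down by substituting into the two structure equations $de^{0}=-\theta^{0}{}_{i}\wedge e^{i}$ and $de^{i}=-\theta^{i}{}_{0}\wedge e^{0}-\theta^{i}{}_{j}\wedge e^{j}$. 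Matching coefficients of the independent 2-forms $e^{0}\wedge e^{i}$ and $e^{j}\wedge e^{k}$ gives a small linear system whose unique solution yields precisely the claimed covariant derivatives \eqref{eq:nabla00}--\eqref{eq:nablaij}. As a sanity check one can verify the symmetry $\nabla_{e_{a}}e_{b}-\nabla_{e_{b}}e_{a}=[e_{a},e_{b}]$ on a couple of brackets computed directly from $[\,\cdot\,,\,\cdot\,]$ applied to the dual frame, and check that the $I_{i}$ are parallel, which is automatic since the metric is hyperk\"ahler but serves as a consistency test.

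The main obstacle is purely bookkeeping rather than conceptual: getting every factor of $\phi$, every sign from the Hodge star on $(Y^{3},g_{E})$, and every instance of the cyclic-versus-Einstein summation convention correct when passing between the coordinate forms $d\mu_{i}$ and the orthonormal forms $e^{i}$, and when expanding $\ast_{E}d\phi$. In particular the term $-\delta_{ij}\phi_{k}e^{k}$ in \eqref{eq:nablaij} arises from the $\phi^{1/2}$ scaling of $e^{i}$ (the "conformal" piece of the connection) and must be separated cleanly from the $d\eta$-contribution that produces the $\epsilon_{ijk}\phi_{k}e^{0}$ term; keeping these two sources of connection coefficients distinct throughout the computation is where care is needed. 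Once the algebra is organized — say, by first treating the flat model $\phi=\text{const}$ to isolate the $d\eta$ terms and then adding the conformal correction — the identities follow by direct verification.
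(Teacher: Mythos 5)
Your proposal is correct and follows essentially the same route as the paper: the paper also computes the connection 1-forms from Cartan's first structure equation and antisymmetry, using the Bogomolnyi equation $d\eta=-\ast_E d\phi$ to evaluate $de^0$, and then reads off $\nabla_{e_a}e_b=\gamma^c_b(e_a)e_c$. Your coframe derivatives $de^0$ and $de^i$ match the paper's, and the ansatz-and-match step is just a reorganization of the same computation.
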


\begin{proof}
The covariant derivatives $\nabla_{e_a} e_b$ for the Levi-Civita connection of $g$ may be computed via
\begin{equation}\label{eq:connection}
\nabla_{e_a} e_b = 
\gamma^c_b (e_a) e_c ,
\end{equation}
where $\gamma^c_b$ denotes the connection 1-forms. 
The $\gamma^a_b$ satisfy the Cartan structure equations:
\begin{equation}\label{eq:Cartan.structure}
d e^a = - \gamma^a_b \wedge e^b , \ \ \  \gamma^a_b + \gamma^b_a =0.
\end{equation}
 Thus, using the monopole equation \eqref{eq:Bogomolnyi}, which we may write 
as
$$d\eta=-\ast_E d\phi = -\frac{1}{2}\epsilon_{ijk} \frac{\partial\phi}{\partial \mu_i} d\mu_j\wedge d\mu_k,$$ and the Cartan structure equations \eqref{eq:Cartan.structure}, we find
\begin{align}\label{eq:connection_form_1}
\gamma^k_j & = \frac{1}{2\phi^{3/2}} \left( \frac{\partial \phi}{\partial \mu_j} e^k - \frac{\partial \phi}{\partial \mu_k} e^j - \epsilon_{ijk} \frac{\partial \phi}{\partial \mu_i} e^0  \right), \\ \label{eq:connection_form_2}
\gamma^0_i & =  \frac{1}{2\phi^{3/2}} \left( - \frac{\partial \phi}{\partial \mu_i} e^0 + \epsilon_{ijk} \frac{\partial \phi}{\partial \mu^j} e^k  \right).
\end{align}
The formulae \eqref{eq:nabla00}--\eqref{eq:nablaij} quickly follow from substituting \eqref{eq:connection_form_1}--\eqref{eq:connection_form_2} into \eqref{eq:connection}.
\end{proof}

\section{Geodesic orbits}\label{sec:Geodesic_Orbits}

\subsection{Existence and location of the geodesic orbits}\label{ss:Existence_GO}

The length of an orbit is determined by the $\U(1)$-invariant function $l: X \rightarrow \mathbb{R}_{\geq 0}$ given by
\begin{equation}\label{eq:length}
l(x)= \frac{1}{\sqrt{\phi(x)}}.
\end{equation}
This function obviously vanishes at any points where the circle action collapses and descends to the quotient space $Y$.
Moreover, if we are in the ALE setting of Examples \ref{ex:flat}--\ref{ex:MultiEH} then $l\to\infty$ at infinity, whereas in the ALF settings of Examples \ref{ex:TN}--\ref{ex:MultiTN} then $l\to m^{-\frac{1}{2}}$.

Equation \ref{eq:length} provides the following simple observation.

\begin{lemma}\label{lem:orbits}
Geodesics orbits on $(X,g)$ are in one-to-one correspondence with critical points of $\phi$.
\end{lemma}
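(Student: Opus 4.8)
The plan is to use the structure equations from Lemma \ref{lem:Covariant_Derivatives} to compute the geodesic equation for a circle orbit and see exactly when it is satisfied. A circle orbit through a point $x\in\hat X$ is an integral curve of $\xi$; since $\eta(\xi)=1$ and $\xi$ is $g$-orthogonal to the horizontal distribution $\ker\eta$, we have $g(\xi,\xi)=a^2=\phi^{-1}$, so $|\xi|=\phi^{-1/2}=l$. Hence the unit tangent to the orbit is $e_0=\phi^{-1/2}\eta^\sharp$, i.e. the dual vector to $e^0$. The orbit is a geodesic (as an unparametrised curve) precisely when $\nabla_{e_0}e_0$ is proportional to $e_0$; but formula \eqref{eq:nabla00} gives
\begin{equation*}
\nabla_{e_0}e_0=\frac{1}{2\phi^{3/2}}\frac{\partial\phi}{\partial\mu_i}\,e_i,
\end{equation*}
which has no $e_0$-component, so the orbit is geodesic if and only if $\nabla_{e_0}e_0=0$, i.e. if and only if $\partial\phi/\partial\mu_i=0$ for $i=1,2,3$ along the orbit. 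Since $\phi$ is pulled back from $Y$ (equivalently from the open set in $\mathbb{R}^3$ with coordinates $\mu_i$), this is exactly the condition that the image point in $Y$ be a critical point of $\phi$, and all orbits over a fixed point of $Y$ agree, so geodesic orbits correspond bijectively to critical points of $\phi$.

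The remaining point to address is parametrisation: a priori vanishing of $\nabla_{e_0}e_0$ only says the curve is a geodesic up to reparametrisation, but here $e_0$ is already the unit-speed tangent field, so the orbit with its constant-speed parametrisation (constant because $|\xi|=\phi^{-1/2}$ is $\U(1)$-invariant, hence constant along the orbit) is an affinely parametrised geodesic. One should also remark that this reasoning takes place on $\hat X$ where the action is free; the points where the action collapses are the punctures of $Y$, which are not critical points of $\phi$ in the relevant sense (there $\phi\to\infty$), so no orbits are lost or gained there.

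I do not expect any genuine obstacle: the content is entirely contained in equation \eqref{eq:nabla00}, and the only thing requiring a word of care is the distinction between being a geodesic as a subset and being affinely parametrised, which is handled by observing that $e_0$ is the unit tangent. If one wanted to avoid invoking the structure equations, one could alternatively argue variationally — an orbit is geodesic iff it is a critical point of length among nearby curves, and by $\U(1)$-invariance it suffices to vary among orbits, whose lengths are $2\pi\,l=2\pi\phi^{-1/2}$ (up to the normalisation of the circle), so criticality of length is criticality of $\phi^{-1/2}$, hence of $\phi$ — but the computational proof via \eqref{eq:nabla00} is cleaner and also yields the affine parametrisation statement for free.
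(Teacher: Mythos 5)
Your proof is correct, but it takes a different route from the paper. The paper argues variationally: it first shows that an orbit is critical for length among \emph{all} curves if and only if it is critical for length among orbits (since the curvature vector of an invariant curve is $\U(1)$-invariant, only the invariant component of a variation field matters in the first variation formula), and then observes that the length of an orbit is $l=\phi^{-1/2}$, whose critical points away from the singularities of $\phi$ are exactly the critical points of $\phi$ --- essentially the alternative you sketch in your last paragraph. You instead compute the geodesic curvature of an orbit directly from the structure equations: the unit tangent is $e_0=\phi^{1/2}\xi$ and \eqref{eq:nabla00} gives $\nabla_{e_0}e_0=\tfrac{1}{2\phi^{3/2}}\,\del_{\mu_i}\phi\, e_i$, which vanishes precisely over critical points of $\phi$ (and is automatically orthogonal to $e_0$, so no reparametrisation issue arises). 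Your computation is the more direct one, gives the affine parametrisation for free, and in fact is exactly the identity $\kappa=\nabla_{e_0}e_0=\tfrac{1}{2\phi^2}\nabla\phi$ that the paper itself invokes later when setting up the invariant curve shortening flow in $\S$\ref{ss:Curve_Shortening}; the paper's variational argument avoids the frame computation at the cost of the (slightly informal) averaging step, and its symmetry-reduction principle is reused elsewhere (e.g.\ for Lemma \ref{lem:min.surf}). Your handling of the fixed points (where $\phi\to\infty$ and there is no orbit) matches the paper's remark that singularities of $\phi$ do not define orbits.
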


\begin{proof}
We first observe that the length functional restricted to the orbits is critical at an orbit if and only if the length functional is critical at the orbit as a functional on all curves.  This follows since the curvature of any $\U(1)$-invariant curve $\gamma$ will be $\U(1)$-invariant, and so, by the first variation formula for length, we need only consider the projection of variation vector fields onto their $\U(1)$-invariant component, which will then define a $\U(1)$-invariant variation of $\gamma$.

We see that the length functional restricted to critical orbits satisfies
$$\nabla l=-\frac{\nabla\phi}{2\phi^\frac{3}{2}}$$
and thus vanishes if and only if $\nabla\phi$ vanishes or $\phi$ has a singularity.  Since singularities of $\phi$ do not define orbits, we know that only the first possibility defines geodesics.
\end{proof}

\begin{example}[Flat and Taub--NUT metrics]
We see that for any $m\geq 0$ the function $\phi$ in \eqref{eq:phi.TN}
has no critical points and so there are no geodesic orbits in the flat metric in Example \ref{ex:flat} or the Taub--NUT metric in Example \ref{ex:TN}.
\end{example}

\begin{example}[Eguchi--Hanson]
\label{ex:EH.geodesics}
If we take $\phi$ as in \eqref{eq:phi.EH} in Example \ref{ex:Eguchi-Hanson}, we see that its (unique) critical point is given by
$$\frac{x-p}{|x-p|^3}+\frac{x+p}{|x+p|^3}=0\quad\Leftrightarrow\quad x=0.$$
Hence, there is a unique geodesic orbit in the Eguchi--Hanson space, which lies over $0\in\mathbb{R}^3$: it is an equator in the 2-sphere in $T^*S^2$.  Moreover, since the 2-sphere is totally geodesic, any equator will define a closed geodesic, and so there are infinitely many closed geodesics in the Eguchi--Hanson space, only one of which is a geodesic orbit.

Similarly, consider $\phi$ as in \eqref{eq:phi.MultiTN} with $p_1=p\neq 0$ and $p_2=-p$.  By the same calculation, there is a unique geodesic orbit in this multi-Taub--NUT space, which lies over $0\in\mathbb{R}^3$ and is again  an equator in a 2-sphere.
\end{example}

\begin{remark}\label{rmk:EH.geodesics}
For the Eguchi--Hanson space $X$ in Example \ref{ex:Eguchi-Hanson}, the minimal 2-sphere $\Sigma$ given by the straight line between the singularities of $\phi$ in \eqref{eq:phi.EH} is totally geodesic, and the squared distance function to $\Sigma$ is convex everywhere outside of $\Sigma$ (c.f.~\cite{TsaiWangFlows}).  This means that all closed geodesics lie in $\Sigma$ and  $\Sigma$ is the unique compact minimal submanifold in $X$ of dimension at least 2.
\end{remark}

\begin{proposition}[Existence of closed geodesics]\label{prop:existence.geodesics}
	Let $X$ be a connected hyperk\"ahler $4$-manifold obtained from the Gibbons--Hawking ansatz. Suppose that $\phi$ has a finite number $k\geq 2$ of isolated singularities. Then, there are closed geodesics in $X$. 
\end{proposition}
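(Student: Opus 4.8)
By Lemma~\ref{lem:orbits}, it suffices to show that the function $\phi$ on $Y^3$ has at least one critical point.  The plan is to exploit the structure of $\phi$: it is a positive harmonic function on $Y^3 = \mathbb{R}^3\setminus\{p_1,\dots,p_k\}$ (possibly with a suitable additive constant $m\ge 0$ in the ALF case), with a prescribed $+\infty$ singularity of ``attractive'' type at each $p_i$ (locally $\phi\sim \tfrac{1}{2|x-p_i|}$), and with a controlled behaviour at infinity ($\phi\to 0$ in the ALE case, $\phi\to m>0$ in the ALF case).  First I would set up the right domain on which to run a minimization or mountain-pass argument.  The key point is that $\phi\to+\infty$ as one approaches any $p_i$, while $\phi$ is bounded (tending to $0$ or to $m$) at infinity; hence on the region ``between'' two consecutive singularities $\phi$ cannot be monotone, and one can hope to trap a critical point there.

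The cleanest route I would try is a \emph{mountain-pass / minimax} argument.  Since $k\ge 2$, pick two singularities, say $p_1$ and $p_2$.  Small spheres $\partial B_\epsilon(p_1)$ and $\partial B_\epsilon(p_2)$ are two disjoint ``high'' sets where $\phi$ is large (of order $\tfrac1{2\epsilon}$), and any path in $Y^3$ from a point near $p_1$ to a point near $p_2$ must cross a region where $\phi$ is comparatively small (bounded independently of $\epsilon$, using the behaviour at infinity and away from the singular set).  Define
\begin{equation*}
c = \inf_{\sigma} \ \max_{t\in[0,1]} \phi(\sigma(t)),
\end{equation*}
the infimum taken over continuous paths $\sigma:[0,1]\to Y^3$ joining a fixed point $q_1$ near $p_1$ to a fixed point $q_2$ near $p_2$.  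One checks $c$ is finite (take any path avoiding all $p_i$ and bounded away from them) and that $c$ is strictly greater than the value of $\phi$ on a separating ``wall'' — i.e.\ $\phi$ genuinely has to go up and come back down — so $c$ is not attained at an endpoint.  Standard deformation-lemma arguments for the (smooth, proper-enough) function $\phi$ on the open manifold $Y^3$ then produce a critical point of $\phi$ at level $c$; properness/no-escape-of-mass is guaranteed because near any $p_i$ the value of $\phi$ exceeds $c$ once $\epsilon$ is small, and near infinity $\phi$ stays below $c$, so the relevant sublevel/superlevel sets are compact in $Y^3$.  By Lemma~\ref{lem:orbits} this critical point corresponds to a geodesic orbit, which is closed since orbits of the $\U(1)$ action are circles.

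Two technical points deserve care.  First, one must make the ``wall has lower $\phi$-value'' claim precise: in the ALE case this uses $\phi\to 0$ at infinity together with a barrier/maximum-principle estimate showing $\phi$ is not too large outside fixed neighbourhoods of the $p_i$; in the ALF case one uses $\phi\to m$ and that $\phi - m$ is a sum of positive terms each small away from its singularity.  Second, one needs a Palais--Smale-type compactness statement on the \emph{non-compact} manifold $Y^3$; I expect this to be the main obstacle, and the way around it is precisely the geometric sandwiching above — any PS sequence at level $c$ cannot converge to a puncture $p_i$ (there $\phi > c$) nor drift to infinity (there $\phi < c$), so it subconverges in the interior of $Y^3$.  (A robust alternative avoiding minimax entirely: restrict attention to a large closed ball $B$ containing all $p_i$ with $\phi|_{\partial B}$ small, remove small balls around the $p_i$ where $\phi$ is large, and observe $\phi$ attains an interior maximum on this compact manifold-with-boundary since it exceeds its boundary values somewhere inside — e.g.\ on the segment between $p_1$ and $p_2$, where $\phi\to\infty$ at both ends of the removed balls; that interior maximum is a critical point.)  Either way, Lemma~\ref{lem:orbits} converts the critical point into the desired closed geodesic, completing the proof.
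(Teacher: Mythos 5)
Your overall strategy---reducing to critical points of $\phi$ via Lemma \ref{lem:orbits} and then running a minimax argument that plays the blow-up of $\phi$ at the singularities against its behaviour at infinity---is the same as the paper's, but both of your implementations contain genuine errors. First, your minimax has the mountain-pass geometry backwards. With endpoints $q_1,q_2$ chosen near $p_1,p_2$, the values $\phi(q_1),\phi(q_2)$ are the \emph{high} values and the region in between is \emph{low}, so $c=\inf_\sigma\max_t\phi(\sigma(t))\ge\max\{\phi(q_1),\phi(q_2)\}$, and in fact $c$ essentially equals this endpoint value: for large $t$ the sublevel set $\{\phi\le t\}$ is the complement of small disjoint neighbourhoods of the $p_i$ and is path-connected, and moving radially away from $p_1$ the function $\phi$ decreases (its radial derivative is $-\tfrac{1}{2|x-p_1|^2}+O(1)$), so one can join $q_1$ to $q_2$ by paths whose maximum of $\phi$ is arbitrarily close to the endpoint values. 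The level $c$ is therefore attained (up to arbitrarily small error) at an endpoint, there is no separating wall on which $\phi$ exceeds the endpoint values, and the mountain pass lemma gives nothing; for $\phi$ itself one would need a $\sup_\sigma\min_t$ (``valley pass'') formulation with its own deformation argument. Second, your ``robust alternative'' is false: $\phi$ is a non-constant harmonic function on the interior of $\overline{B}_R\setminus\bigcup_i B_\epsilon(p_i)$, so by the maximum principle it cannot attain an interior maximum; its maximum on that compact region sits on the small spheres $\partial B_\epsilon(p_i)$, which are part of the boundary you created by removing the balls. Indeed every critical point of $\phi$ is a saddle of index $1$ or $2$, so no argument hunting for an interior extremum of $\phi$ can succeed.

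The paper avoids both problems by working with $\tilde l=(\phi-m)^{-1}$ rather than $\phi$: it has the same critical points, it \emph{vanishes} at the two chosen singularities (genuinely low endpoints), it exceeds some $\epsilon>0$ on small spheres around them (a genuine barrier), and it is coercive since $\tilde l(x)\sim\tfrac{2}{k}|x|$ at infinity, which gives the Palais--Smale condition outright; the mountain pass lemma then applies verbatim and Lemma \ref{lem:orbits} finishes the proof. To repair your write-up, replace $\phi$ by $(\phi-m)^{-1}$ (or $\phi^{-1}$ in the ALE case) in the minimax---or carefully set up the dual $\sup\min$ scheme for $\phi$, where your observation that the level sets $\{\phi=c\}$ with $m<c<\infty$ are compact is the correct substitute for coercivity---and delete the interior-maximum alternative.
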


\begin{proof}
We can write $\phi$ as
	\begin{equation}\label{eq:phi.MultiTN_2}
	\phi= m + \sum_{i=1}^k\frac{1}{2 \vert x - p_i \vert_{g_E}} ,
	\end{equation}
	for $m \geq 0$, i.e.~it is given by Example \ref{ex:MultiEH} or \ref{ex:MultiTN} depending on whether $m=0$ or $m>0$.  We see that $\tilde{l}=(\phi-m)^{-1}$ has the same critical points as $\phi$ and, by \eqref{eq:phi.MultiTN_2}, satisfies
	$$\tilde{l}(x)\sim \frac{2}{k}|x|+o(|x|)$$
for $|x|$ sufficiently large, and so $\tilde{l}:\mathbb{R}^3\to\mathbb{R}_{\geq 0}$ is coercive.  Hence, every sequence of points $x_i\in\mathbb{R}^3$ for which $\tilde{l}(x_i)$ is bounded must lie in a bounded domain, and therefore has a convergent subsequence.  In particular, $\tilde{l}$ satisfies the Palais--Smale condition and so we may use the min-max principle to find critical points of $\tilde{l}$ (and hence $\phi$) as follows.

Suppose that $p$ and $q$ are distinct isolated singularities of $\phi$. Then $\tilde{l}$ vanishes at $p$ and $q$ and as these are isolated singularities of $\phi$, we can choose some arbitrarily small $\epsilon>0$ such that in small spheres around $p$ and $q$ we have that $\tilde{l}(x)>\epsilon$.  We may therefore apply the mountain pass lemma to deduce that there is a critical point of $\tilde{l}$  between $p$ and $q$.  Lemma \ref{lem:orbits} then yields the result.
\end{proof}

\begin{remark}
As noted in the proof, Proposition \ref{prop:existence.geodesics} applies to the multi-Eguchi--Hanson and multi-Taub--NUT spaces in Examples  \ref{ex:MultiEH} and \ref{ex:MultiTN}. It is well-known to be false (i.e.~there are no closed geodesics) in the flat case, since geodesics are straight lines, and we shall see in Example \ref{ex:no.min.TN}, it is false for Taub--NUT given in Example \ref{ex:TN}.
Other cases of interest to which one would like to be able to test a version of Proposition \ref{prop:existence.geodesics} are when the singularities of $\phi$ are not isolated or in infinite number.  We shall explicitly see the latter possibility in the case of the Ooguri--Vafa metric in Corollary \ref{cor:OV.geodesics}, and in the case of the Anderson--Kronheimer--LeBrun metrics in Example \ref{ex:AKlB}.
\end{remark}

Suppose that we are in the setting of Example \ref{ex:MultiEH} or \ref{ex:MultiTN}. Then for all the points $p_i$, $p_j$ with $i\neq j$, Proposition \ref{prop:existence.geodesics} gives a critical point $q_{ij}$ of $l$ with 
$$l(q_{ij})= \inf_{\gamma_{ij}} \sup_{x \in \gamma_{ij}} l(x),$$
where $\gamma_{ij}$ are smooth paths connecting $p_i$ to $p_j$.
However, the points $q_{ij}$ are not necessarily distinct. In fact, as we shall see in examples later, there are cases where some points coincide. From the examples it will also be clear that the number of geodesic orbits can change even for a fixed number of singularities. This suggests that is hard to obtain a general statement regarding the exact number and location of these geodesic orbits. Nevertheless, one can prove the following result. 

\begin{proposition}[Location of the geodesic orbits]\label{prop:Conex_Hull}
	Let $X$ be an ALE or ALF hyperk\"ahler 4-manifold 
	as in Example \ref{ex:MultiEH} or \ref{ex:MultiTN}   and let $\gamma$ be a geodesic orbit of the $\U(1)$-action. Then $\mu(\gamma)$ lies in the convex hull of the points $\lbrace p_i \rbrace_{i=1}^k$. 
\end{proposition}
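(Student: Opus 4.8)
The plan is to show that $\pi(\gamma)$, which by Lemma \ref{lem:orbits} is a critical point of $\phi$, must lie in the convex hull $K=\mathrm{conv}\{p_1,\dots,p_k\}$. Suppose for contradiction that $x_0\in\mathbb{R}^3\setminus K$ is a critical point of $\phi$. Since $K$ is compact and convex, there is a closed half-space $H=\{x : \langle x - x_0, v\rangle \le 0\}$, for some unit vector $v$, containing $K$ in its interior relative to the bounding hyperplane (i.e.~$\langle p_i - x_0, v\rangle < 0$ for all $i$), so that $x_0$ lies strictly outside $H$ on the side of $v$. The idea is that the directional derivative $\partial_v\phi(x_0) = \langle \nabla\phi(x_0), v\rangle$ cannot vanish, contradicting $\nabla\phi(x_0)=0$.

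The key computation is the explicit gradient. Writing $\phi = m + \sum_{i=1}^k \frac{1}{2|x-p_i|}$ (with $m\ge 0$) as in \eqref{eq:phi.MultiTN_2}, we have
\begin{equation*}
\nabla\phi(x) = -\frac{1}{2}\sum_{i=1}^k \frac{x - p_i}{|x - p_i|^3}.
\end{equation*}
Pairing with $v$ gives
\begin{equation*}
\langle \nabla\phi(x_0), v\rangle = -\frac{1}{2}\sum_{i=1}^k \frac{\langle x_0 - p_i, v\rangle}{|x_0 - p_i|^3}.
\end{equation*}
By the choice of $v$, each numerator $\langle x_0 - p_i, v\rangle = -\langle p_i - x_0, v\rangle > 0$, and each denominator is positive, so every term in the sum is strictly negative; hence $\langle\nabla\phi(x_0), v\rangle < 0$. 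In particular $\nabla\phi(x_0)\ne 0$, so $x_0$ is not a critical point of $\phi$, a contradiction. Therefore every critical point of $\phi$ lies in $K$, and by Lemma \ref{lem:orbits} the same holds for $\pi(\gamma)$.

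I do not anticipate a serious obstacle here: the argument is a direct consequence of the explicit form of $\phi$ and the separating hyperplane theorem for the compact convex set $K$. The only point requiring a little care is the strict separation --- one should pick $v$ as an outward normal at the point of $K$ nearest to $x_0$, which guarantees $\langle x_0 - y, v\rangle > 0$ for every $y\in K$, in particular for each $p_i$; this is where the compactness and convexity of the finite point configuration's hull is used. One might also remark that the same proof shows no critical points lie on $\partial K$ unless forced by symmetry, but the statement as given only asserts containment in $K$, so strict separation from the exterior suffices.
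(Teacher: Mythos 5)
Your proof is correct. It rests on exactly the same two ingredients as the paper's argument --- Lemma \ref{lem:orbits} reducing geodesic orbits to critical points of $\phi$, and the explicit gradient $\nabla\phi=-\tfrac12\sum_i\frac{x-p_i}{|x-p_i|^3}$ as in \eqref{eq:nabla.phi} --- but you invoke convexity differently. The paper argues directly: it rewrites the equation $\nabla\phi(x)=0$ as $x=\sum_i\beta_i(x)\,p_i$ with the explicit barycentric weights $\beta_i(x)=\big(\sum_j|x-p_j|^{-3}\big)^{-1}|x-p_i|^{-3}$, which are positive and sum to $1$, so the critical point is exhibited as a convex combination of the $p_i$. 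You instead argue by contradiction via strict separation: for $x_0$ outside the hull, the outward normal $v$ at the nearest point of the hull gives $\langle x_0-p_i,v\rangle>0$ for all $i$, hence $\langle\nabla\phi(x_0),v\rangle<0$. Both are elementary and complete; your handling of the strict-separation subtlety (taking $v$ from the projection onto the compact convex hull) is the right one. The paper's version is marginally more informative, since the strictly positive weights show the critical point actually lies in the relative interior of the hull, whereas your separation argument as written only excludes the exterior (your closing aside about the boundary is not actually established by the argument, but it is also not needed for the statement). Your route has the mild virtue of making transparent that the only property used is that $\nabla\phi$ is a negative combination of the vectors $x-p_i$ with positive coefficients.
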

\begin{proof}
By Lemma \ref{lem:orbits}, geodesic orbits correspond to critical points of $\phi$ in \eqref{eq:phi.MultiTN_2}. 
We may compute
	\begin{align}
\nabla\phi&=-\frac{1}{2}\sum_{i=1}^k\sum_{j=1}^3\frac{x^j-p_i^j}{|x-p_i|^3}\frac{\partial}{\partial x^j}=-\frac{1}{2}\sum_{j=1}^3\left(\sum_{i=1}^k\frac{x^j-p_i^j}{|x-p_i|^3}\right)\frac{\partial}{\partial x^j}.\label{eq:nabla.phi}
	\end{align}
	So any critical point $x$ of $\phi$ satisfies 
	$$x= \sum_{i=1}^k \beta_i(x) p_i\quad\text{
	where}\quad 
	\beta_i(x) = \left(\sum_{j=1}^k \frac{1}{|x-p_j|^3} \right)^{-1}\frac{1 }{|x-p_i|^3}\in (0,1].$$
	The result follows.
\end{proof}

We see that Proposition \ref{prop:Conex_Hull} agrees with the conclusion in Example \ref{ex:EH.geodesics}.

\subsection{Lower bounds on the number of geodesic orbits}\label{ss:Lower_Bound}

Now that we have existence of geodesic orbits, we study the question of 
lower bounds on the number of these orbits. Before we turn to the generic case, we shall start by analyzing the special configuration in which the points are collinear.

\begin{proposition}[Collinear points]\label{prop.collinear}
	Let $X$ be an ALE or ALF hyperk\"ahler 4-manifold 
	as in Example \ref{ex:MultiEH} or \ref{ex:MultiTN} such that the $k\geq 2$ singularities of $\phi$ are collinear.  There are  precisely $k-1$ geodesic orbits, whose projections lie on a line between any pair of adjacent singular points of $\phi$. 
\end{proposition}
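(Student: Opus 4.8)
The plan is to reduce everything, via Lemma~\ref{lem:orbits}, to counting and locating the critical points of $\phi$. Since the $k$ singularities are collinear, after applying an isometry of $\mathbb{R}^3$ I may assume that they lie on the $x^1$-axis, say $p_i=(a_i,0,0)$ with $a_1<a_2<\dots<a_k$, so that $\phi=m+\sum_{i=1}^k\frac{1}{2|x-p_i|}$ for some $m\geq 0$ (with $m=0$ in the multi--Eguchi--Hanson case of Example~\ref{ex:MultiEH} and $m>0$ in the multi--Taub--NUT case of Example~\ref{ex:MultiTN}).

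The first step is to confine all critical points to the axis. From \eqref{eq:nabla.phi}, and using $p_i^2=p_i^3=0$, the second and third components of $\nabla\phi$ are $-\tfrac12 x^2\sum_i|x-p_i|^{-3}$ and $-\tfrac12 x^3\sum_i|x-p_i|^{-3}$; away from the $p_i$ the sums are strictly positive, so these vanish if and only if $x^2=x^3=0$. Thus every critical point of $\phi$ is of the form $(t,0,0)$ with $t\notin\{a_1,\dots,a_k\}$, and the $x^1$-component of $\nabla\phi$ there is $f'(t)$, where $f(t)=m+\sum_{i=1}^k\frac{1}{2|t-a_i|}$ is the restriction of $\phi$ to the axis.

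The second step is an elementary one-variable analysis of $f$ on $\mathbb{R}\setminus\{a_1,\dots,a_k\}$. One computes $f'(t)=-\tfrac12\sum_i\frac{t-a_i}{|t-a_i|^3}$ and $f''(t)=\sum_i|t-a_i|^{-3}>0$, so $f$ is strictly convex on each connected component of its domain. On the unbounded component $(a_k,\infty)$ one has $t-a_i>0$ for all $i$, hence $f'(t)=-\tfrac12\sum_i(t-a_i)^{-2}<0$ throughout; symmetrically $f'(t)>0$ throughout $(-\infty,a_1)$. So there are no critical points in the two unbounded components. On each bounded interval $(a_j,a_{j+1})$ the summand indexed by $p_j$ forces $f'(t)\to-\infty$ as $t\to a_j^+$ and the summand indexed by $p_{j+1}$ forces $f'(t)\to+\infty$ as $t\to a_{j+1}^-$ (the remaining summands staying bounded there); since $f'$ is strictly increasing on $(a_j,a_{j+1})$, it has exactly one zero in that interval. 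Hence $\phi$ has precisely $k-1$ critical points, one strictly between each pair of adjacent singularities, and Lemma~\ref{lem:orbits} turns these into exactly $k-1$ geodesic orbits projecting to the line through the $p_i$, one between each adjacent pair.

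I do not expect a serious obstacle here: each step is a direct computation. The only points that require a little care are the confinement of all critical points to the line (handled by the factorization of the transverse components of $\nabla\phi$ above) and the sign/convexity bookkeeping on the five intervals into which the axis is divided.
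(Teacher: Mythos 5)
Your proof is correct and takes essentially the same route as the paper: reduce to critical points of $\phi$ via Lemma \ref{lem:orbits}, confine them to the axis through the singularities, and use the strict convexity of $\phi$ along that line ($\phi''>0$) to obtain exactly one critical point between each adjacent pair. The only cosmetic differences are that the paper confines the critical points using Proposition \ref{prop:Conex_Hull} and gets existence from the vanishing of the length function $l$ at the singularities, whereas you confine them by factoring the transverse components of \eqref{eq:nabla.phi} and get existence from the boundary behaviour of $f'$ on each gap; both are equally valid.
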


\begin{proof}
	We may suppose, by changing coordinates, that the collinear singular points lie on the $x^1$-axis, and are ordered so that $p_1>\ldots>p_k$.  By Proposition \ref{prop:Conex_Hull} the critical points of $\phi$ lie on the line between $p_1$ and $p_k$.  Since $l$ in \eqref{eq:length} vanishes at each $p_i$ and is non-constant along the line between $p_i$ and $p_{i+1}$ for all $i$, $l$ must have a strict local maximum (and thus $\phi$ has a strict local minimum) at some point between $p_i$ and $p_{i+1}$.  Restricting $\phi$ to the $x^1$-axis and letting $x=x^1$ we see that
	$$\phi''(x)=\sum_{i=1}^k\frac{1}{|x-p_i|^3}>0,$$
	so the only critical points of $\phi$ are local minima.  If $\phi$ had two local minima between $p_i$ and $p_{i+1}$, it must have a local maximum between these local minima, which is a contradiction.  
\end{proof}

We now observe that the arguments for Proposition \ref{prop:Conex_Hull} and \ref{prop.collinear} clearly extend to the Ooguri--Vafa metric described in Example \ref{ex:OV}.  In fact, one can see explicitly for $\phi$ in \eqref{eq:phi.OV} that its only critical points are at $x=(\pi+2k\pi,0,0)$ for $k\in\mathbb{Z}$.  We deduce the following.

\begin{corollary}\label{cor:OV.geodesics}
	There is a unique geodesic orbit in the Ooguri--Vafa metric defined in Example \ref{ex:OV}, which is $\mu^{-1}(-1,0,0)$. 
\end{corollary}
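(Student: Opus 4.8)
The plan is to apply Lemma \ref{lem:orbits} and reduce the problem to locating the critical points of the Ooguri--Vafa potential $\phi$ in \eqref{eq:phi.OV}, restricted to the domain $\mathbb{S}^1 \times B$ on which it is defined and positive. First I would argue, as in the proofs of Propositions \ref{prop:Conex_Hull} and \ref{prop.collinear}, that all critical points must lie on the axis of collinearity, i.e.~the $x^1$-axis with $x^2 = x^3 = 0$. The point is that $\phi = m + \frac{1}{2r} + \frac{1}{2}\sum_{k \neq 0}\big(|x - (2\pi k,0,0)|^{-1} - (2\pi k)^{-1}\big)$ is a sum of (shifted) Newtonian potentials with all poles on the $x^1$-axis, so by the computation \eqref{eq:nabla.phi} (extended to the convergent infinite sum) the $x^2$- and $x^3$-components of $\nabla\phi$ are strictly signed off the axis: each term $-\tfrac12 (x^j - p_i^j)/|x-p_i|^3$ for $j = 2,3$ has the sign of $-x^j$, so $\partial\phi/\partial x^j$ cannot vanish unless $x^j = 0$. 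One has to check that termwise differentiation of the infinite sum is legitimate, which follows from the uniform convergence of $\sum_k \nabla\big(|x-(2\pi k,0,0)|^{-1} - (2\pi k)^{-1}\big)$ on compact subsets away from the poles — this is exactly the estimate $|p_i - p_0|^{-1}$-type summability already invoked in Example \ref{ex:AKL}, and here it is the classical Weierstrass-type bound $|x - (2\pi k,0,0)|^{-1} - (2\pi k)^{-1} = O(k^{-2})$ with derivative bounds of the same order.

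Having reduced to the $x^1$-axis, write $x = x^1$ and differentiate twice. As in the proof of Proposition \ref{prop.collinear}, each Newtonian term contributes a strictly positive second derivative: $\frac{d^2}{dx^2}|x - 2\pi k|^{-1} = 2|x-2\pi k|^{-3} > 0$, and again termwise differentiation is justified by uniform convergence on compact subsets of $\mathbb{R} \setminus 2\pi\mathbb{Z}$ of the relevant series. Hence
\begin{equation*}
\phi''(x) = \sum_{k \in \mathbb{Z}} \frac{1}{|x - 2\pi k|^3} > 0
\end{equation*}
on each open interval $(2\pi k, 2\pi(k+1))$, so $\phi$ is strictly convex there and has at most one critical point per interval. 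Since the metric is $2\pi$-periodic it suffices to work on $(-\pi, \pi)$ restricted to the ball $B$; by the symmetry $x^1 \mapsto -x^1$ of $\phi$ (the pole at $0$ is symmetric and the poles at $\pm 2\pi k$ are interchanged), $\phi$ is even in $x^1$, so $x^1 = 0$ is a critical point — but that is the puncture, where the metric collapses and there is no orbit. By strict convexity of $\phi$ on $(0,\pi)$ (here we need $(0,\pi) \cap B$; the relevant interval is the connected component of the axis inside $\mathbb{S}^1 \times B$ not containing the puncture) combined with the boundary behaviour $\phi \to +\infty$ as $x^1 \to 0^+$ and $\phi$ bounded (indeed its derivative controlled) near the other end, $\phi$ has exactly one interior minimum, which by periodicity/evenness sits at $x^1 = \pi$, i.e.~over the point $(-1,0,0) \in \mathbb{S}^1 \subseteq \mathbb{C}$ in the notation of Example \ref{ex:OV}. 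By Lemma \ref{lem:orbits} this is the unique geodesic orbit, and it equals $\pi^{-1}(-1,0,0)$.

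The main obstacle I anticipate is not the convexity argument — that is essentially a periodic version of Proposition \ref{prop.collinear} — but rather the analytic bookkeeping needed to differentiate the infinite series \eqref{eq:phi.OV} term by term and to handle the fact that $\phi$ is only defined and positive on $\mathbb{S}^1 \times B$ rather than all of $\mathbb{R}^3$. One must confirm that the domain restriction does not introduce spurious boundary critical points and that the "mountain pass between $0$ and $\pi$" picture survives on the truncated axis $(0,\pi)\cap B$; concretely, one wants $\phi'(x^1) < 0$ near $x^1 = 0^+$ (clear, from the dominant $\tfrac12 r^{-1}$ term) and the minimum to be attained strictly inside, which is guaranteed once we know $x^1 = \pi$ lies in $B$ — which is part of the setup of Example \ref{ex:OV} since $B$ is a ball in the $(x^2,x^3)$-plane and the whole $x^1$-circle is present. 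Alternatively, and perhaps more cleanly, one can simply verify directly from the explicit formula that $x = (\pi + 2\pi k, 0, 0)$ solves $\nabla\phi = 0$ using the reflection symmetry of the pole configuration about $x^1 = \pi$, and then use the global strict convexity on each axis interval to rule out any other critical point; this is the route the statement's phrasing ("one can see explicitly") suggests, and it sidesteps the min-max machinery entirely.
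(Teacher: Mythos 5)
Your proposal is correct and follows essentially the paper's route: the paper simply observes that the arguments of Propositions \ref{prop:Conex_Hull} and \ref{prop.collinear} extend to the Ooguri--Vafa potential, so that the only critical points of $\phi$ in \eqref{eq:phi.OV} are at $(\pi+2k\pi,0,0)$, which in $\mathbb{S}^1\times B$ is the single point $(-1,0,0)$, and Lemma \ref{lem:orbits} concludes. Your fleshed-out verification (sign of $\partial\phi/\partial x^j$ off the axis, strict convexity of the axis restriction, reflection symmetry about $x^1=\pi$, and termwise differentiation of the series) is exactly the explicit check the paper leaves to the reader, and the cleaner symmetry-plus-convexity argument in your final paragraph is the one to keep, since in the earlier passage the claimed minimum at $x^1=\pi$ is an endpoint of the interval $(0,\pi)$ you were working on rather than an interior point of it (work on $(0,2\pi)$, where $\phi\to+\infty$ at both ends, instead).
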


Proposition \ref{prop.collinear} shows that when $\phi$ has $k$ singularities arranged in a line there are exactly $k-1$ geodesic orbits, and one may ask if this is a general phenomena independently of the location of the singularities. As we shall now show, by exploring the correspondence between geodesic orbits and critical points of $\phi$, Morse theory yields such a lower bound for the number of geodesic orbits.

\begin{theorem}\label{thm:Geodesic_Orbits_Lower_Bound}
	Let $X$ be an ALE or ALF hyperk\"ahler 4-manifold 
	as in Example \ref{ex:MultiEH} or \ref{ex:MultiTN} with $\phi$ having $k\geq 2$ singularities. For the generic arrangement of these singularities, there are at least $k-1$ geodesic orbits, each of index at least $1$.
\end{theorem}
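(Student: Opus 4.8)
The plan is to exploit the correspondence from Lemma \ref{lem:orbits} between geodesic orbits and critical points of $\phi$, and apply Morse theory to $\phi$ (or rather to the coercive function $\tilde{l} = (\phi-m)^{-1}$ introduced in the proof of Proposition \ref{prop:existence.geodesics}) on $\mathbb{R}^3 \setminus \{p_1,\dots,p_k\}$. The key point is to compute the topology of the space on which $\tilde{l}$ is defined, or more precisely the homotopy type of the sublevel sets near the singularities, and then to run a Morse-theoretic count.

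First I would set things up carefully near the singular points. Since each $p_i$ is an isolated singularity of $\phi$ with $\phi \to +\infty$ there, the function $\tilde{l}$ extends continuously by $\tilde{l}(p_i) = 0$, giving a continuous proper function $\tilde{l} \colon \mathbb{R}^3 \to \mathbb{R}_{\geq 0}$ which is smooth away from the $p_i$ and attains its minimum value $0$ exactly at the $k$ points $p_i$. The idea is that $\tilde{l}$ behaves like a function with $k$ prescribed minima, so its Morse complex must have enough index-$1$ critical points to "connect" these minima; generically there will be at least $k-1$ of them. To make this precise, for small $\varepsilon > 0$ consider the sublevel set $\tilde{l}^{-1}([0,\varepsilon])$: near each $p_i$ this is a small ball (since $\tilde{l}$ is continuous, proper, and positive off the $p_i$), so for $\varepsilon$ small the sublevel set is a disjoint union of $k$ contractible pieces. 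On the other hand, by coercivity, for $R$ large the sublevel set $\tilde{l}^{-1}([0,R])$ deformation retracts onto... here one must be slightly careful, because $\mathbb{R}^3$ is contractible but $\tilde{l}$ is not smooth at the $p_i$; the cleanest route is to observe that $\tilde{l}^{-1}([0,R])$ is a large ball in $\mathbb{R}^3$, hence connected. So as $\varepsilon$ grows from small to $R$, the number of connected components of the sublevel set drops from $k$ to $1$. Each such drop must occur at a critical value, and at a nondegenerate (index-$1$) critical point passing through a single critical level the number of components can decrease by at most one (Morse handle attachment of a $1$-handle). Hence there are at least $k-1$ index-$1$ critical points.

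The genericity hypothesis enters to guarantee that $\phi$ is a Morse function with critical points of well-defined index: I would show (or cite) that for a generic choice of $(p_1,\dots,p_k)$ the function $\phi$ restricted to $\mathbb{R}^3 \setminus \{p_i\}$ is Morse. This is a standard transversality/Sard argument — the space of configurations is finite-dimensional, and $\phi$ depends real-analytically on the $p_i$, so degeneracy of a critical point is a codimension-$\geq 1$ condition; one should check it is not identically satisfied, which follows from the collinear case of Proposition \ref{prop.collinear} where all $k-1$ critical points are nondegenerate local minima along the line, together with a transverse-direction Hessian computation. Once $\phi$ is Morse, each critical point has index $0$, $1$, $2$, or $3$. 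To upgrade "index $1$" to "index at least $1$" for \emph{all} geodesic orbits, I would argue $\phi$ has no local maxima: by the maximum principle, since $\phi$ is harmonic on $\mathbb{R}^3 \setminus \{p_i\}$ (away from the singularities), it has no interior local maximum, so no critical point of index $3$; one then rules out index $0$ (local minima other than the "missing" ones) — wait, this needs thought, since along the collinear line the critical points \emph{were} local minima. Rather: a critical point of $\phi$ of index $0$ is a strict local minimum; but near such a point $\tilde l$ has a strict local maximum, and one uses that $\tilde l$ is subharmonic-type... Actually the cleanest statement: harmonicity of $\phi$ forces every critical point to have Hessian with both signs present (trace zero, nonzero by Morse), i.e.\ index $1$ or $2$, hence index at least $1$. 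That handles all geodesic orbits at once.

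The main obstacle I anticipate is twofold. First, the non-smoothness of $\tilde l$ at the points $p_i$ means one cannot blindly invoke classical Morse theory on a manifold; the remedy is to work on the open manifold $\mathbb{R}^3\setminus\{p_i\}$ and phrase everything in terms of how the homotopy type of sublevel sets $\{\tilde l \le c\}$ changes as $c$ increases through critical values, using that near each puncture the sublevel sets are (for small $c$) small punctured balls which are \emph{not} contractible — so I actually want to count \emph{relative} to these ends, or equivalently use the reduced homology / number of "ends absorbed." The honest bookkeeping device is: $\dim H_0$ of $\{\tilde l \le c\}$ computed with the $k$ punctures filled back in (i.e.\ of the sublevel set in $\mathbb{R}^3$) goes from $k$ to $1$, and passing a single index-$1$ critical point changes $\dim H_0$ by at most $1$ — this is exactly the half-handle inequality $b_0(\text{after}) \ge b_0(\text{before}) - 1$. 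Second, rigorously establishing genericity of the Morse condition for this specific one-parameter-type family of harmonic functions requires a real-analyticity + parametric transversality argument that should be written out carefully but is not conceptually deep. Modulo these points, the Morse-theoretic count $k - 1$ and the index bound follow as sketched.
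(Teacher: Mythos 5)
Your overall strategy is sound and reaches the paper's bound by a genuinely different counting mechanism. The paper also reduces to critical points of $\phi$ via Lemma \ref{lem:orbits} and invokes \cite[Theorem 6.2]{Morse2014} for the generic Morse property, but instead of tracking connected components of sublevel sets it compactifies: it modifies $\phi$ near each singularity so that each $p_i$ becomes a local maximum, regards the result as a Morse function on $S^3$ with a unique minimum at infinity, and uses $\chi(S^3)=0$ to get the identity $m_2-m_1=k-1$, where $m_1,m_2$ count critical points of $\phi$ of index $1,2$. Your route proves instead the weak Morse inequality for $b_0$ of the sublevel sets of $\tilde l=(\phi-m)^{-1}$ (with the punctures filled in), giving at least $k-1$ index-one critical points of $\tilde l$, i.e.\ $m_2\geq k-1$; this suffices for the count. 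The paper's version yields the sharper relation $m_2-m_1=k-1$ and sidesteps the non-smooth minima entirely, while yours avoids modifying $\phi$ at the cost of the bookkeeping near the $p_i$ and at infinity that you correctly flag (both are manageable: near $p_i$ one has $\tilde l\sim 2|x-p_i|$ with no nearby critical points, and since all critical points lie in the convex hull of the $p_i$ by Proposition \ref{prop:Conex_Hull} there are finitely many of them, so the compact sublevel sets beyond the last critical value are mutually homotopy equivalent and exhaust $\mathbb{R}^3$, hence connected). For genericity you could simply cite \cite[Theorem 6.2]{Morse2014}, as the paper does, rather than redoing the transversality argument.

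The one step that needs repair is the index claim. The index in the statement is that of the geodesic orbit $\pi^{-1}(x)$ as a critical point of length (equivalently, as a minimal submanifold), not the index of $x$ as a critical point of $\phi$, and your ``index $1$ or $2$, hence index at least $1$'' elides the bridge between the two. The correct comparison, which the paper states explicitly, is $\ind(\pi^{-1}(x))\geq 3-\ind_x\phi$: a direction at $x$ in which $\phi$ \emph{increases} is one in which the orbit length $l=\phi^{-1/2}$ decreases, and translating the orbit in that direction gives a length-decreasing variation. So the harmonicity input you need is $\ind_x\phi\leq 2$ (the Hessian cannot be negative semi-definite at a Morse critical point), not $\ind_x\phi\geq 1$. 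Since trace-freeness of the Hessian gives both signs, your observation does contain the required fact, but as written the implication points the wrong way; with this one-line correction the index bound, and hence the theorem, follows along the lines you propose.
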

\begin{proof}
	Let $p_1, \ldots , p_k$ be the singularities of $\phi$ and $B_r(p_i)$ the Euclidean ball in $\mathbb{R}^3$ of radius $r$ centred around $p_i$. As $\phi \geq m$ is harmonic, all its critical points are of index either $1$ or $2$, and we shall denote the number of such points by $m_1$ and $m_2$ respectively. The length of $\mu^{-1}(x)$ is given by $l(x)=\phi(x)^{-1/2}$ whose index at a critical point is $3$ minus the index of $\phi$, and as these variations give rise to variations of the corresponding geodesic we find that
	$$\ind ({\mu^{-1}(x)}) \geq 3 - \ind_x \phi .$$
	(Here, we used $\ind ({\mu^{-1}(x)})$ for the index of $\mu^{-1}(x)$ as a minimal submanifold.) 	Thus, 
	\begin{equation}\label{eq:Index_Bounds}
	\begin{aligned}
	\# \lbrace x \ | \ind(\mu^{-1}(x)) \geq 2 \rbrace & \geq \# \lbrace x \ | \ind_x \phi \leq 1 \rbrace = m_1, \\
	\# \lbrace x \ | \ind(\mu^{-1}(x)) \geq 1 \rbrace & \geq \# \lbrace x \ | \ind_x \phi \leq 2 \rbrace = m_1 + m_2.
	\end{aligned}
	\end{equation}
	To apply Morse theory and obtain bounds on $m_1$ and $m_2$ we must know that $\phi$ is Morse: this is true for the generic arrangement of singularities of $\phi$ \cite[Theorem 6.2]{Morse2014}.\footnote{In fact, \cite[Theorem 6.2]{Morse2014} proves that $\phi$ can be made Morse by generically perturbing only one  singularity.}
	
	Recall that $\phi \geq m$ and converges to $m$ at infinity so we may regard it as a (singular) function on the compactification $S^3$ of $\mathbb{R}^3$ with a unique local minimum at $\infty \in S^3$. Furthermore, as $\phi$ converges to $+\infty$ at the singularities we may modify $\phi$ in a neighbourhood $U_i$ of each singularity so that, in each such $U_i$, it has a unique critical point at $p_i$,  which is a maximum. Thus, we obtain a smooth Morse function $\tilde{\phi}:S^3 \to \mathbb{R}$ which agrees with $\phi$ away from  each $U_i$, has a unique global minimum at $\infty$, local maxima at each $p_i$ and the remaining critical points are those of $\phi$. The Euler characteristic of $S^3$ vanishes and so we find that $-k+m_2-m_1+1=0$, so that
	$$m_2-m_1=k-1.$$
The result follows from \eqref{eq:Index_Bounds}.
\end{proof}

 Proposition \ref{prop.collinear} shows that collinear singularities for $\phi$  lead to  the minimum number of geodesic orbits from Theorem \ref{thm:Geodesic_Orbits_Lower_Bound}, so one may ask if there are cases for which the number of geodesic orbits is greater than this minimum. The answer to this question is positive as we  now show.

\begin{proposition}\label{prop:triangle}
	Suppose that $X$ is an ALE or ALF hyperk\"ahler 4-manifold as in Example \ref{ex:MultiEH} or \ref{ex:MultiTN} with $k=3$, and suppose that the points $\lbrace p_1 , p_2 , p_3 \rbrace $ lie in an equilateral triangle. Then, $X$ admits $4$ geodesic orbits, one of which has index at least $2$, and the others have index at least $1$.
\end{proposition}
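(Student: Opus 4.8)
The plan is to apply the Morse-theoretic count from Theorem \ref{thm:Geodesic_Orbits_Lower_Bound} together with a symmetry argument exploiting the dihedral symmetry $D_3$ of the equilateral triangle configuration. First I would set up coordinates so that $p_1,p_2,p_3$ are the vertices of an equilateral triangle centred at the origin, and observe that $\phi$ in \eqref{eq:phi.MultiTN_2} is invariant under the group $D_3$ generated by the three reflections across the perpendicular bisectors of the sides and the rotation by $2\pi/3$ about the centroid. Since a critical point of $\phi$ can be dragged by the reflection that fixes, say, the line through $p_1$ and the midpoint of $p_2p_3$, and by Proposition \ref{prop:Conex_Hull} all critical points lie in the convex hull of the $p_i$, I would analyse the restriction of $\phi$ to the three axes of symmetry and to the centroid.

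The key steps are as follows. (1) Show that the centroid $x=0$ is a critical point of $\phi$: this follows immediately from \eqref{eq:nabla.phi} since the three unit vectors $\frac{-p_i}{|p_i|^3}$ sum to zero by the $2\pi/3$ rotational symmetry. (2) Determine the index of $\phi$ at the centroid. By symmetry the Hessian of $\phi$ at $0$ is a multiple of the identity on the plane of the triangle, and one computes its sign; I expect $0$ to be a local minimum of $\phi$ (equivalently a local maximum of $l$, hence a geodesic orbit of index at least $3-0=3$, so in particular index at least $2$). (3) On each of the three symmetry axes, the argument of Proposition \ref{prop.collinear} applies verbatim to the one-variable restriction — between the vertex $p_i$ and the opposite edge midpoint $\phi$ restricted to the axis is strictly convex (the second derivative is a sum of terms $|x-p_j|^{-3}>0$), vanishing-at-infinity-of-$l$ forces a strict interior minimum of $\phi$ on the segment from $p_i$ toward the centroid, and strict convexity forces it to be unique; this produces, on each of the three axes, one further critical point of $\phi$ lying strictly between $p_i$ and $0$. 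These three are pairwise distinct (they lie on three distinct rays from the origin) and distinct from the centroid. Each is a saddle, hence of index $1$ for $\phi$, giving a geodesic orbit of index at least $3-1=2$; but one checks more carefully that as critical points in the full plane they are index-$1$ saddles of $l$, i.e. geodesic orbits of index at least $1$. (4) Count: we have produced $4$ critical points of $\phi$ (the centroid plus three on the axes), matching $m_2-m_1=k-1=2$ together with the index bounds, and by genericity/Morse-ness of this symmetric configuration (or a limiting argument from Theorem \ref{thm:Geodesic_Orbits_Lower_Bound}) these are all of them. Invoking Lemma \ref{lem:orbits} to pass from critical points of $\phi$ to geodesic orbits, and the index relation $\ind(\pi^{-1}(x))\geq 3-\ind_x\phi$ from the proof of Theorem \ref{thm:Geodesic_Orbits_Lower_Bound}, gives the four geodesic orbits with the stated index bounds.

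The main obstacle I anticipate is Step (2)–(3): pinning down the \emph{index} of each critical point of $\phi$ precisely enough to conclude the index bounds on the geodesic orbits, rather than just their existence. For the centroid one must verify that the Hessian of $\phi$ is negative-definite-free in the appropriate sense — concretely, that $0$ is a local minimum of $\phi$ and not a saddle — which requires a genuine (if short) Hessian computation using the $D_3$-symmetry to reduce to a scalar. For the three axial critical points, one needs to rule out that they are local maxima or higher-index for $\phi$ restricted to the full plane: strict convexity along the axis gives one positive Hessian direction, and one must check the transverse direction, again using the reflection symmetry to diagonalise the Hessian. A secondary subtlety is justifying that there are no \emph{further} critical points: the cleanest route is to note that the configuration is Morse (or to perturb within the $D_3$-symmetric family and use a degree/Euler-characteristic count as in Theorem \ref{thm:Geodesic_Orbits_Lower_Bound}, where $m_2-m_1=2$, together with the fact that we have already exhibited one index-$2$ and three index-$1$ critical points of $\phi$, forcing $m_2=1$, $m_1=3$ and hence no others).
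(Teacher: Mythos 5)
Your overall strategy (reduce to the symmetry axes, exhibit the centroid plus three axial critical points of $\phi$, then use Lemma \ref{lem:orbits} and the inequality $\ind(\pi^{-1}(x))\geq 3-\ind_x\phi$ from \eqref{eq:Index_Bounds}) is the same as the paper's, but the computational core is wrong in ways that would sink the index claims. First, the centroid cannot be a local minimum of $\phi$: $\phi$ is a non-constant harmonic function, so (as noted in the proof of Theorem \ref{thm:Geodesic_Orbits_Lower_Bound}) every critical point has index $1$ or $2$. The $D_3$-symmetry does force the in-plane Hessian at the centroid to be a positive multiple of the identity, but harmonicity then makes the out-of-plane second derivative negative, so the centroid has $\ind\phi=1$ and the corresponding orbit has index at least $2$; your ``index $0$, hence orbit index $\geq 3$'' is impossible. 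Second, step (3) fails: $\phi$ restricted to a symmetry axis is \emph{not} convex, because the computation $\sum_j|x-p_j|^{-3}>0$ from Proposition \ref{prop.collinear} only applies to singularities lying on the axis, whereas the two off-axis charges contribute along-axis second derivatives of the form $(2t^2-c^2)(t^2+c^2)^{-5/2}$, which are negative near the foot of the perpendicular. Indeed, strict convexity along the axis would contradict your own configuration, since the centroid lies on every axis and you want a second axial critical point. The actual extra critical point $q$ on each axis lies between the centroid and the midpoint of the opposite side (not between the vertex and the centroid), and there $\del^2\phi<0$ both along the axis and out of the plane, so $\ind_q\phi=2$ and the three corresponding orbits get index at least $1$. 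Your index attributions are thus essentially reversed, and internally inconsistent: step (2) makes the centroid index $0$, step (4) calls it index $2$, and your proposed count $m_2=1$, $m_1=3$ violates the relation $m_2-m_1=k-1=2$ (the correct values are $m_1=1$, $m_2=3$).

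To repair the argument you need the genuine Hessian analysis rather than a convexity shortcut: parametrize the axis, reduce the critical-point equation to a single scalar function (in the paper, $f(\mu_2)=(\mu_2^2+3a^2)^{3/2}-2\mu_2(\mu_2-3a)^2$), locate its two roots $\mu_2=a$ (the centroid) and $\mu_2=b\in(0,a)$, read off the sign of $\del^2_{\mu_2}\phi$ from the sign of $f'$ at each root, and combine this with the explicit negativity of $\del^2_{\mu_3}\phi$ and harmonicity to conclude $\ind\phi=1$ at the centroid and $\ind\phi=2$ at the three rotated copies of $q$. Only with these indices do Lemma \ref{lem:orbits} and \eqref{eq:Index_Bounds} yield the statement: one geodesic orbit of index at least $2$ (over the centroid) and three of index at least $1$.
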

\begin{proof}
	With no loss of generality we may suppose  $p_1=(-\sqrt{3}a, 0, 0)$, $p_2=(\sqrt{3}a, 0, 0)$ and $p_3=(0,3a,0)$ for some $a>0$. We write $\phi=m + \sum_{i=1}^3 \frac{1}{r_i}$ with $r_i=|x-p_i|$ for $i=1,2,3$. Then, 
	\begin{align}
	-\frac{\del \phi}{\del \mu_1} & = \frac{\mu_1+\sqrt{3}a}{r_1^3} + \frac{\mu_1-\sqrt{3}a}{r_2^3} + \frac{\mu_1}{r_3^3}, \label{eq:triangle.1}\\
	-\frac{\del \phi}{\del \mu_2} & 
	= \frac{\mu_2}{r_1^3} + \frac{\mu_2}{r_2^3} + \frac{\mu_2-3a}{r_3^3},  \label{eq:triangle.2}\\
	-\frac{\del \phi}{\del \mu_3} & 
	= \frac{\mu_3}{r_1^3} + \frac{\mu_3}{r_2^3} + \frac{\mu_3}{r_3^3}.\label{eq:triangle.3}
	\end{align}
Equation \eqref{eq:triangle.3} shows that any critical point lies in the $\mu_3=0$ plane. Furthermore, when $\mu_1=0$ we have $\del_{\mu_1}\phi=0$ by \eqref{eq:triangle.1}, so we look for critical points with $\mu_1=0=\mu_3$, i.e.~on the $\mu_2$-axis. Substituting $\mu_1=\mu_3=0$ in \eqref{eq:triangle.2} shows that $(0,\mu_2,0)$ is a critical point if and only if
	$$f(\mu_2):=(\mu_2^2+3a^2)^{3/2}-2\mu_2 (\mu_2-3a)^2=0.$$
	Clearly $f(a)=0$, and given that $f'(a)>0$ while $f(0)=3^{3/2}a^3>0$ the intermediate value theorem yields that there is another zero of $f$ at some point $b \in (0,a)$. Furthermore, we  compute that
	$$\sign f''(\mu_2)= \sign \left( 2(2a - \mu_2)\sqrt{\mu_2^2+3a^2} + 2\mu_2^2 + 3a^2 \right) >0,$$
	for $\mu_2 \leq a$. Thus, we find that $f'$ is strictly increasing in $(0,a)$ and given that $f(b)=0=f(a)$, 
	we must have $f'(b)<0$. Furthermore, along the $\mu_2$-axis we find that all mixed second partial derivatives of $\phi$ vanish, and at the critical points  $p=(0,a,0)$ and $q=(0,b,0)$  we have
	$$\del^2_{\mu_2} \phi = \frac{f'(\mu_2)}{(\mu_2-3a)^2(\mu_2^2+3a^2)^{3/2}},$$
	where $\mu_2\in\{a,b\}$. Hence, $(\del^2_{\mu_2} \phi)_{q}<0$ while $(\del^2_{\mu_2} \phi)_p>0$. In the same way we compute
	$$\del_{\mu_3}^2\phi = - \frac{(3a^2+\mu_2^2)^{3/2} + 2(3a-\mu_2)^3}{(3a^2+\mu_2^2)^{3/2}(3a-\mu_2)^3}<0,$$
	for $\mu_2 \in (0,a)$. In particular, we deduce that $(\del_{\mu_2}^2\phi)_{q}<0$ and $(\del_{\mu_3}^2\phi)_{q}<0$ and given that $\phi$ is harmonic we have $(\del_{\mu_1}^2\phi)_q>0$. In the same way we have $(\del_{\mu_2}^2\phi)_p>0$, $(\del_{\mu_3}^2\phi)_p<0$ and by direct computation we find $(\del_{\mu_1}^2\phi)_p=3/16a^3>0$. From this we have
	$$\ind_p\phi=1, \ \ \ind_{q}\phi=2.$$
	Now, notice that the centre of the triangle with vertices $p_1,p_2,p_3$ is  $p$. Thus, $\phi$ is invariant under rotation by $\pi/3$ around $p$, and so it   has critical points at the orbits of $p$ and $q$ under this rotation. Given that $p$ is fixed and the orbit of $q$ is $3$ points, the result follows from the inequalities \eqref{eq:Index_Bounds}.
\end{proof}

\noindent In the setting of Proposition \ref{prop:triangle} we plot in Figure \ref{fig2} the level sets of the length function $l$ in \eqref{eq:length}, whose critical points correspond to geodesic orbits, so the position of the 4 orbits becomes clear. 

\begin{figure}[h]\label{fig:2}
	\centering
	\includegraphics[scale=0.5]{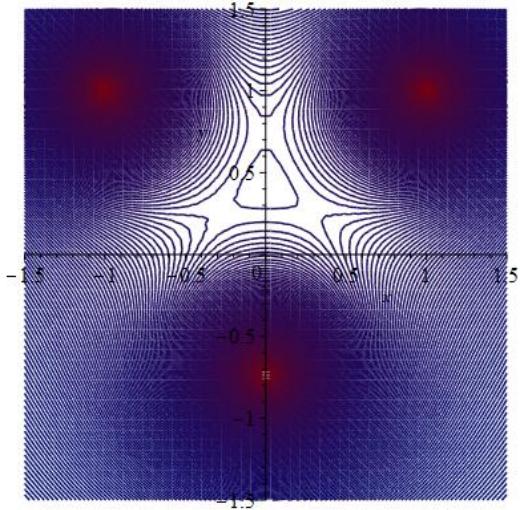}
	\caption{\label{fig2} Level sets of length $l$ for the configuration in Proposition \ref{prop:triangle}.}
\end{figure}

Notice that Proposition \ref{prop:triangle} and its proof yields the following immediate corollary. Recall that any straight line between singular points of $\phi$ defines a 2-sphere in $X$. We shall see below that such a 2-sphere is area-minimizing.

\begin{corollary}\label{cor:triangle}
	Let $X$ be an ALE or ALF hyperk\"ahler 4-manifold 
	as in Example \ref{ex:MultiEH} or \ref{ex:MultiTN} with 3 singularities  $p_1,p_2,p_3$ lying in an equilateral triangle. 
The geodesic orbits in $X$ 
 cannot lie on the area-minimizing 2-spheres whose projections are straight lines joining the singularities. 
\end{corollary}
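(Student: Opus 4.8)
The plan is to use Lemma \ref{lem:orbits} to convert the statement into an elementary non-vanishing assertion for $\nabla\phi$ along three line segments. By Lemma \ref{lem:orbits}, the geodesic orbits of the $\U(1)$-action are precisely the circles $\pi^{-1}(x)$ with $x$ a critical point of $\phi$; since the $2$-sphere attached to an edge $[p_i,p_j]$ is, by construction, the preimage $\mu^{-1}([p_i,p_j])$, such a circle lies on that $2$-sphere if and only if $x\in[p_i,p_j]$ (endpoints being excluded, as they are singularities rather than critical points of $\phi$). Hence it suffices to show that $\phi$ has no critical point lying on any of the three edges $[p_1,p_2]$, $[p_2,p_3]$, $[p_3,p_1]$ of the equilateral triangle.

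To see this I would work in the coordinates chosen in the proof of Proposition \ref{prop:triangle}, so that $p_1=(-\sqrt3 a,0,0)$, $p_2=(\sqrt3 a,0,0)$, $p_3=(0,3a,0)$ and the edge $[p_1,p_2]$ lies on the line $\mu_2=\mu_3=0$. Along this line the expression \eqref{eq:triangle.2} collapses: its first two terms vanish because $\mu_2=0$, leaving $-\partial\phi/\partial\mu_2=-3a/r_3^{3}$, which is finite and strictly negative along the whole edge because $p_3$ is not on the line through $p_1$ and $p_2$, so that $r_3>0$ there. Thus $\nabla\phi\neq0$ on $[p_1,p_2]$, and no geodesic orbit projects into this edge. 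Note that this argument only involves $\nabla\bigl(\sum_i|x-p_i|^{-1}\bigr)$, hence is insensitive to the value of $m$, and so covers the multi-Eguchi--Hanson ($m=0$) and multi-Taub--NUT ($m>0$) cases of Examples \ref{ex:MultiEH} and \ref{ex:MultiTN} at once. Finally, $\phi$ is invariant under the order-three rotation about the centroid of the triangle, which cyclically permutes the three edges, so the conclusion for $[p_1,p_2]$ transfers to $[p_2,p_3]$ and $[p_3,p_1]$, completing the proof.

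I do not expect a genuine obstacle here: the only thing to get right is to test the component of $\nabla\phi$ transverse to the edge but within the plane of the triangle, and to notice that on that edge this component reduces to a single, manifestly nonzero term. An alternative route -- presumably what ``Proposition \ref{prop:triangle} and its proof'' refers to -- would be to use the explicit critical points found in that proof: the centroid $p=(0,a,0)$, interior to the triangle, and the three rotates of $q=(0,b,0)$ with $b\in(0,a)$, each lying on the open part of a median strictly between the centroid and the midpoint of the opposite edge, hence on none of the edges. That alternative, however, would also require knowing these are all the critical points of $\phi$, whereas the symmetry argument above needs no such information, so I would present the latter.
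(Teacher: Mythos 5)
Your proof is correct, and it uses exactly the ingredients the paper has in mind: Lemma \ref{lem:orbits} to reduce to critical points of $\phi$, the explicit gradient formulae \eqref{eq:triangle.1}--\eqref{eq:triangle.3} from the proof of Proposition \ref{prop:triangle}, and the order-three rotational symmetry of the configuration. The paper simply declares the corollary ``immediate'' from Proposition \ref{prop:triangle} and its proof, which most naturally reads as ``the geodesic orbits found there sit at the centroid and on the medians, hence off the edges''; as you note, that reading has a small gap, since Proposition \ref{prop:triangle} only exhibits four geodesic orbits and does not classify all critical points of $\phi$. Your argument --- that on the open edge $[p_1,p_2]$ the component $\partial\phi/\partial\mu_2$ reduces to the single term $3a/r_3^3>0$ (with $r_3>0$ there since $p_3$ is off that line), so $\nabla\phi$ cannot vanish on any edge, and then transporting this to the other two edges by the $2\pi/3$ rotation about the centroid, under which $\phi$ is invariant --- rules out \emph{every} geodesic orbit on the spheres, independently of $m$ and of any completeness claim about the critical set. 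So your route is the cleaner way to make the paper's ``immediate'' deduction rigorous, at no extra cost; the only point to keep explicit (which you did) is that an orbit $\pi^{-1}(x)$ lies on $\mu^{-1}([p_i,p_j])$ precisely when $x$ lies on the open segment, the endpoints being fixed points rather than orbits.
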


\begin{remark}[Non-totally geodesic 2-spheres]\label{rmk:non.tot.geod} Suppose we are in the setting of Corollary \ref{cor:triangle}.  Let $\Sigma_1,\Sigma_2,\Sigma_3$ denote the three 2-spheres obtained via the edges of the triangle defined by $p_1,p_2,p_3$.    
	By Lemmas \ref{lem:min.surf} and \ref{lem:min.calibrated} below, each $\Sigma_i$ will be area-minimizing. 
	Since $\Sigma_i$ is a sphere with a $\U(1)$-invariant metric, it will admit a $\U(1)$-invariant closed geodesic.  This cannot be a geodesic in $X$ by Corollary \ref{cor:triangle} and so $\Sigma_i$ is not totally geodesic.
	
	Moreover, the squared distance function to the union $\Sigma=\cup_{i=1}^3\Sigma_i$ cannot be convex on $X\setminus\Sigma$ since otherwise no closed geodesics on $X\setminus\Sigma$ could exist, contradicting Corollary \ref{cor:triangle}.  We can also move the vertices of the triangle in $\mathbb{R}^3$ so that a geodesic orbit in $X$ is arbitrarily close to, say, $\Sigma_i$.  Thus, the neighbourhood of $\Sigma_i$  in which the squared distance function to $\Sigma_i$ could be convex can be made arbitrarily small by varying the hyperk\"ahler metric on the fixed space $X$.
	
	These behaviours are in marked contrast to the Eguchi--Hanson space in Example \ref{ex:Eguchi-Hanson} as discussed in Remark \ref{rmk:EH.geodesics}.
\end{remark}

\begin{example}[Infinitely many geodesic orbits]\label{ex:AKlB}
Proposition \ref{prop.collinear} shows that for any $n\in\mathbb{N}$ there is an irreducible complete hyperk\"ahler $4$-manifold $X_n$ 
which has exactly $n$ geodesic orbits. The same result shows that the lift of the Ooguri--Vafa metric in Example \ref{ex:OV} to its universal cover yields an example of an irreducible but incomplete hyperk\"ahler 4-manifold with an infinite number of geodesic orbits. 
Finally, the Anderson--Kronheimer--LeBrun metrics in Example \ref{ex:AKL} give complete examples admitting infinitely many geodesic orbits by Theorem \ref{thm:Geodesic_Orbits_Lower_Bound}.  
\end{example}

\subsection{Curve shortening flow}\label{ss:Curve_Shortening}

We now turn to the study of the curve shortening flow for $\U(1)$-invariant curves in the Gibbons--Hawking ansatz.  Recall that the curve shortening flow is
\begin{equation}\label{eq:curve.shortening.flow}
\frac{\partial\gamma}{\partial t}=\kappa=\gamma'',
\end{equation}
where $'$ denotes the derivative with respect to arclength along $\gamma$, and $\kappa$ is the curvature of $\gamma$.  We may compute the curvature of a $\U(1)$-orbit using Lemma \ref{lem:Covariant_Derivatives} to be
$$\kappa=\nabla_{e_0}e_0=\frac{1}{2\phi^2}\nabla\phi.$$
Hence, the $\U(1)$-invariant curve shortening flow is
$$\dot{\mu}_j=\frac{1}{2\phi^2}\frac{\partial\phi}{\partial\mu_j}\quad\text{for $j=1,2,3$.}$$

In particular, from \eqref{eq:nabla.phi}, in the setting of Examples \ref{ex:flat}--\ref{ex:MultiTN} we deduce that the $\U(1)$-invariant curve shortening flow is equivalent to
\begin{equation}\label{eq:Curve_Shortening_Flow}
\dot{x} =  
-\frac{1}{4\phi^2}\sum_{i=1}^k \frac{x-p_i }{|x-p_i|^3} .
\end{equation}
By definition, the curve shortening flow decreases length $l$ in \eqref{eq:length} and so increases $\phi$. However, for large enough $|x|$ we have
$$\phi = m + \frac{k}{2|x|} + o(|x|^{-1}),$$
and hence $\phi$ decreases with $|x|$.  Thus,   solutions of the flow \eqref{eq:Curve_Shortening_Flow}   stay within a bounded domain.

The only critical points of \eqref{eq:Curve_Shortening_Flow} are clearly the geodesic orbits (where $\nabla\phi=0$) and the singularities of $\phi$ (i.e.~the points $p_i$).

\begin{example}[Flat case]
In the flat case, where $\phi$ is given in \eqref{eq:phi.flat}, we see that the flow \eqref{eq:Curve_Shortening_Flow} is
$$\dot{x}=-\frac{rx}{r^2}=-\frac{x}{r}.$$
Hence, the flow is purely radial and we see that
$$\dot{r}=-1$$ 
whose solution is then 
$$r(t)=r(0)-t.$$
So all curves shrink to a point (at the origin) in a finite time determined by their initial distance from the origin.
\end{example}

\begin{example}[Taub--NUT]
In the Taub--NUT case, where $\phi$ is as in \eqref{eq:phi.TN}, the flow \eqref{eq:Curve_Shortening_Flow} becomes:
$$\dot{x}=-\frac{x}{(2m+\frac{1}{r})^2r^3}=-\frac{x}{r(2mr+1)^2}$$
Thus, the flow is again radial and is given by
$$\dot{r}=-\frac{1}{(2mr+1)^2}.$$
We therefore see that
$$\frac{d}{dt} \frac{1}{6m} \left( 1+ 2mr \right)^3=-1$$
and thus
$$\left(1 + 2mr(t) \right)^3 =\left(1 +2mr(0) \right)^3-6mt.$$
Therefore, since $m>0$, we have that the flow again exists for finite time,  shrinking to the origin, and the extinction time is determined by the initial distance from the origin.
\end{example}

\begin{example}[Eguchi--Hanson]\label{ex:EH.flow} Given $p\in\mathbb{R}^3\setminus\{0\}$ we can rotate coordinates so that $p=(a,0,0)$ for $a>0$.  We then write $(x^1,x^2,x^3)=(u,v\sin\theta,v\cos\theta)$.  Since $\phi$ in \eqref{eq:phi.EH} is independent of $\theta$, we see that $\theta$ is preserved along the flow \eqref{eq:Curve_Shortening_Flow}, which is then equivalent to:
\begin{align}
\dot{u}&=-\frac{1}{((u-1)^2+v^2)^{-1}+((u+1)^2+v^2)^{-1}}\left(\frac{u-1}{((u-1)^2+v^2)^{\frac{3}{2}}}+\frac{u+1}{((u+1)^2+v^2)^{\frac{3}{2}}}\right),\label{eq:curve.EH.1}\\
\dot{v}&=-\frac{v}{((u-1)^2+v^2)^{-1}+((u+1)^2+v^2)^{-1}}\left(\frac{1}{((u-1)^2+v^2)^{\frac{3}{2}}}+\frac{1}{((u+1)^2+v^2)^{\frac{3}{2}}}\right)\label{eq:curve.EH.2}
\end{align}
The right-hand side of \eqref{eq:curve.EH.2} has the opposite sign of $v$ and vanishes only when $v=0$. Thus the flow will take any point with $v\neq 0$ towards the axis $v=0$, which is preserved by the flow.

In contrast, we see that the right-hand side of \eqref{eq:curve.EH.1} is negative if $u>1$ and positive if $u<-1$, and so will take points with $|u|>1$ towards points with $|u|\leq 1$.

When $v=0$ we see that $\dot{u}>0$ if $u\in(0,1)$ and $\dot{u}<0$ when $u\in(-1,0)$.  We deduce that $(0,0)$ is an unstable critical point, which is verified by Figure \ref{fig3} of the integral curves of the right-hand side of \eqref{eq:curve.EH.1}--\eqref{eq:curve.EH.2}.
\begin{figure}[h]\label{fig:3}
	\centering
	\includegraphics[scale=0.5]{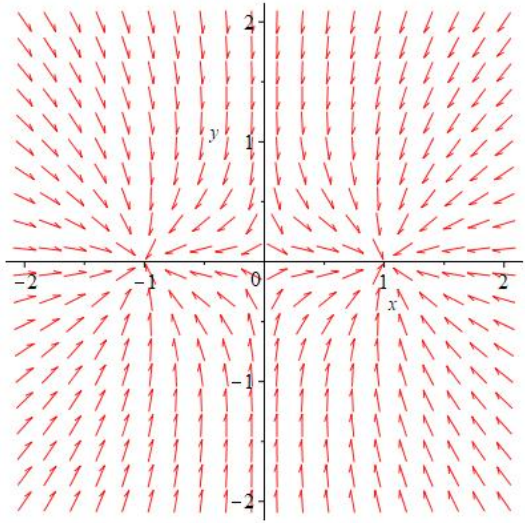}
	\caption{\label{fig3} $\U(1)$-invariant curve-shortening flow for Eguchi--Hanson.}
\end{figure}

Overall, any $\U(1)$-invariant curve with $x^1\neq 0$ will flow to one of the singular points of $\phi$ depending on the sign of $x^1$, whereas all $\U(1)$-invariant curves with $x^1=0$ will flow to the geodesic orbit at 0.
\end{example}

\begin{example}[Collinear points]
In the multi-Eguchi--Hanson and multi-Taub--NUT cases where the points  are collinear, we obtain a similar picture to Example \ref{ex:EH.flow}: the geodesic orbits are unstable critical points, and  generic $\U(1)$-invariant curves flow to the points $p_i$ (e.g.~see Figure \ref{fig4}).
\begin{figure}[h]\label{fig:4}
	\centering
	\includegraphics[scale=0.5]{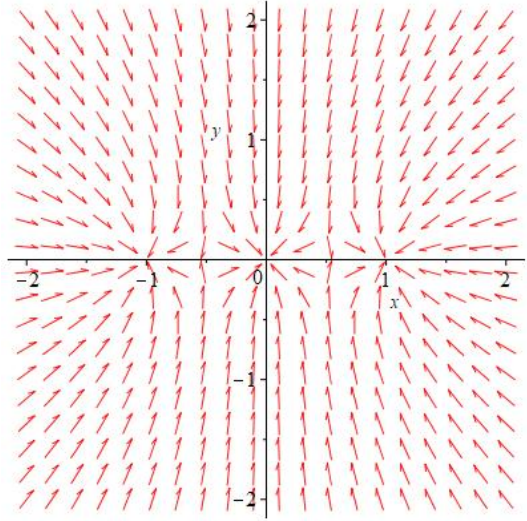}
	\caption{\label{fig4} $\U(1)$-invariant curve-shortening flow for multi-Eguchi--Hanson: three collinear points.}
\end{figure}
\end{example}

\begin{example}[Equilateral triangle]
For the case of $\phi$ with three singular points in an equilateral triangle, all the stationary points for the curve shortening flow are either the singularities of $\phi$ or the geodesic orbits.  
 All these geodesic orbits are unstable points as shown by the computations of Proposition \ref{prop:triangle}. The central point $p$ is a source with heteroclinic orbits connecting it to all the remaining rest points of the flow. The other $3$ geodesic orbits correspond to saddles. Their stable manifold are the three heteroclinic orbits connecting them to $p$, while their unstable manifold is the union of two heteroclinics which connect each of these saddle points to the two nearest singularities of $\phi$, which are attractors for the flow.  This is illustrated in Figure \ref{fig5}.
\begin{figure}[h]\label{fig:5}
	\centering
	\includegraphics[scale=0.5]{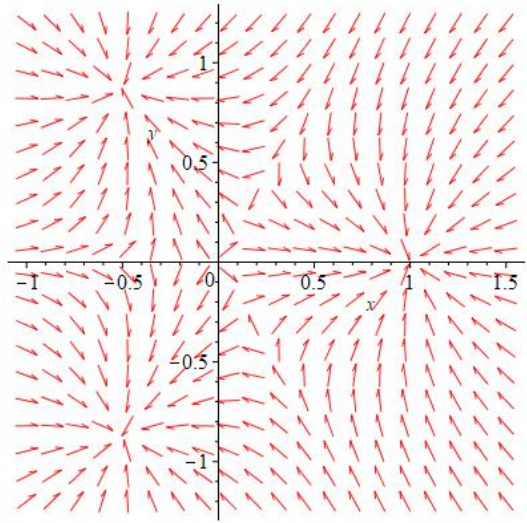}
	\caption{\label{fig5} $\U(1)$-invariant curve-shortening flow for multi-Eguchi--Hanson: three points in equilateral triangle.}
\end{figure}
\end{example}

In fact, the phenomena illustrated by these examples is a general one as we shall now show.

\begin{proposition}[Stability of critical points for the circle-invariant curve shortening flow]
	Let $X$ be an ALE or ALF hyperk\"ahler 4-manifold   
	as in Example \ref{ex:MultiEH} or \ref{ex:MultiTN}. Any critical point of the $\U(1)$-invariant curve shortening flow is one of the following.
	\begin{itemize}
		\item[(a)] A point corresponding to a singularity $p_1, \ldots , p_k$ of $\phi$: these are all stable critical points for the flow.
		\item[(b)] A geodesic circle orbit:   
		these are all unstable critical points for the flow. 
	\end{itemize} 
\end{proposition}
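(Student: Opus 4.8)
The plan is to exploit the gradient-like structure of the circle-invariant curve shortening flow \eqref{eq:Curve_Shortening_Flow}. By Lemma \ref{lem:orbits}, the critical points of this flow are exactly the critical points of $\phi$ together with the singular points $p_1,\ldots,p_k$, so the dichotomy (a)/(b) in the statement is an exhaustive list; it remains only to analyze stability of each type. The key observation is that the flow \eqref{eq:Curve_Shortening_Flow} can be written as $\dot{x} = \tfrac{1}{2\phi^2}\nabla\phi = -\nabla V$ where $V$ is (locally) a function with $\nabla V = -\tfrac{1}{2\phi^2}\nabla\phi$; more simply, along any solution $\tfrac{d}{dt}\phi(x(t)) = \tfrac{1}{2\phi^2}|\nabla\phi|^2 \geq 0$, so $\phi$ is a strict Lyapunov function increasing along non-constant trajectories. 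Equivalently, the length $l = \phi^{-1/2}$ strictly decreases, which is of course the defining property of curve shortening flow. Linearizing \eqref{eq:Curve_Shortening_Flow} at a critical point $x_0$ where $\nabla\phi(x_0)=0$, the Jacobian is $\tfrac{1}{2\phi(x_0)^2}\Hess_{x_0}\phi$ (the terms coming from differentiating the $\phi^{-2}$ prefactor vanish since $\nabla\phi(x_0)=0$), so linear stability of $x_0$ as a rest point of the flow is governed precisely by the sign of $\Hess_{x_0}\phi$: $x_0$ is a sink iff $\Hess_{x_0}\phi$ is negative definite, and has an unstable direction iff $\Hess_{x_0}\phi$ has a positive eigenvalue.

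For case (b), a geodesic orbit projects to a critical point $x_0$ of $\phi$ in the interior of $\mathbb{R}^3$, where $\phi$ is harmonic: $\Delta\phi = \tr\Hess_{x_0}\phi = 0$. Since $x_0$ is not a singularity of $\phi$ and (as already used in the proof of Theorem \ref{thm:Geodesic_Orbits_Lower_Bound}) such critical points have index $1$ or $2$, the Hessian is non-degenerate with eigenvalues summing to zero, hence has at least one strictly positive eigenvalue. Therefore $x_0$ has a non-trivial unstable manifold for the linearized flow, and by the stable manifold theorem it is an unstable rest point of \eqref{eq:Curve_Shortening_Flow}; this gives (b). (If one wishes to avoid the genericity assumption needed to make $\phi$ Morse, one can argue directly: harmonicity forbids a local maximum of $\phi$ at $x_0$, so $l = \phi^{-1/2}$ does not have a local minimum at $x_0$, hence by the Lyapunov property $\phi$ strictly increases along some trajectory leaving every neighbourhood of $x_0$.)

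For case (a), fix a singularity $p_i$. Near $p_i$ we have $\phi(x) = \tfrac{1}{2|x-p_i|} + h_i(x)$ with $h_i$ harmonic (hence smooth and bounded) near $p_i$, so $\phi \to +\infty$ as $x \to p_i$, and correspondingly $l = \phi^{-1/2} \to 0$. The plan is to show $p_i$ is asymptotically stable by producing a strict Lyapunov function on a punctured neighbourhood: indeed $l$ itself works after extending by $l(p_i) = 0$, since $l$ is continuous, positive off $p_i$, vanishes at $p_i$, and strictly decreases along non-constant trajectories by the curve shortening property. More concretely, on a small ball $B_\rho(p_i)$ minus its centre one checks from \eqref{eq:Curve_Shortening_Flow} that the radial component $\tfrac{d}{dt}|x-p_i|$ is strictly negative — the dominant term is $-\tfrac{1}{4\phi^2}\tfrac{x-p_i}{|x-p_i|^3}$, which points inward, and for $|x-p_i|$ small enough it dominates the bounded contributions from the other centres and from $m$ — so every trajectory entering a small enough ball is trapped and converges to $p_i$. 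This yields asymptotic stability, giving (a), and completes the proof.

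**Main obstacle.** The genuinely delicate point is case (a): one must make quantitative the claim that near $p_i$ the self-attraction term in \eqref{eq:Curve_Shortening_Flow} dominates all other terms, uniformly on a punctured ball, so as to conclude that small balls around $p_i$ are forward-invariant and trajectories actually converge to $p_i$ rather than merely failing to escape. The prefactor $\phi^{-2} \sim 4|x-p_i|^2$ degenerates as $x\to p_i$, so some care is needed to see that $\tfrac{d}{dt}|x-p_i| \sim -|x-p_i|^{-1} \cdot |x-p_i|^2 \cdot |x-p_i|^{-2} = -|x-p_i|^{-1} < 0$ with the error terms genuinely lower order; this is the computation I would actually carry out in detail. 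Everything else (the Lyapunov/gradient-like structure, the linearization in case (b), and the harmonicity argument) is routine.
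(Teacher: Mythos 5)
Your proof is correct and rests on the same underlying mechanism as the paper's: $\phi$ (equivalently the orbit length $l=\phi^{-1/2}$) is a strict Lyapunov function for the flow \eqref{eq:Curve_Shortening_Flow}. The paper's own proof is much softer: for (a) it observes only that $\phi=\infty$ at $p_i$ and $\phi$ increases along trajectories, so superlevel sets of $\phi$ near $p_i$ trap the flow; for (b) it invokes the mean value property of the non-constant harmonic function $\phi$ (no interior local maximum), so arbitrarily close to $x_0$ there are points with $\phi>\phi(x_0)$ from which the flow can never return toward $x_0$. You add two genuinely extra ingredients: the linearization at a geodesic orbit, whose Jacobian $\tfrac{1}{2\phi(x_0)^2}\Hess_{x_0}\phi$ is a positive multiple of the Hessian of a harmonic function and hence, being trace-free, has a positive eigenvalue whenever it is non-degenerate (your fallback for the degenerate case is exactly the paper's mean-value argument); and a quantitative radial trapping estimate near $p_i$, which upgrades the paper's ``stable'' to asymptotic stability and convergence to $p_i$. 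Both additions are sound, and what they buy is a sharper dynamical conclusion at the cost of a genericity caveat in the main line of (b). One small slip in your heuristic: near $p_i$ the radial speed is $\tfrac{d}{dt}|x-p_i|\approx -1$, not $\sim -|x-p_i|^{-1}$, since the prefactor $\tfrac{1}{4\phi^2}\sim |x-p_i|^2$ exactly cancels the $|x-p_i|^{-2}$ of the self-attraction term (compare the paper's flat and Taub--NUT examples, where $\dot r=-1$ and $\dot r\to -1$); this does not affect your argument, as the contributions of the other centres and of $m$ are $O(|x-p_i|^2)$ and are still dominated.
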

\begin{proof}
	Recall that the curve shortening flow equation is $\dot{x}=\frac{1}{2\phi^2}\nabla \phi$ and so the flow increases $\phi(x(t))$ with $t$. Suppose we are at a critical point $x_0$ of the flow, so either $\phi(x_0)^{-1}=0$ or $\nabla \phi(x_0)=0$. These correspond respectively to the case when $x_0=p_i$ for some $i \in \lbrace 1, \ldots , k \rbrace $ or the case when $\mu^{-1}(x_0)$ is a geodesic orbit. We shall now analyse the stability of these two kinds of critical points.
	
	 In the first case we have that $\phi(x_0)=\infty$ and so such critical points are stable. In the case when $x_0 \notin \lbrace p_1 , \ldots , p_k \rbrace$, then $x_0$ is a critical point of $\phi$, which is a non-constant harmonic function on $\mathbb{R}^3$ and so satisfies the mean value property.  Hence, there are directions at $x_0$ in which $\phi$ grows, and thus it is an unstable critical point of the flow. 
\end{proof}

\subsection{Monopole dynamics}

In great part, the physics interest on the hyperk\"ahler metric in monopole moduli spaces is motivated by using its geodesic flow to approximate the low energy dynamics of monopoles as first suggested in \cite{Manton}.
As a consequence of this, the closed geodesics we found have a physics interpretation in terms of monopole dynamics. 

In \cite{Gibbons} it is shown that the ALF metrics we consider, i.e. those arising from the Gibbons--Hawking ansatz,  appear when one considers the moduli space of $k+1$ monopoles on $\mathbb{R}^3$, $k$ of which are extremely massive. In the limit when the masses of these $k$ monopoles becomes infinitely large, they become static at a specific location and the metric on the moduli space of the remaining monopole is precisely the ALF metric we consider, with $\phi$ having $k$ singularities located at the position of the $k$ infinitely massive monopoles. The projection $\mu: X \to \mathbb{R}^3$ can be though of as giving the location of the monopole and points in the same fibre can be thought of differing from each other by an internal phase of the Higgs field. 

Thus, the closed geodesics in these moduli spaces (and hence in these ALF metrics) represent periodic motions of monopoles. In particular, the geodesic orbits we find are periodic motions on which the monopole stays in the same location but its phase is varying in a periodic manner. Our results show that if $k \geq 2$ these periodic motions always exist (Proposition \ref{prop:existence.geodesics}) and occur in the convex hull of the infinitely massive monopoles (Proposition \ref{prop:Conex_Hull}). Furthermore, generically there are at least $k-1$ such periodic orbits that are dynamically unstable (Theorem \ref{thm:Geodesic_Orbits_Lower_Bound}). We also find, perhaps somewhat surprisingly, that for the same number of very massive monopoles, depending on their location, there can be a different number of geodesic orbits. We saw such an example from putting together Propositions \ref{prop.collinear} and \ref{prop:triangle}. The dynamical stability of these periodic orbits is also analyzed in Proposition \ref{prop:triangle} and more generally in $\S$\ref{ss:Curve_Shortening}, which may be interesting from the point of view of monopole dynamics.

\section{Minimal surfaces}\label{sec:Minimal_Surfaces}

\subsection{Existence and classification}\label{ss:Min_Surf_Classification}

Any circle-invariant surface in $(X,g)$   projects to a curve $\gamma$ in $\mathbb{R}^3$, 
 so can be written as $\mu^{-1}(\gamma)$.  
Abusing notation, we identify $\gamma$ with an arclength parametrization $\gamma:I\subset \mathbb{R} \to \mathbb{R}^3$. The area of a circle-invariant surface in $X$ is then given by
\begin{equation}\label{eq:Area_Length}
\begin{aligned}
\mathrm{Area}(\mu^{-1}(\gamma)) & = \int_{\mu^{-1}(\gamma)} \phi^{-1/2} \eta \wedge \phi^{1/2} \sum_{i=1}^3 \dot{\gamma}_i dx^i  = 2 \pi \sum_{i=1}^3 \int_{\gamma}  \dot{\gamma}_i dx^i  = 2 \pi \ \mathrm{Length}(\gamma).
\end{aligned}
\end{equation} 
Since minimal surfaces are, by definition, critical for area and geodesics are critical for length, we  deduce the following, using a similar argument to Lemma \ref{lem:orbits}.

\begin{lemma}[Correspondence with straight lines]\label{lem:min.surf}
An embedded circle-invariant surface $\mu^{-1}(\gamma)$ in $X$ is minimal if and only if $\gamma$ is a geodesic (i.e.~a straight line segment) 
in $\mathbb{R}^3$.  
\end{lemma}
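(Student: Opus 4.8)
The plan is to reduce the minimality of the circle-invariant surface $\pi^{-1}(\gamma)$ to the criticality of $\gamma$ for length, using the area formula $\mathrm{Area}(\pi^{-1}(\gamma)) = 2\pi\,\mathrm{Length}(\gamma)$ established in \eqref{eq:Area_Length}, in exact parallel with the argument for Lemma \ref{lem:orbits}. The key point is that the map $\gamma\mapsto\pi^{-1}(\gamma)$ intertwines variations of the curve in $\mathbb{R}^3$ with circle-invariant variations of the surface in $X$, up to the constant factor $2\pi$. Thus $\pi^{-1}(\gamma)$ is critical for area among circle-invariant variations if and only if $\gamma$ is critical for length, i.e.\ $\gamma$ is a geodesic, i.e.\ (since the metric $a^{-2}g_E$ on $Y^3\subset\mathbb{R}^3$ is flat and, for embedded $\gamma$ meeting no singularity of $\phi$, $\gamma$ lies in the region where the metric is smooth) a straight line segment. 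Here one should be slightly careful: the relevant metric on the quotient is $a^{-2}g_E$, which is conformal to the Euclidean $g_E$; however the area of $\pi^{-1}(\gamma)$ is $2\pi$ times the \emph{Euclidean} length of $\gamma$ by \eqref{eq:Area_Length}, so criticality is genuinely for Euclidean length and geodesics are honest straight lines.

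The remaining issue is to upgrade from criticality among circle-invariant variations to criticality among \emph{all} variations — i.e.\ to show $\pi^{-1}(\gamma)$ is actually a minimal surface, not merely invariantly minimal. This is exactly the step handled in the proof of Lemma \ref{lem:orbits}: because $\pi^{-1}(\gamma)$ is circle-invariant, its mean curvature vector $H$ is circle-invariant, and so is the area functional under the circle action; averaging an arbitrary variation vector field over the circle action produces a circle-invariant variation field with the same first variation of area. Hence $H\equiv 0$ as soon as the first variation vanishes for all circle-invariant variations. Concretely, one can also verify this directly using the structure equations of Lemma \ref{lem:Covariant_Derivatives}: parametrize $\gamma$ by arclength (in the Euclidean metric pulled back appropriately), note that $\pi^{-1}(\gamma)$ is spanned by the orbit direction $e_0$ and the horizontal lift $\dot\gamma$ of $\gamma'$, and compute that the mean curvature vector is a multiple of $\nabla_{\dot\gamma}\dot\gamma$ together with the orbit curvature contribution $\nabla_{e_0}e_0 = \tfrac{1}{2\phi^{3/2}}\tfrac{\partial\phi}{\partial\mu_i}e_i$; the latter is the component of $H$ tangent to... — actually the cleaner route is to observe that since $\gamma$ is a straight line in the $\mu$-coordinates, the second fundamental form contributions involving $\dot\gamma$ vanish, and the orbit-curvature term $\nabla_{e_0}e_0$ is tangent to the quotient directions spanned by $\{e_1,e_2,e_3\}$, which after projecting orthogonally to the surface $T(\pi^{-1}\gamma) = \mathrm{span}\{e_0,\dot\gamma\}$ cancels against the trace of the $\nabla_{\dot\gamma}e_0$ terms by \eqref{eq:nablai0}; this is precisely the computation that will later be packaged as "mean curvature flow descends to weighted curve shortening flow," and setting the right-hand side to zero recovers straight lines.

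The main obstacle is this last cancellation / averaging step: one must be sure that invariant-minimality implies minimality. I expect this to be routine given the symmetry argument already deployed in Lemma \ref{lem:orbits}, and given that the forthcoming reduction of mean curvature flow to curve shortening flow (Section \ref{sec:Minimal_Surfaces}) will in any case exhibit $H$ as the horizontal lift of a weighted curvature vector of $\gamma$, so its vanishing is equivalent to $\gamma$ being a straight line. I would therefore structure the proof as: (1) invoke \eqref{eq:Area_Length} to identify area with $2\pi\,\mathrm{Length}(\gamma)$; (2) invoke the circle-averaging argument from Lemma \ref{lem:orbits} to say minimality $\iff$ criticality for circle-invariant variations $\iff$ criticality of $\gamma$ for length; (3) conclude that $\gamma$ is a geodesic of $(\mathbb{R}^3, g_E)$, hence a straight line segment, using that an embedded $\pi^{-1}(\gamma)$ forces $\gamma$ to avoid the singular set of $\phi$ so that everything is smooth.
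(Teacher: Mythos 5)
Your proposal is correct and follows essentially the same route as the paper: the paper deduces the lemma directly from the area--length identity \eqref{eq:Area_Length} together with the same circle-averaging argument used in Lemma \ref{lem:orbits} to upgrade criticality under invariant variations to genuine minimality. The computational aside via Lemma \ref{lem:Covariant_Derivatives} is unnecessary (and left incomplete), but your three-step variational argument is exactly the intended proof.
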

 
\begin{remark}[Topology of circle-invariant minimal surfaces]
 Topologically, the smooth, embedded, complete minimal surfaces $\mu^{-1}(\gamma)$ in $X$ can be of three types: spheres $S^2$, when $\gamma$ connects two fixed points of the $\U(1)$-action; planes $\mathbb{R}^2$, when $\gamma$ is an infinite ray starting at a fixed point; and cylinders $S^1\times\mathbb{R}$, when $\gamma$ is a straight line not passing through any of the fixed points.
\end{remark}

In fact, we can make a stronger statement concerning the Riemannian geometry of certain circle-invariant minimal surfaces in the case when the singular points of $\phi$ are collinear.

\begin{lemma}\label{lem:tot.geod}
Suppose that the singularities of $\phi$ lie on a line $\Lambda$ and let $\gamma$ be a line segment contained in $\Lambda$.    Then $\mu^{-1}(\gamma)$ is a totally geodesic surface in $X$.  In particular, any compact $\U(1)$-invariant minimal surface in $X$ is totally geodesic.
\end{lemma}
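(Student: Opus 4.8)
The plan is to work directly with the structure equations from Lemma \ref{lem:Covariant_Derivatives}. Choose coordinates on $\R^3$ so that the line $\Lambda$ containing the singularities of $\phi$ is the $\mu_1$-axis; then $\phi$ depends only on $\mu_1$ and (after a rotation fixing the $\mu_1$-axis) on the distance $\sqrt{\mu_2^2+\mu_3^2}$ from that axis, and in particular $\frac{\partial\phi}{\partial\mu_2}=\frac{\partial\phi}{\partial\mu_3}=0$ along $\Lambda$ by symmetry (each term $\tfrac{1}{2}|x-p_i|_{g_E}^{-1}$ has vanishing transverse gradient on the axis through all the $p_i$). The surface $\Sigma=\pi^{-1}(\gamma)$ for $\gamma\subset\Lambda$ is then spanned at each point by the orthonormal vectors $e_0$ (tangent to the circle fibre) and $e_1$ (tangent to $\gamma$), while $e_2,e_3$ span the normal bundle. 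To show $\Sigma$ is totally geodesic I must check that the second fundamental form vanishes, i.e.\ that $\nabla_{e_a}e_b$ has no $e_2,e_3$ component for $a,b\in\{0,1\}$.

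The key step is to evaluate the four covariant derivatives $\nabla_{e_0}e_0$, $\nabla_{e_0}e_1$, $\nabla_{e_1}e_0$, $\nabla_{e_1}e_1$ using \eqref{eq:nabla00}--\eqref{eq:nablaij} and then restrict to $\Lambda$, where $\frac{\partial\phi}{\partial\mu_2}=\frac{\partial\phi}{\partial\mu_3}=0$. Inspecting the formulas: \eqref{eq:nabla00} gives $\nabla_{e_0}e_0=\frac{1}{2\phi^{3/2}}\frac{\partial\phi}{\partial\mu_i}e_i$, which on $\Lambda$ reduces to $\frac{1}{2\phi^{3/2}}\frac{\partial\phi}{\partial\mu_1}e_1$, tangent to $\Sigma$. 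For $\nabla_{e_1}e_0$, \eqref{eq:nablai0} gives $\frac{1}{2\phi^{3/2}}\epsilon_{1jk}\frac{\partial\phi}{\partial\mu_j}e_k$, and the only surviving terms would involve $\frac{\partial\phi}{\partial\mu_2}$ or $\frac{\partial\phi}{\partial\mu_3}$, both zero on $\Lambda$. For $\nabla_{e_0}e_1$, \eqref{eq:nable0i} gives $\frac{1}{2\phi^{3/2}}(-\frac{\partial\phi}{\partial\mu_1}e_0+\epsilon_{1jk}\frac{\partial\phi}{\partial\mu_j}e_k)$, again with the $e_k$ part vanishing on $\Lambda$, leaving only the $e_0$ term, tangent to $\Sigma$. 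For $\nabla_{e_1}e_1$, \eqref{eq:nablaij} with $i=j=1$ gives $\frac{1}{2\phi^{3/2}}(\frac{\partial\phi}{\partial\mu_1}e_1-\frac{\partial\phi}{\partial\mu_k}e_k)$, whose $e_k$ part is $-\frac{\partial\phi}{\partial\mu_2}e_2-\frac{\partial\phi}{\partial\mu_3}e_3$, zero on $\Lambda$, leaving only $\frac{1}{2\phi^{3/2}}\frac{\partial\phi}{\partial\mu_1}e_1$, tangent. Hence all four covariant derivatives stay tangent to $\Sigma$ along $\Sigma$, so the second fundamental form vanishes and $\Sigma$ is totally geodesic.

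For the final assertion, a compact $\U(1)$-invariant minimal surface in $X$ is $\pi^{-1}(\gamma)$ with $\gamma$ a straight line segment by Lemma \ref{lem:min.surf}; compactness forces $\gamma$ to have both endpoints at fixed points of the $\U(1)$-action, i.e.\ at singularities $p_i,p_j$ of $\phi$, which are collinear points of $\Lambda$, so $\gamma$ is a segment of $\Lambda$ and the first part applies. The main (and really only) obstacle is justifying that $\frac{\partial\phi}{\partial\mu_2}$ and $\frac{\partial\phi}{\partial\mu_3}$ genuinely vanish on $\Lambda$; this is the symmetry observation that, since every singular point $p_i$ lies on the $\mu_1$-axis, the function $\phi=m+\sum_i\frac{1}{2|x-p_i|_{g_E}}$ is invariant under rotations about that axis and hence has purely axial gradient there — a short computation, but the one point that must be stated carefully.
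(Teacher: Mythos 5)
Your proof is correct, but it follows a genuinely different route from the paper's. The paper's proof of Lemma \ref{lem:tot.geod} is a two-line symmetry argument: rotations of $\mathbb{R}^3$ about $\Lambda$ preserve $\phi$ and the flat metric, hence lift to isometries of $X$; their common fixed-point set is precisely $\pi^{-1}(\Lambda)$, and the fixed-point set of a family of isometries is totally geodesic, so $\pi^{-1}(\gamma)$ is totally geodesic, with the final statement then following from Lemma \ref{lem:min.surf}. You instead verify directly from the structure equations of Lemma \ref{lem:Covariant_Derivatives} that the second fundamental form of $\Sigma=\pi^{-1}(\gamma)$ vanishes, invoking the same underlying rotational symmetry only through the pointwise fact that $\partial\phi/\partial\mu_2=\partial\phi/\partial\mu_3=0$ along $\Lambda$; your evaluation of the four covariant derivatives $\nabla_{e_a}e_b$, $a,b\in\{0,1\}$, is correct, and your handling of the final statement (compactness forces the endpoints of the segment to be singularities of $\phi$, which lie on $\Lambda$) matches the paper's appeal to Lemma \ref{lem:min.surf}. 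Two points worth stating explicitly if you write this up: the frame $\{e_a\}$, and hence your computation, is only defined on $\hat{X}$, so at the fixed points of the $\U(1)$-action lying on $\pi^{-1}(\gamma)$ you should conclude by continuity of the second fundamental form of the smooth surface; and the vanishing of the transverse gradient of $\phi$ on $\Lambda$ rests on the explicit multi-centre form of $\phi$ (equivalently, its invariance under rotations about $\Lambda$), which is exactly the hypothesis the paper exploits. What the paper's argument buys is brevity and automatic coverage of the fixed points; what yours buys is a self-contained verification that avoids constructing the lifted circle of isometries and makes explicit which connection terms vanish along $\Sigma$.
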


\begin{proof}
Rotations about $\Lambda$ preserve $\phi$ and are isometries on $\mathbb{R}^3$, so they lift to isometries on $X$. Their common fixed point set is  $\mu^{-1}(\Lambda)$.  Therefore, $\mu^{-1}(\gamma)$ is totally geodesic as claimed. The final statement follows from Lemma \ref{lem:min.surf}.
\end{proof}

\begin{example}[Collinear multi-Eguchi--Hanson/multi-Taub--NUT]
Lemma \ref{lem:tot.geod} shows that the 2-sphere in the Eguchi--Hanson metric, which is the zero section in $T^*S^2$, is totally geodesic, as stated earlier. More generally, if $X$ is given by Example \ref{ex:MultiEH} or \ref{ex:MultiTN} with collinear points $p_1,\ldots,p_k$, then any 2-sphere in $X$ defined by a straight line between adjacent singular points is totally geodesic. These observations fit well with Proposition \ref{prop.collinear}, which shows that the geodesic orbits lie in these totally geodesic spheres.
\end{example}

\begin{remark}
	There are interesting examples of Gibbons--Hawking type gravitational instantons, obtained from harmonic functions $\phi$  not having collinear singularities, which nevertheless admit totally geodesic $2$-spheres. Fror example, consider  $\phi$ with $4$ singularities lying in a square in a plane. Then, fix a diagonal $\gamma$ of the square and consider the group   generated by  reflection in the plane containing the singularities, and a reflection within the plane in the diagonal $\gamma$. The fixed point set of this group is $\gamma$, which then lifts to a totally geodesic $2$-sphere.
\end{remark}

\begin{example}[Ooguri--Vafa]
If we take the Ooguri--Vafa metric in Example \ref{ex:OV}, then $\Sigma=\mu^{-1}(\mathbb{S}^1\times\{0\})$ can be identified with the preimage of the $x^1$-axis in $\mathbb{R}^3$.  It is clear that $\Sigma$ is an immersed 2-sphere with a double point or, equivalently, it can be viewed as a pinched (or nodal) 2-torus.  Since the singularities of $\phi$ given in \eqref{eq:phi.OV} lie on a line, Lemma \ref{lem:tot.geod} shows that $\Sigma$ is totally geodesic.
\end{example}

\begin{remark}[Non-totally geodesic minimal 2-spheres]
It follows from Remark \ref{rmk:non.tot.geod} that circle-invariant minimal surfaces in $X$ need not be totally geodesic.  In fact, is clear that the arguments in Remark \ref{rmk:non.tot.geod} show that, for generic choices of $p_1,\ldots,p_k$ for $k\geq 3$ in Examples \ref{ex:MultiEH} and \ref{ex:MultiTN},  the 2-spheres given by $\mu^{-1}(\gamma_{ij})$, where $\gamma_{ij}$ is a straight line joining $p_i$ and $p_j$, are not totally geodesic in the multi-Eguchi--Hanson and multi-Taub--NUT metrics.
\end{remark}

If $\gamma$ is a straight line segment then (recalling that we parametrize $\gamma$ by arclength)   $\dot{\gamma}=(\dot{\gamma}_1 , \dot{\gamma}_2 , \dot{\gamma}_3)$ lies in the unit $2$-sphere $\mathbb{S}^2\subseteq\mathbb{R}^3$. 
Recall that $\mathbb{S}^2 \subset \mathbb{R}^3$ can be identified with the twistor sphere of $X$.  Thus, $v \in \mathbb{S}^2$ defines a complex structure $I_{v}$.  This observation leads us to the following result.

\begin{lemma}[Calibration]\label{lem:min.calibrated}
Any circle-invariant  minimal surface $\mu^{-1}(\gamma)$ is $I_{\dot{\gamma}}$-holomorphic, thus calibrated by 
$$\omega_{\dot{\gamma}}=  \sum_{i=1}^3 \dot{\gamma}_i \left( \eta \wedge  dx^i +  \phi \ dx^j \wedge dx^k \right) ,$$
with $(i,j,k)$ denoting a cyclic permutation of $(1,2,3)$, and hence $\mu^{-1}(\gamma)$ is area-minimizing.
\end{lemma}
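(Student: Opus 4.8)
The plan is to show that the circle-invariant surface $\pi^{-1}(\gamma)$ associated with a straight line segment $\gamma$ is a complex curve for the complex structure $I_{\dot\gamma}$ determined by the (constant) unit tangent vector $\dot\gamma\in\mathbb{S}^2$, and then invoke the general fact that complex submanifolds of a Kähler manifold are calibrated by the corresponding Kähler form, hence homologically area-minimizing. The identification of $\mathbb{S}^2$ with the twistor sphere means that for $v=(v_1,v_2,v_3)\in\mathbb{S}^2$ the complex structure is $I_v=v_1 I_1+v_2 I_2+v_3 I_3$, with compatible Kähler form $\omega_v=v_1\omega_1+v_2\omega_2+v_3\omega_3$; substituting the expressions \eqref{eq:hypforms} for $\omega_1,\omega_2,\omega_3$ (and using $\alpha_i=\phi^{1/2}d\mu_i$, $a\eta=\phi^{-1/2}\eta$, and $\mu_i\leftrightarrow x^i$) gives precisely the formula for $\omega_{\dot\gamma}$ in the statement.

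First I would set $v=\dot\gamma$ and parametrize $\pi^{-1}(\gamma)$ by the arclength parameter $s$ along $\gamma$ together with the circle fibre coordinate. The tangent space to $\pi^{-1}(\gamma)$ at a point is then spanned by $e_0=\phi^{-1/2}\partial_\eta$ (the unit vector along the fibre, dual to $e^0=\phi^{-1/2}\eta$) and the horizontal lift of $\dot\gamma$, which in the orthonormal coframing $e^i=\phi^{1/2}dx^i$ of Lemma \ref{lem:Covariant_Derivatives} is the unit vector $v_1 e_1+v_2 e_2+v_3 e_3$. So it suffices to check that $I_v$ maps $e_0$ to $v_1 e_1+v_2 e_2+v_3 e_3$; equivalently, using $\omega_i(\cdot,\cdot)=g(I_i\cdot,\cdot)$ together with \eqref{eq:hypforms}, one computes $\iota_{e_0}\omega_i = e^i$ for each $i$ (since $\omega_i = e^0\wedge e^i + e^j\wedge e^k$ in the orthonormal coframing, with $(i,j,k)$ cyclic), hence $\iota_{e_0}\omega_v = v_1 e^1+v_2 e^2+v_3 e^3$, which is exactly the metric dual of the horizontal lift of $\dot\gamma$. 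This shows $I_v e_0$ is tangent to $\pi^{-1}(\gamma)$, and since $I_v$ is an involution on the $2$-plane $\langle e_0, I_v e_0\rangle$ the tangent plane is $I_v$-invariant, i.e. $\pi^{-1}(\gamma)$ is $I_v$-holomorphic.

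Once holomorphicity is established, the calibration statement is immediate: the Kähler form $\omega_v$ is a calibration on $(X,g,I_v)$ and any $I_v$-complex curve is calibrated by it, hence volume-minimizing in its homology class. The only genuine point to be careful about is that $v=\dot\gamma$ must be \emph{constant} along $\pi^{-1}(\gamma)$ for $\omega_v$ (a single closed form) to calibrate the whole surface — and this is exactly where the hypothesis that $\gamma$ is a straight line segment (Lemma \ref{lem:min.surf}) enters. I expect no real obstacle here; the main step is the short linear-algebra computation identifying $I_{\dot\gamma}e_0$ with the horizontal lift of $\dot\gamma$, which is a routine consequence of \eqref{eq:hypforms} rewritten in the orthonormal coframing $\{e^a\}$ of Lemma \ref{lem:Covariant_Derivatives}. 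It remains only to record, for the explicit formula in the statement, the substitution $e^0\wedge e^i + e^j\wedge e^k = \eta\wedge dx^i + \phi\, dx^j\wedge dx^k$, and to sum against the weights $\dot\gamma_i$.
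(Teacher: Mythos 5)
Your proof is correct and is essentially the verification the paper leaves implicit: the lemma is stated there as an immediate consequence of the twistor-sphere identification, and your computation $I_{i}e_0=e_i$ in the orthonormal coframe (equivalently, the restriction formula of Lemma \ref{lem:Lagrangian}, which gives $\omega_v|_{\pi^{-1}(\gamma)}=\langle\gamma',v\rangle\,\vol_{\pi^{-1}(\gamma)}$ and hence a calibrated surface once $\dot\gamma$ is constant by Lemma \ref{lem:min.surf}) is precisely the routine step being omitted. One small wording slip: $I_v$ is a complex structure, $I_v^2=-\mathrm{id}$, not an involution, but your argument that the tangent plane spanned by $e_0$ and $I_ve_0$ is $I_v$-invariant is unaffected.
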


\begin{example}[Ooguri--Vafa as an elliptic fibration]\label{ex:OV.elliptic}
We see that for the Ooguri--Vafa metric in Example \ref{ex:OV}, $\Sigma_b=\mu^{-1}(\mathbb{S}^1\times\{b\})$ for any $b\in B$ is an $I_1$-holomorphic curve, which is an elliptic curve that is embedded for $b\neq 0$ and nodal for $b=0$.  Hence, as is well-known, the Ooguri--Vafa metric is an elliptic fibration over $B$ with a single nodal fibre. 
\end{example}

A tantalizing question in Riemannian geometry concerns the representability of (some) homology classes by minimal (or area-minimizing) submanifolds. For the hyperk\"ahler $4$-manifolds obtained from the Gibbons--Hawking ansatz we have described, which only have non-trivial second homology, we have the following result.

\begin{proposition}[Representing homology classes by minimal surfaces]\label{prop:min.spheres}
	Let $X$ be an ALE or ALF hyperk\"ahler $4$-manifold 
	as in Examples \ref{ex:MultiEH} and \ref{ex:MultiTN} with points $p_1, \ldots , p_k$ in $\mathbb{R}^3$. Any cohomology class $\kappa \in H_2(X, \mathbb{Z}) $ can be represented by a circle-invariant minimal surface, which is a union of embedded 2-spheres that pairwise intersect transversely in at most one point. When $\kappa \cdot \kappa =-2$ there is a unique such circle-invariant surface which is area-minimizing (in fact, calibrated).  
\end{proposition}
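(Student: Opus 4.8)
Here is the plan.

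\emph{Setup: the topology.} I would reduce everything to elementary facts about the $A_{k-1}$ root lattice together with the triangle inequality in $\mathbb{R}^3$, using the Area--Length identity \eqref{eq:Area_Length} and Lemmas \ref{lem:min.surf} and \ref{lem:min.calibrated}. First I would fix notation for $H_2(X;\mathbb{Z})$: for any two distinct singular points $p_a,p_b$ of $\phi$, let $e_{ab}\in H_2(X;\mathbb{Z})$ be the class of $\pi^{-1}(\gamma)$, where $\gamma$ is an embedded arc in $\mathbb{R}^3$ from $p_a$ to $p_b$ meeting none of the other $p_i$. Since $\mathbb{R}^3\setminus\{p_1,\dots,p_k\}$ has vanishing first homology, any two such arcs are homologous rel endpoints, so $e_{ab}$ is well defined; reversing orientation gives $e_{ba}=-e_{ab}$ and concatenating arcs gives $e_{ab}+e_{bc}=e_{ac}$. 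Fixing an ordering of the points and setting $e_i:=e_{i,i+1}$, the classes $e_1,\dots,e_{k-1}$ are the bouquet of 2-spheres of Examples \ref{ex:MultiEH} and \ref{ex:MultiTN} and form a $\mathbb{Z}$-basis of $H_2(X;\mathbb{Z})$; since $X$ is Calabi--Yau, adjunction gives $e_{ab}\cdot e_{ab}=-2$ for every embedded 2-sphere $\pi^{-1}(\gamma)$ of this form, while $e_i\cdot e_{i\pm1}=1$ (one transverse intersection point, over the shared singularity) and $e_i\cdot e_j=0$ for $|i-j|\ge 2$ (disjoint supports). Thus the intersection lattice is $-A_{k-1}$, and its vectors of self-intersection $-2$ are exactly its roots, namely the classes $\pm e_{ab}$.

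\emph{Existence for arbitrary $\kappa$.} Given $\kappa\in H_2(X;\mathbb{Z})$, write $\kappa=\sum_{i=1}^{k-1}n_ie_i$. After relabelling the $p_i$ — e.g.\ ordering them by a generic linear functional on $\mathbb{R}^3$, so that distinct segments $\overline{p_ip_{i+1}}$ have disjoint images except at shared endpoints — the polygonal path $\bigcup_i\overline{p_ip_{i+1}}$ is embedded. Then $\Sigma_\kappa:=\bigcup_{i:\,n_i\ne 0}\pi^{-1}(\overline{p_ip_{i+1}})$, with the $i$-th sphere oriented by $\sign(n_i)$ and counted with multiplicity $|n_i|$, is a union of embedded 2-spheres meeting pairwise in at most the single point $\pi^{-1}(p_{i+1})$; each component is minimal by Lemma \ref{lem:min.surf}, so $\Sigma_\kappa$ is a circle-invariant minimal surface, and $[\Sigma_\kappa]=\sum_i n_ie_i=\kappa$. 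In degenerate configurations (several $p_i$ collinear) consecutive spheres are tangent rather than transverse at the shared point, which is harmless. This gives the first assertion.

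\emph{The case $\kappa\cdot\kappa=-2$.} Here $\kappa$ is a root, hence $\kappa=\pm e_{ab}$ for a unique unordered pair $\{a,b\}$. The surface $\Sigma:=\pi^{-1}(\overline{p_ap_b})$ (a single embedded 2-sphere, or a chain of such if the straight segment passes through other $p_i$), oriented so that $[\Sigma]=\kappa$, is circle-invariant and minimal by Lemma \ref{lem:min.surf}, and by Lemma \ref{lem:min.calibrated} it is calibrated, hence area-minimizing in $\kappa$ with $\mathrm{Area}(\Sigma)=2\pi|p_a-p_b|$ by \eqref{eq:Area_Length}. For uniqueness, let $\Sigma'$ be any circle-invariant area-minimizing surface with $[\Sigma']=\kappa$. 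Being area-minimizing it is minimal, so by Lemma \ref{lem:min.surf} it is a finite union $\bigcup_\alpha\pi^{-1}(\gamma_\alpha)$ of preimages of straight segments; finiteness of its area and compactness of its homology class force the $\gamma_\alpha$ to be closed segments with endpoints among the $p_i$. By \eqref{eq:Area_Length}, $\mathrm{Area}(\Sigma')=2\pi\sum_\alpha\mathrm{Length}(\gamma_\alpha)$, so equality of areas gives $\sum_\alpha\mathrm{Length}(\gamma_\alpha)=|p_a-p_b|$. On the other hand, projecting to $\mathbb{R}^3$ and using the relations $e_{ab}+e_{bc}=e_{ac}$, the family $\{\gamma_\alpha\}$ is a finite collection of segments with endpoints in $\{p_i\}$ whose associated $1$-cycle represents $e_{ab}$, and any such collection has total length $\ge|p_a-p_b|$, with equality only when its union is the single segment $\overline{p_ap_b}$. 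Hence $\bigcup_\alpha\gamma_\alpha=\overline{p_ap_b}$ and $\Sigma'=\Sigma$.

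\emph{Main obstacle.} The only genuinely non-formal point is the last inequality in the uniqueness argument: one must rule out, among circle-invariant area-minimizers, ``wasteful'' configurations — disconnected unions of segments, or detours through other $p_i$ — whose projected $1$-cycles nonetheless add up to $e_{ab}$, and show the straight segment is the unique length-minimizer compatible with the homology constraint. Turning the lattice relations $e_{ab}+e_{bc}=e_{ac}$ (together with $\mathrm{Area}=2\pi\,\mathrm{Length}$) into exactly this sharp length-minimization statement, so that the triangle inequality applies and is rigid, is where the argument needs real care; everything else is bookkeeping with the $A_{k-1}$ lattice and appeals to Lemmas \ref{lem:min.surf} and \ref{lem:min.calibrated}.
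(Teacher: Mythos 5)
Your proposal is correct and follows essentially the same route as the paper's (much terser) proof: generate $H_2(X,\mathbb{Z})\cong A_{k-1}$ by the sphere classes over segments between singularities, get area-minimization from the calibration of Lemma \ref{lem:min.calibrated}, and get uniqueness from the identity \eqref{eq:Area_Length}. The step you single out as the ``main obstacle'' closes immediately by the mechanism you already name: the assignment $e_{ab}\mapsto p_b-p_a$, i.e.\ $\kappa\mapsto\tfrac{1}{2\pi}\bigl(\langle[\omega_1],\kappa\rangle,\langle[\omega_2],\kappa\rangle,\langle[\omega_3],\kappa\rangle\bigr)$ as computed in the proof of Corollary \ref{cor:Lagrangian_Classes_From_Omega}, is linear on homology, so any signed collection of segments representing $e_{ab}$ has total length $\geq|p_b-p_a|$ by the triangle inequality, with equality forcing every oriented segment to be a positive multiple of $p_b-p_a$; a short conservation-of-multiplicity argument (or, more cleanly, the calibration inequality $\mathrm{Area}\geq\int\omega_u$ with equality iff the surface is $I_u$-holomorphic, which by Lemma \ref{lem:Lagrangian} means its projection has $\gamma'\equiv u$) then pins the configuration down to the segment $[p_a,p_b]$ with multiplicity one, so this is not a genuine gap. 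One small correction: in the collinear case consecutive spheres are not tangent at the shared fixed point --- both are $I_v$-holomorphic with homological intersection $e_i\cdot e_{i+1}=1$ concentrated at that single point, hence the intersection is transverse (in the flat model the preimages of two opposite rays from the fixed point are two transverse complex lines) --- which is precisely why the statement can assert transversality even when the straight line passes through intermediate singularities.
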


\begin{proof}
Given any two points $p_i$ and $p_j$ there is a unique straight line $\gamma_{ij}$ between them. Then $\mu^{-1}(\gamma_{ij})$ is 
  area-minimizing in $[\mu^{-1}(\gamma_{ij})]$ by Lemmas \ref{lem:min.surf} and \ref{lem:min.calibrated}, and \eqref{eq:Area_Length} shows that it is the unique circle-invariant area-minimizer in the class. 
If $\gamma_{ij}$ does not pass through any of the other points $p_l$ then $\mu^{-1}(\gamma_{ij})$ is an embedded 2-sphere, and otherwise is as described in the statement.  

The second homology group $H_2(X, \mathbb{Z}) \cong A_{k-1}$ is generated by the classes $[\mu^{-1}(\gamma_{ij})]$ and so the first claim about the existence of minimal representatives follows. Finally, the classes $\kappa$ so that $\kappa \cdot \kappa =-2$ are precisely those of the form $[\mu^{-1}(\gamma_{ij})]$.
\end{proof}

\begin{remark}[Non-uniqueness of minimal representatives]
Suppose that we are in the setting of Proposition \ref{prop:min.spheres} and  $p_1, p_2,p_3$  are not collinear. Let $\kappa_i = [\mu^{-1}(\gamma_{i,i+1})]$. Then, $\kappa_1+\kappa_{2}$ has two minimal representatives.  The first is the union of the area-minimizing representatives of $\kappa_1$ and $\kappa_2$, which is a union of two minimal $2$-spheres which transversely intersect at a point.  The second is the minimal $2$-sphere which is given by the straight line from $p_1$ to $p_{3}$, i.e.~$\mu^{-1}(\gamma_{13})$. It follows immediately from \eqref{eq:Area_Length}
	and the proof of Proposition \ref{prop:min.spheres} that this second one is the minimizing one.
\end{remark}

\subsection{Mean curvature flow}\label{ss:Mean_Curvature_Flow}

The computation \eqref{eq:Area_Length} suggests a relation between mean curvature flow (the gradient flow for area) of $\mu^{-1}(\gamma)$ and curve shortening flow (the gradient flow for length) of $\gamma$. The precise relation is as follows. 

\begin{proposition}[Flow of curves]\label{prop:curve.flow}
	Parametrize curves $\gamma$ in $\mathbb{R}^3$ with respect to Euclidean arc-length and let $'$ denote the derivative with respect to this parameter. 	Then, the mean curvature flow of $\mu^{-1}(\gamma)$   in $X$ coincides with
	\begin{equation}\label{eq:Mean_Curvature_Flow_E}
	\frac{\partial}{\partial t}\gamma = \phi^{-1} \gamma''.
	\end{equation}
\end{proposition}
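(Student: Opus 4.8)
The plan is to compute the mean curvature vector of a circle-invariant surface $\pi^{-1}(\gamma)$ directly using the structure equations in Lemma \ref{lem:Covariant_Derivatives}, and then recognise that the mean curvature flow equation $\partial_t F = H$, after projecting to $\mathbb{R}^3$ and stripping away tangential reparametrisations, becomes exactly \eqref{eq:Mean_Curvature_Flow_E}. First I would set up an adapted frame: parametrise $\gamma$ by Euclidean arc-length $s$, so that $\gamma' = \sum_i \dot\gamma_i \partial_{x^i}$ is a unit vector in $\mathbb{R}^3$, and lift it to a frame along $L = \pi^{-1}(\gamma)$ consisting of $e_0 = \phi^{-1/2}\eta^\sharp$ (tangent to the circle orbits, of length $1$) and the horizontal lift of $\gamma'$, which in the orthonormal coframe $\{e^0,e^i\}$ of Lemma \ref{lem:Covariant_Derivatives} is $T = \sum_i \dot\gamma_i e_i$. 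These two vectors span $TL$; note $T$ has unit length since $g$ restricted to horizontal vectors is $\phi(\sum d\mu_i^2)$ and $\dot\gamma$ is a Euclidean unit vector, but it is \emph{not} the arc-length direction of $L$ with respect to $g|_L$ — that is fine, since the mean curvature vector does not depend on the parametrisation.

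The main computation is then $H = (\nabla_{e_0} e_0)^\perp + (\nabla_T T)^\perp$. For the first term, \eqref{eq:nabla00} gives $\nabla_{e_0}e_0 = \tfrac{1}{2\phi^{3/2}}\sum_i \partial_{\mu_i}\phi\, e_i$, which is purely horizontal and hence its component normal to $L$ is $\tfrac{1}{2\phi^{3/2}}\big(\nabla^{\mathbb{R}^3}\phi - \langle \nabla^{\mathbb{R}^3}\phi, \gamma'\rangle\gamma'\big)$ expressed in the $e_i$ basis. For the second term I would expand $\nabla_T T = \sum_{i,j}\dot\gamma_i \dot\gamma_j \nabla_{e_i} e_j + \sum_i \dot\gamma_i (e_i \dot\gamma_j)\, e_j$; the derivative-of-coefficient part is the curvature $\gamma''$ of the curve in $\mathbb{R}^3$ (up to the rescaling between $s$ and the $g$-arc-length, which I will track carefully), while the $\nabla_{e_i}e_j$ part is handled by \eqref{eq:nablaij}. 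The key simplifications are: the antisymmetric $\epsilon_{ijk}$ piece of \eqref{eq:nablaij} contracts to zero against the symmetric $\dot\gamma_i\dot\gamma_j$; the $\delta_{ij}$ piece and the $\partial_{\mu_j}\phi\, e_i$ piece combine, using $\sum_i \dot\gamma_i^2 = 1$, to produce a term proportional to $\langle\nabla^{\mathbb{R}^3}\phi,\gamma'\rangle\gamma'$ plus a multiple of $\nabla^{\mathbb{R}^3}\phi$. I expect the $e_0$-components and the tangential ($\gamma'$-direction) components to cancel between the two contributions — the $e_0$-component must vanish because $L$ is invariant and the flow should stay circle-invariant, and the tangential component is irrelevant to $H$ — leaving $H$ proportional to the normal part of $\gamma''$, i.e. to $\phi^{-1}$ times the curve's curvature, after converting from $g$-arc-length back to Euclidean arc-length $s$ (this conversion contributes the $\phi^{-1}$, since $ds_g = \phi^{1/2}\,ds$).

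Finally I would note that mean curvature flow $\partial_t F = H$ descends to a flow of $\gamma$ in $\mathbb{R}^3$ because the family of surfaces stays circle-invariant (the normal bundle of $L$ is horizontal and $\U(1)$-invariant, as $e_0$ is always tangent), and that tangential components of the velocity amount only to reparametrisation, so the geometric flow of $\gamma$ is governed by the normal part of $\phi^{-1}\gamma''$, which is \eqref{eq:Mean_Curvature_Flow_E} up to a tangential term; writing the flow in the non-parametric (graphical) or normal form removes that ambiguity. The main obstacle is the bookkeeping in the second-derivative term: distinguishing the intrinsic $g|_L$ arc-length from the Euclidean $s$, correctly extracting $\gamma''$ from $\nabla_T T$, and verifying the claimed cancellation of the $e_0$- and $\gamma'$-components; this is a finite but somewhat delicate index computation using \eqref{eq:nabla00}–\eqref{eq:nablaij}, and I would organise it by splitting $\nabla_T T$ into its $e_0$, $\gamma'$, and normal-horizontal parts from the outset.
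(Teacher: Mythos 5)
Your proposal is correct and follows essentially the same route as the paper: both compute the mean curvature of $\pi^{-1}(\gamma)$ from an orthonormal tangent frame $\{e_0,T\}$ using Lemma \ref{lem:Covariant_Derivatives}, kill the $\epsilon_{ijk}$ term by symmetry, and find that the normal $\nabla\phi$-contributions from the orbit direction and the curve direction cancel, leaving exactly $\phi^{-1}\gamma''$ (which is already normal to $\gamma$, so the tangential ambiguity you hedge about does not in fact arise). The only difference is cosmetic: the paper parametrizes $\gamma$ by $g$-arclength and converts to Euclidean arclength at the end, whereas you work with Euclidean arclength and the unit frame $T=\sum_i\dot\gamma_i e_i$ throughout.
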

\noindent  Comparing with \eqref{eq:curve.shortening.flow}, we see that, up to the factor of $\phi^{-1}$, \eqref{eq:Mean_Curvature_Flow_E} coincides with the Euclidean curve shortening flow for $\gamma$.  However, this factor $\phi^{-1}$ is important since it has zeros, and therefore the flow fails to be parabolic (in fact, it is stationary) at these zeros of $\phi^{-1}$.

\begin{proof} We first assume that $\gamma=(\gamma_1,\gamma_2,\gamma_3)$ is parametrized by arc-length with respect to the induced metric on $\mathbb{R}^3$ and let $\dot{}$ denote differentiation with respect to this parameter.
The tangent space to $\mu^{-1}(\gamma)$   is spanned by $e_0$ and $\dot{\gamma}$.  We find that (using summation convention)
$$\dot{\gamma}= \phi^{1/2}\dot{\gamma}_i e_i.$$
Lemma \ref{lem:Covariant_Derivatives} then yields:
\begin{align}
\nabla_{\dot{\gamma}} \dot{\gamma} & =  
\nabla_{\dot{\gamma}} (\phi^{1/2}\dot{\gamma}_j e_j )\nonumber\\
& = \ddot{\gamma} +   
\dot{\gamma}_j\nabla_{ \dot{\gamma}}  (\phi^{1/2}  e_j) \nonumber\\
& = \ddot{\gamma} +   \phi^{-1/2} \nabla_{\dot{\gamma}}  (\phi^{1/2}) 
\phi^{1/2}\dot{\gamma}_j e_j + \phi^{1/2}
\dot{\gamma}_j\nabla_{ \dot{\gamma}} e_j \nonumber\\
& = \ddot{\gamma} +   \phi^{-1/2} \nabla_{\dot{\gamma}}  (\phi^{1/2}) \dot{\gamma} + \phi 
\dot{\gamma}_i \dot{\gamma}_j \nabla_{e_i} e_j \nonumber\\
& = \ddot{\gamma} +  \frac{1}{2\phi} (\nabla_{\dot{\gamma}} \phi) \dot{\gamma} +  
\dot{\gamma}_i \dot{\gamma}_j \frac{1}{2\phi^{1/2}} \left( \frac{\partial \phi}{\partial \mu_j} e_i - \delta_{ij} \frac{\partial \phi}{\partial \mu_k} e_k \right), \label{eq:ddot.gamma.1}
\end{align}
noting that the component of $\nabla_{e_i}e_j$ in the $e_0$ direction vanishes since the sum over $i,j$ is symmetric, whilst $\epsilon_{ijk}$ is skew-symmetric in $i,j$.  
We first observe that 
\begin{align}\label{eq:ddot.gamma.2}
\frac{1}{2\phi^{1/2}}\dot{\gamma}_i\dot{\gamma}_j\frac{\partial \phi}{\partial \mu_j} e_i&=\frac{1}{2\phi^{1/2}}\dot{\gamma}_i\nabla_{\dot{\gamma}}\phi e_i =\frac{1}{2\phi}(\nabla_{\dot{\gamma}}\phi)\dot{\gamma}.
\end{align}
Now, as our curve is parametrized by arc-length, meaning that 
$\sum_{i=1}^3 \dot{\gamma}_i^2 = \phi^{-1}$, we then see that
\begin{align}\label{eq:ddot.gamma.3}
\frac{1}{2\phi^{1/2}}\dot{\gamma}_i\dot{\gamma}_j\delta_{ij}\frac{\partial \phi}{\partial \mu_k} e_k&=\frac{1}{2\phi^{3/2}}\frac{\partial \phi}{\partial \mu_k} e_k=\frac{1}{2\phi}\nabla\phi.
\end{align}
Inserting \eqref{eq:ddot.gamma.2} and \eqref{eq:ddot.gamma.3} in \eqref{eq:ddot.gamma.1} 
we find that
\begin{equation}\label{eq:ddot.gamma.4}
\nabla_{ \dot{\gamma}} \dot{\gamma} = \ddot{\gamma} + \frac{1}{\phi}  ( \nabla_{\dot{\gamma}} \phi )\dot{\gamma}  -\frac{1}{2\phi}\nabla\phi
 .
\end{equation}
Observe from Lemma \ref{lem:Covariant_Derivatives} that
\begin{equation}\label{eq:nabla00.2}
\nabla_{e_0}e_0=\frac{1}{2\phi^{3/2}}\frac{\partial\phi}{\partial\mu_i}e_i=\frac{1}{2\phi}\nabla\phi.
\end{equation}
Putting \eqref{eq:ddot.gamma.4} together with \eqref{eq:nabla00}, we see that
\begin{equation}\label{eq:ddot.gamma.5}
I:=\nabla_{e_0} e_0 + \nabla_{\dot{\gamma}} \dot{\gamma}  = \ddot{\gamma} + \frac{1}{\phi}  ( \nabla_{\dot{\gamma}} \phi )\dot{\gamma}.
\end{equation}
Then, the mean curvature $H$ of $\mu^{-1}(\gamma)$, which is the normal projection of the quantity in \eqref{eq:ddot.gamma.5}, is 
\begin{align} 
H & = ( \nabla_{e_0} e_0 + \nabla_{\dot{\gamma}} \dot{\gamma})^{\perp} 
= I - \langle I, e_0 \rangle e_0 - \langle I, \dot{\gamma} \rangle \dot{\gamma}  
\label{eq:mean.curvature}
= \ddot{\gamma} + \frac{1}{2\phi}  ( \nabla_{\dot{\gamma}} \phi )\dot{\gamma},
\end{align}
where we have used the fact that $\sum_{i=1}^3 \dot{\gamma}_i^2 = \phi^{-1}$ implies
\begin{align*}
0 & = 2\dot{\gamma}_i\ddot{\gamma}_i+\phi^{-2}\frac{\partial \phi}{\partial \mu_i}\dot{\gamma}_i  = 2\phi^{-1}\langle \ddot{\gamma},\dot{\gamma}\rangle+\phi^{-2}\nabla_{\dot{\gamma}}\phi
\end{align*} 
and so 
$$2 \langle I, \dot{\gamma} \rangle =-\phi^{-1}\nabla_{\dot{\gamma}}\phi+2\phi^{-1}\nabla_{\dot{\gamma}}\phi= \phi^{-1}\nabla_{ \dot{\gamma}}\phi.$$

We seek solutions of mean curvature flow of the form $\mu^{-1}(\gamma(t))$, where $\gamma(t)$ denotes a $1$-parameter family of curves as above.  Note that mean curvature flow preserves circle-invariance, i.e.~mean curvature flow starting at $\mu^{-1}(\gamma(0))$ must be of the form $\mu^{-1}(\gamma(t))$. Now, $\mu^{-1}(\gamma(t))$ is a solution of the mean curvature flow if and only if $$\frac{\partial}{\partial t} \mu^{-1}(\gamma(t)) =H=  H\big(\mu^{-1}(\gamma(t))\big),$$ with $H$ as in \eqref{eq:mean.curvature}.  The computation of $H$ shows that it is orthogonal to the kernel of $\mu_\ast$ (where $\mu$ is the projection of $X$ to $\mathbb{R}^3$) so we may work on $\mathbb{R}^3$ and write $\mu_* (\partial_t \mu^{-1}(\gamma(t)))=  \mu_* H$. By \eqref{eq:mean.curvature}, this may be written as the following equation for $\gamma=\gamma(t)$:
\begin{equation}\label{eq:Mean_Curvature_Flow}
\frac{\partial}{\partial t}\gamma =  \ddot{\gamma} + \frac{1}{2\phi}  (\nabla_{\dot{\gamma}} \phi ) \dot{\gamma}.
\end{equation}
Now, recall we use $'$ to denote differentiation with respect to the Euclidean arc-length parameter.  Then,  $\sum_{i=1}^3 (\gamma_i')^2 =1$ while $\sum_{i=1}^3 \dot{\gamma_i}^2 =\phi^{-1}$ so
$$\dot{\gamma}=\phi^{-\frac{1}{2}}\gamma'.$$
We then compute
\begin{align}
\label{eq:ddot.gamma}\ddot{\gamma}
& = -\frac{1}{2 \phi} (\nabla_{ \dot{\gamma}} \phi ) \dot{\gamma} + \phi^{-1} \gamma'' .
\end{align}
Thus, inserting \eqref{eq:ddot.gamma} in \eqref{eq:Mean_Curvature_Flow} we find the claimed flow equation \eqref{eq:Mean_Curvature_Flow_E}.
\end{proof}

It is well-known that the mean curvature flow exists as long as the square norm of the second fundamental form of the flowing submanifold remains bounded.  From this observation we can deduce a criterion for blow-up related to the flow of curves \eqref{eq:Mean_Curvature_Flow_E}.

\begin{proposition}\label{prop:curve.flow.blow.up}
The mean curvature flow of $\mu^{-1}(\gamma)$ for curves $\gamma\subseteq\mathbb{R}^3$ exists as long as $\phi^{-1/2}\gamma''$ and $\nabla^{\perp}_{\mathbb{R}^3}\log\phi$ are bounded on $\gamma$, where $\nabla^{\perp}_{\mathbb{R}^3}$ is taken with respect to the flat metric on $\mathbb{R}^3$.
\end{proposition}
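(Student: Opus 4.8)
The plan is to translate the general principle ``mean curvature flow exists as long as $|A|^2$ stays bounded'' into the curve picture via the computation in Lemma \ref{lem:Covariant_Derivatives}. The surfaces $\pi^{-1}(\gamma_t)$ flowing by mean curvature in $X$ have tangent space spanned by $e_0$ and the lift $\dot\gamma=\phi^{-1/2}\gamma'$, so their second fundamental form decomposes into three blocks: $\nabla_{e_0}e_0$, $\nabla_{\dot\gamma}\dot\gamma$, and the mixed term $\nabla_{e_0}\dot\gamma=\nabla_{\dot\gamma}e_0$, each projected onto the normal bundle. First I would compute each of these in the orthonormal coframe $e^0=\phi^{-1/2}\eta$, $e^i=\phi^{1/2}d\mu_i$ using the formulas \eqref{eq:nabla00}--\eqref{eq:nablaij}. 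From \eqref{eq:nabla00.2} we already have $\nabla_{e_0}e_0=\tfrac{1}{2\phi}\nabla\phi$, whose normal projection contributes a term controlled by $\nabla^\perp_{\mathbb{R}^3}\log\phi$ (the tangential component along $e_0$ drops out automatically, and the component along $\dot\gamma$ is subtracted in the projection). From \eqref{eq:nablai0}, $\nabla_{\dot\gamma}e_0=\tfrac{\phi^{-1/2}}{2\phi^{3/2}}\gamma'_i\epsilon_{ijk}\tfrac{\partial\phi}{\partial\mu_j}e_k=\tfrac{1}{2\phi^2}(\gamma'\times\nabla\phi)$ in the obvious notation, again bounded by $\phi^{-1}|\nabla^\perp\phi|$ up to the factor $\phi^{-1}$ already absorbed into $e_k$. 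The only block that involves the bending of the curve itself is $\nabla_{\dot\gamma}\dot\gamma$, and the computation \eqref{eq:ddot.gamma.1}--\eqref{eq:ddot.gamma.4} in the proof of Proposition \ref{prop:curve.flow} expresses it as $\ddot\gamma+\tfrac1\phi(\nabla_{\dot\gamma}\phi)\dot\gamma-\tfrac{1}{2\phi}\nabla\phi$; passing to the Euclidean arclength parameter via $\dot\gamma=\phi^{-1/2}\gamma'$ and \eqref{eq:ddot.gamma} shows the genuinely curve-dependent part is $\phi^{-1}\gamma''$, whose normal projection has norm $\le\phi^{-1/2}\cdot\phi^{-1/2}|\gamma''|$ — i.e.\ controlled by $|\phi^{-1/2}\gamma''|$ once one remembers the extra $\phi^{-1/2}$ hidden in passing from the intrinsic to the Euclidean derivative, or more precisely that $|\ddot\gamma|$ is the quantity entering $|A|$ and $\ddot\gamma$ differs from $\phi^{-1}\gamma''$ by lower-order $\nabla\log\phi$ terms.

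The key step is therefore the estimate $|A(\pi^{-1}(\gamma_t))|^2\le C\big(|\phi^{-1/2}\gamma''|^2+|\nabla^\perp_{\mathbb{R}^3}\log\phi|^2\big)$, valid pointwise along $\gamma_t$, where $C$ depends only on the dimension. Given this, the standard long-time existence criterion for mean curvature flow (e.g.\ the fact that a smooth solution on $[0,T)$ extends past $T$ whenever $\sup_{[0,T)}|A|$ is finite) immediately yields the claim: as long as $\phi^{-1/2}\gamma''$ and $\nabla^\perp_{\mathbb{R}^3}\log\phi$ remain bounded along the flowing curve, $|A|$ stays bounded, hence the flow of surfaces — equivalently, by Proposition \ref{prop:curve.flow}, the flow \eqref{eq:Mean_Curvature_Flow_E} of curves — continues. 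One should be slightly careful that ``bounded on $\gamma$'' means bounded uniformly over the time interval in question, and that $\nabla^\perp_{\mathbb{R}^3}$ denotes the component of $\nabla_{\mathbb{R}^3}\log\phi$ orthogonal to $\gamma'$, since the tangential component is absorbed into reparametrization and does not enter $|A|$; this matches exactly which terms survive the normal projections above.

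The main obstacle is bookkeeping the normal projections correctly: the raw covariant derivatives $\nabla_{e_a}e_b$ contain tangential pieces (multiples of $e_0$ and of $\dot\gamma$) that must be discarded, and one has to check that after discarding them the surviving terms really do only involve $\phi^{-1/2}\gamma''$ and the \emph{normal} part of $\nabla\log\phi$ — in particular that no uncontrolled term like $\phi^{-3/2}|\nabla\phi|$ with the ``wrong'' power survives. The cleanest way to organize this is to note that $\nabla_{e_0}e_0+\nabla_{\dot\gamma}\dot\gamma=H$ has already been computed in \eqref{eq:mean.curvature} to equal $\ddot\gamma+\tfrac{1}{2\phi}(\nabla_{\dot\gamma}\phi)\dot\gamma$, so the ``trace'' part of $A$ is manifestly of the required form, and then handle the two ``traceless'' blocks $\nabla_{e_0}e_0-\tfrac12 H$, $\nabla_{\dot\gamma}\dot\gamma-\tfrac12 H$ and the off-diagonal block $(\nabla_{e_0}\dot\gamma)^\perp$ separately, each of which is visibly $O(\phi^{-1}|\nabla^\perp\phi|)+O(|\phi^{-1/2}\gamma''|)$ from the formulas above. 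Once that is in place the proposition follows immediately.
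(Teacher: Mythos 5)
Your proposal is correct and follows essentially the same route as the paper: compute the three normal-projected blocks $\nabla^{\perp}_{e_0}e_0$, $\nabla^{\perp}_{\dot\gamma}e_0$ and $\nabla^{\perp}_{\dot\gamma}\dot\gamma$ of the second fundamental form of $\pi^{-1}(\gamma)$ using Lemma \ref{lem:Covariant_Derivatives} and the identities from the proof of Proposition \ref{prop:curve.flow}, observe that they are controlled precisely by $\phi^{-1/2}\gamma''$ and $\phi^{-1}\nabla^{\perp}_{\mathbb{R}^3}\phi$, and then invoke the standard extension criterion for mean curvature flow under a bound on $|A|^2$. Apart from unimportant constant and $\phi$-power bookkeeping, this is the paper's proof.
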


\begin{proof}  Our aim is to compute the square norm of the second fundamental form of $\mu^{-1}(\gamma)$.  

Recall the notation from the proof of Proposition \ref{prop:curve.flow}.  In particular, recall that $e_0$ and $\dot{\gamma}$ are orthogonal unit tangent vectors on $\mu^{-1}\gamma$.  We see from \eqref{eq:nabla00.2} that
\begin{equation*}
\nabla^{\perp}_{e_0}e_0=\frac{1}{2\phi}\nabla\phi-\frac{1}{2\phi}(\nabla_{\dot{\gamma}}\phi)\dot{\gamma}=\frac{1}{2\phi}\nabla^{\perp}\phi.
\end{equation*}
From this, \eqref{eq:mean.curvature} and \eqref{eq:ddot.gamma} we deduce that
\begin{equation*}
\nabla_{\dot{\gamma}}^{\perp}\dot{\gamma}=\ddot{\gamma}+\frac{1}{2\phi}(\nabla_{\dot{\gamma}}\phi)\dot{\gamma}-\frac{1}{2\phi}\nabla^{\perp}\phi
=\frac{1}{\phi}\gamma''-\frac{1}{2\phi}\nabla^{\perp}\phi.
\end{equation*}
We are then left with computing $\nabla_{\dot{\gamma}}^{\perp}e_0$.  Using Lemma \ref{lem:Covariant_Derivatives} we find that
\begin{align*}
\nabla_{\dot{\gamma}}e_0&=\phi^{\frac{1}{2}}\dot{\gamma}_i\nabla_{e_i}e_0 =\frac{1}{\phi}\epsilon_{ijk}\dot{\gamma}_i\frac{\partial\phi}{\partial\mu_j}e_k =\frac{1}{\phi^{\frac{3}{2}}}\epsilon_{ijk}\dot{\gamma}_i\frac{\partial\phi}{\partial\mu_j}\frac{\partial}{\partial\mu_k} =\frac{1}{\phi}\dot{\gamma}\times \nabla\phi,
\end{align*}
where $\times$ is the cross product on $\mathbb{R}^3$.  Since $u\times v$ is orthogonal to $u$ and $v$, and $u\times u=0$, we find
\begin{equation*}
\nabla_{\dot{\gamma}}^{\perp}e_0=\frac{1}{\phi}\dot{\gamma}\times\nabla^{\perp}\phi.
\end{equation*}
We may then compute (recalling that $|\dot{\gamma}|=1$)
\begin{align*}
|\nabla_{e_0}^{\perp}e_0|^2&=\frac{1}{4\phi^2}|\nabla^{\perp}\phi|^2=\frac{1}{4\phi^2}|\nabla_{\mathbb{R}^3}^{\perp}\phi|^2_{\mathbb{R}^3},\\
|\nabla_{\dot{\gamma}}^{\perp}e_0|^2&=\frac{1}{\phi^2}|\dot{\gamma}\times\nabla^{\perp}\phi|^2=\frac{1}{\phi^2}|\nabla^{\perp}_{\mathbb{R}^3}\phi|^2_{\mathbb{R}^3},\\
|\nabla_{\dot{\gamma}}\dot{\gamma}|^2&=\frac{1}{\phi^2}|\gamma''|^2-\frac{1}{\phi^2}\langle\gamma'',\nabla^{\perp}\phi\rangle+\frac{1}{4\phi^2}|\nabla^{\perp}\phi|^2,\\
&=\frac{1}{\phi}|\gamma''|_{\mathbb{R}^3}^2-\frac{1}{\phi^{3/2}}\langle\gamma'',\nabla_{\mathbb{R}^3}^{\perp}\phi\rangle_{\mathbb{R}^3}+\frac{1}{4\phi^2}|\nabla^{\perp}_{\mathbb{R}^3}\phi|_{\mathbb{R}^3}^2,
\end{align*}
where $\mathbb{R}^3$ is used to indicate when we are using Euclidean norms and derivatives.  We see   that the squared norm of the second fundamental form of $\mu^{-1}(\gamma)$ is bounded if and only if
$$\phi^{-1}|\gamma''|_{\mathbb{R}^3}^2\quad\text{and}\quad \phi^{-2}|\nabla_{\mathbb{R}^3}^{\perp}\phi|^2_{\mathbb{R}^3}$$
are bounded.
\end{proof}

We shall now prove the following local uniqueness and stability result for mean curvature flow.

\begin{theorem}\label{thm:Strong_Stability}  Let $X$ be a hyperk\"ahler manifold from Example \ref{ex:MultiEH} or \ref{ex:MultiTN} with collinear points $p_1,\ldots,p_k$.  
Let $\gamma \subseteq \mathbb{R}^3$ be the straight line between $p_1$ and $p_2$ with midpoint $q$, and let $2d=\text{\emph{Length}}(\gamma)$. Suppose for $i>2$ that the Euclidean distance from $p_i$ to $q$ is strictly greater than $s d$ for $s\geq \max \lbrace 4, \sqrt{(k-2)/2} \rbrace$.

Then $\Sigma=\mu^{-1}(\gamma)$ is an embedded minimal $2$-sphere in $X$, there exists a tubular neighbourhood $U$ of $\Sigma$ in $X$ such that $\Sigma$ is the unique compact minimal submanifold of $U$ of dimension at least $2$, and mean curvature flow starting at any surface $\Gamma$ which is sufficiently $C^1$-close to $\Sigma$ will exist for all time and converge smoothly to $\Sigma$. 
\end{theorem}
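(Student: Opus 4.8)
The plan is to reduce everything to the weighted curve shortening flow \eqref{eq:Mean_Curvature_Flow_E} in $\mathbb{R}^3$ and to exploit that the hypothesis on $s$ forces the relevant region of $\mathbb{R}^3$ to be a quantitative ``ball'' on which $\phi$ is controlled. First I would set up coordinates so that $q$ is the origin and $\gamma$ is a segment of the $x^1$-axis with endpoints $p_1=(d,0,0)$, $p_2=(-d,0,0)$; then by Lemma \ref{lem:min.surf} the surface $\Sigma=\pi^{-1}(\gamma)$ is minimal, and since $\gamma$ meets no other $p_i$ (as $sd>d$), $\Sigma$ is an embedded $2$-sphere (topology remark after Lemma \ref{lem:min.surf}). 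The uniqueness statement will follow the strategy of Remark \ref{rmk:EH.geodesics}: I would show that on a suitable tubular neighbourhood $U=\pi^{-1}(V)$, with $V$ a Euclidean neighbourhood of $\gamma$ of radius comparable to $d$ and disjoint from all $p_i$ with $i>2$, the function $\phi$ restricted to $V$ has the property that the two point masses at $p_1,p_2$ dominate: the ``tail'' $\sum_{i>2}\frac{1}{2|x-p_i|}$ and all its derivatives up to second order are small compared to the Eguchi--Hanson-type part, uniformly on $V$, precisely because each $|x-p_i|\gtrsim (s-O(1))d$ while the relevant scale on $V$ is $d$. The choice $s\geq\max\{4,\sqrt{(k-2)/2}\}$ is exactly what makes the total tail contribution, which is at most $(k-2)\cdot\frac{1}{2(s-2)d}$ on $V$, smaller than a fixed fraction of the two-mass part; so $\phi$ on $V$ is a $C^2$-small perturbation of the exact Eguchi--Hanson potential $\phi_{EH}=\frac{1}{2|x-p_1|}+\frac{1}{2|x-p_2|}$. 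Then I would invoke (a quantitative version of) the Eguchi--Hanson facts recalled in Remark \ref{rmk:EH.geodesics}: for $\phi_{EH}$ the squared-distance function to the segment $\gamma$ (equivalently to $\Sigma_{EH}$) is strictly convex off $\gamma$ with a definite lower Hessian bound on $V$; convexity is an open $C^2$-condition, so it persists for $\phi$, giving that $\Sigma$ is the unique compact minimal submanifold of $U$ of dimension $\geq 2$ (any such would be circle-invariant by the maximum principle / the Hessian computation of Appendix \ref{app:hessian}, hence project to a geodesic, i.e.\ a segment, which must coincide with $\gamma$ by convexity).

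For the flow statement I would argue as follows. Take $\Gamma$ a surface $C^1$-close to $\Sigma$. Mean curvature flow starting at $\Gamma$ stays circle-invariant only if $\Gamma$ is, so in general I work with the genuine MCF $\Gamma_t$ in $X$ and use avoidance/barrier arguments against the circle-invariant model flows: by taking $\Gamma$ $C^1$-close to $\Sigma$ one traps $\Gamma_0$ between two circle-invariant barriers $\pi^{-1}(\gamma^{\pm})$ with $\gamma^{\pm}$ small planar graphs over $\gamma$, and one runs the weighted flow \eqref{eq:Mean_Curvature_Flow_E} for these. Here the key is the convexity of the squared-distance function $f=\tfrac12 \mathrm{dist}^2(\cdot,\Sigma)$ on $U$ just established: along MCF one computes $\partial_t f\le \Delta_{\Gamma_t} f - c\,|A|^0$-type inequality, or more simply $\partial_t (\sup_{\Gamma_t} f)\le 0$ with strict decrease away from $0$, because the Hessian of $f$ in the normal directions is bounded below by a positive constant on $U\setminus\Sigma$ and $\Sigma$ is totally geodesic for the model but in any case minimal, so $f$ is a strict Lyapunov function. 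This gives that $\Gamma_t$ stays in $U$ for all time and $\sup_{\Gamma_t}f\to 0$. Long-time existence then needs a curvature bound: I would use Proposition \ref{prop:curve.flow.blow.up} for the projected flow together with the fact that on $U$ the potential $\phi$ and $\nabla^\perp\log\phi$ are bounded away from the (absent in $U$) singularities, so any finite-time singularity would force $\phi^{-1/2}\gamma''$ to blow up; a standard White-type regularity / blow-up argument, using that the Gaussian density of the limiting tangent flow would be that of a multiplicity-one plane (since $\Sigma$ is embedded and $\Gamma_t$ stays graphical over it), rules this out. Smooth convergence to $\Sigma$ then follows from the Lyapunov decay of $f$ plus uniform curvature estimates and standard parabolic bootstrapping.

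\textbf{Main obstacle.} The delicate point is the long-time existence / no-singularity step for the full (not necessarily circle-invariant) flow: although the circle-invariant picture is essentially an ODE on $\mathbb{R}^3$ with $\Sigma$ an attracting fixed segment, to conclude for arbitrary $C^1$-close initial data one must control the flow in the normal bundle of $\Sigma$, and the ambient metric is only asymptotically, not exactly, Eguchi--Hanson on $U$. I expect the cleanest route is to prove that graphicality over $\Sigma$ is preserved (using the strict stability / positive second variation of $\Sigma$, which comes from the convexity of $f$) so that the flow is, for all time, a graph solving a scalar quasilinear parabolic equation over the fixed compact $\Sigma$; long-time existence and exponential convergence then follow from the standard theory for such equations near a strictly stable stationary solution. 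Verifying preservation of graphicality — i.e.\ that the neighbourhood $U$ and the $C^1$-smallness can be chosen so the flow never develops a vertical tangent — is where the quantitative hypothesis on $s$ will be used a second time, to ensure the relevant geometric constants (injectivity radius of the normal exponential map, size of the region of convexity of $f$) are comparable to $d$ and hence uniform.
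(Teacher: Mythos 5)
Your route is genuinely different from the paper's, but it has a gap at its central step. The paper does not work with the squared distance function at all: it observes (Lemma \ref{lem:min.calibrated}) that $\Sigma$ is holomorphic, hence special Lagrangian after a hyperk\"ahler rotation, so that by \cite[Proposition A]{Tsai} the strong stability condition of Tsai--Wang holds for $\Sigma$ if and only if the induced Gauss curvature is positive; the hypothesis on $s$ is then used only to verify $K>0$ by the explicit computation in Proposition \ref{prop:Positive_Curvature} (Appendix \ref{app:spheres}), and both the uniqueness in a tubular neighbourhood and the convergence of MCF for $C^1$-close initial data are quoted from \cite[Theorems A and B]{Tsai}. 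Your proposal instead tries to establish convexity of $f=\mathrm{dist}^2(\cdot,\Sigma)$ by treating $\phi$ as a $C^2$-small perturbation of the Eguchi--Hanson potential and invoking openness of convexity. This step fails as stated, for two reasons. First, in the multi-Taub--NUT case (Example \ref{ex:MultiTN}) the constant $m>0$ is not small, so on no neighbourhood of $\gamma$ is the metric a small perturbation of Eguchi--Hanson; the convexity of $\mathrm{dist}^2$ to $\Sigma$ recalled in Remark \ref{rmk:EH.geodesics} is only asserted (via \cite{TsaiWangFlows}) for the genuine Eguchi--Hanson metric. Second, even in the ALE case convexity of $f$ is \emph{not} an open $C^2$-condition near $\Sigma$: at points of $\Sigma$ the Hessian of $f$ vanishes on tangential directions, so there is no uniform positive lower bound to absorb the perturbation, and indeed Remark \ref{rmk:non.tot.geod} of the paper shows that the neighbourhood on which $\mathrm{dist}^2$ to such a sphere can be convex may shrink to zero as the configuration of the other points varies. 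So a quantitative replacement for convexity, using the hypothesis on $s$, is genuinely required, and your sketch does not supply it; in the paper that role is played exactly by the positive Gauss curvature computation feeding into Tsai--Wang's strong stability.

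Beyond that, the second half of your argument (barriers, Lyapunov decay of $f$, preservation of graphicality, long-time existence for the non-invariant flow) is essentially an attempt to reprove \cite[Theorem B]{Tsai} from scratch; as you acknowledge in your ``main obstacle'' paragraph, the preservation of graphicality and the exclusion of finite-time singularities in codimension two are not standard consequences of a maximum principle and are left unresolved. Also, the parenthetical claim that any compact minimal submanifold of $U$ ``would be circle-invariant by the maximum principle'' is unjustified and unnecessary: the correct elementary statement is that a convex function restricted to a compact minimal submanifold is subharmonic, hence constant --- but again this presupposes the convexity you have not established. If you want a self-contained proof along your lines, you would need (i) a direct Hessian estimate for $\mathrm{dist}^2$ (or another strictly convex-in-normal-directions function) valid for $\phi=m+\sum_i\frac{1}{2|x-p_i|}$ under the stated bound on $s$, with constants uniform up to $\Sigma$, and (ii) a genuine higher-codimension stability argument for the flow; the paper's citation of \cite[Proposition A, Theorems A and B]{Tsai} together with Proposition \ref{prop:Positive_Curvature} is precisely the shortcut around both.
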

\begin{proof}  By Lemma \ref{lem:min.calibrated}, $\Sigma$ is a holomorphic 2-sphere.  Holomorphic curves in $X$ are special Lagrangian for a different K\"ahler structure on $X$ in the hyperk\"ahler family: we will see this explicitly in $\S$\ref{sec:Lagrangians}.  	It then follows from \cite[Proposition A]{Tsai}  
that, in this case, the strong stability condition of \cite{Tsai} holds for $\Sigma$ if and only if the Gauss curvature of $\Sigma$ is positive. We show that this is indeed the case, under the hypotheses of the statement, in Proposition \ref{prop:Positive_Curvature} of Appendix \ref{sec:Positive_Curvature}.  The conclusions then follow from \cite[Theorems A and B]{Tsai}.
\end{proof}

\begin{remark}  One can generalize Theorem \ref{thm:Strong_Stability} to the setting of weak notions of (minimal) submanifolds and mean curvature flows.  Specifically, by \cite{LotaySchulze}, under the conditions of Theorem \ref{thm:Strong_Stability}, $\Sigma$ is unique in $U$ amongst stationary integral varifolds with support of dimension at least 2, and for any integral 2-current $\Gamma$ in $U$ which is homologous to $\Sigma$ and of mass strictly less than twice the area of $\Sigma$, there is an enhanced Brakke flow which exists and is non-vanishing for all time and converges \emph{smoothly} to $\Sigma$.  One can also remove the assumption about the points being collinear by suitably modifying the lower bound on $s$ (see \cite[Proposition 1.7]{Trinca}).
\end{remark}

Whilst the curve shortening flow in Euclidean space is by now a classical subject, the flow is actually rather poorly understood for general curves in $\mathbb{R}^3$.  One of the causes for difficulty, in contrast to the situation in $\mathbb{R}^2$, is that a space curve may cross itself as it evolves along the flow.  We therefore leave the general mean curvature flow of circle-invariant surfaces for future study and instead focus on the case where the curve is contained in a plane in the next section.  As we shall see, this has a natural interpretation in terms of symplectic geometry: namely, that the corresponding circle-invariant surfaces are Lagrangian for some choice of symplectic form in the hyperk\"ahler triple.

\section{Lagrangian spheres}\label{sec:Lagrangians}

In this section we study Lagrangian submanifolds, particularly spheres, in $X$ with respect to symplectic forms compatible with the hyperk\"ahler structure.  Recall that the twistor space of $X$ can be identified with the unit sphere $\mathbb{S}^2 \subset \mathbb{R}^3$. So given $v \in \mathbb{S}^2$ we shall denote by $\omega_v$ the symplectic structure associated with $v$ and the hyperk\"ahler metric.  We shall restrict to the case where $X$ is an ALE or ALF 
hyperk\"ahler 4-manifold given by the Gibbons--Hawking ansatz, though much of our discussion holds without the ALE or ALF hypothesis.

\subsection{Classifying invariant Lagrangian classes}

\begin{definition}[Lagrangian class]
A homology class $\delta \in H_2(X, \mathbb{Z})$ is a Lagrangian class with respect to some symplectic form $\omega$ if it can be represented by a Lagrangian cycle.\footnote{Lagrangian cycles are supposed to be smooth almost everywhere.} Furthermore, $\delta$ is an invariant Lagrangian class if it admits a circle-invariant Lagrangian representative.
\end{definition}

\noindent Note that the definition of invariant Lagrangian class  requires the Lagrangian representative to be invariant, not just that the class be invariant and Lagrangian.  However, we shall see that all Lagrangian classes with respect to the hyperk\"ahler forms are invariant Lagrangian classes.

To start, we  classify  invariant Lagrangians using the following elementary but useful observation.  

\begin{lemma}\label{lem:Lagrangian}
	Let $v \in \mathbb{S}^2$ and let $\gamma$ be a curve in  $\mathbb{R}^3$.  Then, 
	$$\omega_{v} |_{\mu^{-1}(\gamma)} = \langle \gamma' , v \rangle \vol_{\mu^{-1}(\gamma)},$$
	where $\gamma'$ is the velocity of $\gamma$ with respect to Euclidean arclength and $\langle.,.\rangle$ is the Euclidean inner product. Hence, a circle-invariant surface $\mu^{-1}(\gamma)$ in $X$ is Lagrangian for some symplectic form in the sphere of hyperk\"ahler 2-forms on $X$ if and only if $\gamma$ is contained in a plane.
\end{lemma}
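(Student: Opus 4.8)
The plan is to compute $\omega_v$ restricted to $\pi^{-1}(\gamma)$ directly in the Gibbons--Hawking coordinates of Proposition \ref{prop:MainHK}. Since $\alpha_i=\phi^{1/2}d\mu_i$ and $a\eta=\phi^{-1/2}\eta$, the hyperk\"ahler forms \eqref{eq:hypforms} become $\omega_i=\eta\wedge d\mu_i+\phi\, d\mu_j\wedge d\mu_k$ for $(i,j,k)$ a cyclic permutation of $(1,2,3)$, so for $v=(v_1,v_2,v_3)\in\mathbb{S}^2$ we have $\omega_v=\sum_i v_i\omega_i=\eta\wedge\big(\sum_i v_i\, d\mu_i\big)+\phi\sum_{(i,j,k)} v_i\, d\mu_j\wedge d\mu_k$ (the same expression used in Lemma \ref{lem:min.calibrated}).

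Next I would parametrize. Writing $\gamma=(\gamma_1,\gamma_2,\gamma_3)$ by Euclidean arclength $s$, so that $\sum_i(\gamma_i')^2=1$, and letting $(s,\theta)$ be coordinates on $\pi^{-1}(\gamma)$ with $\theta$ running along the circle fibre, on $\pi^{-1}(\gamma)$ we have $\mu_i=\gamma_i(s)$ and hence $d\mu_i|_{\pi^{-1}(\gamma)}=\gamma_i'\, ds$. Consequently every $d\mu_j\wedge d\mu_k$ restricts to $\gamma_j'\gamma_k'\, ds\wedge ds=0$, killing the second group of terms, while $\eta\wedge\big(\sum_i v_i\,d\mu_i\big)|_{\pi^{-1}(\gamma)}=\langle v,\gamma'\rangle\,\eta\wedge ds$. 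On the other hand, by \eqref{eq:HypMetric2} the induced metric on $\pi^{-1}(\gamma)$ is $\phi^{-1}\eta^2+\phi\big(\sum_i(\gamma_i')^2\big)\,ds^2=\phi^{-1}\eta^2+\phi\, ds^2$, whose Riemannian volume form is $\sqrt{\phi^{-1}\cdot\phi}\;\eta\wedge ds=\eta\wedge ds$ (fixing the orientation so that this is positive). Comparing the two computations gives $\omega_v|_{\pi^{-1}(\gamma)}=\langle\gamma',v\rangle\,\vol_{\pi^{-1}(\gamma)}$, which is the first assertion.

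For the characterization: $\pi^{-1}(\gamma)$ is Lagrangian for $\omega_v$ if and only if $\langle\gamma'(s),v\rangle=0$ for all $s$, i.e. $\gamma'\perp v$ everywhere; and since $\frac{d}{ds}\langle\gamma(s),v\rangle=\langle\gamma'(s),v\rangle$, this is equivalent to $\langle\gamma(\cdot),v\rangle$ being constant, i.e. to $\gamma$ lying in an affine plane with normal $v$. Hence $\pi^{-1}(\gamma)$ is Lagrangian for some $\omega_v$ with $v\in\mathbb{S}^2$ precisely when such a unit normal exists, which is exactly the condition that $\gamma$ be contained in a plane. (Degenerate curves --- points and line segments --- lie in infinitely many planes, consistent with the corresponding invariant surfaces being Lagrangian for a whole circle, resp.\ sphere, of the $\omega_v$.)

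I do not anticipate a genuine obstacle: the only points needing a little care are the bookkeeping of orientation when identifying $\eta\wedge ds$ with $\vol_{\pi^{-1}(\gamma)}$ (affecting only an overall sign, absorbed into the choice of orientation of the fibre/curve) and handling the degenerate cases in the ``iff'' cleanly. The substantive content is the vanishing of the $\phi\, d\mu_j\wedge d\mu_k$ terms upon restriction to a one-parameter family in the base, together with the conformal cancellation $\sqrt{\phi^{-1}\cdot\phi}=1$ in the induced volume form.
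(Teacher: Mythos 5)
Your proof is correct and follows essentially the same route as the paper: restrict $\omega_v=\sum_i v_i(\eta\wedge d\mu_i+\phi\,d\mu_j\wedge d\mu_k)$ to $\pi^{-1}(\gamma)$, note the $d\mu_j\wedge d\mu_k$ terms die and identify $\eta\wedge ds$ with $\vol_{\pi^{-1}(\gamma)}$ via the conformal cancellation. The paper merely normalizes $v=(0,0,1)$ by a rotation and leaves the volume-form identification and the planarity equivalence implicit, which you spell out.
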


\begin{proof}	Without loss of generality we may focus on $v=(0,0,1)\in\mathbb{S}^2$  so that $\omega_{v}$ is
	$$\omega_3=\eta\wedge d \mu_3+\phi d\mu_1\wedge d\mu_2.$$
	We see that if we write $\gamma=(\gamma_1,\gamma_2,\gamma_3)$ then
	$$\omega_3|_{\mu^{-1}(\gamma)}=\gamma_3' \eta \wedge ds,$$
	where $s$ denotes the Euclidean arclength parameter. The result  follows.
\end{proof}

Given a symplectic structure $\omega_v$, it is natural to ask which classes are Lagrangian with respect to $\omega_v$. 
Lemma \ref{lem:Lagrangian} allows us to answer this question as follows.  

\begin{corollary}[Lagrangian classes with respect to a fixed $\omega$]\label{cor:Lagrangian_Classes_From_Omega}
	Let $v \in \mathbb{S}^2$ and $\delta \in H_2(X, \mathbb{Z})$. Then $\delta$ is a Lagrangian class with respect to $\omega_v$ if and only if $\delta$ is a finite sum of classes of the form $[\mu^{-1}(\gamma)]$, with $\gamma$ curves connecting two singularities of $\phi$ lying in the same plane parallel to $v^{\perp}$.  
\end{corollary}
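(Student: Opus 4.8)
The plan is to prove the two implications separately, the real content lying in the ``only if'' direction, where I would combine the period computation coming from Lemma \ref{lem:Lagrangian} with the structure of $H_2(X,\mathbb{Z})$ recalled in Proposition \ref{prop:min.spheres}: it is isomorphic to $A_{k-1}$ with negative-definite intersection form, and the classes of square $-2$ are exactly the $\pm[\pi^{-1}(\gamma_{ij})]$. For the ``if'' direction, suppose $\delta=\sum_\alpha[\pi^{-1}(\gamma_\alpha)]$, where each $\gamma_\alpha$ joins two singularities of $\phi$ inside a plane $P_\alpha$ parallel to $v^{\perp}$; I may take $\gamma_\alpha\subset P_\alpha$ (e.g.\ the straight segment). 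Then $\gamma_\alpha'$ is tangent to $P_\alpha$, hence orthogonal to $v$, so Lemma \ref{lem:Lagrangian} gives $\omega_v|_{\pi^{-1}(\gamma_\alpha)}=\langle\gamma_\alpha',v\rangle\,\vol_{\pi^{-1}(\gamma_\alpha)}=0$. Thus each $\pi^{-1}(\gamma_\alpha)$, and therefore the formal sum $\sum_\alpha\pi^{-1}(\gamma_\alpha)$ representing $\delta$, is a Lagrangian cycle for $\omega_v$, so $\delta$ is a Lagrangian class.

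For the ``only if'' direction, I would first record the period formula. Writing $h_i=\langle p_i,v\rangle$, Lemma \ref{lem:Lagrangian} together with \eqref{eq:Area_Length} gives, for the straight segment $\gamma_{ab}$ from $p_a$ to $p_b$,
\[
\int_{\pi^{-1}(\gamma_{ab})}\omega_v=\Big\langle\tfrac{p_b-p_a}{|p_b-p_a|},v\Big\rangle\,\mathrm{Area}\big(\pi^{-1}(\gamma_{ab})\big)=2\pi\langle p_b-p_a,v\rangle=2\pi(h_b-h_a),
\]
so $[\pi^{-1}(\gamma_{ab})]$ pairs to zero with $[\omega_v]$ precisely when $p_a,p_b$ lie in a common plane parallel to $v^{\perp}$. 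Now suppose $\delta$ is represented by a Lagrangian cycle; I would take this cycle to be a finite sum $\sum_i m_i Z_i$ of connected, closed, embedded Lagrangian surfaces $Z_i$ (the ``smooth almost everywhere'' clause allowing the $Z_i$ to intersect one another), so that $\delta=\sum_i m_i[Z_i]$. For each $i$ two facts are available: $\langle[\omega_v],[Z_i]\rangle=\int_{Z_i}\omega_v=0$ because $Z_i$ is Lagrangian; and $[Z_i]^2=-\chi(Z_i)\ge -2$ because the normal bundle of a Lagrangian surface in a symplectic $4$-manifold is isomorphic to its cotangent bundle, while $\chi(Z_i)\le 2$ for a connected closed orientable surface. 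Since the intersection form on $A_{k-1}$ is negative definite, $[Z_i]^2\in\{0,-2\}$; if it is $0$ then $[Z_i]=0$, and if it is $-2$ then $[Z_i]=\pm[\pi^{-1}(\gamma_{ab})]$ for some $a\ne b$ by Proposition \ref{prop:min.spheres}, in which case the period formula forces $h_a=h_b$. Hence each $[Z_i]$ is either $0$ or $\pm[\pi^{-1}(\gamma_{ab})]$ with $p_a,p_b$ in a common plane parallel to $v^{\perp}$, and since reversing the orientation of $\gamma_{ab}$ negates the class, $\delta=\sum_i m_i[Z_i]$ is a finite sum of classes of the asserted form.

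The main obstacle I anticipate is the reduction, in the ``only if'' part, of a general ``almost everywhere smooth'' Lagrangian cycle to a finite sum of connected, closed, embedded Lagrangian surfaces, since that is exactly what makes the two inputs $[Z_i]^2=-\chi(Z_i)$ and $\int_{Z_i}\omega_v=0$ applicable; I would deal with this either by building it into the definition of a Lagrangian cycle, or by observing that the degree-$2$ homology classes carried by a.e.-smooth Lagrangian cycles coincide with those carried by finite unions of embedded Lagrangian submanifolds. Once that is granted, the remainder is only linear algebra in the $A_{k-1}$ root lattice combined with the period formula, and I expect no further difficulty; note in particular that this argument also establishes the earlier assertion that every Lagrangian class for a hyperk\"ahler form is an invariant Lagrangian class, since every class in the resulting list is realised, as in the ``if'' direction, by a circle-invariant Lagrangian.
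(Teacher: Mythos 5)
Your ``if'' direction is the same as the paper's. For the ``only if'' direction you take a genuinely different, and in fact more detailed, route: the paper's proof consists solely of the period computation $\langle[\omega_v],[\pi^{-1}(\gamma_{ij})]\rangle=2\pi\langle p_j-p_i,v\rangle$ for the generators supplied by Proposition \ref{prop:min.spheres}, followed by ``from which the result follows''; it never spells out how an arbitrary Lagrangian cycle is forced into the stated sublattice. Your argument supplies exactly that content: decompose the cycle into closed connected Lagrangian pieces $Z_i$, use $[Z_i]^2=-\chi(Z_i)\geq -2$ together with negative definiteness of the $A_{k-1}$ intersection form to conclude that each $[Z_i]$ is zero or a class of square $-2$, identify the latter with $\pm[\pi^{-1}(\gamma_{ab})]$ via Proposition \ref{prop:min.spheres}, and then apply the period formula to each piece to force $\langle p_b-p_a,v\rangle=0$. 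This extra lattice-theoretic step is doing real work: vanishing of $\langle[\omega_v],\delta\rangle$ alone would not suffice, since for instance with three singularities at pairwise distinct heights $0,1,2$ along $v$ the class $[\pi^{-1}(\gamma_{23})]-[\pi^{-1}(\gamma_{12})]$ has zero period against $\omega_v$ but is not a sum of classes of the stated form. So, piece by piece, your argument proves something the paper leaves implicit.

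The one point you rightly flag, reducing an ``almost everywhere smooth'' Lagrangian cycle to a finite sum of connected, closed, \emph{embedded}, oriented Lagrangian surfaces, is where the only genuine gap sits, and neither of your proposed fixes is automatic. Building embeddedness into the definition amends the statement rather than proving it, and the ``observation'' that a.e.-smooth Lagrangian cycles carry the same classes as unions of embedded Lagrangians is not obvious: for an immersed Lagrangian surface with transverse double points the relevant identity is $[Z]^2=-\chi(Z)+2D$, with $D$ the signed count of double points, so the bound $[Z]^2\geq -2$ fails (one negative double point on a Lagrangian sphere already gives square $-4$), and the definiteness argument no longer pins $[Z]$ to a root; singular pieces are even less controlled. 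To be fair, the paper's footnoted definition of Lagrangian cycle is equally loose and its own proof does not address this either, but if you want your ``only if'' direction to be airtight you should either state that a Lagrangian cycle means an integer combination of embedded closed oriented Lagrangian surfaces, or add an argument handling immersed and singular components.
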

\begin{proof}
	Recall from Proposition \ref{prop:min.spheres} that the second homology of $X$ is generated by classes of the form $[\mu^{-1}(\gamma)]$ for $\gamma$ a straight line connecting singularities $p_i$, $p_j$ of $\phi$. Lemma \ref{lem:Lagrangian} shows that
	\begin{equation*}
	\langle [ \omega_v ] , [\mu^{-1}(\gamma)] \rangle= 2 \pi \int_{\gamma} \langle \gamma' , v \rangle  = 2 \pi  \langle p_j - p_i , v \rangle.
	\end{equation*}
Hence $[\mu^{-1}(\gamma)]$ is Lagrangian if and only if $\langle p_j-p_i,v\rangle=0$, from which the result follows.
\end{proof}

\begin{remark} Proposition \ref{prop:min.spheres} and
	Corollary \ref{cor:Lagrangian_Classes_From_Omega} show that all Lagrangian classes with respect to a given $\omega_v$ are invariant Lagrangian classes, represented by a union of embedded, circle-invariant, minimal Lagrangian 2-spheres that pairwise intersect at most one point.
\end{remark}

Suppose instead we are given $\delta \in H_2(X, \mathbb{Z})$ of the form $\delta= [\mu^{-1}(\gamma)]$, where $\gamma$ is a straight line connecting two singularities of $\phi$, and we want to know when $\delta$ is a Lagrangian class with respect to $\omega_v$. Since $\gamma$ is a straight line, we see from Lemma \ref{lem:Lagrangian} that $\mu^{-1}(\gamma)$ is Lagrangian with respect to $\omega_v$ if and only if $v\in (\gamma')^{\perp}$. As $v \in \mathbb{S}^2$ and $(\gamma')^{\perp} \cap\mathbb{S}^2\cong \mathbb{S}^1$, we record this result as follows.

\begin{corollary}[How many $\omega$ have $\delta$ as a Lagrangian class?]\label{cor:Omega_from_Lagrangian_Classes}
	 Let $\delta=[\mu^{-1}(\gamma)]$, for $\gamma$ a straight line connected two singularities of $\phi$. Then, there is an $\mathbb{S}^1 \subseteq \mathbb{S}^2$, given by $\mathbb{S}^1=(\gamma')^{\perp} \cap\mathbb{S}^2$, so that $\delta$ is a Lagrangian class with respect to $\omega_v$ if and only if $v\in\mathbb{S}^1$.
\end{corollary}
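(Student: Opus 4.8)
The plan is to read this off directly from Lemma \ref{lem:Lagrangian} together with the cohomological computation already performed inside the proof of Corollary \ref{cor:Lagrangian_Classes_From_Omega}. First I would treat the easy implication: suppose $v\in\mathbb{S}^1:=(\gamma')^\perp\cap\mathbb{S}^2$. Since $\gamma$ is a straight segment, its Euclidean unit velocity $\gamma'$ is constant, and $\langle\gamma',v\rangle=0$ by assumption; hence Lemma \ref{lem:Lagrangian} gives $\omega_v|_{\pi^{-1}(\gamma)}\equiv 0$. Thus $\pi^{-1}(\gamma)$ is itself a circle-invariant Lagrangian representative of $\delta$ with respect to $\omega_v$, so $\delta$ is a Lagrangian class.

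For the converse, I would use that being a Lagrangian class is a purely cohomological constraint, so the choice of representative is irrelevant. If $L$ is any Lagrangian cycle with $[L]=\delta$, then $\omega_v|_L\equiv 0$ away from a measure-zero set, which suffices, so
\[
\langle[\omega_v],\delta\rangle=\int_L\omega_v=0 .
\]
Writing $p_i,p_j$ for the two singularities of $\phi$ joined by $\gamma$, the computation in the proof of Corollary \ref{cor:Lagrangian_Classes_From_Omega} gives $\langle[\omega_v],\delta\rangle=2\pi\langle p_j-p_i,v\rangle$. Since $\gamma'$ is a nonzero scalar multiple of $p_j-p_i$, the vanishing of this pairing is equivalent to $\langle\gamma',v\rangle=0$, i.e.\ $v\in(\gamma')^\perp$; together with $v\in\mathbb{S}^2$ this says exactly $v\in\mathbb{S}^1$. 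Finally, because $\gamma'\neq 0$, the subspace $(\gamma')^\perp\subset\mathbb{R}^3$ is a $2$-plane through the origin, so $(\gamma')^\perp\cap\mathbb{S}^2$ is a great circle, diffeomorphic to $\mathbb{S}^1$, which justifies the notation in the statement.

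There is no genuine obstacle here; the result is a direct corollary. The only point worth a moment's care is that the definition of ``Lagrangian class'' permits an arbitrary Lagrangian cycle, smooth almost everywhere, rather than the specific surface $\pi^{-1}(\gamma)$; but since the obstruction $\langle[\omega_v],\delta\rangle$ depends only on $\delta\in H_2(X,\mathbb{Z})$, replacing the representative changes nothing in the argument above.
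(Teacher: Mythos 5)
Your proof is correct and follows essentially the same route as the paper: the paper also deduces the result from Lemma \ref{lem:Lagrangian} (a straight line has constant $\gamma'$, so $\pi^{-1}(\gamma)$ is Lagrangian for $\omega_v$ precisely when $v\in(\gamma')^{\perp}$), together with the pairing computation $\langle[\omega_v],\delta\rangle=2\pi\langle p_j-p_i,v\rangle$ from the proof of Corollary \ref{cor:Lagrangian_Classes_From_Omega}. Your only addition is to spell out explicitly the cohomological argument for the ``only if'' direction (an arbitrary Lagrangian representative forces $\langle[\omega_v],\delta\rangle=0$), which the paper leaves implicit; this is a harmless and correct refinement, not a different method.
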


\begin{remark}
	Despite the existence results for Lagrangian classes in Corollaries \ref{cor:Lagrangian_Classes_From_Omega} and \ref{cor:Omega_from_Lagrangian_Classes}, it is also immediate from Lemma \ref{lem:Lagrangian} that for the generic $\omega$ compatible with the hyperk\"ahler structure there are no Lagrangian classes.
\end{remark}

\subsection{Thomas conjecture}

There is a well-known conjecture due to Thomas \cite{Thomas} asserting that the existence of a special Lagrangian representative in the Hamiltonian isotopy class of a zero Maslov Lagrangian should be equivalent to a notion of \emph{stability}.  We wish to describe this conjecture here with a view to proving a version of it in our  setting. We begin with some preliminaries.

\subsubsection{Graded Lagrangians}\label{sss:graded}
Without loss of generality fix the K\"ahler structure on $X$ to be $(\omega,I)=(\omega_3, I_3)$ (i.e.~choose $v=(0,0,1)\in\mathbb{S}^2$). By Lemma \ref{lem:Lagrangian}, the circle-invariant surface $\mu^{-1}(\gamma)$ is Lagrangian if and only if the curve $\gamma$ lies in a plane parallel to $\mu_3=0$, so we restrict to such curves. 

Now, there is a natural choice of $I$-holomorphic volume form on $X$ given by
$$\Omega=\omega_1+i\omega_2.$$
(Any other choice of $I$-holomorphic volume form satisfying the normalization condition $2\omega^2=\Omega \wedge \overline{\Omega}$ differs from $\Omega$ by multiplication by a unit complex number.)
This choice of $\Omega$ enables us to determine the phase of the (oriented) Lagrangian $\mu^{-1}(\gamma)$ as in Definition \ref{dfn:Lag.intro}: it is the function $e^{i\beta}: \mu^{-1}(\gamma) \to \mathbb{S}^1$ so that 
$$\Re(e^{-i\beta}\Omega)|_{\mu^{-1}(\gamma)}=\vol_{\mu^{-1}(\gamma)}.$$
Notice that this equality implies $\Im(e^{-i\beta}\Omega)|_{\mu^{-1}(\gamma)}=0$ by Wirtinger's inequality. 
Recall that if we parameterize $\gamma=\gamma(s)$ by Euclidean arclength then  $\vol_{\mu^{-1}(\gamma)}=\eta\wedge ds$.  Hence,
\begin{align*}
\Im(e^{-i\beta}\Omega) |_{\mu^{-1}(\gamma)} & = ( \cos(\beta)\omega_2 - \sin(\beta)\omega_1 ) |_{\mu^{-1}(\gamma)} = (\gamma_2'\cos(\beta)-\gamma_1'\sin(\beta)) \vol_{\mu^{-1}(\gamma)}
\end{align*}
and so
\begin{equation}\label{eq:Beta_angle_of_curve_with_the_x_axis}
\tan(\beta)=\tfrac{\gamma_2'}{\gamma_1'}
\end{equation}
for $\gamma_1'\neq 0$.  Moreover, we see that 
\[
\Re(e^{-i\beta}\Omega)|_{\mu^{-1}(\gamma)}=(\gamma_1'\cos\beta+\gamma_2'\sin\beta)\vol_{\mu^{-1}(\gamma)},
\]
which means that, in fact, \eqref{eq:Beta_angle_of_curve_with_the_x_axis} can be improved to:
\begin{equation}\label{eq:Beta_angle_of_curve_with_the_x_axis.2}
\cos\beta=\gamma_1'\quad\text{and}\quad \sin\beta=\gamma_2'.
\end{equation}
Thus, up to an integer multiple of $2 \pi$, the Lagrangian angle $\beta$ is the angle between $\gamma'$ and the $\mu_1$-axis (in the plane parallel to $\mu_3=0$ in which $\gamma$ lies).

Recall that the curvature $\kappa$ of $\gamma$ with respect to the Euclidean metric on $\mathbb{R}_{\mu_1,\mu_2}^2$ is defined by $\gamma''=\kappa N$, where $N=-I\gamma'$ is the unit normal vector such that $\lbrace \gamma', N \rbrace$ is an oriented basis of $\mathbb{R}_{\mu_1,\mu_2}^2$ with the standard orientation. From this, we immediately deduce the following.

\begin{lemma}[$\beta'$ is the curvature of $\gamma$]\label{lem:Lagrangian_Angle_Curvature}
Let $\gamma$ be an oriented planar curve in $\mathbb{R}^3$ such that $\mu^{-1}(\gamma)$ is Lagrangian with respect to some K\"ahler structure $(\omega_v,I_v)$ on $X$. Let $\kappa$ be the curvature of $\gamma$, viewed as a function on $\gamma$, and let $e^{i\beta}$ be the phase of $\mu^{-1}(\gamma)$ with respect to some choice of $I_v$-holomorphic volume form.  If $'$ denotes differentiation with respect to Euclidean arclength on $\gamma$, then 
$$\beta'=\kappa.$$
\end{lemma}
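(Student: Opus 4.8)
The plan is to reduce to the normalized situation and then simply differentiate the identity \eqref{eq:Beta_angle_of_curve_with_the_x_axis.2}. First, applying an ambient rotation of $\mathbb{R}^3$ carrying $v$ to $(0,0,1)$ — which rotates the hyperk\"ahler triple and hence identifies $(\omega_v,I_v)$ with $(\omega_3,I_3)$ — I may assume that $\gamma$ is parametrized by Euclidean arclength $s$ and lies in a plane parallel to $\{\mu_3=0\}$, and that the $I_v$-holomorphic volume form used to define $\beta$ is $\Omega=\omega_1+i\omega_2$; any other choice differs from this one by multiplication by a unit complex number and so only shifts $\beta$ by a constant, which does not affect $\beta'$. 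In this normalization \eqref{eq:Beta_angle_of_curve_with_the_x_axis.2} says precisely that, regarding $\gamma'$ as a unit vector in the $(\mu_1,\mu_2)$-plane, $\gamma'=(\cos\beta,\sin\beta)$.

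Second, I differentiate this with respect to $s$. Since $\beta=\beta(s)$, the chain rule gives
\[
\gamma''=\frac{d}{ds}(\cos\beta,\sin\beta)=\beta'\,(-\sin\beta,\cos\beta).
\]
Third, I identify the vector on the right. It is a unit vector orthogonal to $\gamma'=(\cos\beta,\sin\beta)$, and $\det\big[\gamma'\mid(-\sin\beta,\cos\beta)\big]=\cos^2\beta+\sin^2\beta=1>0$, so $\{\gamma',(-\sin\beta,\cos\beta)\}$ is a positively oriented basis of $\mathbb{R}^2_{\mu_1,\mu_2}$. Hence $(-\sin\beta,\cos\beta)=N$, the distinguished unit normal in the definition $\gamma''=\kappa N$, and comparing the two expressions for $\gamma''$ yields $\beta'N=\kappa N$, i.e.\ $\beta'=\kappa$.

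The only step that needs genuine care — and the one place a slip would flip the conclusion to $\beta'=-\kappa$ — is matching orientation conventions: one must verify that the normal $N=(-\sin\beta,\cos\beta)$ produced by differentiating \eqref{eq:Beta_angle_of_curve_with_the_x_axis.2} agrees with the $N=-I\gamma'$ fixed in the statement, rather than its negative. This is pure bookkeeping, since the sign of $\beta$ in \eqref{eq:Beta_angle_of_curve_with_the_x_axis.2} was pinned down by using the orientation of $\pi^{-1}(\gamma)$ for which $\Re(e^{-i\beta}\Omega)$ is the volume form, but it should be checked explicitly. No analytic input is required beyond this.
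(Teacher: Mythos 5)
Your proof is correct and is essentially the paper's argument: the paper derives $\cos\beta=\gamma_1'$, $\sin\beta=\gamma_2'$ (equation \eqref{eq:Beta_angle_of_curve_with_the_x_axis.2}) and then deduces the lemma "immediately" from the definition $\gamma''=\kappa N$ with $\{\gamma',N\}$ positively oriented, which is exactly your differentiation plus the determinant check. Your orientation verification already settles the only delicate sign, since $N$ is characterized in the paper precisely by the positive-orientation condition you confirmed.
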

\noindent Lemma \ref{lem:Lagrangian_Angle_Curvature} is a reflection of the well-known relation between the Lagrangian angle and the mean curvature (i.e.~$H=I_v\nabla\beta$) and the formula \eqref{eq:Area_Length}.

\begin{definition}[Maslov class and gradings]
Recall that given a Lagrangian $\mu^{-1}(\gamma)$, we may use the phase $e^{i\beta}:\mu^{-1}(\gamma)\to\mathbb{S}^1$ to pullback the fundamental class on $\mathbb{S}^1$ and hence obtain 
$$\frac{1}{2\pi}[d \beta] \in H^1( \mu^{-1}(\gamma), \mathbb{Z}).$$
This is the Maslov class of the Lagrangian $\mu^{-1}(\gamma)$.

When $[d\beta]=0$ we say $\mu^{-1}(\gamma)$ is zero Maslov, and we may choose a well-defined lift of the Lagrangian angle $\beta:\mu^{-1}(\gamma) \to \mathbb{R}$.  A choice of such a lift is called a grading for the Lagrangian and a zero Maslov Lagrangian equipped with such a lift is called graded.
\end{definition}
  
\begin{example}
	Let $\gamma$ be a planar curve in $\mathbb{R}^3$ connecting two singularities of $\phi$ lying in the plane $\mu_3=0$ and meeting no other singularities of $\phi$. Then, $\mu^{-1}(\gamma)\cong S^2$ and as $H^1(S^2)=0$ we find that $\mu^{-1}(\gamma)$ is a zero Maslov class Lagrangian.
\end{example}
  
  An immediate consequence of  Lemma \ref{lem:Lagrangian_Angle_Curvature} is the following simple observation.

\begin{corollary}
	For $\gamma$ a closed planar curve in $\mathbb{R}^3$ not meeting the singularities of $\phi$, the Lagrangian $\mu^{-1}(\gamma)$ is zero Maslov if and only if 
	$$\int_{\gamma} \kappa =0.$$
\end{corollary}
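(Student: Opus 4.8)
The plan is to combine Lemma~\ref{lem:Lagrangian_Angle_Curvature} with the definition of the Maslov class, exploiting that a closed planar curve $\gamma$ not meeting the singularities of $\phi$ gives $\pi^{-1}(\gamma)\cong S^1\times\mathbb{R}$ (a cylinder) with $H^1(\pi^{-1}(\gamma),\mathbb{Z})\cong H^1(S^1,\mathbb{Z})\cong\mathbb{Z}$, generated by the class of $ds$ (the pullback of the fundamental class of the base circle). First I would recall that, by definition, $\pi^{-1}(\gamma)$ is zero Maslov if and only if the Maslov class $\tfrac{1}{2\pi}[d\beta]\in H^1(\pi^{-1}(\gamma),\mathbb{Z})$ vanishes. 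Since $\beta$ is a $\U(1)$-invariant function on $\pi^{-1}(\gamma)$ — it depends only on the point of $\gamma$ — the $1$-form $d\beta$ is pulled back from $\gamma$, so $[d\beta]=0$ in $H^1(\pi^{-1}(\gamma),\mathbb{Z})$ if and only if $[d\beta]=0$ in $H^1(\gamma,\mathbb{Z})\cong\mathbb{Z}$ (the inclusion of a circle fibre, or rather the projection, induces an isomorphism on $H^1$).

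Next I would evaluate this class explicitly. Parametrising $\gamma$ by Euclidean arclength $s$ over $[0,\Length(\gamma)]$, the de Rham class $[d\beta]\in H^1(\gamma,\mathbb{R})$ is detected by integration over $\gamma$, namely by $\int_\gamma d\beta$. By Lemma~\ref{lem:Lagrangian_Angle_Curvature}, $\beta'=\kappa$ where $\kappa$ is the curvature of $\gamma$, so
\[
\int_\gamma d\beta = \int_\gamma \beta'\, ds = \int_\gamma \kappa\, ds.
\]
Thus $[d\beta]=0$ in $H^1(\gamma,\mathbb{R})$ — equivalently the closed $1$-form $d\beta$ is exact, i.e.\ $\beta$ descends to a well-defined $\mathbb{R}$-valued function on $\gamma$ — precisely when $\int_\gamma\kappa=0$. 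Since the integral of the integer generator against $\tfrac{1}{2\pi}[d\beta]$ is $\tfrac{1}{2\pi}\int_\gamma\kappa$, the class $\tfrac{1}{2\pi}[d\beta]$ is zero in $H^1$ if and only if $\int_\gamma\kappa=0$ (the integrality is automatic because $e^{i\beta}$ is single-valued: $\int_\gamma\kappa\in 2\pi\mathbb{Z}$).

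The only mild subtlety — and the point I would be most careful about — is the identification of the Maslov class of the cylinder $\pi^{-1}(\gamma)$ with that of the base curve $\gamma$, i.e.\ checking that the $\U(1)$-fibre contributes nothing. This is clear because $e^{i\beta}$ is constant along each fibre (it is a $\U(1)$-invariant function, being built from $\Omega$, $\gamma'$, and the induced volume form, all invariant), so its homotopy class as a map $\pi^{-1}(\gamma)\to\mathbb{S}^1$ factors through $\gamma$; equivalently $\pi^\ast\colon H^1(\gamma,\mathbb{Z})\to H^1(\pi^{-1}(\gamma),\mathbb{Z})$ is an isomorphism and sends the Maslov class of $\gamma$ (with its phase) to that of $\pi^{-1}(\gamma)$. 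With this observation in place the corollary is immediate: $\pi^{-1}(\gamma)$ is zero Maslov $\iff \tfrac{1}{2\pi}[d\beta]=0 \iff \int_\gamma\kappa=0$.
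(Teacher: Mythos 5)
Your route is the paper's own: the corollary is recorded there as an immediate consequence of Lemma~\ref{lem:Lagrangian_Angle_Curvature}, and your central computation $\int_\gamma d\beta=\int_\gamma \beta'\,ds=\int_\gamma\kappa\,ds$, together with the observation that $e^{i\beta}$ is $\U(1)$-invariant so the Maslov class is pulled back from $\gamma$, is exactly the intended argument. However, your topological setup contains a genuine error: for a \emph{closed} curve $\gamma$ avoiding the singularities of $\phi$, the preimage $\pi^{-1}(\gamma)$ is a compact surface, namely a principal circle bundle over a circle and hence a $2$-torus (this is precisely Example~\ref{ex:Lagrangian_Torus}), not a cylinder $S^1\times\mathbb{R}$. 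Consequently $H^1(\pi^{-1}(\gamma),\mathbb{Z})\cong\mathbb{Z}^2$, and the map $\pi^*\colon H^1(\gamma,\mathbb{Z})\to H^1(\pi^{-1}(\gamma),\mathbb{Z})$ is injective but certainly not an isomorphism, contrary to what you assert (the cylinder would be the preimage of a non-compact line missing the fixed points).

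The slip is repairable and does not change the conclusion, but the repair should be said: since any circle bundle over $S^1$ is trivial, $H_1(\pi^{-1}(\gamma),\mathbb{Z})$ is generated by a fibre circle and a section lifting $\gamma$. Because $\beta$ is constant on fibres, $[d\beta]$ evaluates to $0$ on the fibre class, and it evaluates to $\int_\gamma\kappa$ on the section; hence $[d\beta]=0$ in $H^1(\pi^{-1}(\gamma),\mathbb{R})$ if and only if $\int_\gamma\kappa=0$ (and the $\mathbb{Z}$-coefficient statement follows since $\int_\gamma\kappa\in 2\pi\mathbb{Z}$, as you note). Equivalently, injectivity of $\pi^*$ — which follows from the existence of a section, giving $\sigma^*\pi^*=\mathrm{id}$ — is all you need in place of the claimed isomorphism. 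With that correction your proof is complete and coincides with the paper's.
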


\begin{example}\label{ex:Lagrangian_Torus}
	If $\gamma$ is a simple closed curve in a plane in $\mathbb{R}^3$, then
	$$\int_{\gamma} \kappa = \pm 2 \pi.$$
	If $\gamma$ does not meet any singularities of $\phi$, we deduce that $\mu^{-1}(\gamma)$ is an embedded Lagrangian $2$-torus of non-zero Maslov class.  However, if $\gamma$ contains one singularity of $\phi$, then $\mu^{-1}(\gamma)$ is a topological 2-sphere (a pinched 2-torus) and so is zero Maslov as its first cohomology vanishes.
\end{example}

There is a particularly important class of zero Maslov Lagrangians, namely, the ones whose Lagrangian angle is constant.

\begin{definition}[Special Lagrangians]
A Lagrangian is special (with angle $\beta$) if its Lagrangian angle $\beta$ is constant.  A special Lagrangian is calibrated by $\Re(e^{-i\beta}\Omega)$ and so is area-minimizing.  
\end{definition}

\begin{remark}[Special Lagrangians and holomorphic curves]\label{rmk:SL.holo}
When $\beta$ is constant, $\Re(e^{-i\beta}\Omega)=\omega_{\tilde{v}}$ for some $\tilde{v}\in\mathbb{S}^2$.  Hence any special Lagrangian is in fact complex for some complex structure in the sphere of hyperk\"ahler structures.  
\end{remark}

\begin{example}[Special Lagrangians and straight lines]\label{ex:SL.straight} Lemmas \ref{lem:Lagrangian} and \ref{lem:Lagrangian_Angle_Curvature} show that $\mu^{-1}(\gamma)$ is special Lagrangian if and only if $\gamma$ is a straight line, with angle $\beta$ equal to the angle between $\gamma$ and the $\mu_1$-axis.  
Thus, a straight line $\gamma$ connecting singularities of $\phi$ defines a special Lagrangian $S^2$.
\end{example}

\begin{example}[Ooguri--Vafa as an SYZ fibration]
Combining Example \ref{ex:OV.elliptic}, Remark \ref{rmk:SL.holo} and Example \ref{ex:SL.straight} shows that one can view the Ooguri--Vafa metric as a special Lagrangian torus fibration with a single nodal fibre.  This shows the importance of this metric from the point of view of the SYZ Conjecture, as mentioned earlier (c.f.~\cite{GrossWilson}).
\end{example}

\subsubsection{Lagrangian and Hamiltonian isotopies}

Two Lagrangian submanifolds $L_0$ and $L_1$ of the symplectic manifold $(X, \omega)$  are said to be Lagrangian isotopic if there is a continuous family of smooth maps $\Phi_t:L \to X$ so that $L_t=\Phi_t(L)$ is Lagrangian for all $t \in [0,1]$ and connects $L_0$ to $L_1$. They are furthermore called Hamiltonian isotopic if there is $\Phi_t:L \to X$ so that the path $L_t=\Phi_t(L)$ is generated by a time-dependent Hamiltonian $H_t:L_t \to \mathbb{R}$, i.e. 
$$dH_t (\cdot) = \omega \left( \partial_t\Phi_t , \cdot \right)|_{L_t}.$$ 
When $L_0$ (and hence $L_1$) has vanishing first Betti number, the notions of Lagrangian isotopy and Hamiltonian isotopy coincide (since the form $\omega \left( \del_t \Phi_t  , \cdot \right)|_{L_t}$ will be closed for a Lagrangian isotopy).

\subsubsection{(Graded) Lagrangian connect sum}

Given two Lagrangians $L_1$ and $L_2$ in $(X,\omega)$, intersecting at one point, since $X$ is 4-dimensional we can always form the Lagrangian connect sums $L_1\# L_2$ and $L_2\# L_1$, which are Lagrangians diffeomorphic to the topological connect sum and in the homology class $[L_1]+[L_2]$ (when this is defined).    
In fact, this construction is defined for Hamiltonian isotopy classes of Lagrangians, so $L_1 \# L_2$ and $L_2 \# L_1$ actually denote Hamiltonian isotopy classes of Lagrangians, which are uniquely defined (c.f.~\cite[$\S$4]{Thomas}) and not equal in general. 

When $L_1$ and $L_2$ are zero Maslov Lagrangians intersecting at a point $p$, given a grading in $L_2$ there is a unique grading of $L_1$ such that $L_1 \# L_2$ can be graded. In \cite{Seidel2}, a local Floer index $\mathrm{Ind}_p(L_1,L_2)$ at $p$ is defined and when $\mathrm{Ind}_p(L_2,L_1)=1$ then $L_1 \# L_2$ can be graded. In fact, $$\mathrm{Ind}_p(L_1,L_2) + \mathrm{Ind}_p(L_2,L_1) = \dim_{\mathbb{R}}L_i=2.$$ Hence, if $L_1$ and $L_2$ are initially graded, the connect sum $L_1\#L_2$ can be graded if and only if $L_2 \# L_1$ can.  We then equip both $L_1 \# L_2$ and $L_2 \# L_1$ with such a grading and call them graded Lagrangian connect sums.

\begin{example}\label{ex:Lagrangian_Connect_Sum}
	Let $\gamma_1$ and $\gamma_2$ be two planar curves connecting singularities of $\phi$ so that the endpoint $p$ of $\gamma_1$ coincides with the initial point of $\gamma_2$. Then, the Lagrangians $L_1=\mu^{-1}(\gamma_1)$, $L_2=\mu^{-1}(\gamma_2)$ intersect at a point and $L_1\# L_2=\mu^{-1}(\gamma)$ for $\gamma:=\gamma_1 \# \gamma_2$ a planar curve constructed as follows: outside a small neighbourhood of $p$, $\gamma$ coincides with $\gamma_1 \cup \gamma_2$, whereas round $p$ the curve $\gamma$ departs from $\gamma_1$ to connect with $\gamma_2$ in a clockwise manner.   This is illustrated in Figure \ref{fig_ConnectSum}.
	
	\begin{figure}[h]\label{fig:ConnectSum}
		\centering
		\includegraphics[scale=0.5]{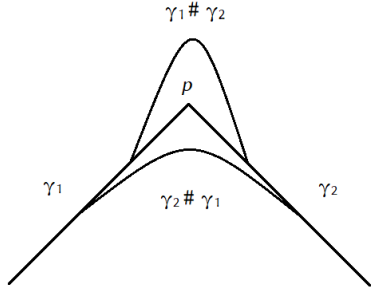}
		\caption{\label{fig_ConnectSum} Connect sums $\gamma_1 \# \gamma_2$ and $\gamma_2 \# \gamma_1$.}
	\end{figure}
\end{example}

\begin{definition}[Cohomological phase and slope]\label{dfn:Cohomological_Phase}
Let $L$ be a compact, oriented Lagrangian in $(X,\omega)$ and $\Omega$ the holomorphic volume form used to define the Lagrangian angle of $L$. The cohomological phase $e^{i\tau(L)}$ is the unit complex number such that  
	$$\int_L e^{-i\tau(L)} \Omega \in \mathbb{R}^+.$$
	(Here we can avoid the sign ambiguity in other definitions of the cohomological phase by assuming our initial orientation on $L$.)
	When the Lagrangian $L$ is graded and the variation of the Lagrangian angle is less than $2\pi$ we can canonically consider $\tau(L)$ to be a real number. 
	
The slope of $L$ is $\mu(L)=\tan(\tau(L)) \in \mathbb{R}$, which is well-defined independent of any grading.
\end{definition}

\begin{definition}[Stability]\label{dfn:stability}
	A compact zero Maslov Lagrangian $L$ is unstable if it is Hamiltonian isotopic to a graded Lagrangian connect sum $L_1 \# L_2$ of compact graded Lagrangians $L_1$ and $L_2$, with variations of their Lagrangian angles less than $2\pi$, so that $$\tau(L_1) \geq \tau(L_2).$$  The Lagrangian $L$ is called stable if it is not unstable.
\end{definition}

\begin{example}\label{ex:Cohomological_Phase}
	Let $v=(0,0,1)$ and $\gamma =(\gamma_1,\gamma_2,0) \subset \mathbb{R}^3$ be a curve connecting two singularities of $\phi$ lying in the plane $\mu_3=0$. Then, $e^{i\tau(L)}$ is a unit vector along the straight line connecting the endpoints of $\gamma$. Indeed, $\tau(L)$ is determined from:
\begin{align*}
0 & =\int_L \Im(e^{-i\tau} \Omega)  = \int_L( \cos \tau \ \omega_2 - \sin\tau \ \omega_1 )  = 2\pi \int_{\gamma} (\gamma_2'\cos\tau -\gamma_1'\sin\tau) \ ds, \\
0 & < \int_L \Re (e^{-i\tau} \Omega) = \int_L (\cos \tau \omega_1 + \sin \tau \omega_2) = 2 \pi \int_{\gamma} ( \gamma_1' \cos \tau + \gamma_2' \sin \tau ) \ ds.
\end{align*}
Thus, for $\int_{\gamma}\gamma_1'\neq 0$, we have
\begin{equation}
\sign(\cos \tau)=\sign(\int_\gamma \gamma_1' \ ds)
\quad\text{and}\quad
\label{eq:Cohomological_Lagrangian_Angle}
\tan(\tau)=\frac{\int_{\gamma}\gamma_2'ds}{\int_{\gamma}\gamma_1'ds}.
\end{equation} 
When $L$ is graded and the variation of the Lagrangian angle is less than $2\pi$ we can lift $\tau$ to be real valued. In this case, and bearing in mind that the grading $\beta$ takes values in $[0,2\pi)$, we have that $\tau(L) \in [0,2\pi)$ is the angle between the straight line connecting $\gamma$'s endpoints and the $\mu_1$-axis. 
\end{example}

\noindent With Example \ref{ex:Cohomological_Phase} in mind, the notion of stability in Definition \ref{dfn:stability} is illustrated by Figure \ref{fig_Stability}.

\begin{figure}[h]\label{fig:Stability}
	\centering
	\includegraphics[scale=0.5]{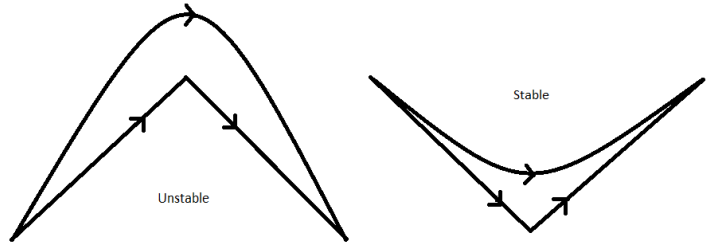}
	\caption{\label{fig_Stability} Projections of unstable and stable Lagrangians obtained from connect  sums.}
\end{figure}

\begin{remark}[Slope stability]
	For compact (oriented) Lagrangians $L$ we might consider an analogue of classical slope stability: i.e.~$L$ is slope unstable if it is Hamiltonian isotopic to a Lagrangian connect sum $L=L_1 \# L_2$ with $\mu(L_1) \geq \mu(L_2)$.  However, there are various subtleties here (including orientations of Lagrangians, for example) which mean that it does not seem prudent to pursue the analogy with stable bundles too closely here: see \cite[$\S$3]{Thomas} for further remarks.
\end{remark}

We may now state the Thomas conjecture (c.f.~\cite[Conjecture 5.1]{Thomas}).

\begin{conjecture}[Thomas conjecture]\label{conj:Thomas}
	Let $L$ be a compact zero Maslov Lagrangian in $(X,\omega,\Omega)$. Then, there is a 
	special Lagrangian in the Hamiltonian isotopy class of $L$ if and only if $L$ is stable, and the special Lagrangian is unique.
\end{conjecture}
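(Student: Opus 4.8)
The plan is to approach Conjecture \ref{conj:Thomas} in the spirit of the Hitchin--Kobayashi correspondence, with the cohomological phase $\tau(L)$ of Definition \ref{dfn:Cohomological_Phase} playing the role of the numerical invariant detecting stability in the sense of Definition \ref{dfn:stability}. The starting point is that $\tau(L)$ is a Hamiltonian isotopy invariant: since $\Omega$ is closed, $\int_L\Omega$ depends only on $[L]\in H_2(X,\mathbb{Z})$, which is preserved under Lagrangian (hence Hamiltonian) isotopy. Consequently both sides of the biconditional are well posed on Hamiltonian isotopy classes, and any special Lagrangian representative of $L$, should it exist, must have constant Lagrangian angle equal to $\tau(L)$.

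For the direction that existence implies stability, I would argue by contradiction using calibration geometry together with phase balance. Suppose $L$ admits a special Lagrangian representative $L_\infty$, with constant angle $\theta_0=\tau(L)$, yet is unstable, so that $L$ is Hamiltonian isotopic to a graded connect sum $L_1\#L_2$ with $\tau(L_1)\ge\tau(L_2)$. Closedness of $\Omega$ and additivity of the homology class under connect sum yield the phase-balance identity
\begin{equation}\label{eq:phase.balance}
\Big|\int_L\Omega\Big|\,e^{i\tau(L)}=\Big|\int_{L_1}\Omega\Big|\,e^{i\tau(L_1)}+\Big|\int_{L_2}\Omega\Big|\,e^{i\tau(L_2)}.
\end{equation}
Thus $e^{i\tau(L)}$ is a positive combination of $e^{i\tau(L_1)}$ and $e^{i\tau(L_2)}$, so $\tau(L)$ lies strictly between $\tau(L_1)$ and $\tau(L_2)$ when these differ. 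The remaining input is the grading compatibility built into the graded connect sum: the clockwise smoothing at the intersection point (c.f.~Example \ref{ex:Lagrangian_Connect_Sum}) forces the turning of the Lagrangian angle across the neck to have a definite sign, which is compatible with a constant--angle (special) total representative only when the order is $\tau(L_1)<\tau(L)<\tau(L_2)$. This contradicts $\tau(L_1)\ge\tau(L_2)$, so the special Lagrangian representative must be stable. (Since $L_\infty$ is calibrated by $\Re(e^{-i\theta_0}\Omega)$, and hence area--minimising in $[L]$ by Remark \ref{rmk:SL.holo}, the same conclusion can be reached variationally by comparing the calibrated volume $|\int_L\Omega|$ with the volumes of the summands.)

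The converse, that stability implies existence, is the genuine analogue of constructing the Hermitian--Yang--Mills connection, and is where I expect the main obstacle to lie. I would attempt it via the Lagrangian mean curvature flow $L_t$ starting at a representative of $L$. The flow preserves the Hamiltonian isotopy class, and the Lagrangian angle evolves by the heat equation, so its oscillation is non-increasing; the class therefore retains the phase $\tau(L)$, and any smooth long--time limit of constant angle is the sought special Lagrangian. The decisive role of stability is to obstruct singularities: by Neves \cite{NevesSingularities} finite--time singularities form in general, and one must show that stability excludes precisely the destabilising ones. The mechanism would be blow-up analysis at a putative singularity, producing an ancient or self--shrinking model which, by the classification in \cite{LambertLotaySchulze}, is a union of planes through a point; undoing the blow-up would exhibit $L$ as a connect sum $L_1\#L_2$ whose summand phases violate stability, contradicting the hypothesis and forcing smooth convergence to a special Lagrangian. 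A flow-free alternative is to minimise volume directly within the Hamiltonian isotopy class: a minimising sequence converges as an integral current, and stability is exactly the condition preventing the limit from splitting into summands with the wrong phase ordering, thereby producing a smooth minimiser.

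Uniqueness would follow by comparing two special Lagrangians $L',L''$ in the same Hamiltonian class: both have constant angle $\tau(L)$ and are calibrated, hence area--minimising in $[L]$, and one concludes $L'=L''$ using convexity of the volume functional along the geodesic joining them in the space of positive graded Lagrangians (in the sense of Solomon \cite{Solomon}) together with the exactness supplied by the Hamiltonian isotopy. The hard part, throughout, is the singularity analysis in the stable--implies--existence step: in full generality the structure of Lagrangian mean curvature flow singularities---equivalently, the compactness behaviour of volume--minimising sequences---is not understood well enough to certify that stability obstructs exactly the bad singular models. This is why the rigorous results presently available, including Theorem \ref{thm:Intro.SL.stable} of this paper and the Thomas--Yau theorem \cite{ThomasYau}, proceed under extra hypotheses such as the almost calibrated condition or a circle symmetry, the latter reducing the flow to a curve shortening problem in $\mathbb{R}^2$ in which the relevant singularity models are completely classifiable.
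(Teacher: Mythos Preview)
The statement is Conjecture \ref{conj:Thomas}, which the paper explicitly records as open; there is no proof of it in the paper to compare against. The paper proves only the circle-invariant analogue, Theorem \ref{thm:SL.stable}, so your proposal should be read as a strategic outline rather than a proof, and you yourself acknowledge as much for the ``stable implies existence'' direction and for uniqueness.

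That said, your argument for ``existence implies stability'' contains a genuine gap beyond those you flag. Your phase-balance identity is correct and, when $\tau(L_1)\neq\tau(L_2)$, places $\tau(L)$ strictly between $\tau(L_1)$ and $\tau(L_2)$; but this is \emph{consistent} with the instability condition $\tau(L_1)\ge\tau(L_2)$, not in tension with it. You then claim that the clockwise neck in $L_1\#L_2$ forces the opposite order $\tau(L_1)<\tau(L)<\tau(L_2)$, but this constraint concerns the variation of the Lagrangian angle along the connect-sum representative, not along the special Lagrangian $L_\infty$: these are different submanifolds, merely Hamiltonian isotopic, and the constant angle of $L_\infty$ says nothing about how the angle turns across a neck in some other representative of the class. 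So no contradiction has been derived. In the paper's circle-invariant proof (Proposition \ref{prop:SL.stable.1}) the obstruction is not phase-theoretic at all but topological: the destabilising decomposition traps a singularity $p_3$ of $\phi$ in the region between $\gamma$ and the straight line $\ell$, which prevents any planar isotopy from $\gamma$ to $\ell$ avoiding the singularities, hence (via Lemma \ref{lem:SL.stable.2}) any circle-invariant Hamiltonian isotopy to the unique special Lagrangian $\pi^{-1}(\ell)$. In the general conjecture one expects the obstruction to be Floer-theoretic rather than derivable from phase balance alone, as in Thomas's original heuristic discussion \cite{Thomas}.
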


\subsection{Special Lagrangians}

We first answer the question of when the minimal representatives of the classes in $H_2(X, \mathbb{Z})$ found in Proposition \ref{prop:min.spheres} are Lagrangian using Lemma \ref{lem:Lagrangian}.

\begin{corollary}
	Let $X$ be an ALE or ALF hyperk\"ahler 4-manifold as in Examples \ref{ex:MultiEH} and \ref{ex:MultiTN} with points $p_1,\ldots p_k\in\mathbb{R}^3$. There is a K\"ahler form $\omega$ compatible with the hyperk\"ahler structure on $X$ such that all the minimal surfaces given by Proposition \ref{prop:min.spheres} are Lagrangian with respect to $\omega$ if and only if all the points $p_1, \ldots , p_k$ lie in the same plane.
\end{corollary}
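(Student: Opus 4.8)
The plan is to reduce the whole statement to Lemma~\ref{lem:Lagrangian}, which translates the Lagrangian condition for a circle-invariant surface into an elementary condition on the projected curve in $\mathbb{R}^3$. First I would recall from Proposition~\ref{prop:min.spheres} that the minimal surfaces produced there are precisely the circle-invariant surfaces $\pi^{-1}(\gamma_{ij})$, where $\gamma_{ij}$ is the straight line segment joining a pair of singular points $p_i,p_j$ of $\phi$ (when $\gamma_{ij}$ meets a further singular point $p_l$, the surface $\pi^{-1}(\gamma_{ij})$ is reducible, namely the union of the embedded $2$-spheres $\pi^{-1}(\gamma_{il})$ and $\pi^{-1}(\gamma_{lj})$, and so on). Since a K\"ahler form compatible with the hyperk\"ahler structure is, up to an irrelevant positive scale, one of the twistor forms $\omega_v$ with $v\in\mathbb{S}^2$, the assertion to be proved becomes: there exists $v\in\mathbb{S}^2$ such that $\pi^{-1}(\gamma_{ij})$ is Lagrangian for $\omega_v$ for every pair $i\neq j$, if and only if $p_1,\dots,p_k$ are coplanar.

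The next step is to apply Lemma~\ref{lem:Lagrangian}. Since $\gamma_{ij}$ is a straight segment, its unit velocity $\gamma_{ij}'$ is the constant vector $\pm(p_j-p_i)/|p_j-p_i|$, so the lemma says that $\pi^{-1}(\gamma_{ij})$ is Lagrangian with respect to $\omega_v$ exactly when $\langle\gamma_{ij}',v\rangle\equiv 0$, i.e.\ when $\langle p_j-p_i,v\rangle=0$. For a reducible $\pi^{-1}(\gamma_{ij})$, being Lagrangian means $\omega_v$ restricts to zero on the smooth locus, hence on each embedded $2$-sphere component, and the differences attached to the sub-segments of $\gamma_{ij}$ add up to $p_j-p_i$; thus the single equation $\langle p_j-p_i,v\rangle=0$ still encodes the condition in that case. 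So the existence of the desired $\omega=\omega_v$ is equivalent to the existence of a unit vector $v$ with $\langle p_j-p_i,v\rangle=0$ for all $i,j$.

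Finally, a unit vector $v$ satisfies $\langle p_j-p_i,v\rangle=0$ for all $i,j$ precisely when $v$ is orthogonal to the linear span of $\{p_i-p_1:i=2,\dots,k\}$, and such a nonzero $v$ exists if and only if that span has dimension at most $2$, which is exactly the statement that $p_1,\dots,p_k$ lie in a common affine plane, namely the plane through $p_1$ with normal $v$. This settles both implications simultaneously: for the ``if'' direction one takes $v$ to be a unit normal to a plane containing all the $p_i$ and reads off from Lemma~\ref{lem:Lagrangian} that every $\pi^{-1}(\gamma_{ij})$, and hence every minimal surface from Proposition~\ref{prop:min.spheres}, is Lagrangian for $\omega_v$; for the ``only if'' direction, the existence of such an $\omega_v$ forces the span of the differences to be at most $2$-dimensional. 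I do not anticipate a real obstacle here; the only point needing a little care is the bookkeeping for the reducible minimal surfaces, i.e.\ confirming that for a union of circle-invariant $2$-spheres ``Lagrangian'' is equivalent to each embedded component being Lagrangian, which is immediate since $\omega_v|_L\equiv0$ is imposed on the smooth locus. Everything else is the direct citation of Lemma~\ref{lem:Lagrangian} together with a one-line linear algebra argument.
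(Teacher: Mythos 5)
Your argument is correct and is essentially the paper's: the corollary is stated there as an immediate consequence of Lemma \ref{lem:Lagrangian} applied to the straight-line representatives $\pi^{-1}(\gamma_{ij})$ from Proposition \ref{prop:min.spheres}, with the condition $\langle p_j-p_i,v\rangle=0$ for all pairs reducing to coplanarity of the $p_i$ by exactly the linear algebra you give. Your extra care about reducible representatives and the normalization of compatible K\"ahler forms is fine but adds nothing beyond the paper's intended one-line deduction.
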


We now state and prove a version of Conjecture \ref{conj:Thomas} in the circle-invariant setting, for which we introduce some notation.  Let $L$ be a compact circle-invariant Lagrangian in $X$. First, we let $\mathrm{Ham}^{\U(1)}(L)$ denote its circle-invariant Hamiltonian isotopy class, i.e.~the set of Lagrangians  Hamiltonian isotopic to $L$ via circle-invariant Hamiltonians. Second, we say that $L$ is destabilized by a circle-invariant configuration if there is a representative of its class in $\mathrm{Ham}^{\U(1)}(L)$ of the form $L_1\# L_2$ as in Definition \ref{dfn:stability}, where $L_1$, $L_2$ are circle-invariant.

\begin{theorem}\label{thm:SL.stable}
	Let $X$ be an ALE or ALF hyperk\"ahler 4-manifold as in Example \ref{ex:MultiEH} and \ref{ex:MultiTN}, and let $L$ be a compact, embedded, circle-invariant, zero Maslov Lagrangian in $X$. There is a circle-invariant special Lagrangian in $\mathrm{Ham}^{\U(1)}(L)$ if and only if $L$ is not destabilized by a circle-invariant configuration.  Moreover, the special Lagrangian is unique in $\mathrm{Ham}^{\U(1)}(L)$.
\end{theorem}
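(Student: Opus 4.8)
The plan is to translate the statement into one about the planar curve $\gamma$ with $L=\pi^{-1}(\gamma)$, using the dictionary of Section \ref{sec:Lagrangians}. A compact, embedded, circle-invariant, zero Maslov Lagrangian is of the form $L=\pi^{-1}(\gamma)$ with $\gamma$ an embedded planar curve, which after a translation and rotation we take to lie in the plane $\mu_3=0$, joining two singularities $p_1,p_2$ of $\phi$ and meeting no other singularity; thus $L\cong S^2$ and its two $\U(1)$-fixed points are the points over $p_1$ and $p_2$. First I would observe that a $\U(1)$-invariant Hamiltonian generates a $\U(1)$-equivariant flow, isotopic to the identity, which therefore fixes the discrete set of $\U(1)$-fixed points of $X$ pointwise; hence every $L'\in\mathrm{Ham}^{\U(1)}(L)$ is a circle-invariant sphere containing exactly the same two fixed points, so $L'=\pi^{-1}(\gamma')$ with $\gamma'$ a planar curve (Lemma \ref{lem:Lagrangian} forces the projection into $\mu_3=0$ once $L'$ is Lagrangian for $\omega_3$) from $p_1$ to $p_2$ avoiding the other singularities. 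Conversely, any isotopy of $\gamma$ through such curves extends to a $\U(1)$-invariant ambient isotopy of $X$, and the resulting family of circle-invariant Lagrangian spheres is a Lagrangian isotopy, hence (as $b_1(S^2)=0$) a Hamiltonian one, with essentially unique and therefore $\U(1)$-invariant generating Hamiltonian; so it lies in $\mathrm{Ham}^{\U(1)}(L)$. This identifies $\mathrm{Ham}^{\U(1)}(L)$ with the set of isotopy classes of embedded planar curves from $p_1$ to $p_2$ in the complement of $S\setminus\{p_1,p_2\}$.

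Uniqueness then follows at once: a circle-invariant special Lagrangian is $\pi^{-1}(\ell')$ for a straight segment $\ell'$ by Example \ref{ex:SL.straight}, and by the identification $\ell'$ must run between $p_1$ and $p_2$, so $\ell'$ is the unique straight segment $\ell$ joining them. It remains to show that $\pi^{-1}(\ell)\in\mathrm{Ham}^{\U(1)}(L)$ --- equivalently that $\gamma$ is isotopic to $\ell$ through the allowed curves, which in particular forces $\ell$ to avoid the other singularities --- if and only if $L$ is not destabilized by a circle-invariant configuration. If $\gamma$ is isotopic to $\ell$ then $\pi^{-1}(\ell)\in\mathrm{Ham}^{\U(1)}(L)$ by the identification and is special Lagrangian by Example \ref{ex:SL.straight}, giving existence; stability will follow from the contrapositive of the remaining implication.

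For the stability equivalence I would use Lemma \ref{lem:Lagrangian_Angle_Curvature} (so that a grading of $\pi^{-1}(\gamma)$ is the tangent angle of $\gamma$, up to a constant) and Example \ref{ex:Cohomological_Phase} (so that the cohomological phase $\tau$ of $\pi^{-1}(\gamma)$ is the angle of the chord of $\gamma$). A circle-invariant destabilizing configuration is $\pi^{-1}(\gamma_1\#\gamma_2)$ with $\gamma_1$ a planar arc from $p_1$ to some singularity $p_3$, $\gamma_2$ from $p_3$ to $p_2$, the connect sum formed in the clockwise fashion of Example \ref{ex:Lagrangian_Connect_Sum}, the gradings those inherited by $\gamma_1\#\gamma_2$, and $\tau(L_1)\ge\tau(L_2)$. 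For one direction: a clockwise pinch at $p_3$ forces the tangent angle of $\gamma_1\#\gamma_2$ to decrease across the junction, and with the induced gradings one checks this is incompatible with $\gamma_1\#\gamma_2$ being isotopic to the straight segment from $p_1$ to $p_2$; since $\gamma\sim\gamma_1\#\gamma_2$, also $\gamma\not\sim\ell$. Conversely, if $\gamma\not\sim\ell$, I would isotope $\gamma$ to a taut normal form relative to the punctures $S\setminus\{p_1,p_2\}$, a near-polygonal curve hugging a shortest representative and passing close to a nonempty ordered list of singularities; cutting at the first of these, $p_3$, gives $\gamma\sim\gamma_1\#\gamma_2$ with $\gamma_1$ a near-segment, the side on which $\gamma$ runs past $p_3$ forces the pinch to be clockwise (degenerate, with equality, when $\ell$ runs through $p_3$), hence $\tau(L_1)\ge\tau(L_2)$ by Lemma \ref{lem:Lagrangian_Angle_Curvature}, while tautness keeps the gradings of $L_1,L_2$ of variation less than $2\pi$.

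The main obstacle is this converse --- extracting a genuine destabilizing decomposition from $\gamma\not\sim\ell$. The delicate points are: (i) bounding the total turning of the two pieces so the variation-less-than-$2\pi$ requirement of Definition \ref{dfn:stability} holds even when $\gamma$ must wind around several singularities, which may force cutting at a carefully chosen singularity rather than the first one, or iterating; (ii) pinning down the direction of $\tau(L_1)\ge\tau(L_2)$, a bookkeeping matter tying together the clockwise convention in the connect sum, the choice of grading, and the identity $\beta'=\kappa$ of Lemma \ref{lem:Lagrangian_Angle_Curvature}; and (iii) justifying the taut normal form for embedded planar arcs in the complement of a finite set, and that this form has only finitely many, controlled corners. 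With these in place, Theorem \ref{thm:SL.stable} follows by combining the identification of $\mathrm{Ham}^{\U(1)}(L)$ with curve isotopy classes, the uniqueness argument, and the stability equivalence.
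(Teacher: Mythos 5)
Your dictionary between circle-invariant Lagrangians and embedded planar arcs, the identification of $\mathrm{Ham}^{\U(1)}(L)$ with isotopy classes of arcs from $p_1$ to $p_2$ in the punctured plane, and the uniqueness argument via the straight chord $\ell$ all agree with the paper (Lemmas \ref{lem:SL.stable.1}--\ref{lem:SL.stable.3}). The genuine gap is the converse of the stability equivalence: producing a circle-invariant destabilizing configuration from $\gamma\not\sim\ell$. Your plan — isotope $\gamma$ to a ``taut normal form'' and cut at the first singularity it hugs — is left unproved at exactly the points that constitute the proof, as you yourself list: existence and finiteness properties of the taut representative, the grading-variation requirement of Definition \ref{dfn:stability} for the two pieces, and above all the direction of the inequality $\tau(L_1)\geq\tau(L_2)$ together with the clockwise bookkeeping. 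Nothing in the proposal forces the cut at the \emph{first} hugged singularity of a taut representative to yield $\tau(L_1)\geq\tau(L_2)$, so the crucial step is asserted rather than argued. The forward direction as you phrase it is also not a proof: that the tangent angle of $\gamma_1\#\gamma_2$ decreases across the junction is not an isotopy invariant, so it cannot by itself be ``incompatible'' with $\gamma\sim\ell$; the actual obstruction, used in Proposition \ref{prop:SL.stable.1}, is that the clockwise pinch together with $\tau(L_1)\geq\tau(L_2)$ forces $p_3$ into the interior of the region bounded by $\gamma_1\#\gamma_2$ and $\ell$, and an enclosed singularity is what prevents the isotopy.

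For comparison, the paper's Proposition \ref{prop:SL.stable.2} sidesteps the taut normal form entirely: take \emph{any} planar isotopy $\Psi_t$ from $\gamma$ to $\ell$, now allowed to cross singularities, let $p_3$ be the first singularity crossed (arranged via the Jordan curve theorem to lie in the region bounded by $\gamma\cup\ell$), and cut the curve $\Psi_{t_0}$ at $p_3$ into $\gamma_1,\gamma_2$. Two features then do the work your delicate points were meant to do: first, by \eqref{eq:Cohomological_Lagrangian_Angle} the cohomological phases $\tau(L_1),\tau(L_2)$ depend only on the endpoints of $\gamma_1,\gamma_2$, i.e.\ on the chords $\ell_1,\ell_2$, so no control of the turning of the pieces is needed to compare phases; second, Lemma \ref{lem:SL.stable.2} guarantees that either $L_1\#L_2$ or $L_2\#L_1$ lies in $\mathrm{Ham}^{\U(1)}(L)$, and after relabelling, the only remaining sign case ($\tau(L_1)<\tau(L_2)$) is excluded because then $p_3$ would not be enclosed by $\gamma_1\#\gamma_2$ and $\ell$, contradicting membership in the Hamiltonian isotopy class. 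If you wish to keep your route you must supply the existence of the taut representative, the turning estimates, and the sign argument; otherwise replace the converse by this chord-based argument.
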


\noindent Theorem \ref{thm:SL.stable} gives Theorem \ref{thm:Intro.SL.stable} in the introduction.

\begin{proof}
We start by better characterizing the Lagrangians $L$ in $X$ with respect to $\omega_v$, for some $v\in\mathbb{S}^2$, as in the statement of Theorem \ref{thm:SL.stable}. 

\begin{lemma}\label{lem:SL.stable.1} The Lagrangian $L=\mu^{-1}(\gamma)$ is a 2-sphere where $\gamma\subseteq\mathbb{R}^3$ is a curve connecting two singularities $p_1,p_2$ of $\phi$, and meeting no other singularities $p_i$ for $i>2$.  Moreover, $\gamma$ is contained in the plane $P_v$ orthogonal to $v$ and containing $p_1$ and $p_2$.
\end{lemma}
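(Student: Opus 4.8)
The plan is to first pin down the topology of $\gamma$ from compactness and embeddedness of $L$, then use the zero Maslov hypothesis to eliminate the torus case, and finally read off the planarity from Lemma \ref{lem:Lagrangian}.

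First I would use circle-invariance to write $L=\pi^{-1}(\gamma)$ for a closed subset $\gamma\subseteq\mathbb{R}^3$; since $L$ is a surface, $\gamma$ is a $1$-manifold, possibly with boundary lying in the fixed-point set $S=\{p_1,\dots,p_k\}$, and it is connected as $L$ is and compact as $L$ is. The crucial point is that embeddedness of $L$ (not merely compactness) excludes all degenerate shapes of $\gamma$: a transverse self-crossing of $\gamma$ at a point $q\notin S$ produces two sheets of $L$ meeting along the circle $\pi^{-1}(q)$; a point of $S$ in the interior of $\gamma$, or a loop of $\gamma$ both of whose ends equal the same $p_i$, collapses a fibre circle and yields a non-manifold (two disks meeting at a point, i.e.\ a Lagrangian node) in $L$; and a simple closed curve meeting $\ell\geq 2$ points of $S$ gives a ``necklace'' of $\ell$ spheres joined at nodes, again non-embedded. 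Hence only two cases survive: (i) $\gamma$ is a simple closed curve disjoint from $S$, with $L\cong T^2$; or (ii) $\gamma$ is a simple arc whose interior is disjoint from $S$ and whose two endpoints are \emph{distinct} points of $S$, with $L\cong S^2$.

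Next I would rule out case (i) using that $L$ is zero Maslov. Here $\gamma$ is automatically planar by Lemma \ref{lem:Lagrangian} (since $L$ is Lagrangian with respect to some $\omega_v$), so by Lemma \ref{lem:Lagrangian_Angle_Curvature} the Lagrangian angle satisfies $\int_\gamma d\beta=\int_\gamma \beta'\,ds=\int_\gamma\kappa\,ds=\pm 2\pi$, the turning number of a simple closed plane curve (cf.\ Example \ref{ex:Lagrangian_Torus}). As $d\beta$ descends from $\gamma$ to $L$ and pairs nontrivially with the base class in $H_1(L,\mathbb{Z})$, the Maslov class of $L$ is nonzero, contradicting the hypothesis. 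Therefore case (ii) holds: $L=\pi^{-1}(\gamma)\cong S^2$ with $\gamma$ joining two distinct singularities $p_1,p_2$ of $\phi$ and meeting no $p_i$ for $i>2$.

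Finally, for the location of $\gamma$: since $L$ is Lagrangian with respect to $\omega_v$, Lemma \ref{lem:Lagrangian} gives $\langle\gamma',v\rangle\equiv 0$, so $\gamma$ lies in an affine plane orthogonal to $v$; that plane contains $\gamma$ and hence $p_1$ and $p_2$, and since a plane orthogonal to $v$ is determined by any one of its points it is exactly the plane $P_v$ of the statement. The only genuinely delicate step is the topological bookkeeping in the first paragraph—checking that embeddedness rules out every degenerate configuration of $\gamma$—while the remainder is a direct appeal to Lemmas \ref{lem:Lagrangian} and \ref{lem:Lagrangian_Angle_Curvature} and the standard turning-number computation.
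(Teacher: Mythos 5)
Your proposal is correct and follows essentially the same route as the paper: planarity via Lemma \ref{lem:Lagrangian}, elimination of interior singularities and self-intersections of $\gamma$ by embeddedness of $L$, and exclusion of the torus case by the turning-number computation of Example \ref{ex:Lagrangian_Torus} (equivalently Lemma \ref{lem:Lagrangian_Angle_Curvature}) against the zero Maslov hypothesis. Your slightly more explicit bookkeeping of the degenerate configurations (a loop based at a single $p_i$, a closed curve through several points of $S$) is a fine elaboration of what the paper handles with its blanket statement that an interior singularity or immersed point destroys embeddedness.
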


\begin{proof}
First, $L=\mu^{-1}(\gamma)$ for $\gamma\subseteq\mathbb{R}^3$ a curve lying in a plane $P_v$ perpendicular to $v$ by Lemma \ref{lem:Lagrangian}.  

If $\gamma$ has a singularity $p_i$ of $\phi$ as an interior point, then $L$ cannot be embedded at the point $\mu^{-1}(p_i)$.  If $\gamma$ has an immersed point $p$ which is not a singularity of $\phi$, then $L$ self-intersects along the circle $\mu^{-1}(p)$.  Therefore, for $L$ to be embedded, $\gamma$ is either a simple closed curve $\phi$ not meeting any singularities of $\phi$, or (the closure) of an open curve with two endpoints $p_1,p_2 \in \mathbb{R}^3$ which meets no other singularities of $\phi$. 

In the first case,  $L$ is a torus as in Example \ref{ex:Lagrangian_Torus} and so has nonzero Maslov class. We are therefore in the second case, so $P_v$ is uniquely determined by $v$, $p_1$ and $p_2$, and the result follows.
\end{proof}

We now want to understand $\mathrm{Ham}^{\U(1)}(L)$ in terms of curves.

\begin{lemma}\label{lem:SL.stable.2} We have that $\tilde{L}\in\mathrm{Ham}^{\U(1)}(L)$ if and only if $\tilde{L}=\mu^{-1}(\tilde{\gamma})$ for a curve $\tilde{\gamma}\subseteq P_v$ connecting $p_1, p_2$, which is isotopic to $\gamma$ through curves in $P_v\setminus\{p_i:i>2\}$ with endpoints $p_1$, $p_2$.
\end{lemma}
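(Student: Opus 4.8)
\medskip
\noindent The plan is to transfer the whole question to the plane $P_v$ via the projection, exploiting an explicit description of circle-invariant Hamiltonian vector fields. Normalise $v=(0,0,1)$, so that $\omega_v=\omega_3=\eta\wedge d\mu_3+\phi\,d\mu_1\wedge d\mu_2$ and $P_v$ is one of the planes $\{\mu_3=\mathrm{const}\}$. The first step I would carry out is to record the basic dictionary: a circle-invariant Hamiltonian on $X$ has the form $\tilde H=h\circ\mu$ for some $h\colon\mathbb{R}^3\to\mathbb{R}$, and a short computation from \eqref{eq:hypforms} gives
\[
\mu_*X_{\tilde H}=\frac{1}{\phi}\left(\frac{\partial h}{\partial\mu_2}\frac{\partial}{\partial\mu_1}-\frac{\partial h}{\partial\mu_1}\frac{\partial}{\partial\mu_2}\right);
\]
in particular $\mu_*X_{\tilde H}$ is tangent to the planes $\mu_3=\mathrm{const}$, and on $P_v$ it is exactly the Hamiltonian vector field of $h$ for the (mildly singular) area form $\varpi=\phi\,d\mu_1\wedge d\mu_2$.

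For ``$\Rightarrow$'', suppose $\tilde L=\Phi_1(L)$ for a circle-invariant Hamiltonian isotopy $\{\Phi_t\}$ generated by $\tilde H_t=h_t\circ\mu$. Each $\tilde L_t=\Phi_t(L)$ is again a compact, embedded, circle-invariant Lagrangian sphere, so I would apply Lemma \ref{lem:SL.stable.1} to it: $\tilde L_t=\pi^{-1}(\gamma_t)$ with $\gamma_t=\mu(\tilde L_t)$ an arc lying in some plane $\{\mu_3=c_t\}$, connecting two singularities of $\phi$ and meeting no others, and varying continuously in $t$. Since $\mu_*X_{\tilde H_t}$ has no $\partial/\partial\mu_3$-component the flow preserves $\mu_3$, so $c_t\equiv c_0$ and every $\gamma_t\subseteq P_v$; and the unordered pair of endpoints of $\gamma_t$ --- which is exactly $\gamma_t\cap\{p_1,\dots,p_k\}$ --- moves continuously in the finite set of pairs, hence is constant and equal to $\{p_1,p_2\}$. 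This exhibits $\gamma=\gamma_0$ as isotopic to $\tilde\gamma=\gamma_1$ through curves in $P_v\setminus\{p_i:i>2\}$ with endpoints $p_1,p_2$, as required.

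For ``$\Leftarrow$'', given such an isotopy $\{\gamma_t\}$ I would set $\tilde L_t=\mu^{-1}(\gamma_t)$, which by Lemma \ref{lem:Lagrangian} is a circle-invariant Lagrangian --- an embedded sphere, since each $\gamma_t$ joins $p_1$ to $p_2$ while avoiding the other singularities --- so that $\{\tilde L_t\}$ is a circle-invariant Lagrangian isotopy from $L$ to $\pi^{-1}(\tilde\gamma)$. The remaining task is to upgrade this to a circle-invariant \emph{Hamiltonian} isotopy. Working downstairs with the dictionary above, I would first reparametrise tangentially so that $\gamma_t$ is stationary near every $p_i$; then $\varpi(\partial_t\gamma_t,\,\cdot\,)$ integrates along each arc $\gamma_t$ to a smooth function, and an extension of it to a compactly supported $h_t$ on $P_v$ vanishing near the $p_i$ with $i>2$ has a $\varpi$-Hamiltonian flow that sweeps out the $\gamma_t$; the pullback $\tilde H_t=h_t\circ\mu$ then generates the desired circle-invariant Hamiltonian isotopy. (Alternatively one can argue that a Lagrangian isotopy of a sphere, which has $b_1=0$, is automatically Hamiltonian, and run Moser's argument in an equivariant Weinstein neighbourhood.)

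The hard part is this last upgrade in ``$\Leftarrow$'': producing an honest Hamiltonian flow that realises the prescribed geometric isotopy of arcs while controlling the behaviour near the fixed points $p_i$ of the circle action, where $\phi$ --- and hence $\varpi$ --- degenerates. The delicate points are to arrange the isotopy to be stationary near the $p_i$ so that $\varpi(\partial_t\gamma_t,\,\cdot\,)$ stays integrable there, to verify that the resulting vector field on $P_v$ genuinely transports $\gamma_0$ onto each $\gamma_t$ (using that an embedded arc with fixed endpoints is determined by its normal velocity, so that matching normal velocities suffices), and to keep all the extensions circle-equivariant. Everything else reduces to Lemmas \ref{lem:Lagrangian} and \ref{lem:SL.stable.1} together with the continuity/connectedness argument used in ``$\Rightarrow$''.
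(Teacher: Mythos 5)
Your forward direction is correct and in fact more explicit than the paper, which simply projects the invariant Hamiltonian isotopy and quotes Lemmas \ref{lem:Lagrangian} and \ref{lem:SL.stable.1}; your formula for $\mu_*X_{\tilde H}$ and the observation that the fixed points $\pi^{-1}(p_i)$ are stationary give the same conclusion. For the converse, however, the paper's proof is exactly the argument you relegated to a parenthesis, and in a simpler form: given the circle-invariant Lagrangian isotopy $\tilde L_t=\pi^{-1}(\gamma_t)$, the $1$-form $\iota_{\partial_t\Phi_t}\omega|_{L_t}$ is closed, hence exact because $b^1(L)=0$, so one gets $H_t$ with $dH_t=\iota_{\partial_t\Phi_t}\omega|_{L_t}$, and since this $1$-form is circle-invariant and $\U(1)$ is compact one \emph{averages} $H_t$ over $\U(1)$ to obtain a circle-invariant Hamiltonian; no Weinstein neighbourhood or Moser argument is needed, and the averaging step is the one ingredient your parenthetical omits (it is needed because $\mathrm{Ham}^{\U(1)}(L)$ requires the generating Hamiltonian itself to be invariant, not just the Lagrangians).

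By contrast, your primary route (building a compactly supported $h_t$ downstairs whose $\varpi$-Hamiltonian flow sweeps out the $\gamma_t$) has a genuine gap as written: you cannot ``reparametrise tangentially so that $\gamma_t$ is stationary near every $p_i$'', since a tangential reparametrisation does not change the image of the curve, and near the fixed endpoints $p_1,p_2$ the image of $\gamma_t$ genuinely moves in general (the arc can rotate about its endpoint), so the curves are not stationary there no matter how you parametrise. One would instead have to control the normal velocity and the behaviour of $h_t\circ\mu$ across the points $\pi^{-1}(p_1),\pi^{-1}(p_2)$, where $\phi$ blows up and smoothness of the lifted Hamiltonian is not automatic. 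This can presumably be repaired, but it is extra work that the averaging argument avoids entirely, so I would promote your alternative to the main proof.
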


\begin{proof}
We observe that the notions of circle-invariant Hamiltonian and Lagrangian isotopy class agree for $L$. Indeed, if $\Phi_t:L_t \to X$ is a circle-invariant Lagrangian isotopy, 
since $b^1(L)=0$ we know there is $H_t :L_t \to \mathbb{R}$ such that $d H_t =\iota_{\partial_t \Phi_t }\omega$.  As $\iota_{\partial_t \Phi_t }\omega$ is circle-invariant and $\U(1)$ is compact, we can average $H_t$ over $\U(1)$ to obtain a circle-invariant Hamiltonian. Thus, a circle-invariant Lagrangian isotopy is actually a circle-invariant Hamiltonian isotopy. 

The result now follows from Lemmas \ref{lem:Lagrangian} and   \ref{lem:SL.stable.1}.
\end{proof}

Our next observation follows immediately from Proposition \ref{prop:min.spheres} and Example \ref{ex:SL.straight}.

\begin{lemma}\label{lem:SL.stable.3}  Let $\ell$ be the straight line from $p_1$ to $p_2$.
The only circle-invariant special Lagrangian representative of the homology class $[L] \in H_2(X)$ is $\mu^{-1}(\ell)$.
\end{lemma}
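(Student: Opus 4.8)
The plan is to identify which circle-invariant surfaces are special Lagrangian in terms of their projected curves, and then to pin down the unique one lying in the homology class $[L]$. First I would invoke Example \ref{ex:SL.straight}, which establishes that $\pi^{-1}(\gamma)$ is special Lagrangian (for some compatible $\omega_v$) if and only if $\gamma$ is a straight line segment, with Lagrangian angle equal to the angle $\gamma$ makes with the $\mu_1$-axis in the plane in which it lies. Thus any circle-invariant special Lagrangian is of the form $\pi^{-1}(\gamma)$ with $\gamma$ a straight line. For this to be a \emph{compact} embedded surface representing a class in $H_2(X)$, by Lemma \ref{lem:SL.stable.1} (or the topology discussion following Lemma \ref{lem:min.surf}) the curve $\gamma$ must be a straight segment whose endpoints are singularities $p_i, p_j$ of $\phi$ — if $\gamma$ were an infinite ray or a line missing all singular points, $\pi^{-1}(\gamma)$ would be noncompact (a plane or cylinder), and if $\gamma$ had an interior singularity the surface would fail to be embedded.

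Next I would use the homological constraint. By Proposition \ref{prop:min.spheres}, $H_2(X,\mathbb Z)\cong A_{k-1}$ with generators $[\pi^{-1}(\gamma_{ij})]$, and the classes represented by $\pi^{-1}(\text{straight segment between two }p_i)$ are exactly those; moreover the formula \eqref{eq:Area_Length}, $\mathrm{Area}(\pi^{-1}(\gamma))=2\pi\,\mathrm{Length}(\gamma)$, shows that within a fixed homology class the area of a circle-invariant representative equals $2\pi$ times the length of its projected curve. Since $L=\pi^{-1}(\gamma)$ with $\gamma$ joining $p_1$ and $p_2$ (Lemma \ref{lem:SL.stable.1}), a circle-invariant special Lagrangian $\Sigma'$ homologous to $L$ must be $\pi^{-1}(\gamma')$ for a straight segment $\gamma'$ joining singularities with $[\pi^{-1}(\gamma')]=[\pi^{-1}(\gamma)]$; because the straight segment joining two given points is unique, the only possibility consistent with $[\pi^{-1}(\gamma')]=[\pi^{-1}(\gamma)]$ in the root lattice is $\gamma'=\ell$, the straight line from $p_1$ to $p_2$. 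Conversely, $\pi^{-1}(\ell)$ \emph{is} special Lagrangian (with respect to the $\omega_v$ for which $P_v$ contains $p_1,p_2$ and $\ell$) by Example \ref{ex:SL.straight}, and clearly lies in $[L]$. Hence $\pi^{-1}(\ell)$ is the unique circle-invariant special Lagrangian representative of $[L]$.

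I do not expect a serious obstacle here: the lemma is essentially a bookkeeping consequence of Example \ref{ex:SL.straight}, Lemma \ref{lem:SL.stable.1}, and Proposition \ref{prop:min.spheres}. The one point requiring a little care is ruling out a straight segment between a \emph{different} pair of singularities (or a multiple/degenerate configuration) that happens to land in the same homology class; this is handled by observing that distinct pairs $\{p_i,p_j\}$ give distinct (indeed linearly independent or at least non-equal) classes among the generators of $A_{k-1}$, so the constraint $[\pi^{-1}(\gamma')]=[\pi^{-1}(\gamma)]$ forces the same endpoints $p_1,p_2$ and hence $\gamma'=\ell$.
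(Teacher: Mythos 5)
Your argument is correct and is essentially the paper's own: the lemma is stated there as an immediate consequence of Proposition \ref{prop:min.spheres} and Example \ref{ex:SL.straight}, which are exactly the ingredients you combine (with Lemma \ref{lem:SL.stable.1} supplying the compactness/endpoint bookkeeping). Your extra remark that the class in $A_{k-1}$ determines the pair of endpoints, hence the segment, is just a spelled-out version of the uniqueness already built into Proposition \ref{prop:min.spheres}.
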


We now prove one direction of Theorem \ref{thm:SL.stable}.

\begin{proposition}\label{prop:SL.stable.1}
If $L$ is destabilized by a circle-invariant configuration, then there is no circle-invariant special Lagrangian in $\mathrm{Ham}^{\U(1)}(L)$.
\end{proposition}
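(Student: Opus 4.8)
The plan is to reduce the statement, via Lemmas \ref{lem:SL.stable.1}–\ref{lem:SL.stable.3}, to a planar topological statement about curves, and then to extract a winding-number obstruction from the destabilizing decomposition. By Lemma \ref{lem:SL.stable.1} write $L=\pi^{-1}(\gamma)$ with $\gamma$ a curve in the plane $P=P_v$ joining two singularities $p_1,p_2$ of $\phi$ and meeting no other singularity; let $\Sigma_P$ be the finite set of singularities of $\phi$ lying in $P$ other than $p_1,p_2$, and let $\ell\subset P$ be the straight segment from $p_1$ to $p_2$. By Lemma \ref{lem:SL.stable.3}, $\pi^{-1}(\ell)$ is the only circle-invariant special Lagrangian in the class $[L]$, so it is enough to show $\pi^{-1}(\ell)\notin\mathrm{Ham}^{\U(1)}(L)$. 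If $\ell$ meets a point of $\Sigma_P$ then $\pi^{-1}(\ell)$ is not embedded, hence is not in $\mathrm{Ham}^{\U(1)}(L)$ by Lemma \ref{lem:SL.stable.2}; so I may assume $\ell\cap\Sigma_P=\emptyset$, and by Lemma \ref{lem:SL.stable.2} it remains to prove that $\gamma$ is not isotopic to $\ell$ rel endpoints inside $P\setminus\Sigma_P$.

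Next I would bring in the destabilizing data. Replacing $L$ by a circle-invariant Hamiltonian isotopic Lagrangian (which changes neither $\mathrm{Ham}^{\U(1)}(L)$ nor the task at hand), I may assume $\gamma=\gamma_1\#\gamma_2$ with $\gamma_1$ running from $p_1$ to some $p_3\in\Sigma_P$, $\gamma_2$ from $p_3$ to $p_2$, the two curves meeting only at $p_3$, with $L_i=\pi^{-1}(\gamma_i)$ graded of Lagrangian-angle variation $<2\pi$, with the connect sum rounding the corner at $p_3$ clockwise as in Example \ref{ex:Lagrangian_Connect_Sum}, and with the destabilizing inequality $\tau(L_1)\ge\tau(L_2)$. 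By Example \ref{ex:Cohomological_Phase}, $e^{i\tau(L_1)}$ and $e^{i\tau(L_2)}$ point along the chords $p_1\to p_3$ and $p_3\to p_2$, while $e^{i\tau(L)}$ points along $p_1\to p_2$, compatibly since $(p_3-p_1)+(p_2-p_3)=p_2-p_1$.

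The crux is to compute the winding number $w$ of the closed loop $c:=\gamma\cdot\ell^{-1}$ in $P\setminus\Sigma_P$ about $p_3$: this is invariant under isotopy of $\gamma$ rel endpoints, and it vanishes if $\gamma\simeq\ell$, so it suffices to show $w=\pm1$. In a small disc about $p_3$, the clockwise connect sum makes $\gamma$ enter along $\gamma_1$'s approach direction, circulate about $p_3$ with decreasing angular position, and leave along $\gamma_2$'s departure direction; tracking which lift of the Lagrangian angle the graded connect sum forces on $L_2$ once a lift is fixed on $L_1$, one checks that the inequality $\tau(L_1)\ge\tau(L_2)$ is exactly the condition that this clockwise circulation is the ``long'' arc about $p_3$ — equivalently, that $\gamma$ leaves $p_3$ on the opposite side of the chord $\ell$ from the side on which it arrived. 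Hence $\gamma$ and $\ell$ together enclose $p_3$, so $w=\pm1$ and $\gamma$ is not isotopic to $\ell$ inside $P\setminus\Sigma_P$; this gives $\pi^{-1}(\ell)\notin\mathrm{Ham}^{\U(1)}(L)$ and completes the proof. (If some $\gamma_i$ cannot be isotoped to the corresponding straight chord because it genuinely wraps another point of $\Sigma_P$, then $c$ already has nonzero winding there and the conclusion follows a fortiori.)

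The main obstacle is precisely this last piece of bookkeeping: correctly synchronizing the ``clockwise'' convention of the connect sum in Example \ref{ex:Lagrangian_Connect_Sum}, the normalization of $\tau$ used in Definition \ref{dfn:stability} and Example \ref{ex:Cohomological_Phase}, and the geometry of the triangle $p_1p_3p_2$, so that ``$\tau(L_1)\ge\tau(L_2)$'' translates into ``the clockwise rounding of the corner at $p_3$ encloses $p_3$''. Once that dictionary is in place, the remaining steps — the reductions via Lemmas \ref{lem:SL.stable.1}–\ref{lem:SL.stable.3} and the winding-number computation — are routine.
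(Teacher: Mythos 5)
Your proposal is correct and follows essentially the same route as the paper's proof: reduce via Lemmas \ref{lem:SL.stable.1}--\ref{lem:SL.stable.3} to a statement about planar curves, then use the clockwise connect-sum convention together with $\tau(L_1)\geq\tau(L_2)$ to conclude that $p_3$ is enclosed by $\gamma$ and $\ell$, obstructing any isotopy in $P_v$ avoiding the singularities of $\phi$. Your winding-number formulation (and the explicit handling of the degenerate case $\ell\cap\Sigma_P\neq\emptyset$) is just a slightly more explicit packaging of the paper's ``$p_3$ lies in the interior of the region bounded by $\gamma$ and $\ell$'' argument.
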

\begin{proof} 
By assumption, there is a circle-invariant Hamiltonian isotopy from $L$ to a circle-invariant destabilizing configuration $L_1 \# L_2$ with $\tau(L_1) \geq \tau(L_2)$.  Therefore, $\mathrm{Ham}^{\U(1)}(L)=\mathrm{Ham}^{\U(1)}(L_1\# L_2)$.

 By Lemmas \ref{lem:SL.stable.1}--\ref{lem:SL.stable.2}, $L_1=\mu^{-1}(\gamma_1)$ and $L_2=\mu^{-1}(\gamma_2)$ where $\gamma_1,\gamma_2$ are curves in $P_v$ so that, up to relabelling the singularities of $\phi$ and reparametrising the curves, we have
 $$\gamma_1(0)=p_1,\quad \gamma_1(1)=p_3=\gamma_2(0), \quad \gamma_2(1)=p_2,$$ for some singularity $p_3$ of $\phi$. Then, $L_1\#L_2=\mu^{-1}(\gamma)$ with $\gamma=\gamma_1 \# \gamma_2$ as in Example \ref{ex:Lagrangian_Connect_Sum}. Since $\tau(L_1) \geq \tau(L_2)$ and $\gamma$ departs from $\gamma_1$ to meet $\gamma_2$ in a clockwise manner around $p_3$ as described in Example \ref{ex:Lagrangian_Connect_Sum}, we conclude that $p_3$ lies in the interior of the region bounded by $\gamma$ and the straight line $\ell$ connecting $p_1$ and $p_2$. Therefore, $\gamma$ and $\ell$ cannot be isotoped keeping the endpoints fixed without crossing the singularities of $\phi$. Lemma \ref{lem:SL.stable.2} then implies $\mu^{-1}(\ell) \notin \mathrm{Ham}^{\U(1)}(L_1\# L_2)=\mathrm{Ham}^{\U(1)}(L)$.  The result then follows from Lemma \ref{lem:SL.stable.3}.
\end{proof}

We now conclude by proving the other direction of Theorem \ref{thm:SL.stable}.

\begin{proposition}\label{prop:SL.stable.2}
If there is no circle-invariant special Lagrangian in $\mathrm{Ham}^{\U(1)}(L)$, then $L$ is destabilized by a circle-invariant configuration.
\end{proposition}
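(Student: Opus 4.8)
The plan is to translate the hypothesis into a two-dimensional statement about $\gamma$ and then to produce the destabilizing configuration from the way $\gamma$ sits inside the punctured plane $P_v$. As above, write $L=\pi^{-1}(\gamma)$ with $\gamma\subseteq P_v$ joining singularities $p_1,p_2$ of $\phi$ and meeting no other singularity, let $\ell\subseteq P_v$ be the straight segment from $p_1$ to $p_2$, and set $S'=\{p_i:i>2\}\cap P_v$. By Lemmas \ref{lem:SL.stable.2} and \ref{lem:SL.stable.3}, saying that $\mathrm{Ham}^{\U(1)}(L)$ contains no circle-invariant special Lagrangian is the same as saying that $\gamma$ cannot be isotoped, rel $\{p_1,p_2\}$ and within $P_v\setminus S'$, onto $\ell$; in particular $S'\neq\emptyset$.

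First I would put $\gamma$ into minimal position with respect to $\ell$, isotoping it rel endpoints in $P_v\setminus S'$ so as to minimise the number of interior intersections with $\ell$. Since $\gamma$ is \emph{not} isotopic to $\ell$, the complement of $\gamma\cup\ell$ must have a bounded face meeting $S'$ --- otherwise one could remove innermost, necessarily empty, bigons one at a time until $\gamma$ is pushed onto $\ell$, which is the arc version of the standard bigon criterion. The essential refinement is then to arrange, after a further ``sweep'' of $\gamma$ towards $\ell$ (for instance minimising, in the isotopy class, the area $\gamma$ bounds with $\ell$), that $\gamma$ and $\ell$ cobound an embedded bigon $D$ whose closure meets $S'$ in a single point $p_3$, lying in the interior of $D$. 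Proving that a non-straightenable \emph{embedded} arc always admits such a one-singularity bigon --- or, equivalently, organising this as an induction on the number of singularities enclosed by $\gamma\cup\ell$ --- is the step I expect to be the main obstacle.

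Given $D$ and $p_3$, I would isotope $\gamma$ rel endpoints in $P_v\setminus S'$ so that near $p_3$ it becomes the corner resolution, on the $D$-side, of the straight segments $\overline{p_1p_3}$ and $\overline{p_3p_2}$; concretely, choose $\gamma_1$ and $\gamma_2$ to be $C^0$-small perturbations of $\overline{p_1p_3}$ and $\overline{p_3p_2}$ that meet no point of $S'\setminus\{p_3\}$ and lie in the isotopy classes prescribed by $D$, so that $\gamma\simeq\gamma_1\#\gamma_2$ in the sense of Example \ref{ex:Lagrangian_Connect_Sum} with $p_3$ enclosed between $\gamma_1\#\gamma_2$ and $\ell$. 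By Lemma \ref{lem:SL.stable.2} this realises $L$ as Hamiltonian isotopic, through circle-invariant Hamiltonians, to $L_1\#L_2$ with $L_i=\pi^{-1}(\gamma_i)$ compact, embedded, circle-invariant Lagrangian $2$-spheres.

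It remains to verify the conditions of Definition \ref{dfn:stability}. As $\gamma_1,\gamma_2$ are nearly straight, $L_1,L_2$ are graded with Lagrangian-angle variation $<2\pi$, and $L_1\#L_2$ is gradeable by the local-index discussion of $\S$\ref{sss:graded}. For $\tau(L_1)\ge\tau(L_2)$ one reverses the computation in the proof of Proposition \ref{prop:SL.stable.1}: by Example \ref{ex:Cohomological_Phase} the phases $\tau(L_1),\tau(L_2)$ are the grading-consistent angles of $\overline{p_1p_3},\overline{p_3p_2}$ with the $\mu_1$-axis, and their difference is the turning of the grading across the corner at $p_3$, which is non-negative precisely because $D$ places $p_3$ between $\gamma$ and $\ell$ --- this is exactly the biconditional underlying Proposition \ref{prop:SL.stable.1}, and disentangling the sign here (that the $D$-side rounding is the one labelled $L_1\#L_2$ rather than $L_2\#L_1$) is the one further delicate point. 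Hence $L_1\#L_2$ destabilizes $L$ by a circle-invariant configuration, completing the argument.
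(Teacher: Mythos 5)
Your reduction to the planar picture and your closing sign discussion are in the spirit of the paper's argument, but the middle of your construction has a genuine gap: you insist that the destabilizing pieces be $C^0$-small perturbations $\gamma_1,\gamma_2$ of the straight segments $\overline{p_1p_3}$, $\overline{p_3p_2}$ with $\gamma\simeq\gamma_1\#\gamma_2$ rel endpoints in $P_v\setminus S'$, and this fails in general. A $C^0$-small perturbation of a straight segment lies in an essentially unique isotopy class in the punctured plane (it cannot ``lie in the isotopy class prescribed by $D$'' if that class is nontrivial), so the region bounded by $\gamma_1\#\gamma_2$ and $\ell$ is approximately the triangle $p_1p_3p_2$. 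On the other hand, for every singularity not lying on $\ell$ the winding number of the loop obtained by traversing $\gamma$ and then $\ell$ backwards is invariant under isotopies of $\gamma$ rel endpoints in $P_v\setminus S'$; hence if $\gamma\cup\ell$ encloses a second singularity outside that triangle, or if $\gamma$ is a twisted arc as in $\S$\ref{ss:Seidel}, then no near-straight connect sum is Hamiltonian isotopic to $L$ --- and that isotopy is exactly what Definition \ref{dfn:stability} demands. Relatedly, the ``one-singularity bigon'' you flag as the main obstacle is both problematic and unnecessary: the definition of instability does not require the pieces $L_1,L_2$ to be special or nearly straight, and by Example \ref{ex:Cohomological_Phase} the cohomological phases $\tau(L_i)$ depend only on the endpoints of $\gamma_i$, so curved pieces cost nothing (the angle-variation and grading conditions, which presumably motivated your near-straight choice, can be arranged without it).

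The paper's proof is organised precisely to avoid this. Take any generic isotopy $\Psi_t$ in $P_v$, rel the endpoints $p_1,p_2$, from $\gamma$ to $\ell$ --- crossings of singularities are now permitted --- and let $t_0$ be the first time the moving curve meets a singularity $p_3$; after adjusting the isotopy and labels, $p_3$ lies in the region bounded by $\gamma$ and $\ell$. Splitting the intermediate curve $\Psi_{t_0}$ at $p_3$ into two \emph{curved} arcs $\gamma_1,\gamma_2$ and setting $L_i=\pi^{-1}(\gamma_i)$, either $L_1\# L_2$ or $L_2\# L_1$ lies in $\mathrm{Ham}^{\U(1)}(L)$ by Lemma \ref{lem:SL.stable.2}, since $\pi^{-1}(\Psi_t)$ is Hamiltonian isotopic to $L$ for all $t<t_0$. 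The inequality $\tau(L_1)\geq\tau(L_2)$ then follows from the location of $p_3$ relative to $\ell$ together with the clockwise convention of Example \ref{ex:Lagrangian_Connect_Sum}: the opposite strict inequality would force $p_3$ outside the region bounded by $\gamma_1\#\gamma_2$ and $\ell$, contradicting membership in $\mathrm{Ham}^{\U(1)}(L)$. Your proposed reversal of the computation in Proposition \ref{prop:SL.stable.1} is essentially this last step, but it only becomes available once the pieces are allowed to be the two halves of an intermediate curve of the isotopy rather than near-straight segments.
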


\begin{proof}
The hypothesis of the proposition, together with Lemmas \ref{lem:SL.stable.1}--\ref{lem:SL.stable.3}, state that $\gamma$ cannot be isotoped to the straight line $\ell$ connecting $p_1$ to $p_2$ through curves in $P_v$ with fixed endpoints without passing through another singularity of $\phi$. 

Consider an isotopy of curves $\Psi_t:[0,1] \to P_v$, for $t \in [0,1]$, with $\Psi_0=\gamma$ and $\Psi_1=\ell$. Let $p_3$ be the first singularity of $\phi$ which the isotopy crosses, which occurs at some $t_0\in (0,1)$.\footnote{By picking the isotopy in a generic manner we can assume that $\Psi_{t_0}(0,1)$ contains only one singularity of $\phi$.} Then, for all $t<t_0$, the open curves $\Psi_{t}(0,1)$ do not intersect any singularity of $\phi$ and $\Psi_{t_0}(0,1)$ only contains $p_3$. In particular, $\mu^{-1}(\gamma)$ and $\mu^{-1}(\Psi_t(0,1))$ are Hamiltonian isotopic for all $t<t_0$ but not for $t \geq t_0$.

By the Jordan curve theorem, we may assume that $p_3$ lies in the interior of the region bounded by $\gamma$ and $\ell$, otherwise we would just choose a different isotopy and rename the singularity we meet first in this interior as $p_3$. We know from construction that $p_3$ is also in the interior of the region bound by $\ell$ and $\Psi_{t}(0,1)$ for $t<t_0$.
	
	Now consider the curves $$\gamma_1=\Psi_{t_0}[0, \Psi_{t_0}^{-1}(p_3)]\quad\text{and}\quad\gamma_2=\Psi_{t_0}[\Psi_{t_0}^{-1}(p_3),1]$$ and the corresponding Lagrangians 
	$L_1=\mu^{-1} (\gamma_1)$ and $L_2=\mu^{-1}(\gamma_2)$. 
	Then, either $L_1 \# L_2$ or $L_2 \# L_1$ lies in $\mathrm{Ham}^{\U(1)}(L)$ by Lemma \ref{lem:SL.stable.2}. By possibly relabeling the points $p_1$ and $p_2$ we may assume that  $L_1 \# L_2 \in \mathrm{Ham}^{\U(1)}(L)$. 
	
	By \eqref{eq:Cohomological_Lagrangian_Angle}, the cohomological phases $\tau(L_1)$ and $\tau(L_2)$ coincide with the angles that the (oriented) straight lines $\ell_1$ connecting $p_1$ to $p_3$ and $\ell_2$ connecting $p_3$ to $p_2$ respectively make with $\ell$ (possibly after shifting both $\tau(L_1)$ and $\tau(L_2)$ by the same constant). 
Viewing $\ell$ as horizontal we have either: 
	\begin{itemize}
		\item[(a)] $\ell_1$ makes a non-negative angle with $\ell$, in which case $\ell_2$ makes a non-positive angle with $\ell$, and so $\tau(L_1)\geq \tau(L_2)$, 
		with the equality holding only when all lines are parallel; or
		\item[(b)] $\ell_1$ makes a negative angle with $\ell$ and $\ell_2$ makes a positive angle with $\ell$, so
		$\tau(L_1) < \tau(L_2)$.
	\end{itemize}
 Suppose that (b) holds and recall that $\gamma_1 \# \gamma_2$ goes round $p_3$ in a clockwise manner as described in Example \ref{ex:Lagrangian_Connect_Sum}. Then, $p_3$ would be outside the region bounded by $\gamma_1 \# \gamma_2$ and so $L_1 \# L_2$ would not be in $\mathrm{Ham}^{\U(1)}(L)$ which is a contradiction. Thus, (a) holds, so the result follows by definition of stability in Definition \ref{dfn:stability}.
\end{proof}

The main statement in Theorem \ref{thm:SL.stable} then follows from Propositions \ref{prop:SL.stable.1}--\ref{prop:SL.stable.2}, and the uniqueness statement is a consequence of Lemma \ref{lem:SL.stable.3}.
\end{proof}

\begin{remark}\label{rem:Area_Stability}
The proof of Theorem \ref{thm:SL.stable} actually holds under a replacement for the (in)stability condition: namely, that an unstable $L$ is one which can be decomposed as $L_1 \# L_2$ with 
	$$\mathrm{Area}(L) \geq \mathrm{Area}(L_1) + \mathrm{Area}(L_2),$$
for all $L$ in its Hamiltonian isotopy class. We will see that this is related to flow stability (Definition \ref{dfn:flow_stability}) in the Thomas--Yau conjecture.
\end{remark}

\subsection{Jordan--H\"older filtrations and decompositions}

We now discuss the Jordan--H\"older filtrations and decompositions for graded Lagrangians suggested in \cite{ThomasYau}. These are related to Bridgeland stability conditions and the Joyce conjectures on Lagrangian mean curvature flow (see \cite{JoyceConjectures}). We intend to return to some of those conjectures in the future. For now we shall simply explain how, in our setting, such Jordan--H\"older filtrations can be obtained and we start by recalling here the proposed definitions of \cite{ThomasYau}.

\begin{definition}[Subobjects]
	Given a graded Lagrangian $L$, we say  a graded Lagrangian $L_1$ is a subobject of $L$, written $L_1 \leq L$, if $L$ is Hamiltonian isotopic to a graded connect sum of the form $L_1 \# \tilde{L}$ which respects the initial grading on $L$. In this case, we denote $\tilde{L}$  as the ``quotient'' $L/L_1$.
\end{definition}

\begin{definition}[Jordan--H\"older filtration and decomposition]
	A Jordan--H\"older filtration of a graded Lagrangian $L$ is a sequence of graded Lagrangians
	$$L_1 \leq L_2 \leq \ldots \leq L_k=L ,$$
	such that the consecutive ``quotients'' $L_i/L_{i-1}$ are stable and
	$$\tau(L_1) \geq \tau(L_2) \geq \ldots \geq \tau(L_k).$$
	The union
	$$L_1 \cup L_2/L_1 \cup \ldots \cup L_k/L_{k-1}  $$
	is called a Jordan--H\"older decomposition of $L$.
\end{definition}

In our setting we can prove that there is a large class of graded Lagrangians which admit Jordan-H\"older filtrations and decompositions.

\begin{theorem}\label{thm:Jordan-Holder}
	Let $X$ be an ALE or ALF hyperk\"ahler 4-manifold as in Example \ref{ex:MultiEH} and \ref{ex:MultiTN}, and let $L$ be a compact, embedded, circle-invariant, graded Lagrangian in $X$. If the grading $\beta:L \to \mathbb{R}$ is a perfect Morse function, then $L$ admits a  Jordan--H\"older decomposition.
\end{theorem}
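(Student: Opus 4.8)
The plan is to pass to the projected picture, show that the Morse hypothesis forces the curve underneath $L$ to be convex, and then read the filtration off from a convex hull.

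First I would recall, as in the proof of Theorem~\ref{thm:SL.stable}, that $L=\pi^{-1}(\gamma)$ for an embedded arc $\gamma$ in a plane $P_v\subseteq\mathbb{R}^3$ joining two singularities $p_1,p_2$ of $\phi$ and meeting no other singularity $S=\{p_3,\dots,p_k\}$ (Lemmas~\ref{lem:Lagrangian} and \ref{lem:SL.stable.1}), with $L\cong S^2$, and that $\mathrm{Ham}^{\U(1)}(L)$ consists of the $\pi^{-1}(\widetilde{\gamma})$ with $\widetilde{\gamma}$ isotopic to $\gamma$ rel endpoints in $P_v\setminus S$ (Lemma~\ref{lem:SL.stable.2}). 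By the discussion preceding Definition~\ref{dfn:Cohomological_Phase}, a grading $\beta$ descends to the turning-angle function of $\gamma$ in $P_v$, with $\beta'=\kappa$ the Euclidean curvature (Lemma~\ref{lem:Lagrangian_Angle_Curvature}). The key first observation is that a circle-invariant function on $S^2=\pi^{-1}(\gamma)$ has, besides the two poles $\pi^{-1}(p_1),\pi^{-1}(p_2)$, an entire critical \emph{circle} over each interior zero of $\beta'$; and, computing the Hessian of $\beta$ at the collapsing circles as in Appendix~\ref{app:hessian}, the two poles are nondegenerate critical points exactly when $\kappa$ is nonzero there. Hence $\beta$ is Morse iff $\kappa$ has no zero on $\gamma$, i.e.\ $\gamma$ is strictly convex; after possibly reflecting $P_v$ we fix the sign of $\kappa$.

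Next I would build the filtration from a convex hull. Put $\ell=\overline{p_1p_2}$, so $\overline{R}:=\mathrm{conv}(\gamma)$ is the region bounded by $\gamma$ and $\ell$, and let $\Pi=(p_1=q_0,q_1,\dots,q_m,q_{m+1}=p_2)$ be the chain of the boundary of $\mathrm{conv}\bigl(\{p_1,p_2\}\cup(S\cap\overline{R})\bigr)$ lying on the $\gamma$-side, listing in order all singularities of $\phi$ on it (so that every edge of $\Pi$ contains no singularity in its interior). If $S\cap\overline{R}=\emptyset$ then $m=0$, $\gamma$ is isotopic to $\ell$, $L$ is stable, and the Jordan--H\"older decomposition is trivial; so assume $m\ge1$. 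Every singularity in $\overline{R}$ not among $q_1,\dots,q_m$ lies strictly inside this convex polygon, hence strictly on the $\ell$-side of $\Pi$; therefore a suitable smoothing $\widetilde{\Pi}$ of $\Pi$, rounding each corner $q_j$ into a connect-sum kink in the rotational sense dictated by the sign of $\kappa$ (as in Example~\ref{ex:Lagrangian_Connect_Sum}), together with $\gamma$ bounds a disk contained in $\overline{R}$ and disjoint from $S$; foliating this disk by arcs from $p_1$ to $p_2$ exhibits $\widetilde{\Pi}$ as isotopic to $\gamma$ rel endpoints in $P_v\setminus S$, so $\pi^{-1}(\widetilde{\Pi})\in\mathrm{Ham}^{\U(1)}(L)$.

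Then I would set $L_i=\pi^{-1}(\sigma_i)$, where $\sigma_i$ is a smoothing of the convex sub-chain $q_{m+1-i}\,q_{m+2-i}\cdots q_{m+1}=p_2$, so that $L_{m+1}=L$ and $L_1=\pi^{-1}(\overline{q_mp_2})$, and check the three requirements of a Jordan--H\"older filtration. (i) Each edge $\overline{q_jq_{j+1}}$ joins two singularities and contains none in its interior, so $\pi^{-1}(\overline{q_jq_{j+1}})$ is an embedded special Lagrangian $2$-sphere (Example~\ref{ex:SL.straight}), hence stable (Theorem~\ref{thm:SL.stable}); in particular $L_1$ is stable. (ii) Un-smoothing one corner of $\sigma_{i+1}$ at a time gives $L_{i+1}\simeq L_i\#(L_{i+1}/L_i)$ with $L_{i+1}/L_i=\pi^{-1}(\overline{q_{m-i}q_{m+1-i}})$ and with $L_i$ the sub-object, the matching of the rounding direction with the convention of Example~\ref{ex:Lagrangian_Connect_Sum} being precisely where the fixed sign of $\kappa$ enters (for one sign the filtration is read from the $p_2$-end, for the other from the $p_1$-end). (iii) By Example~\ref{ex:Cohomological_Phase}, $\tau(L_i)$ is the angle of the chord from $p_2$ to $q_{m+1-i}$; convexity of $\Pi$ makes these angles monotone in $i$, and the argument of Proposition~\ref{prop:SL.stable.2} fixes the direction, giving $\tau(L_1)\ge\tau(L_2)\ge\cdots\ge\tau(L_{m+1})=\tau(L)$. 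Thus $L_1\le L_2\le\cdots\le L_{m+1}=L$ is a Jordan--H\"older filtration and $\bigcup_{i=1}^{m+1}\pi^{-1}(\overline{q_{i-1}q_i})$ a Jordan--H\"older decomposition of $L$.

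The hardest part is the orientation and grading bookkeeping in steps (ii) and (iii): one must simultaneously ensure that $\widetilde{\Pi}$ and each partial smoothing is Hamiltonian isotopic to the relevant curve, that every un-smoothing places the sub-object \emph{first} in the connect sum so that the $L_i$ form a genuine filtration rather than its reverse, and that the cohomological phases come out non-increasing; all three are governed by the single sign of $\kappa$ together with the conventions of Examples~\ref{ex:Cohomological_Phase} and \ref{ex:Lagrangian_Connect_Sum}, and this is exactly where the perfect-Morse hypothesis (equivalently, convexity of $\gamma$) is indispensable, since a curve whose curvature changes sign can thread the points $p_j$ in a pattern admitting no filtration with monotone phases. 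A secondary, more routine point is the claim that an embedded arc rel endpoints in a punctured plane can be isotoped onto the stated convex polygonal path, which I would justify by the disk-foliation argument above.
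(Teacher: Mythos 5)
Your proposal follows essentially the same route as the paper's proof: you show the perfect Morse hypothesis forces $\beta'=\kappa$ to be nonvanishing in the interior so that $\gamma$ together with the chord $\ell$ bounds a convex region, take the convex hull of the enclosed singularities of $\phi$, observe that the smoothed boundary chain on the $\gamma$-side is circle-invariantly Hamiltonian isotopic to $\gamma$, and obtain the filtration from the partial chains, with the quotients being the special Lagrangians $\pi^{-1}(\ell_i)$ and the phases monotone by convexity. The only differences are cosmetic (you justify the Morse step via the critical circles and the Hessian at the poles, build the filtration from the $p_2$-end rather than the $p_1$-end, and spell out the connect-sum orientation bookkeeping), so this matches the paper's argument.
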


\begin{proof}
	Given that $(L, \beta)$ is circle-invariant, we can write $L$ as $\mu^{-1}(\gamma)$ with $\beta$ descending to a function on $\gamma$. Consider the straight-line $\ell$ connecting the endpoints of $\gamma$.     Then, $\mu^{-1}(\ell)$ is the unique special Lagrangian in the homology class of $L$ and, up to changing the grading of all Lagrangians by the same constant, we may suppose that for any circle-invariant graded Lagrangian $\tilde{L}=\mu^{-1}(\tilde{\gamma})$, the grading $\beta(\tilde{L})$ coincides with the angle that $\tilde{\gamma}'$ makes with $\ell$. Furthermore, denoting by $\tilde{\ell}$ the straight-line connecting the endpoints of $\tilde{\gamma}$, $\tau(\tilde{\gamma})$ is the angle that $\tilde{\ell}$ makes with $\ell$.

Since $L=\mu^{-1}(\gamma)$ is an embedded sphere, the condition that $\beta$ is a perfect Morse function on $L$ implies that it has a unique maximum and minimum, which must therefore correspond to the endpoints of $\gamma$.  As a consequence, $\beta'$ never vanishes in the interior of $\gamma$.   Recall that by Lemma \ref{lem:Lagrangian_Angle_Curvature} we have $\beta'=\kappa$, so the region $B$ in the plane bounded by $\gamma \cup \ell$ must be convex. 

Let $\bigtriangleup$ be the closed convex hull of the singularities of $\phi$ enclosed in $B$. Given that the endpoints of $\gamma$ are singularities of $\phi$, $\bigtriangleup$ has $\ell$ as one of its facets and the remaining ones give a sequence of straight-lines $\ell_1, \ldots , \ell_k$ with consecutive initial and endpoints, i.e.
	$$\partial \bigtriangleup = \ell \cup \ell_1 \cup \ldots \cup \ell_k .$$
	Up to rotation we may suppose that $\ell$ is horizontal and $\ell_1 \cup \ldots \cup \ell_k$ is above $\ell$. The construction is such that $\gamma$ can be isotoped through planar curves to $\ell_1 \# \ldots \# \ell_k$ without crossing any singularity of $\phi$: see Figure \ref{Jordan-Holder}. 
	\begin{figure}[h]\label{fig:Jordan-Holder}
		\centering
		\includegraphics[scale=0.5]{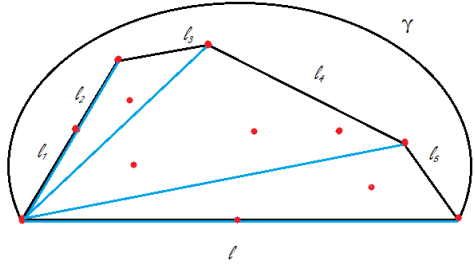}
		\caption{\label{Jordan-Holder} Red points represent singularities of $\phi$: there are none in the region bounded by $\gamma$ and $\ell_1 \# \ldots \# \ell_k$.}
	\end{figure}
	Thus,  $L=\mu^{-1}(\gamma)$ is Hamiltonian isotopic to $\mu^{-1}(\ell_1 \# \ldots \# \ell_k)$ by Lemma \ref{lem:SL.stable.2}. We now define
	\begin{align*}
	\gamma_1 & := \ell_1,\quad
	\gamma_2  := \ell_1 \# \ell_2,\quad \ldots \quad
	\gamma_k := \ell_1 \# \ldots \# \ell_k , 
	\end{align*}
	and $L_i :=\mu^{-1}(\gamma_i)$ for $i=1, \ldots , k$. By construction, the sequence $\tau(L_i)$ is decreasing: see Figure \ref{Jordan-Holder}, where the $\tau(L_i)$ are the angles between the blue lines and the horizontal line $\ell$. Furthermore, the consecutive ``quotients'' $L_{i}/L_{i-1}$ are Hamiltonian isotopic to $\mu^{-1}(\ell_i)$, which is a special Lagrangian and thus stable. This proves the statement.
\end{proof}

\begin{remark}
	Alternatively, the construction in the proof of Theorem 
	\ref{thm:Jordan-Holder} can be made in a similar way to \cite[Section 5.3]{ThomasYau}.  Specifically, one first finds a subobject of $L$ of maximal phase and, among those, one of minimal area. Repeating this for the quotient of $L$ by such a subobject and proceeding inductively gives the Jordan--H\"older filtration in Theorem \ref{thm:Jordan-Holder}.
	
However, we point out that in our circle-invariant setting, these strategies do not seem to work without the assumption that $\beta$ is a perfect Morse function. Indeed,  Figure \ref{Bad_Curve} shows a curve defining a graded Lagrangian which does not appear to have a circle-invariant Jordan--Holder filtration. 

The question of what happens for the Lagrangian mean curvature flow starting at such a Lagrangian is a tantalizing one which we shall address in future work by relating this issue with the predictions of Joyce's conjectures \cite{JoyceConjectures}.

		\begin{figure}[h]\label{fig:Bad_Curve}
		\centering
		\includegraphics[scale=0.6]{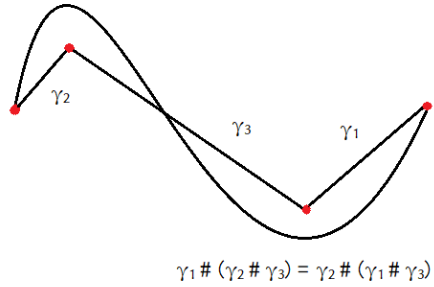}
		\caption{\label{Bad_Curve} Circle-invariant filtrations of this curve do not satisfy the monotonicity condition on the cohomological angles, or lead to some unstable quotients.}
	\end{figure}
\end{remark}

\subsection{Seidel's symplectically knotted 2-spheres}\label{ss:Seidel}

In this section we will prove an invariant version of a deep result of Seidel \cite{Seidel}. Namely, we shall explicitly construct an infinite family of embedded, circle-invariant, Lagrangian $2$-spheres which are symplectically knotted, i.e.~no two of them are Lagrangian isotopic, even though they are isotopic through embedded (non-Lagrangian) $2$-spheres. In our case, we restrict to the circle-invariant setting, with the fact they cannot be unknotted through possibly non-invariant Lagrangian isotopies due to Seidel's original work. Nevertheless, we emphasize that, even though yielding a weaker result, our work only uses elementary methods with no need of Floer homology.

Let $X$ be a Gibbons--Hawking hyperk\"ahler $4$-manifold where $\phi$ has at least three singularities $\lbrace p_0 , p_1, p_2 \rbrace$, so that there is an open set $U \ni  p_0 , p_1, p_2$, diffeomorphic to a ball in $\mathbb{R}^3$, which contains no other singularity of $\phi$. Suppose further that the straight lines $\ell_1$ and $\ell_2$, connecting $p_0$ to $p_1$ and $p_2$ respectively, only intersect at $p_0$ and are contained in $U$. We also let $\ell_{\infty}$ be the infinite ray starting at $p_1$ so that $\ell_1\cup\ell_{\infty}$ is an infinite ray starting at $p_0$ (so $p_0$ does not lie on $\ell_{\infty}$).  For $r \in \mathbb{N}$ we define $\gamma^{r} \subset U$ to be a simple curve in the plane defined by $p_0,p_1,p_2$ which starts at $p_1$, never intersects $\ell_1$, and intersects both $\ell_2$ and $\ell_{\infty}$ exactly $r$ times before it ends at $p_2$, see Figure \ref{fig:Curve_Gamma} below.\footnote{One can choose $\gamma^r$ with positive geodesic curvature and intersecting $\ell_2$ and $\ell_{\infty}$ with the same angle for each $r$.} Then, for $i=1,2$ let $L_i=\mu^{-1}(\ell_i)$, and for $r \in \mathbb{N}$ let 
\begin{equation}\label{eq:Lr}
 L^r:=\mu^{-1}(\gamma^r).
\end{equation}

\begin{figure}[h]
	\centering
	\includegraphics[width=0.33\textwidth,height=0.25\textheight]{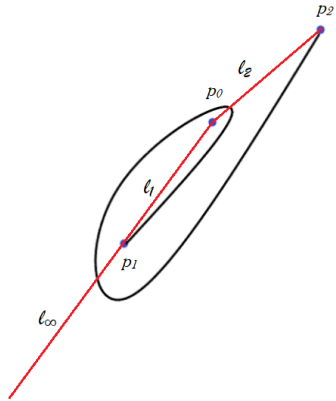}
	\caption{\label{fig:Curve_Gamma} The curve $\gamma^1$.}
\end{figure}

\begin{remark}
	The Lagrangian $2$-spheres $L^r$ are related to those constructed in \cite{Seidel}. Indeed, using Arnold's generalized Dehn twist $\tau_{L_1}$ around $L_1$ we have
	$$L^r=\tau_{L_1}^{2r}(L_2).$$
\end{remark}

It is immediate from their definition that for $r \neq s$ one cannot isotope $\gamma^r$ to $\gamma^s$ through planar curves fixing the end-points and without passing through $p_0$. Indeed, suppose $r>s$, then we may consider the curve $\gamma^r \cdot \overline{\gamma^s}$ obtained by concatenating $\gamma^r$ with  $\gamma^s$ with its opposite orientation. The result is an immersed closed curve which can be isotoped through planar curves, without crossing the singularities of $\phi$, to a closed curve containing $p_0$ in its interior and index $r-l$. In terms of the Lagrangian $2$-spheres $L^r$, this can be stated as follows.

\begin{proposition}[Knotted Lagrangian 2-spheres]
	For all $r,s \in \mathbb{N}$ with $r \neq s$, the Lagrangian 2-spheres in 
	\eqref{eq:Lr} satisfy $L^r \notin \mathrm{Ham}^{\U(1)}(L^s)$.
\end{proposition}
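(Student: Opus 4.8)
The plan is to reduce the statement about Lagrangian $2$-spheres $L^r, L^s$ to a purely topological statement about the curves $\gamma^r, \gamma^s$ in the plane $P$ containing $p_0, p_1, p_2$, and then exploit a winding-number (or index) invariant that distinguishes them. By Lemma \ref{lem:SL.stable.2}, we have $L^r \in \mathrm{Ham}^{\U(1)}(L^s)$ if and only if $\gamma^r$ is isotopic to $\gamma^s$ through curves in $P$ with fixed endpoints $p_1, p_2$ that avoid all singularities of $\phi$ other than the endpoints; since $U$ contains no other singularities, the only relevant obstruction is the singularity $p_0$. So it suffices to show that, for $r \neq s$, no such isotopy avoiding $p_0$ exists.

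First I would set up the invariant. Fix the infinite ray $\ell_1 \cup \ell_\infty$ starting at $p_0$ (passing through $p_1$); this ray, together with the requirement that curves stay away from $p_0$, allows one to count algebraic intersections. Concretely, for any planar curve $\gamma$ from $p_1$ to $p_2$ avoiding $p_0$, the signed count of intersections of $\gamma$ with $\ell_2$ (or, dually, a winding number of $\gamma$ around $p_0$ relative to a fixed reference curve) is invariant under isotopies fixing the endpoints and avoiding $p_0$. The construction of $\gamma^r$ — which crosses $\ell_2$ and $\ell_\infty$ exactly $r$ times, always on the same side and with consistent orientation — is designed precisely so that this invariant equals $r$ (up to a fixed normalization independent of $r$). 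Then I would compute, for $r > s$, the closed curve $\gamma^r \cdot \overline{\gamma^s}$ obtained by concatenating $\gamma^r$ with the reversal of $\gamma^s$: this is a closed loop based at $p_1$, and by the additivity of the winding number its winding number around $p_0$ is $r - s \neq 0$. If $\gamma^r$ and $\gamma^s$ were isotopic rel endpoints in $P \setminus \{p_0\}$, then $\gamma^r \cdot \overline{\gamma^s}$ would be null-homotopic in $P \setminus \{p_0\} \simeq S^1$, forcing winding number $0$, a contradiction. Hence $\gamma^r$ and $\gamma^s$ are not isotopic rel endpoints in $P \setminus \{p_0\}$, and Lemma \ref{lem:SL.stable.2} gives $L^r \notin \mathrm{Ham}^{\U(1)}(L^s)$.

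The main obstacle is not the topology itself — that is essentially $\pi_1(S^1) = \mathbb{Z}$ — but rather pinning down precisely which intersection/winding invariant is preserved by the relevant class of isotopies, and checking carefully that the somewhat informally described curves $\gamma^r$ genuinely realize distinct values of it. In particular, one must be careful that crossings of $\ell_\infty$ and $\ell_2$ are counted with correct signs (the footnote's stipulation that $\gamma^r$ has positive geodesic curvature and meets $\ell_2$, $\ell_\infty$ at a consistent angle is exactly what makes all $r$ crossings count with the same sign, so no cancellation occurs), and that the reference ray $\ell_1 \cup \ell_\infty$ does indeed avoid $p_0$ as stated, so that it is legitimate to use it as a base for the winding-number computation. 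Once this bookkeeping is set up correctly, the argument is elementary and requires no Floer-theoretic input, as emphasized in the surrounding text; Seidel's deeper theorem is only invoked to upgrade the conclusion to non-invariant Lagrangian isotopies, which is outside the scope of this proposition.
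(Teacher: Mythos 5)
Your proposal is correct and follows essentially the same route as the paper: reduce via Lemma \ref{lem:SL.stable.2} to an isotopy question for curves in $P\setminus\{p_0\}$ rel endpoints, then note that the concatenation $\gamma^r\cdot\overline{\gamma^s}$ is a closed curve of winding number (index) $r-s\neq 0$ around $p_0$, which obstructs such an isotopy. The extra bookkeeping you describe with signed crossings of $\ell_2$ and $\ell_\infty$ is just a concrete way of computing that index (note only that $\ell_1\cup\ell_\infty$ is a ray emanating \emph{from} $p_0$, not one avoiding it), so no genuinely new ingredient is involved.
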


\begin{remark}
	Seidel \cite{Seidel}  showed further  that, under the additional hypothesis that there is $p_3$ so that there is a path $\gamma$ connecting it to $p_1$ with $\gamma \cap \ell_2=\emptyset$ and $\gamma \cap \ell_1 = \lbrace p_1 \rbrace$, $L^r$ and $L^s$ are not Lagrangian isotopic for $r \neq s$.
\end{remark}

\section{Lagrangian mean curvature flow}\label{sec:Lagrangian_Mean_Curvature_Flow}

The Thomas--Yau conjecture \cite{ThomasYau} builds upon the Thomas Conjecture (Conjecture \ref{conj:Thomas}),  
stating that a different notion of stability, which we call flow stability, implies long-time existence and convergence of Lagrangian mean curvature  flow in a Calabi--Yau manifold. 

Throughout we shall restrict to the flow in $X$, where $X$ is an ALE or ALF gravitational instanton given by the Gibbons--Hawking ansatz, though some analysis will go through without the ALE or ALF hypothesis.  We shall typically view $X$ as a Calabi--Yau manifold by taking, without loss of generality, $\omega=\omega_3$ as the K\"ahler form and $\Omega=\omega_1+i\omega_2$ as the holomorphic volume form.  

To define flow stability, we recall the notion of almost calibrated Lagrangians.

\begin{definition}[Almost calibrated]\label{dfn:almost.calibrated}
Let $L^2$ be an oriented, zero Maslov Lagrangian in $(X^4,\omega,\Omega)$ with Lagrangian angle $e^{i\beta}:L\to\mathbb{S}^1$.  Then $L$ is almost calibrated if there is a grading $\beta$ of $L$ such that, for some $\delta>0$, 
$$\sup_L\beta-\inf_L\beta< \pi-\delta.$$
If $L$ is compact, $L$ is almost calibrated if and only if there is a constant $\beta_0$ so that $\Re(e^{-i\beta_0}\Omega)|_L>0$.
\end{definition}

Recall the notion of cohomological phase $e^{i\tau(L)}$ of a compact, oriented Lagrangian in Definition \ref{dfn:Cohomological_Phase}.  If $L$ is almost calibrated, we can view $\tau(L)$ as a real number and, by multiplying $\Omega$ by a unit complex number, we can always ensure that $\tau(L)=0$.  We may now define flow stability.

\begin{definition}[Flow stability]\label{dfn:flow_stability}  
Let $L$ be a compact, almost calibrated Lagrangian   in $(X,\omega,\Omega)$ such that $\tau(L)=0$. Then $L$ is flow stable if for any decomposition of $L$, in its Hamiltonian isotopy class, as a graded Lagrangian connect sum $L_1 \# L_2$ of almost calibrated Lagrangians $L_1,L_2$ we have
	\begin{itemize}
	\item[(a)] $[\tau(L_1),\tau(L_2)]\nsubseteq (\inf_L\beta,\sup_L\beta)$, or
	\item[(b)] $\mathrm{Area}(L) < \int_{L_1} e^{-i\tau(L_1)}\Omega + \int_{L_2} e^{-i\tau(L_2)}\Omega $.\footnote{Thomas--Yau \cite{ThomasYau} write a weak inequality but we believe it was their intention to write a strict one.}\end{itemize}
	Note that the condition (b) implies that $\mathrm{Area}(L)< \mathrm{Area}(L_1)+\mathrm{Area}(L_2)$, c.f.~Remark \ref{rem:Area_Stability}. 
\end{definition}  

\noindent Figure \ref{fig_Stability} shows curves defining invariant Lagrangians which are flow unstable and flow stable.

\begin{remark}[The almost calibrated condition] Whilst the almost calibrated condition does not explicitly appear in \cite{ThomasYau}, as stated in that paper, the Thomas--Yau conjecture should really be made only for Lagrangians with some bound on the variation of the Lagrangian angle.  In \cite{ThomasYau}, they prove the conjecture in a special case assuming a stronger bound on the Lagrangian angle than the almost calibrated condition which, from the purely geometric viewpoint, is not a natural assumption, though necessary to make their analysis go through.  Here, we prove an invariant version of the Thomas--Yau conjecture only assuming that the initial Lagrangian is almost calibrated. 

The class of almost calibrated Lagrangians has received significant study, both in terms of Lagrangian mean curvature flow \cites{LambertLotaySchulze,Wang} and in other contexts \cites{Donaldson,Solomon,Thomas}. Moreover, by work of Neves \cite{NevesSingularities} we know that, given any compact Lagrangian $L$ in a Calabi--Yau manifold, we can always find another compact Lagrangian $L'$ arbitrarily $C^0$-close and Hamiltonian isotopic to the initial one  (which is embedded if $L$ is, but is \emph{not} almost calibrated even if $L$ is) such that Lagrangian mean curvature flow starting at $L'$ develops a finite time singularity.  The fact that $L'$ is not almost calibrated plays a crucial role in the proof that a singularity forms. Therefore, without the almost calibrated assumption, we are guaranteed to have Lagrangians in the Hamiltonian isotopy class for which Lagrangian mean curvature flow becomes singular in finite time.
\end{remark}

We will prove the following version of the Thomas--Yau conjecture in the setting of hyperk\"ahler 4-manifolds given by the Gibbons--Hawking ansatz.

\begin{theorem}\label{thm:LMCF.stable}
	Let $X$ be an ALE or ALF hyperk\"ahler 4-manifold given by the Gibbons--Hawking ansatz as in Examples \ref{ex:MultiEH} and \ref{ex:MultiTN} and let $L$ be an embedded, compact, almost calibrated, circle-invariant Lagrangian   in $X$.  
	If $L$ is flow stable in the sense of Definition \ref{dfn:flow_stability}, then Lagrangian mean curvature flow starting at $L$ exists for all time.  Moreover, the flow converges smoothly to the unique circle-invariant special Lagrangian in the Hamiltonian isotopy class of $L$, given by Theorem \ref{thm:SL.stable}.
\end{theorem}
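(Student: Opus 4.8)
The strategy is to reduce the Lagrangian mean curvature flow of $L = \pi^{-1}(\gamma_0)$ to the weighted curve shortening flow \eqref{eq:Mean_Curvature_Flow_E}, namely $\partial_t\gamma = \phi^{-1}\gamma''$, for a planar curve $\gamma_t$ with fixed endpoints $p_1,p_2$, and then to show that this flow exists for all time, stays in the plane $P_v$, stays away from all other singularities $p_i$ of $\phi$, and converges smoothly to the straight line $\ell$. The key inputs are: (i) Lemma \ref{lem:Lagrangian_Angle_Curvature}, which identifies the Lagrangian angle $\beta$ with the turning angle of $\gamma_t$, so the evolution of $\beta$ along the flow is the (inhomogeneous) heat equation and the variation of $\beta$ is non-increasing, hence the almost calibrated condition $\sup\beta - \inf\beta < \pi - \delta$ is preserved; (ii) the area formula \eqref{eq:Area_Length}, $\mathrm{Area}(\pi^{-1}(\gamma)) = 2\pi\,\mathrm{Length}(\gamma)$, which means the flow strictly decreases length, together with the fact that the cohomological phase $\tau(L_t)$ is constant (it depends only on $p_1,p_2$); and (iii) the blow-up criterion Proposition \ref{prop:curve.flow.blow.up} together with the classification of ancient almost-calibrated solutions in \cite{LambertLotaySchulze}.

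First I would verify that $\gamma_t$ remains planar: the flow \eqref{eq:Mean_Curvature_Flow_E} is the Euclidean curve shortening flow up to the positive factor $\phi^{-1}$, and since $\gamma''$ always lies in the plane through $\gamma$ tangent to $\gamma'$, the component of $\gamma_t$ normal to $P_v$ satisfies a parabolic equation with zero initial data, so it stays zero; hence $L_t$ remains Lagrangian. Next I would record short-time existence (standard, since $\phi^{-1}>0$ away from the $p_i$ and the flow is uniformly parabolic on compact subsets avoiding the singular set). Then comes the central point: ruling out finite-time singularities. Suppose a singularity forms at time $T < \infty$. By Proposition \ref{prop:curve.flow.blow.up} this means $\phi^{-1/2}\gamma''$ or $\nabla^\perp_{\mathbb{R}^3}\log\phi$ blows up along $\gamma_t$ as $t \to T$; the latter can only happen if $\gamma_t$ approaches a singularity $p_i$ of $\phi$. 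I would argue that $\gamma_t$ cannot reach any $p_i$ with $i>2$: if it did, one could (using the monotonicity of area and the heat-equation control on $\beta$, together with the constancy of $\tau$) extract, via a rescaling/blow-up at the singularity, a graded Lagrangian connect sum $L_1 \# L_2 \in \mathrm{Ham}^{\U(1)}(L)$ for which both flow-stability conditions (a) and (b) in Definition \ref{dfn:flow_stability} fail — contradicting flow stability. The almost calibrated hypothesis is what allows a Huisken-type monotonicity formula and the \cite{LambertLotaySchulze} classification to be applied: any finite-time singularity model of an almost-calibrated Lagrangian mean curvature flow is a special Lagrangian cone, which in the circle-invariant $4$-dimensional setting forces the curve picture above, i.e.\ a connect sum splitting. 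Thus $\phi^{-1/2}\gamma''$ stays bounded away from the endpoints, and near the endpoints $p_1, p_2$ (where the circle action collapses) one uses smoothness of the ambient metric and a barrier argument to control the second fundamental form of $\pi^{-1}(\gamma_t)$ there as well; hence no singularity occurs and the flow exists for all $t \in [0,\infty)$.

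For convergence, with long-time existence in hand the length $\mathrm{Length}(\gamma_t)$ is monotone decreasing and bounded below by $|p_1 - p_2|$, so it converges; a standard argument (using that the flow is the negative gradient flow of length, with uniform estimates from the now-established bound on the curvature and $\phi^{-1}$) shows $\gamma_t$ subconverges to a geodesic of $(\mathbb{R}^3, g_E)$ with endpoints $p_1, p_2$, i.e.\ to the straight line $\ell$, and the convergence is smooth and for the full flow. By Example \ref{ex:SL.straight}, $\pi^{-1}(\ell)$ is special Lagrangian, and by Lemma \ref{lem:SL.stable.3} (Theorem \ref{thm:SL.stable}) it is the unique circle-invariant special Lagrangian in $\mathrm{Ham}^{\U(1)}(L)$, completing the proof. \textbf{The main obstacle} I anticipate is step (iii): carefully converting a putative finite-time singularity of the weighted flow near a singularity $p_i$ of $\phi$ into a genuine graded Lagrangian connect-sum decomposition that contradicts flow stability — this requires combining the blow-up analysis for $L_t$ (using \cite{LambertLotaySchulze} and the almost calibrated condition) with the blow-up analysis for the curve flow $\gamma_t$, and matching the geometric data (cohomological phases $\tau(L_i)$, the variation of $\beta$, and the area defect) precisely with conditions (a) and (b) of Definition \ref{dfn:flow_stability}.
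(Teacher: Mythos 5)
Your overall skeleton matches the paper's strategy (planar reduction of the flow, preservation of the almost calibrated condition via the heat equation for $\beta$, use of flow stability to keep the curve away from the singularities $p_i$ with $i>2$, blow-up analysis combined with \cite{LambertLotaySchulze}, then convergence to the straight line), but the crux --- actually excluding a finite-time curvature singularity --- is not carried out, and the mechanism you invoke for it is wrong as stated. In this setting any finite-time singularity is necessarily Type II (Type I is ruled out because a smooth self-shrinker would have angle variation at least $\pi$ by \cite{Wang}, contradicting the preserved almost calibrated bound), so the blow-up is an eternal/ancient solution, not ``a special Lagrangian cone''; what is needed is the classification in \cite{LambertLotaySchulze} of exact, almost calibrated ancient solutions in $\mathbb{R}^4$ whose blow-down is one or two planes (a plane, a pair of transverse planes, or the Lawlor neck). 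Moreover the hardest part is exactly the step you dispatch with ``smoothness of the ambient metric and a barrier argument'': the paper first shows (Lemma \ref{lem:p1p2}) that a singular point must be an endpoint $p_1$ or $p_2$, and then at the endpoint runs a two-scale analysis comparing the weighted quantity $\phi^{-1/2}|\kappa|$ with $|\kappa|$ (Lemmas \ref{lem:blow.up.case.1} and \ref{lem:blow.up.case.2}); in the second case one must produce a smooth, embedded, exact, almost calibrated ancient solution containing the fixed point $\pi^{-1}(0)$ (hence not topologically $S^1\times\mathbb{R}$) and exclude the plane, the pair of planes and the Lawlor neck. Smoothness of the ambient metric does not control the second fundamental form of $\pi^{-1}(\gamma_t)$ near the fixed point, since $\phi$ blows up there and the relation between $\kappa$ and the second fundamental form degenerates --- this is precisely why the blow-up criterion involves $\phi^{-1}\kappa^2$ rather than $\kappa$ (Proposition \ref{prop:curve.flow.blow.up}), a distinction your sketch does not exploit.

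Two further gaps. First, boundedness of $|\nabla^{\perp}_{\mathbb{R}^3}\log\phi|_{\mathbb{R}^3}$ is not automatic once the flow avoids the $p_i$ with $i>2$: the endpoints $p_1,p_2$ are themselves singularities of $\phi$ lying on every $\gamma_t$, and one needs the almost calibrated condition to place two fixed rays from each endpoint, spanning an angle less than $\pi$ and acting as barriers (they are fixed by the flow, being projections of minimal Lagrangians), so that $\gamma_t$ cannot wind around its endpoints; this is the paper's Lemma \ref{lem:log.bounded} and is absent from your argument. Second, your route to ``the flow never reaches $p_i$, $i>2$'' via a blow-up at $p_i$ producing a destabilising connect sum is both vaguer than needed and unnecessary: the paper argues directly that conditions (a) and (b) of Definition \ref{dfn:flow_stability}, in the curve reformulation of Remark \ref{rem:Curve_Stability}, are preserved along the flow (angle variation non-increasing, length decreasing, $\tau$ fixed since the endpoints are fixed), and flow stability then forces all $p_i$ with $i>2$ to stay outside the region bounded by $\gamma_t$ and the chord, giving a uniform $\delta$-neighbourhood that the flow avoids, with no rescaling argument required at this stage. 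With these ingredients supplied your outline would coincide with the paper's proof, but as written the finite-time singularity exclusion --- which you yourself identify as the main obstacle --- is asserted rather than proved.
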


\noindent The setting of Theorem \ref{thm:LMCF.stable} is similar to \cite[\S 7]{ThomasYau}, but the main differences are we use the actual hyperk\"ahler metric and Lagrangian mean curvature flow in the proof (rather than a modified, non Ricci-flat metric, and thus a modified flow as in \cite{ThomasYau}), and we have the optimal result that the initial Lagrangian is almost calibrated (rather than the stronger bound on the Lagrangian angle assumed in \cite{ThomasYau}).  However, given the similarities in the two situations, one could use similar techniques to those employed in \cite{ThomasYau} but with some key modifications, though we mainly take our own approach here.

\subsection{Flow of planar curves} In Proposition \ref{prop:curve.flow}  we re-cast the mean curvature flow of circle-invariant surfaces in $X$ as a flow \eqref{eq:Mean_Curvature_Flow_E} of curves in $\mathbb{R}^3$, which we rewrite here for convenience:
\begin{equation}\label{eq:Mean_Curvature_Flow_E_2}
\frac{\partial \gamma}{\partial t} = \phi^{-1} \gamma'',
\end{equation}
where $'$ denotes the derivative with respect to Euclidean arclength.  Since Lemma \ref{lem:Lagrangian} shows that curves in $\mathbb{R}^3$ correspond to Lagrangians if and only if they are planar, we want to study the evolution of planar curves along \eqref{eq:Mean_Curvature_Flow_E_2}.

We consider two types of solutions $\gamma_t$ of \eqref{eq:Mean_Curvature_Flow_E_2}: simple closed curves and open curves with fixed ends given by singularities of $\phi$.
More precisely, we write these closed and open problems as either
\begin{itemize}
	\item[(C)] $\gamma_t$ is a simple closed curve in $\mathbb{R}^3$ not meeting any singularities of $\phi$,	or
	\item[(O)] $\gamma_t$ is an embedded arc in $\mathbb{R}^3$ with endpoints $p_1$ and $p_2$, where $p_1,p_2$ are singularities of $\phi$.
	\end{itemize}
Notice that if $p_1,p_2$ are singular points of $\phi$, then $\partial_t \gamma_t (p_i)=0$ for $i=1,2$ by \eqref{eq:Mean_Curvature_Flow_E_2}, which suggests that condition (O) is preserved.  Moreover, if $\gamma_t$ satisfies (C) then $\mu^{-1}(\gamma_t)$ is an embedded 2-torus, and if $\gamma_t$ satisfies (O) then $\mu^{-1}(\gamma_t)$ is an embedded 2-sphere.
We shall now prove the following.

\begin{lemma}\label{lem:planar.curves}
	Let $\gamma_0 \subseteq \mathbb{R}^3$ be a curve satisfying condition (C) or (O) above for $t=0$.  Then there is $T>0$ and a unique smooth solution $\gamma_t$ to \eqref{eq:Mean_Curvature_Flow_E_2} for $t\in[0,T)$ starting at $\gamma_0$,  which satisfies (C) or (O) respectively. 
	
Suppose further that $\gamma_0$ lies in a plane $P_v$ orthogonal to $v\in\mathbb{S}^2$.	Then $\gamma_t$ lies in the plane $P_v$ for all $t\in[0,T)$.
\end{lemma}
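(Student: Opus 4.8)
The plan is to deduce existence and uniqueness from the short-time theory for mean curvature flow of the \emph{compact embedded surface} $\pi^{-1}(\gamma_0)$, translating back to curves via Proposition \ref{prop:curve.flow}, and then to obtain planarity from a maximum principle applied to the affine height function over $P_v$.

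\textbf{Step 1 (short-time existence and uniqueness).} In case (O) the surface $\Sigma_0:=\pi^{-1}(\gamma_0)$ is a smooth compact embedded $2$-sphere in $X$, the circle action collapsing smoothly over the endpoints $p_1,p_2$ so as to cap off $\Sigma_0$ as in Example \ref{ex:Eguchi-Hanson}; in case (C) it is a smooth compact embedded $2$-torus. By the standard short-time existence and uniqueness theory for mean curvature flow of a compact submanifold of a Riemannian manifold, there is $T>0$ and a unique smooth mean curvature flow $\Sigma_t$, $t\in[0,T)$, starting at this surface. Since $g$ and $\Sigma_0$ are $\U(1)$-invariant, for every $g_0\in\U(1)$ the family $g_0\cdot\Sigma_t$ is also a mean curvature flow with initial datum $\Sigma_0$, so $g_0\cdot\Sigma_t=\Sigma_t$ by uniqueness; hence $\Sigma_t$ is $\U(1)$-invariant and therefore of the form $\pi^{-1}(\gamma_t)$ for a family of curves $\gamma_t$, which solve \eqref{eq:Mean_Curvature_Flow_E_2} by Proposition \ref{prop:curve.flow}. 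Shrinking $T$ if necessary, $\Sigma_t$ stays embedded and keeps its diffeomorphism type, so $\gamma_t$ stays embedded and satisfies (C), respectively (O); in case (O) the two $\U(1)$-fixed points of $\Sigma_t$ vary continuously inside the discrete fixed-point set of the action, hence remain $\pi^{-1}(p_1)$ and $\pi^{-1}(p_2)$, so the endpoints of $\gamma_t$ stay at $p_1,p_2$. Finally, any solution $\tilde\gamma_t$ of \eqref{eq:Mean_Curvature_Flow_E_2} of type (C) or (O) with $\tilde\gamma_0=\gamma_0$ yields a $\U(1)$-invariant mean curvature flow $\pi^{-1}(\tilde\gamma_t)$ with initial datum $\Sigma_0$, so $\pi^{-1}(\tilde\gamma_t)=\Sigma_t$ by uniqueness and $\tilde\gamma_t=\gamma_t$.

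\textbf{Step 2 (planarity).} Write $P_v=\{x\in\mathbb{R}^3:\langle x-x_0,v\rangle=0\}$ with $x_0\in P_v$. Parametrizing the flow over a fixed domain ($S^1$ in case (C), $[0,1]$ in case (O)) and adding a tangential reparametrization term, which does not change the curves $\gamma_t$, we may take $\partial_t\gamma=\phi^{-1}|\gamma_u|^{-2}\gamma_{uu}$. Then $h(u,t):=\langle\gamma(u,t)-x_0,v\rangle$ satisfies
\begin{equation*}
\partial_t h=\frac{\phi^{-1}}{|\gamma_u|^2}\,h_{uu},
\end{equation*}
a linear parabolic equation with nonnegative diffusion coefficient and no zeroth-order term, non-degenerate away from the endpoints. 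Its initial datum vanishes identically since $\gamma_0\subseteq P_v$; it is periodic in $u$ in case (C), while in case (O) it vanishes at $u=0,1$ for all $t$ because the fixed endpoints $p_1,p_2$ lie in $P_v$. The parabolic maximum principle then forces $h\equiv 0$ on $[0,T)$, i.e.\ $\gamma_t\subseteq P_v$, as claimed.

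\textbf{Main obstacle.} The only genuinely delicate point is the degeneracy of \eqref{eq:Mean_Curvature_Flow_E_2} at the endpoints in case (O), where $\phi^{-1}\to 0$ and the curve flow ceases to be parabolic. I avoid analysing the corresponding degenerate parabolic boundary-value problem for curves directly by passing through the genuinely smooth, non-degenerate mean curvature flow of the compact surface $\pi^{-1}(\gamma_0)$; the endpoints then stay fixed for the soft reason that they are isolated fixed points of an isometric circle action, and the same degeneracy is harmless in Step 2 since there $h$ already vanishes at the endpoints. The remaining ingredients — $\U(1)$-equivariance via uniqueness of mean curvature flow, and the scalar maximum principle — are routine.
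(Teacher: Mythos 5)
Your proof is correct, and while your Step 1 is essentially the paper's argument (lift to the compact embedded surface $\pi^{-1}(\gamma_0)$, invoke short-time existence and uniqueness of mean curvature flow, and get circle-invariance and uniqueness of the curve flow from uniqueness of the surface flow — you in fact spell out the equivariance-by-uniqueness step and the persistence of the endpoints, which the paper only asserts), your Step 2 takes a genuinely different route. The paper proves planarity symplectically: by Lemma \ref{lem:Lagrangian}, $\gamma_0\subseteq P_v$ means $L_0$ is Lagrangian for $\omega_v$; Smoczyk's theorem \cite{Smoczyk} then keeps $L_t$ Lagrangian, so $\langle\gamma_t',v\rangle\equiv 0$, and a one-line computation with \eqref{eq:Mean_Curvature_Flow_E_2} shows $\langle\gamma_t,v\rangle$ is constant, pinning down the plane. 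You instead run a scalar maximum principle on the height $h=\langle\gamma-x_0,v\rangle$, which satisfies a linear, zeroth-order-free parabolic equation with nonnegative coefficient, vanishing initial data and (in case (O)) vanishing boundary data because the fixed endpoints $p_1,p_2$ lie in $P_v$. This is more elementary and self-contained — it avoids Smoczyk's theorem and the Lagrangian interpretation entirely, and is exactly the "direct" proof the paper alludes to after its own argument — at the cost of having to be slightly careful that the equation is only invoked at interior parameter values (where $\phi^{-1}|\gamma_u|^{-2}$ is finite and positive), which you do correctly since $h$ already vanishes on the parabolic boundary; the paper's route, by contrast, is a one-line appeal to a heavier black box that also fits naturally with the Lagrangian framework used in the rest of Section \ref{sec:Lagrangian_Mean_Curvature_Flow}.
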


\begin{proof}
Proposition \ref{prop:curve.flow} shows that \eqref{eq:Mean_Curvature_Flow_E_2} is equivalent to mean curvature flow of $\mu^{-1}(\gamma)$.  If $\gamma_0$ satisfies (C) or (O) for $t=0$ then $L_0=\mu^{-1}(\gamma_0)$ is a smooth embedded surface in $X$. Hence, there is a unique solution $L_t$ to  mean curvature flow in $X$ starting at $L_0$ for $t\in[0,T)$, for some $T>0$.  Moreover, $L_t$ remains circle-invariant, and will stay embedded for  $T$ sufficiently small. Since $L_t$ is of the form $\mu^{-1}(\gamma_t)$ and its topology does not change, $\gamma_t$ satisfies conditions (C) or (O) respectively.

Furthermore, if $\gamma_0$ lies in $P_v$ then $L_0$ is Lagrangian by Lemma \ref{lem:Lagrangian}.  Since $X$ is Calabi--Yau and $L_0$ is compact, $L_t$ is then Lagrangian for all $t\in [0,T)$ \cite{Smoczyk}.  Hence, $\gamma_t$ lies in a plane orthogonal to $v$ for all $t$ by Lemma \ref{lem:Lagrangian}, i.e.~$\langle\gamma_t',v\rangle=0$ for all $t$.  Thus, if $s$ denotes arclength along $\gamma_t$,
\begin{align*}
\frac{\partial}{\partial t}\langle \gamma_t,v\rangle&=\langle \phi^{-1}\gamma_t'',v\rangle=\phi^{-1}\frac{\partial}{\partial s}\langle\gamma_t',v\rangle=0
\end{align*}
for all $t$, where we used \eqref{eq:Mean_Curvature_Flow_E_2}.  We conclude that $\gamma_t$ lies in $P_v$ for all $t$ as required.
\end{proof}

\noindent One can prove Lemma \ref{lem:planar.curves} directly just using the flow \eqref{eq:Mean_Curvature_Flow_E_2}, but this is more involved.

Now that we know that planar curves remain planar under \eqref{eq:Mean_Curvature_Flow_E_2}, we want to understand when they become singular along the flow.   If $\gamma$ denotes a planar curve, we let $\kappa$ denote the curvature of $\gamma$ in the plane, viewed as a function on $\gamma$.

\begin{lemma}\label{lem:flow.exist}
Let $\gamma_0$ be a curve in a 2-plane $P$ in $\mathbb{R}^3$ satisfying conditions (C) or (O) above for $t=0$.
The solution $\gamma$ of \eqref{eq:Mean_Curvature_Flow_E_2} in $P$ starting at $\gamma_0$ exists and remains embedded as long as $\phi^{-1}\kappa^2$ and $|\nabla^{\perp}_{\mathbb{R}^3}\log\phi|_{\mathbb{R}^3}^2$ remain bounded and the flow does not reach any other singularities of $\phi$. Furthermore, the flow remains in a bounded region in $P$.
\end{lemma}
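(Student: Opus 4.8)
The plan is to deduce the lemma from the correspondence of Proposition \ref{prop:curve.flow} between \eqref{eq:Mean_Curvature_Flow_E_2} and the mean curvature flow of $\pi^{-1}(\gamma)$ in $X$, combined with the short-time existence and planarity of Lemma \ref{lem:planar.curves} and the blow-up criterion of Proposition \ref{prop:curve.flow.blow.up}. By Lemma \ref{lem:planar.curves} the solution $\gamma_t$ exists on a maximal interval $[0,T)$, stays in the $2$-plane $P$, and satisfies condition (C) or (O); set $L_t=\pi^{-1}(\gamma_t)$, which is the corresponding mean curvature flow in $X$. It is a standard fact that a compact mean curvature flow extends past $T$ provided $|A|^2$ stays bounded along it, and by Proposition \ref{prop:curve.flow.blow.up} this holds exactly when $\phi^{-1}|\gamma_t''|^2_{\mathbb{R}^3}$ and $\phi^{-2}|\nabla^{\perp}_{\mathbb{R}^3}\phi|^2_{\mathbb{R}^3}$ remain bounded. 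Since $\gamma_t$ is planar and parametrized by Euclidean arclength, $|\gamma_t''|_{\mathbb{R}^3}=|\kappa|$ and $\phi^{-2}|\nabla^{\perp}_{\mathbb{R}^3}\phi|^2_{\mathbb{R}^3}=|\nabla^{\perp}_{\mathbb{R}^3}\log\phi|^2_{\mathbb{R}^3}$, so these are precisely the two quantities in the statement; and if moreover $\gamma_t$ stays away from the singularities of $\phi$ other than its fixed endpoints, then $L_t$ remains a smooth surface in $X$ and the flow continues past $T$.

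To obtain embeddedness it suffices, since $\gamma_t\subseteq P$ misses the singularities of $\phi$ except at its endpoints, to show that $\gamma_t$ stays embedded as a curve in the plane $P$. I would prove this by the usual Angenent-type argument for parabolic curve flows: if $t_0\in[0,T)$ were the first time a self-intersection formed, then by smoothness $\gamma_{t_0}$ would have a tangential (not transverse) self-touching, and near such a point both branches can be written as graphs over a common line solving the same quasilinear parabolic equation — here the coefficient $\phi^{-1}$ is smooth and strictly positive along $\gamma_{t_0}$ by the no-collision hypothesis — so the strong maximum principle (equivalently, the non-increase of the Sturmian zero count for one-dimensional parabolic equations) forces the branches to coincide, a contradiction; in case (O) one argues identically away from the fixed endpoints. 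The point I expect to need the most care is exactly this one: in higher codimension one cannot in general conclude that embeddedness of a mean curvature flow persists under bounded curvature from the surface picture alone, so this step genuinely exploits the reduction to a planar curve flow.

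For the final assertion, that $\gamma_t$ stays in a bounded region of $P$, I would use convex-hull containment. Fix a unit vector $e\in P$ and consider $m(t)=\max_s\langle\gamma_t(s),e\rangle$. At an interior point realizing the maximum one has $\langle\gamma_t',e\rangle=0$ and $\partial_s^2\langle\gamma_t,e\rangle\le 0$, hence $\partial_t\langle\gamma_t,e\rangle=\phi^{-1}\langle\gamma_t'',e\rangle\le 0$ there; while in case (O) the endpoint values $\langle p_1,e\rangle$ and $\langle p_2,e\rangle$ are constant in $t$ since $\partial_t\gamma_t$ vanishes at the singular endpoints. By Hamilton's trick for the evolution of a maximum it follows that $m(t)\le m(0)$ for all $t\in[0,T)$, and since $e$ was arbitrary, $\gamma_t$ remains in the convex hull of $\gamma_0$ in $P$, which is bounded. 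This gives the stated conclusion.
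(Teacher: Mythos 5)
Your argument is correct, and its first two parts run essentially parallel to the paper: the smooth-existence criterion is obtained exactly as in the paper by passing through Proposition \ref{prop:curve.flow}, the standard extension criterion for compact mean curvature flows under a bound on $|A|^2$, and Proposition \ref{prop:curve.flow.blow.up}, noting that for a planar arclength-parametrized curve $|\gamma''|_{\mathbb{R}^3}=|\kappa|$ and $\phi^{-2}|\nabla^{\perp}_{\mathbb{R}^3}\phi|^2_{\mathbb{R}^3}=|\nabla^{\perp}_{\mathbb{R}^3}\log\phi|^2_{\mathbb{R}^3}$; and embeddedness is, as in the paper, a maximum-principle statement for the parabolic planar flow away from the singularities of $\phi$ (your Sturmian/Angenent-style sketch is a fleshed-out version of the paper's one-line appeal to the maximum principle, at the same level of rigor near the degenerate fixed endpoints).

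Where you genuinely diverge is the boundedness claim. The paper uses shrinking circles $\gamma_{\sqrt{R^2-2ct}}$ as exterior barriers, choosing $c$ according to uniform upper and lower bounds on $\phi^{-1}$ in a fixed large annulus, and then invokes the avoidance principle. You instead prove convex-hull containment: for each unit $e\in P$ the height $\max_s\langle\gamma_t(s),e\rangle$ is non-increasing by Hamilton's trick, since at an interior maximum $\langle\gamma_t'',e\rangle\le 0$ and $\phi^{-1}>0$, while in case (O) the endpoint heights are constant because $\partial_t\gamma_t$ vanishes at the singular endpoints; intersecting the resulting half-planes shows $\gamma_t$ stays in the convex hull of $\gamma_0$. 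This is a sharper and arguably cleaner conclusion than the paper's (containment in the convex hull rather than in some large disc), it avoids having to verify that the circles are genuine barriers in an annular region where $\phi^{-1}$ is pinched, and it only uses positivity of $\phi^{-1}$ rather than two-sided bounds. The paper's barrier argument, on the other hand, is the more flexible template if one later wants quantitative shrinking estimates rather than mere confinement. Either route suffices for the lemma as stated.
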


\begin{proof}
Since the flow \eqref{eq:Mean_Curvature_Flow_E_2} is parabolic away from the singularities of $\phi$, an  application of the maximum principle shows that $\gamma$ stays embedded as long as the flow remains smooth and does not meet any other singularities of $\phi$. The stated sufficient conditions for the flow to remain smooth follow directly from Proposition \ref{prop:curve.flow.blow.up}.

Finally, we prove that the flow stays within a bounded region in $P$.  By translating coordinates, we may assume that $P$ contains the $0\in\mathbb{R}^3$ and that $0$ lies in the interior of $\gamma_0$. Take a large circle $\gamma_{R}$ of radius $R$ centred at $0$ in $P$ and, given $c>0$, consider the family $\gamma_{\sqrt{R^2-2ct}}$ of circles depending on $t$. For $c=1$ this curve is a solution to the usual curve shortening flow \eqref{eq:curve.shortening.flow} in the plane. In a fixed large annular region sufficiently far from the origin we know that $\phi^{-1}$ is uniformly bounded from both below and above. Thus, by choosing $c$ appropriately we can use these circles $\gamma_{\sqrt{R^2-2ct}}$ as barriers in this region to ensure (again by the maximum principle) that given an embedded curve satisfying \eqref{eq:Mean_Curvature_Flow_E_2} there exists a compact set in $P$ so that the flow remains in this set.  
\end{proof}

Lemmas \ref{lem:planar.curves} and \ref{lem:flow.exist} have the following important corollary.

\begin{corollary}\label{cor:tori.nosings}  Let $L_0$ be an embedded circle-invariant Lagrangian torus in an ALE or ALF hyperk\"ahler 4-manifold $X$ given by the Gibbons--Hawking ansatz as in Examples \ref{ex:MultiEH} or \ref{ex:MultiTN}.  Suppose further that the planar curve $\gamma_0=\mu(L_0)$ lies in a plane containing no singularities of $\phi$.  Then, the solution $\gamma_t$ to \eqref{eq:Mean_Curvature_Flow_E_2} starting at $\gamma_0$ shrinks to a point in finite time, so the corresponding solution $L_t=\mu^{-1}(\gamma_t)$ remains an embedded torus until it collapses to a circle orbit in finite time.  
\end{corollary}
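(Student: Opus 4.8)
The plan is to work entirely with the induced flow of planar curves and invoke the classical theory of curve shortening flow for embedded closed curves in the plane. By Lemma \ref{lem:planar.curves}, since $\gamma_0$ lies in a plane $P$ (orthogonal to some $v\in\mathbb{S}^2$, using that $\gamma_0$ is planar because $L_0$ is Lagrangian), the solution $\gamma_t$ of \eqref{eq:Mean_Curvature_Flow_E_2} stays in $P$ for as long as it exists; and by the topological preservation in Lemma \ref{lem:planar.curves}, $\gamma_t$ remains a simple closed curve, so $L_t=\pi^{-1}(\gamma_t)$ remains an embedded torus as long as $\gamma_t$ avoids the singularities of $\phi$. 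Since by hypothesis $P$ contains no singularities of $\phi$, the restriction of $\phi$ to $P$ is a smooth positive function, bounded above and below on any compact subset of $P$; hence on the region swept out by the flow (which by the barrier argument in Lemma \ref{lem:flow.exist} stays in a fixed compact set $K\subset P$) we have $0<c_1\le \phi\le c_2<\infty$. Thus \eqref{eq:Mean_Curvature_Flow_E_2} is a uniformly parabolic equation on $K$, equivalent to curve shortening flow up to the time-reparametrisation encoded by the factor $\phi^{-1}$.

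First I would make the time-reparametrisation precise: the flow \eqref{eq:Mean_Curvature_Flow_E_2} restricted to $P$ is $\partial_t\gamma=\phi^{-1}\kappa N$, where $\kappa$ is the planar curvature and $N$ the unit normal. Writing the ordinary curve shortening flow as $\partial_s\tilde\gamma=\kappa N$, one checks that along an embedded closed curve the two flows differ only by a (space-and-time dependent, but positive and bounded) normal speed factor; in particular \eqref{eq:Mean_Curvature_Flow_E_2} monotonically decreases the enclosed area with rate $\tfrac{d}{dt}\mathrm{Area}(\text{enclosed by }\gamma_t)=-\int_{\gamma_t}\phi^{-1}\kappa\,ds=-2\pi\inf\phi^{-1}+\cdots$, so that (using $\int_{\gamma_t}\kappa\,ds=2\pi$ for an embedded convex-eventually curve and the uniform bound $\phi^{-1}\ge c_2^{-1}$) the enclosed area must reach zero in finite time: $\mathrm{Area}(t)\le \mathrm{Area}(0)-2\pi c_2^{-1}t$ once the curve is convex. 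Hence the flow cannot exist smoothly for all time. Second, I would rule out the flow running into a singularity of $\phi$ or developing any other pathology before extinction: since $P$ contains no singularities of $\phi$ and $\gamma_t$ stays in $K\subset P$, the only obstruction to smooth existence from Lemma \ref{lem:flow.exist} is a blow-up of $\phi^{-1}\kappa^2$, i.e.\ a curvature singularity of the curve itself.

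The decisive input is Grayson's theorem: an embedded closed curve in the plane evolving under curve shortening flow stays embedded, becomes convex in finite time, and then shrinks to a round point. Because \eqref{eq:Mean_Curvature_Flow_E_2} on $K$ is, after the positive bounded reparametrisation factor $\phi^{-1}$, genuinely of curve shortening type, the analogue of Grayson's theorem applies: one can either cite the version of the Gage--Hamilton--Grayson theorem for curve shortening flow with a smooth positive metric conformal factor (equivalently, curve shortening flow on a surface — here $P$ with the conformally flat metric $\phi\,g_{\mathrm{eucl}}$, which restricted to the relevant compact region has bounded geometry), or simply observe that the rescaled flow $\partial_{\tilde t}\gamma=\kappa N$ obtained by the time change $d\tilde t=\phi^{-1}dt$ is exactly Euclidean curve shortening flow and then translate the conclusion back. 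Either way, $\gamma_t$ shrinks to a point $q\in P$ as $t\to T$ for some finite $T$, and $L_t=\pi^{-1}(\gamma_t)$ degenerates to the circle orbit $\pi^{-1}(q)$ — note $q$ is not a singularity of $\phi$, so $\pi^{-1}(q)$ is genuinely a circle, not a point.

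The main obstacle is justifying the application of Grayson's theorem in the presence of the conformal factor $\phi$: one must confirm that the ambient region has bounded geometry uniformly along the flow so that the Gage--Hamilton--Grayson machinery (or Grayson's surface version, or Angenent's results on curve shortening on surfaces) genuinely applies, and that the limiting point $q$ does not drift toward the boundary of $P$ or toward a singularity of $\phi$. The barrier argument of Lemma \ref{lem:flow.exist} handles the confinement to a fixed compact $K$, and the time-reparametrisation $d\tilde t=\phi^{-1}dt$ reduces everything to the flat case on $K$; since $\inf_K\phi^{-1}>0$, a finite Euclidean-CSF extinction time $\tilde T$ corresponds to a finite time $T=\int_0^{\tilde T}\phi\,d\tilde t<\infty$ in the original parameter, giving the claimed finite-time collapse.
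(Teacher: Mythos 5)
Your proposal follows essentially the same route as the paper: reduce to the planar weighted flow via Lemma \ref{lem:planar.curves}, confine the evolving curve to a fixed compact set by the barrier argument of Lemma \ref{lem:flow.exist}, note that the absence of singularities of $\phi$ in the plane gives uniform two-sided bounds $0<c_1\le\phi\le c_2$ on that set, and then invoke a Grayson-type theorem to get extinction at a point in finite time, so that $L_t=\pi^{-1}(\gamma_t)$ stays an embedded torus and collapses to the circle orbit over the (nonsingular) extinction point. That is exactly the paper's argument, which likewise says the flow is ``uniformly controlled by the curve shortening flow'' and cites Grayson.

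However, both of the specific justifications you offer for the Grayson step are flawed, and the second is simply false. The time change $d\tilde t=\phi^{-1}\,dt$ does \emph{not} convert $\partial_t\gamma=\phi^{-1}\kappa N$ into Euclidean curve shortening flow, because $\phi^{-1}$ is a function of position and hence varies \emph{along} each curve $\gamma_t$; there is no single reparametrisation of the time variable that removes a spatially varying speed factor. The conformal identification is also inexact: the curve shortening flow of $(P,\phi\,g_{\mathrm{eucl}})$ is $\partial_t\gamma=\phi^{-1}\bigl(\kappa-\tfrac12\partial_N\log\phi\bigr)N$, which differs from \eqref{eq:Mean_Curvature_Flow_E_2} by the gradient term (indeed the stationary curves of \eqref{eq:Mean_Curvature_Flow_E_2} are Euclidean straight lines, not geodesics of the conformal metric, so the flow cannot be a surface CSF). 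What is actually needed -- and what the paper's phrase ``uniformly controlled by the curve shortening flow'' is shorthand for -- is that the Gage--Hamilton--Grayson conclusions persist for the uniformly parabolic flow $\partial_t\gamma=a(x)\,\kappa N$ with $a$ smooth and pinched between positive constants on the relevant compact region; this can be obtained by rerunning the classical arguments (or from Angenent-type results on curve flows with positional speed), together with the avoidance/comparison argument with suitably rescaled shrinking circles already used in Lemma \ref{lem:flow.exist}. Your convexity-based area estimate for finite extinction time is fine once convexity is known, but becoming convex is precisely the Grayson-type input, so it cannot substitute for it.
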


\noindent Notice that there is no assumption on the initial Lagrangian torus other than it is embedded and circle-invariant.

\begin{proof}
Let $P$ be the plane containing $\gamma_0$ and hence the flow $\gamma_t$.  Since $P$ contains no singularities of $\phi$, $\phi^{-1}$ is uniformly bounded below on $P$ by $a>0$.  Moreover, by Lemma \ref{lem:flow.exist}, the flow stays in a bounded region in $P$ and hence $\phi^{-1}$ is bounded above by $b>0$ in this region.

Therefore, the flow $\gamma$ is uniformly controlled by the curve shortening flow \eqref{eq:Curve_Shortening_Flow} in the Euclidean plane $P$.  Hence, by Grayson's Theorem \cite{Grayson}, the flow $\gamma$ will shrink to a point in finite time.  
\end{proof}

\noindent In the setting of Corollary \ref{cor:tori.nosings}, the flow of planar curves starting at $\gamma_0$ will become convex and then shrink to a round point in finite time, i.e.~it has a Type I singularity whose Type I blow-up is the shrinking circle.  Therefore, the Lagrangian mean curvature flow starting at $L_0$ has a finite-time Type I singularity whose Type I blow-up is a shrinking cylinder $S^1\times\mathbb{R}$.  We now discuss flows where other singularities can occur.

\begin{example}[Clifford torus]
The Clifford torus in $\mathbb{R}^4$ is an embedded circle-invariant Lagrangian torus which can be defined as $\mu^{-1}(\gamma_0)$, where $\gamma_0$ is a planar circle of radius $1$ centred at the origin in $\mathbb{R}^3$.  If we assume that $$\gamma_t=r(t)\gamma_0$$ then 
$$\gamma_t''=-\frac{1}{r(t)}\gamma_0.$$ 

 Therefore, if we consider the Lagrangian mean curvature flow in Euclidean $\mathbb{R}^4$ starting at the Clifford torus, we see by Example \ref{ex:flat} and \eqref{eq:Mean_Curvature_Flow_E_2} that the flow becomes the ODE:
$$\dot{r}(t)=-\frac{2r(t)}{r(t)}=-2.$$
Hence, 
$$r(t)=r(0)-2t.$$
We deduce the well-known fact that the Clifford torus is a self-shrinker for Lagrangian mean curvature flow, so it has a Type I singularity at the origin in finite time and is its own Type I blow-up.

If instead we consider the Lagrangian mean curvature flow in Taub--NUT $\mathbb{R}^4$ starting  at the Clifford torus, we see by Example \ref{ex:TN} and \eqref{eq:Mean_Curvature_Flow_E_2} that the flow becomes:
$$\dot{r}(t)=-\frac{1}{r(t)(m+\frac{1}{2r(t)})}=-\frac{2}{2mr(t)+1}.$$
Hence, $r(t)$ is the positive solution to the quadratic equation
$$mr(t)^2+r(t)=mr(0)^2+r(0)-2t.$$
Notice that this reduces to the Euclidean case when $m=0$.  One observes that again the flow starting at the Clifford torus in Taub--NUT shrinks to the origin in finite-time.  Moreover, it is clear that the flow has a Type I singularity whose Type I blow-up is the usual Clifford torus in $\mathbb{R}^4$.
\end{example}

\subsection{Long-time existence and stability (Proof of Theorem \ref{thm:LMCF.stable})}
As we have seen, circle-invariant Lagrangian mean curvature flow  is equivalent to the flow \eqref{eq:Mean_Curvature_Flow_E_2} for curves $\gamma$ in a plane, which we may identify with $\mathbb{R}^2$ (and we restrict $\phi$ to this plane).  By Lemma \ref{lem:SL.stable.1}, any Lagrangian in Theorem \ref{thm:LMCF.stable} is of the form $L=\mu^{-1}(\gamma)$, where $\gamma$ is an embedded planar arc which has singularities $p_1,p_2$ of $\phi$ as endpoints and meet no other singularities $p_i$ for $i>2$ of $\phi$. In proving Theorem \ref{thm:LMCF.stable}, we may therefore restrict ourselves to the Dirichlet problem (O) above for \eqref{eq:Mean_Curvature_Flow_E_2} 
for embedded arcs $\gamma$ in $\mathbb{R}^2$ which have endpoints at singularities $p_1$ and $p_2$ of $\phi$ in $\mathbb{R}^2$, with initial curve $\gamma_0=\mu(L_0)$.    

We start with an elementary, but important observation.  

\begin{lemma}\label{lem:almost.calibrated}
If the curve $\gamma_0$ in $\mathbb{R}^2$ is almost calibrated (i.e.~the variation of the angle it makes with a fixed axis is less than $\pi$), then the variation of the angle for the solution of \eqref{eq:Mean_Curvature_Flow_E_2} is non-increasing along the flow, and $\gamma_t$ remains almost calibrated as long as the flow exists.  
\end{lemma}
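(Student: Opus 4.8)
The plan is to transfer the statement to Lagrangian mean curvature flow upstairs, where the Lagrangian angle obeys a genuine heat equation on a closed surface, and then apply the maximum principle. First I would recall from Proposition~\ref{prop:curve.flow} and Lemma~\ref{lem:planar.curves} that the solution $\gamma_t$ of \eqref{eq:Mean_Curvature_Flow_E_2} corresponds to Lagrangian mean curvature flow of the compact embedded surface $L_t=\pi^{-1}(\gamma_t)$ in $X$ (a $2$-sphere in case~(O), a $2$-torus in case~(C), but in either case closed). Since the Gibbons--Hawking data, and in particular the metric and $\Omega=\omega_1+i\omega_2$, are $\U(1)$-invariant, the Lagrangian angle $e^{i\beta_t}$ of $L_t$ is circle-invariant and descends to $\gamma_t$; as $L_0$ is zero Maslov it lifts to a grading $\beta_t\colon L_t\to\mathbb{R}$ that depends continuously on $t$, and by \eqref{eq:Beta_angle_of_curve_with_the_x_axis.2} it equals, modulo $2\pi$, the angle that $\gamma_t'$ makes with the $\mu_1$-axis. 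Hence the quantity to control is $\operatorname{osc}_{L_t}\beta_t=\sup_{L_t}\beta_t-\inf_{L_t}\beta_t$.

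The heart of the argument is the well-known fact that, along Lagrangian mean curvature flow in a Calabi--Yau manifold, $\partial_t\beta_t=\Delta_{g_t}\beta_t$, where $\Delta_{g_t}$ is the Laplacian of the induced metric on $L_t$ (see \cites{Smoczyk,ThomasYau,Wang}); in our circle-invariant situation one may also derive this directly, since $\beta_t'=\kappa$ by Lemma~\ref{lem:Lagrangian_Angle_Curvature} and the turning angle of a planar curve moving with normal velocity $\phi^{-1}\kappa$ evolves by $\partial_t\beta_t=\partial_s(\phi^{-1}\partial_s\beta_t)$, with $s$ denoting Euclidean arclength, which is precisely $\Delta_{g_t}\beta_t$ once the Laplacian of $(L_t,g_t)$ is restricted to circle-invariant functions. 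Because $L_t$ is compact and without boundary, the maximum principle applies with no boundary contribution: at a point realising $\max_{L_t}\beta_t$ one has $\Delta_{g_t}\beta_t\le 0$, and dually at the minimum, so $t\mapsto\max_{L_t}\beta_t$ is non-increasing and $t\mapsto\min_{L_t}\beta_t$ is non-decreasing. Therefore $t\mapsto\operatorname{osc}_{L_t}\beta_t$ is non-increasing, which is the first assertion, and the second follows immediately: if $\gamma_0$ is almost calibrated then $\operatorname{osc}_{L_0}\beta_0<\pi$, hence $\operatorname{osc}_{L_t}\beta_t\le\operatorname{osc}_{L_0}\beta_0<\pi$ for every $t$ in the interval of existence.

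The one place where care is needed, and the reason I would argue on $L_t$ rather than directly on $\gamma_t$ in the plane, is that \eqref{eq:Mean_Curvature_Flow_E_2} degenerates at the singularities of $\phi$, which in case~(O) are exactly the fixed endpoints of $\gamma_t$; a purely planar maximum-principle argument would have to deal with this degeneracy and with the behaviour of $\beta_t$ at those endpoints. Passing to $L_t$ eliminates the issue, since there the flow is a genuine parabolic mean curvature flow of a smooth closed surface and $\beta_t$ is smooth everywhere, including over the collapsed orbits. I do not anticipate any real obstacle: the remaining points, namely continuity in $t$ of the lift $\beta_t$ and the elementary identification $\operatorname{osc}_{\gamma_t}\beta_t=\operatorname{osc}_{L_t}\beta_t$ (using that the endpoints of the arc $\gamma_t$ have well-defined tangent directions and map to the poles of $L_t$), are routine.
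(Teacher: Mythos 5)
Your proposal is correct and follows essentially the same route as the paper: lift to the compact Lagrangian $L_t=\pi^{-1}(\gamma_t)$, identify the angle of $\gamma_t$ with the Lagrangian angle $\beta_t$, use that $\beta_t$ satisfies the heat equation along Lagrangian mean curvature flow, and apply the maximum principle on the closed surface $L_t$. The only (inessential) difference is that you apply the maximum principle to $\beta_t$ itself, obtaining monotonicity of $\sup\beta_t$ and $\inf\beta_t$ and hence of the oscillation directly, whereas the paper phrases the conclusion via the non-decreasing minimum of $\cos\beta_t$.
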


\begin{proof}
Recall that the Lagrangian angle  $\beta$ of a zero Maslov circle-invariant Lagrangian $L$ agrees with the angle the curve $\gamma=\mu(L)$ in $\mathbb{R}^2$ makes with a fixed axis (see $\S$\ref{sss:graded}).    Since $\tau(L)=0$, by translating $\beta$ by a constant if necessary, we can assume that the almost calibrated condition is given by $\cos\beta>0$ (i.e.~$\Re\Omega|_L>0$).  As $L_0$   is compact there exists some $\epsilon>0$ such that $\cos\beta_0 \geq \epsilon$.

Since the Lagrangian angle satisfies the heat equation along the flow (see e.g.~\cite[Lemma 2.3]{ThomasYau}), the maximum principle 
applied to the compact manifold $L_t$ without boundary shows that the almost calibrated condition is preserved as the minimum of $\cos\beta_t$ is non-decreasing. 
\end{proof}

\begin{remark}
To obtain the same result as Lemma \ref{lem:almost.calibrated} in \cite[Lemma 7.8]{ThomasYau}, the authors needed to use the stability assumption and a stronger initial bound on the Lagrangian angle, together with a much more involved argument.  However, one can see that their proof can be greatly simplified, as here, in the case where the Lagrangians are surfaces.
\end{remark}

Lemma \ref{lem:almost.calibrated} gives two important consequences. The first is the following.

\begin{lemma}\label{lem:log.bounded}
If the curve $\gamma_0$ in $\mathbb{R}^2$ is almost calibrated, then $|\nabla^{\perp}_{\mathbb{R}^3} \log\phi|_{\mathbb{R}^3}^2$ remains bounded as long as the curve does not reach any other singularities of $\phi$.
\end{lemma}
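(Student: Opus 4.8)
The plan is to split the curve $\gamma_t$ into the part lying far from all singularities of $\phi$, where the estimate is essentially immediate, and two small arcs near the fixed endpoints $p_1,p_2$, where the singular behaviour of $\phi$ must be balanced against the geometry of $\gamma_t$. First, by (the region–boundedness part of) Lemma~\ref{lem:flow.exist} the flow $\gamma_t$ stays in a fixed compact set $K\subset P$; and by hypothesis there is $\delta>0$ with $\mathrm{dist}(\gamma_t,p_i)\ge\delta$ for all $t$ in the relevant interval and all $i>2$. On the part of $\gamma_t$ at distance $\ge\delta$ from every singularity, $\phi$ is smooth with $\phi\ge c>0$ and $|\nabla_{\mathbb{R}^3}\phi|\le C$, so $|\nabla^{\perp}_{\mathbb{R}^3}\log\phi|\le|\nabla_{\mathbb{R}^3}\log\phi|\le C/c$; here both the component of $\nabla^{\perp}\phi$ in $P$ and, should some $p_i$ ($i>2$) lie off $P$, the component transverse to $P$ are controlled, since all the polar terms $\tfrac1{2|x-p_j|}$ are smooth on this region.

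\textbf{Near the endpoints.} Fix $\rho>0$ so that $B_{2\rho}(p_i)$ contains no other singularity, and write $\phi=\tfrac1{2|x-p_i|}+\psi_i$ on $B_{2\rho}(p_i)$ with $\psi_i$ smooth, $\psi_i\ge c_i>0$ and $|\nabla\psi_i|\le C_i$ (the constant $m\ge 0$ and the remaining poles all contribute to $\psi_i$). Since $p_i\in P$ and $x\in\gamma_t\subset P$, the gradient of the polar part, $-\tfrac{x-p_i}{2|x-p_i|^3}$, lies in $P$, so $\nabla^{\perp}_{\mathbb{R}^3}\phi=-\tfrac{(x-p_i)^{\perp}}{2|x-p_i|^3}+\nabla^{\perp}_{\mathbb{R}^3}\psi_i$, where $(x-p_i)^{\perp}$ is the component of $x-p_i$ normal to $\gamma_t$, so that $|(x-p_i)^{\perp}|$ equals the Euclidean distance from $p_i$ to the tangent line of $\gamma_t$ at $x$. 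Using $\phi\ge\tfrac1{2|x-p_i|}$ this gives, on $\gamma_t\cap B_\rho(p_i)$,
\[
|\nabla^{\perp}_{\mathbb{R}^3}\log\phi|=\frac{|\nabla^{\perp}_{\mathbb{R}^3}\phi|}{\phi}\le\frac{|(x-p_i)^{\perp}|}{|x-p_i|^2}+2\rho\,C_i .
\]
So the whole lemma reduces to bounding $|(x-p_i)^{\perp}|/|x-p_i|^2$, i.e.\ to a $C^{1,1}$–type estimate for the flowing curve up to the fixed endpoint $p_i$.

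\textbf{The main obstacle: $C^{1,1}$–control at the fixed endpoints.} By Lemma~\ref{lem:almost.calibrated} the almost calibrated condition persists, so after rotating coordinates $\gamma_t\cap B_\rho(p_i)$ is, for all $t$, a graph $\mu_2=f(\mu_1,t)$ over an interval $[a_i,a_i+\rho']$ with $p_i=(a_i,f(a_i,t))$, $f(a_i,\cdot)$ fixed, and $|f_{\mu_1}|\le M$. In this parametrization \eqref{eq:Mean_Curvature_Flow_E_2} becomes $f_t=\phi^{-1}(1+f_{\mu_1}^2)^{-1}f_{\mu_1\mu_1}$, which near $\mu_1=a_i$ is $f_t=b(\mu_1,f,f_{\mu_1})\,(\mu_1-a_i)\,f_{\mu_1\mu_1}$ with $b$ smooth and positive (since $\phi^{-1}=2|x-p_i|/(1+2|x-p_i|\psi_i)$ and $|x-p_i|\asymp\mu_1-a_i$ on the graph): a uniformly parabolic equation away from $\mu_1=a_i$ that degenerates \emph{linearly} at the characteristic boundary $\mu_1=a_i$, with the value $f(a_i,\cdot)$ automatically preserved (consistent with $\partial_t\gamma_t(p_i)=0$). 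Since $L_0=\pi^{-1}(\gamma_0)$ is a smooth embedded surface, $\gamma_0$ is smooth up to $p_i$; I would then invoke the regularity theory for operators of the form $(\mu_1-a_i)\partial_{\mu_1}^2$ (weighted Schauder estimates at a characteristic boundary) to bound $\|f_{\mu_1\mu_1}(\cdot,t)\|_{L^\infty[a_i,a_i+\rho']}$ in terms of the initial data and the behaviour of the flow on the parabolic side $\mu_1=a_i+\rho'$ (which is interior, hence controlled on the interval of existence). Finally, for a graph with $\|f_{\mu_1\mu_1}\|_\infty$ bounded, the distance from $p_i$ to the tangent line satisfies $|(x-p_i)^{\perp}|\le\tfrac12\|f_{\mu_1\mu_1}(\cdot,t)\|_{L^\infty}(\mu_1-a_i)^2\le\tfrac12\|f_{\mu_1\mu_1}(\cdot,t)\|_{L^\infty}|x-p_i|^2$, so $|(x-p_i)^{\perp}|/|x-p_i|^2$ is bounded on $\gamma_t\cap B_\rho(p_i)$. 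Combined with the far–from–singularities bound this proves the lemma. The delicate step is exactly this near–endpoint $C^{1,1}$ estimate: parabolicity is lost there because $\phi^{-1}\to0$, so one must use the precise linear nature of the degeneracy — either via the degenerate–parabolic Schauder estimates indicated above, or by trapping $\gamma_t$ near $p_i$ between barriers built from the stationary straight–line solutions of \eqref{eq:Mean_Curvature_Flow_E_2}, which exist precisely because straight segments are minimal (Lemma~\ref{lem:min.surf}).
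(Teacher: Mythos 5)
Your reduction is set up correctly: the far-from-singularity bound is immediate, and near an endpoint the computation $|\nabla^{\perp}_{\mathbb{R}^3}\log\phi|\le |(x-p_i)^{\perp}|/|x-p_i|^2+2\rho C_i$ (using $\phi\ge\tfrac1{2|x-p_i|}$) is right, so everything hinges on a uniform-in-time quadratic ($C^{1,1}$-type) estimate at the fixed endpoints. But that estimate is precisely the content of the lemma near $p_1,p_2$, and you do not prove it. Invoking ``weighted Schauder estimates at a characteristic boundary'' for the quasilinear equation $f_t=b\,(\mu_1-a_i)f_{\mu_1\mu_1}$ is not an argument: no specific theorem is cited, the coefficient $b$ depends on $f$ and $f_{\mu_1}$, and, most seriously, the asserted dependence on ``the behaviour of the flow on the parabolic side $\mu_1=a_i+\rho'$, which is interior, hence controlled on the interval of existence'' is circular --- interior smoothness for $t<T$ gives no bound that is uniform as $t\to T$, and producing such uniform control is exactly what this lemma (feeding into Lemma \ref{lem:avoid.sings} and the subsequent blow-up analysis) is supposed to deliver. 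The fallback you sketch, trapping $\gamma_t$ between the stationary straight lines through $p_i$, cannot close this gap either: straight barriers through $p_i$ only confine the \emph{position} of the curve to a sector, which gives nothing beyond the trivial bound $|(x-p_i)^{\perp}|\le|x-p_i|$, a full power of $|x-p_i|$ short of the bound $|(x-p_i)^{\perp}|\lesssim|x-p_i|^2$ that your own reduction requires (it says nothing about the tangent direction aligning radially at rate $O(|x-p_i|)$).

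For comparison, the paper's proof is much softer and takes a different route at the endpoints: it notes that the dominant term of $\phi$ at $p_i$ is rotationally symmetric, so the danger is $\gamma_t$ winding around $p_i$; since the almost calibrated condition persists (Lemma \ref{lem:almost.calibrated}), one can choose two rays emanating from $p_1$, spanning an angle strictly less than $\pi$ and meeting $\gamma_0$ only at $p_1$, and these rays are stationary for \eqref{eq:Mean_Curvature_Flow_E_2}, hence act as barriers; the avoidance principle then keeps $\gamma_t$ from winding around $p_1$ (similarly $p_2$), and the boundedness is deduced from this non-winding property rather than from a pointwise $C^{1,1}$ estimate at the endpoint. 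So your reformulation demands a stronger quantitative statement than the paper's barrier argument supplies, and as written you leave that statement unverified; either carry out an actual degenerate-parabolic estimate at the endpoint (with non-circular hypotheses), or argue as the paper does.
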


\begin{proof}
Recall that the flow $\gamma$ stays in a compact region in $\mathbb{R}^2$ by Lemma \ref{lem:flow.exist}.  If we take a compact subset $K$ of this region not containing any singularities of $\phi$, then $\phi$ is uniformly bounded above and below away from zero, and so $|\nabla_{\mathbb{R}^3}\log \phi|^2_{\mathbb{R}^3}$ is bounded in $K$.  Therefore, if we assume that $\gamma$ does not reach any singularities $p_i$ of $\phi$ for $i>2$, we need only show that $|\nabla^{\perp}_{\mathbb{R}^3} \log\phi|_{\mathbb{R}^3}^2$ remains bounded at the endpoints $p_1$, $p_2$ of $\gamma$.
Notice by \eqref{eq:phi.MultiTN_2} that the dominant term in $\phi$ near each $p_i$ is rotationally symmetric around $p_i$.  Therefore for $|\nabla^{\perp}_{\mathbb{R}^3}\log\phi|_{\mathbb{R}^3}^2$ to remain bounded we must ensure that $\gamma$ does not wind around $p_1$ or $p_2$.  

Since $\gamma_0$ is almost calibrated, we can choose two rays $\ell_1$ and $\ell_2$ in $\mathbb{R}^2$ emanating from $p_1$ so that $\gamma_0$ does not meet $\ell_1$ and $\ell_2$ except at $p_1$, and the angle between $\ell_1$ and $\ell_2$ is strictly less than $\pi$.  As $\ell_1$ and $\ell_2$ are fixed by the flow \eqref{eq:Mean_Curvature_Flow_E_2} (they define minimal Lagrangians in $X$), they act as barriers, and so $\gamma_t$ must not meet $\ell_1$ and $\ell_2$ except at $p_1$ for all $t$.  Therefore, $\gamma_t$ cannot wind around $p_1$ and $|\nabla^{\perp}_{\mathbb{R}^3}\log\phi|^2_{\mathbb{R}^3}$ must remain bounded near $p_1$ along the flow.  A similar argument works at $p_2$.    
\end{proof}

\begin{remark}[Flow stability of curves] \label{rem:Curve_Stability}
Let $L$ be an embedded, compact, almost calibrated, circle-invariant Lagrangian in $(X,\omega,\Omega)$. 
	Without loss of generality, suppose $\mu(L)=\gamma$ is an embedded arc in the $\mu_3=0$ plane and $\Omega=\omega_1 + i \omega_2$. Suppose that the initial point of $\gamma$ is $(0,0,0)$ and its final point is $(x,y,0)$. We find that
	$$\int_L \Omega= 2 \pi \int_{\gamma} d\mu_1 + i d \mu_2= 2 \pi (x + i y).$$
	Thus, $$e^{i \tau(L)} = \frac{x + i y}{\sqrt{x^2+y^2}}$$ and  hence $\tau(L)$ is the angle  the straight line $\overline{\gamma}$ with the same endpoints as $\gamma$ makes with the $\mu_1$-axis.  Moreover, 
	$$\int_L e^{-i\tau(L)}\Omega = 2\pi \sqrt{x^2+y^2}=2\pi \mathrm{Length(\overline{\gamma})}.$$
Note that, by Proposition \ref{prop:min.spheres}, $\mu^{-1}(\overline{\gamma})$ is the area-minimizer in $[L] \in H_2(M, \mathbb{Z})$ and $2\pi \mathrm{Length(\overline{\gamma})}= \mathrm{Area}(\mu^{-1}(\overline{\gamma}))$.
We also see that $\tau(L)=0$ if and only if the endpoints of $\gamma$ lie on the $\mu_1$-axis.
 
We deduce that, in the circle-invariant setting, the notion of flow stability as in Definition \ref{dfn:flow_stability} can be re-cast as follows. For any decomposition of $\gamma$ as $\gamma_1 \# \gamma_2$, where $\gamma_1,\gamma_2$ are almost calibrated, we let $\overline{\gamma_1}, \overline{\gamma_2}$ denote the straight-lines with the same endpoints as $\gamma_1$ and $\gamma_2$ respectively.  We assume that the endpoints of $\gamma$ are on the $\mu_1$-axis and we let $\beta$, $\overline{\beta}_1$, $\overline{\beta}_2$ denote the angles that $\gamma$, $\overline{\gamma}_1$, $\overline{\gamma}_2$ make with the $\mu_1$-axis respectively.  Then $\gamma$, equivalently $L=\mu^{-1}(\gamma)$, is flow stable if  for all decompositions   $\gamma=\gamma_1\#\gamma_2$ we have  
\begin{itemize}
\item[(a)] $[ \min \lbrace \overline{\beta_1},\overline{\beta}_2 \rbrace , \max \lbrace \overline{\beta_1},\overline{\beta}_2 \rbrace ]\nsubseteq(\inf_{\gamma}\beta,\sup_{\gamma}\beta)$, or
\item[(b)] $\mathrm{Length}(\gamma) < \mathrm{Length}(\overline{\gamma_1}) + \mathrm{Length}(\overline{\gamma_2})$
\end{itemize} 
One can picture condition (a) in Figure \ref{fig_Stability}. Condition (b) should be compared with Remark \ref{rem:Area_Stability}.
\end{remark}

For flow stable initial curves for the flow, we have a very important consequence of the observations in Remark \ref{rem:Curve_Stability}.

\begin{lemma}\label{lem:avoid.sings}  Let $p_i$ denote the singularities of $\phi$ as in Examples \ref{ex:MultiEH} and \ref{ex:MultiTN}, and recall that $p_1,p_2$ are the fixed endpoints of the curve along flow. 
There exists $\delta>0$ so that the flow \eqref{eq:Mean_Curvature_Flow_E_2} of curves $\gamma$ in $\mathbb{R}^2$ starting at a flow stable curve remains outside of $\cup_{i>2}\overline{B_{\delta}(p_i)}$ for all time for which the flow is defined.  Hence, the flow \eqref{eq:Mean_Curvature_Flow_E_2} exists as long as $\phi^{-1}\kappa^2$ remains bounded.
\end{lemma}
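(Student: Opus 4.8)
The plan is to use the area-decreasing property of the flow together with flow stability to produce a uniform barrier preventing the curve from approaching any $p_i$ with $i>2$. First, recall from \eqref{eq:Area_Length} that $\mathrm{Area}(\pi^{-1}(\gamma_t)) = 2\pi\,\mathrm{Length}(\gamma_t)$ and that mean curvature flow decreases area, so $\mathrm{Length}(\gamma_t)$ is non-increasing; in particular $\mathrm{Length}(\gamma_t)\le \mathrm{Length}(\gamma_0)=:\Lambda$ for all $t$. Also recall that by Lemma \ref{lem:flow.exist} the flow stays in a fixed compact region $K\subseteq \mathbb{R}^2$, and by Lemma \ref{lem:almost.calibrated} it remains almost calibrated. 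The strategy is a contradiction argument: suppose no such $\delta$ exists, so there is a sequence of times $t_j$ (for some fixed $i>2$, after passing to a subsequence) with $\gamma_{t_j}$ entering balls $B_{\delta_j}(p_i)$ with $\delta_j\to 0$. I would then extract from $\gamma_{t_j}$ a sub-curve which comes very close to $p_i$ and show that, because the endpoints $p_1,p_2$ are fixed and the curve is embedded and almost calibrated, $\gamma_{t_j}$ can be decomposed (in its Hamiltonian isotopy class, i.e.\ up to a planar isotopy fixing $p_1,p_2$ and avoiding the singularities — using Lemma \ref{lem:SL.stable.2}) as a connect sum $\gamma_1^j \# \gamma_2^j$ with one factor a short curve near $p_i$ whose straight-line segment $\overline{\gamma^j}$ has length tending to $0$.

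The key estimate is then a length/area comparison contradicting flow stability in the form of Remark \ref{rem:Curve_Stability}. Concretely: if $\gamma_{t_j}$ passes within $\delta_j$ of $p_i$, one isotopes it, near $p_i$, to a curve that actually passes through $p_i$, splitting $\gamma_{t_j}$ at $p_i$ into $\gamma_1^j$ (from $p_1$ to $p_i$) and $\gamma_2^j$ (from $p_i$ to $p_2$); this is a graded Lagrangian connect sum $\gamma_1^j\#\gamma_2^j\in \mathrm{Ham}^{\U(1)}(L)$, with both factors almost calibrated (their angle variation is bounded by that of $\gamma_{t_j}$, which stays $<\pi$ by Lemma \ref{lem:almost.calibrated}). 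By flow stability, alternative (a) or (b) of Definition \ref{dfn:flow_stability}/Remark \ref{rem:Curve_Stability} must hold for this decomposition. But $\mathrm{Length}(\gamma_{t_j})\le \Lambda$ while $\mathrm{Length}(\overline{\gamma_1^j})+\mathrm{Length}(\overline{\gamma_2^j}) \ge \mathrm{dist}(p_1,p_i)+\mathrm{dist}(p_i,p_2) - o(1) =: \Lambda_i - o(1)$ (a fixed positive constant determined by the configuration of singularities), and condition (b) reads $\mathrm{Length}(\gamma_{t_j}) < \mathrm{Length}(\overline{\gamma_1^j})+\mathrm{Length}(\overline{\gamma_2^j})$. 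One must check this is genuinely a constraint: if $\Lambda < \Lambda_i$ this is automatically consistent and gives no contradiction — so the real content is that either (a) fails in a quantitative way forcing (b), or (b) itself is violated because $\Lambda$ could be chosen/shown $\ge \Lambda_i$ along the flow. I expect the correct argument is that flow stability rules out the decomposition entirely for $\delta$ small: if $\gamma_{t_j}$ hugs $p_i$ then the angles $\overline{\beta_1^j},\overline{\beta_2^j}$ of the straight segments $\overline{\gamma_1^j},\overline{\gamma_2^j}$ both lie strictly inside $(\inf_{\gamma_{t_j}}\beta,\sup_{\gamma_{t_j}}\beta)$ (since the curve genuinely turns near $p_i$, its angle range is at least as large as the spread of these two chord angles), so (a) fails; and the length comparison then forces (b), which can be arranged to fail by the monotonicity $\mathrm{Length}(\gamma_{t_j})\le\Lambda$ once one observes $\Lambda$ can be taken less than the relevant sum — this uses that $L_0$ is \emph{in} the Hamiltonian class of the straight-line special Lagrangian $\pi^{-1}(\ell)$ and almost calibrated, so $\mathrm{Length}(\gamma_{t_j})$ stays below any connect-sum length involving a genuine third singularity.

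Once the uniform $\delta>0$ is established, the final sentence is immediate: by Lemma \ref{lem:flow.exist} (equivalently Proposition \ref{prop:curve.flow.blow.up}) the flow exists as long as $\phi^{-1}\kappa^2$ and $|\nabla^{\perp}_{\mathbb{R}^3}\log\phi|^2_{\mathbb{R}^3}$ are bounded and the curve avoids the other singularities. The last condition now holds automatically, and Lemma \ref{lem:log.bounded} (whose hypothesis — the curve avoiding $p_i$ for $i>2$ — is exactly what we have just secured) gives boundedness of $|\nabla^{\perp}_{\mathbb{R}^3}\log\phi|^2_{\mathbb{R}^3}$; we may also invoke that the rays $\ell_1,\ell_2$ at $p_1$ and at $p_2$ act as barriers as in the proof of Lemma \ref{lem:log.bounded}, handling the endpoints. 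Hence the flow persists as long as $\phi^{-1}\kappa^2$ remains bounded. The main obstacle is the middle step: turning "the curve comes close to $p_i$" into a bona fide destabilizing connect-sum decomposition with a clean length (or angle) inequality violating Definition \ref{dfn:flow_stability}. The delicate points are (i) performing the splitting inside the Hamiltonian isotopy class $\mathrm{Ham}^{\U(1)}(L)$ without crossing other singularities, which needs Lemma \ref{lem:SL.stable.2} and a careful choice of isotopy as in the proof of Proposition \ref{prop:SL.stable.2}, and (ii) checking that almost-calibratedness is inherited by both connect-sum factors so that Definition \ref{dfn:flow_stability} genuinely applies — this is where Lemma \ref{lem:almost.calibrated}'s control $\sup_{\gamma_t}\beta - \inf_{\gamma_t}\beta < \pi$ is essential.
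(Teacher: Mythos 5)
Your setup assembles the right ingredients --- preservation of the angle variation (Lemma \ref{lem:almost.calibrated}), monotonicity of length via \eqref{eq:Area_Length}, the curve reformulation of flow stability in Remark \ref{rem:Curve_Stability}, and the reduction of the final sentence to Lemmas \ref{lem:flow.exist} and \ref{lem:log.bounded} (that last paragraph matches the paper). But the central step, turning proximity to some $p_i$ with $i>2$ into a contradiction with flow stability, is not closed, and your attempted resolution points in the wrong direction. To contradict Definition \ref{dfn:flow_stability} for a decomposition at $p_i$ you must show that \emph{both} (a) and (b) fail, i.e.\ $[\overline{\beta}_1,\overline{\beta}_2]\subseteq(\inf\beta,\sup\beta)$ \emph{and} $\mathrm{Length}(\gamma_{t_j})\geq |p_1-p_i|+|p_i-p_2|$. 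The second inequality is exactly what is missing from your write-up, and it is elementary: since the endpoints are pinned at $p_1,p_2$ and $\gamma_{t_j}$ meets $B_{\delta_j}(p_i)$, the triangle inequality gives $\mathrm{Length}(\gamma_{t_j})\geq |p_1-p_i|+|p_i-p_2|-2\delta_j$. Your closing assertion that ``$\mathrm{Length}(\gamma_{t_j})$ stays below any connect-sum length involving a genuine third singularity'' is the \emph{opposite} inequality: if that held, condition (b) would be satisfied for the decomposition and flow stability would be entirely consistent with the curve approaching $p_i$, so no contradiction would ensue. As written, the argument undercuts itself at precisely the point you flag as ``the main obstacle''.

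For comparison, the paper does not try to prove that both (a) and (b) fail at the approach time. It notes that, for a decomposition at a fixed singularity $p_i$, the comparison data $\overline{\beta}_1,\overline{\beta}_2$ and $|p_1-p_i|+|p_i-p_2|$ are constants, while $\sup\beta_t-\inf\beta_t$ is non-increasing (Lemma \ref{lem:almost.calibrated}) and $\mathrm{Length}(\gamma_t)$ is decreasing (the flow decreases area and \eqref{eq:Area_Length} converts area to length); hence whichever of (a) or (b) held at $t=0$ persists, and the preserved condition is incompatible with the singularities $p_i$, $i>2$, being reached or enclosed by $\gamma_t$ --- in the (b) case precisely because of the triangle-inequality length bound above, whose strict margin at $t=0$ also supplies the uniform $\delta$. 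If you wish to keep your contradiction scheme, you need that length bound together with the fixed margin from the strict inequality in (b) at $t=0$ (or the preserved non-containment in (a)) to handle the fact that at distance $\delta_j>0$ the violation is only approximate; note also that your argument for the failure of (a) only places the chord angles in the closed interval $[\inf\beta,\sup\beta]$, not the open one, and that the split curve must be realized as a connect sum going around $p_i$ as in Example \ref{ex:Lagrangian_Connect_Sum} (curves representing classes in $\mathrm{Ham}^{\U(1)}(L)$ cannot pass through $p_i$), with the ordering $L_1\# L_2$ versus $L_2\# L_1$ determined by the side from which $\gamma_{t_j}$ passes $p_i$.
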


\begin{proof}  Let $\gamma_0$ be the initial flow stable curve and let $\overline{\gamma}$ be the straight line between the endpoints $p_1$ and $p_2$ of $\gamma_0$. By the reformulation of flow stability in Remark \ref{rem:Curve_Stability}, we see that the singularities $p_i$ for $i>2$ cannot lie in the interior of the region bounded by $\gamma_0$ and $\overline{\gamma}$. 
	
To see this we argue by contradiction and suppose the existence of one such singularity, say $p_3$, in the interior of the region bounded by $\gamma_0$ and $\overline{\gamma}$. Then $\gamma_0$ could be decomposed as $\ell_1 \# \ell_2$ for some straight lines $\ell_1$ and $\ell_2$ with endpoints $p_1,p_3$ and $p_3,p_2$ respectively. Then $\mathrm{Length}(\ell_1)+ \mathrm{Length}(\ell_2) < \mathrm{Length}(\gamma_0)$ which would contradict the condition (b) in Remark \ref{rem:Curve_Stability}. Furthermore, by the intermediate value theorem, if $\beta_0, \beta_1 , \beta_2$ respectively denotes the gradings associated to $\gamma_0, \ell_1,\ell_2$ we must either have $\sup_{\gamma_0} \beta_0 > \beta_1 > \beta_2 > \inf_{\gamma_0} \beta_0$, or $\sup_{\gamma_0} \beta_0 > \beta_2 > \beta_1 > \inf_{\gamma_0} \beta_0$, and so   condition (a) in  Remark \ref{rem:Curve_Stability} is also violated.  Overall, we obtain our required contradiction to the flow stability of $\gamma_0$. 

Suppose that $\gamma_0$ is flow stable in the sense of condition (a) in Remark \ref{rem:Curve_Stability}.  Since the variation of the angle of $\gamma$ is non-increasing along the flow \eqref{eq:Mean_Curvature_Flow_E_2} by Lemma \ref{lem:almost.calibrated}, $\gamma$ satisfies (a) for all time for which the flow exists.  Then $p_i$ for $i>2$ can never lie in the interior of the region bounded by $\gamma$ and $\overline{\gamma}$, and we deduce that $\gamma$ must remain outside of a compact region containing the $p_i$ for $i>2$.

Suppose instead that $\gamma_0$ is flow stable in the sense of condition (b) in Remark \ref{rem:Curve_Stability}.  Lagrangian mean curvature flow is the gradient flow for the area of the Lagrangian $\mu^{-1}(\gamma)$, so its area is decreasing along the flow and hence, by \eqref{eq:Area_Length}, the length of $\gamma$ is decreasing along \eqref{eq:Mean_Curvature_Flow_E_2}.  Therefore, $\gamma$ satisfies (b) for all time for which the flow exists, and we again conclude as before that $\gamma$ must lie outside of $\cup_{i>2}\overline{B_{\delta}(p_i)}$ for some $\delta>0$.

Since we have established that the flow $\gamma$ never reaches any other singularities of $\phi$, the final statement then follows from Lemmas \ref{lem:flow.exist} and \ref{lem:log.bounded}.
\end{proof}

\begin{remark}
Notice that if we bound the curvature $\kappa$ then we bound $\phi^{-1}\kappa^2$ since $\phi^{-1}$ is bounded in the compact region in which the flow is taking place.  However, it is important to know that we require $\phi^{-1}\kappa^2$ to blow up, not just $\kappa$ to blow up, for a singularity to form.  This is a key point where we differ from \cite{ThomasYau}, who only assume $\kappa$ blows up at the singularity.
\end{remark}

We now need to understand the possible blow-up behaviour of the flow.  Suppose we parametrise the flowing curves $\gamma=\gamma_t(s)$ by arclength so that $s=0$ at $p_1$.  Let $\kappa=\kappa_t(s)$ denote the curvature of $\gamma$.  Suppose   the flow has a singularity at time $T$.  Then there must exist a sequence of spacetime points $(s_i,t_i)$ with $t_i\to T$ as $i\to\infty$ such that if $x_i=\gamma(s_i)$ and
$\lambda_i=\phi(x_i)^{-1/2}|\kappa_{t_i}(s_i)|$ defined by
$$\lambda_i=\max\{\phi(\gamma_t(s))^{-1/2}|\kappa_t(s)|:s\geq 0, t\leq t_i\}$$
satisfies $\lambda_i\to\infty$ as $i\to\infty$.  By a diagonal argument or taking a subsequence we can assume further that $(s_i,t_i)$ also represents a maximum for $|\kappa_t(s)|$ for $t\leq t_i$.

We now perform the standard blow-up analysis and define curves $\hat{\gamma}^i_t$ by
$$\hat{\gamma}^i_t=\lambda_i(\gamma_{t_i+\lambda_i^{-2}t}-x_i)$$
for $t\in [-\lambda_i^2t_i,\lambda_i^2(T-t_i))$.  The flow cannot have a Type I singularity, since any such singularity in our setting would be modelled by a smooth self-shrinker, but by \cite{Wang} any non-trivial smooth self-shrinker would have angle variation more than $\pi$ (in fact, at least $2\pi$), and so is excluded by Lemma \ref{lem:almost.calibrated}.  Thus, we have must have a Type II singularity, i.e.~$\lambda_i^2(T-t_i)\to\infty$.

The curves $\hat{\gamma}^i$ satisfy the equation
\begin{equation}\label{eq:limit.flow}
\frac{\partial}{\partial t}\hat{\gamma}^i=\left(m+\sum_{j=1}^k\frac{1}{|\lambda_i^{-1}\hat{\gamma}^i+x_i-p_j|}\right)^{-1}(\hat{\gamma}^i)'',
\end{equation}
using the notation of Examples \ref{ex:MultiEH} and \ref{ex:MultiTN}.

Since the flowing curves are compact and remain so, we have that the points $x_i=\gamma_{t_i}(s_i)$ admit a convergent subsequence converging to a point $x_{\infty}$, say.
There are two cases: when the blow-up happens in the interior of the curve (i.e.~$x_\infty\notin\{p_1,p_2\}$) or at $p_1$ or $p_2$.  We begin with the first.

\begin{lemma}\label{lem:p1p2}
In the setting above, if there is a singularity at time $T$, then $x_{\infty}$ must be $p_1$ or $p_2$. 
\end{lemma}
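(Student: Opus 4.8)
The plan is to argue by contradiction: suppose the singularity occurs at an interior point, i.e.~$x_\infty\notin\{p_1,p_2\}$. By Lemma \ref{lem:avoid.sings} the flow $\gamma_t$ stays outside $\cup_{i>2}\overline{B_\delta(p_i)}$ and by Lemma \ref{lem:flow.exist} it remains in a fixed bounded region of the plane, so $x_\infty$ lies at a definite positive distance from every singularity of $\phi$.  Hence there is a neighbourhood $V$ of $x_\infty$ on which $\phi$ is smooth, bounded above, bounded below by a positive constant, with $|\nabla\phi|$ bounded.  Since $x_i\to x_\infty$ and $\lambda_i\to\infty$, for $i$ large the rescaled curves restricted to any fixed ball take values in $V$, and the coefficient $\big(m+\sum_{j}|\lambda_i^{-1}\hat\gamma^i+x_i-p_j|^{-1}\big)^{-1}$ in \eqref{eq:limit.flow} converges locally uniformly to the constant $\phi(x_\infty)^{-1}$.

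Next I would extract the blow-up limit.  On $V$ the flow \eqref{eq:limit.flow} is uniformly parabolic, the rescaled curvatures are uniformly bounded by the maximality in the choice of $(s_i,t_i)$, and standard parabolic interior estimates then give $C^\infty_{\mathrm{loc}}$ bounds on $\hat\gamma^i_t$.  Passing to a subsequence, $\hat\gamma^i$ converges in $C^\infty_{\mathrm{loc}}$ to a limiting flow $\hat\gamma^\infty_t$; since $\lambda_i^2 t_i\to\infty$ (as $t_i\to T>0$) and $\lambda_i^2(T-t_i)\to\infty$ (the singularity is Type II, as already established), $\hat\gamma^\infty_t$ is defined for all $t\in(-\infty,\infty)$, i.e.~it is eternal, and it is complete, embedded (a $C^\infty_{\mathrm{loc}}$ limit of embedded solutions which stay embedded by Lemma \ref{lem:flow.exist}), with bounded curvature.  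It solves $\partial_t\hat\gamma^\infty=\phi(x_\infty)^{-1}(\hat\gamma^\infty)''$, so after the time change $\tilde t=\phi(x_\infty)^{-1}t$ it is the ordinary curve shortening flow in the plane $\mathbb{R}^2\cong\mathbb{C}$, i.e.~an eternal Lagrangian mean curvature flow in $\mathbb{C}$.  From the normalisation $\lambda_i=\phi(x_i)^{-1/2}|\kappa_{t_i}(s_i)|$ and the fact that curvature scales by $\lambda_i^{-1}$ under the blow-up, the curvature of $\hat\gamma^\infty_0$ at the space-time origin equals $\phi(x_\infty)^{1/2}>0$, so the limit is non-flat.

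Finally I would derive the contradiction.  The angle that the unit tangent of a curve makes with a fixed axis is invariant under translations and dilations, so each $\hat\gamma^i_t$ has the same angle variation as the corresponding time slice $\gamma_{t_i+\lambda_i^{-2}t}$ of the original flow, which is $<\pi-\delta$ by Lemma \ref{lem:almost.calibrated}.  Passing to the limit, $\hat\gamma^\infty_t$ is an almost calibrated eternal (in particular ancient) solution of Lagrangian mean curvature flow in $\mathbb{C}$, whose variation of Lagrangian angle is at most $\pi-\delta<\pi$.  By the classification of almost calibrated ancient solutions of Lagrangian mean curvature flow in $\mathbb{C}$ from \cite{LambertLotaySchulze}, the only such solution is a static straight line, which has vanishing curvature.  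This contradicts the fact that $\hat\gamma^\infty$ has strictly positive curvature at the origin.  Therefore $x_\infty\in\{p_1,p_2\}$.

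The delicate part of this argument is the blow-up step: justifying the $C^\infty_{\mathrm{loc}}$-convergence to a \emph{complete}, \emph{eternal} limit flow with the claimed curvature normalisation (the Type II point selection and the uniform higher-order estimates), together with checking that the almost calibrated bound and completeness are inherited by the limit.  Once that is in place, the classification input from \cite{LambertLotaySchulze} closes the argument with essentially no further work.
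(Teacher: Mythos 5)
Your argument follows the paper's proof almost step for step: assume $x_\infty\notin\{p_1,p_2\}$, use Lemmas \ref{lem:avoid.sings} and \ref{lem:flow.exist} to place $x_\infty$ at a definite distance from every singularity of $\phi$, pass the rescaled flows \eqref{eq:limit.flow} to a limit solving constant-coefficient curve shortening flow in $\mathbb{R}^2$, and rule that limit out using the angle bound from Lemma \ref{lem:almost.calibrated}. The one place you diverge is the final exclusion step: the paper quotes the classical blow-up analysis for planar curve shortening flow \cite{GageHamilton} (a singularity forces angle variation at least $\pi$ in the blow-up limit, as for the grim reaper), whereas you normalise the limit to have curvature $\phi(x_\infty)^{1/2}>0$ at the origin and then invoke \cite{LambertLotaySchulze} as a classification of almost calibrated ancient solutions in $\mathbb{C}$. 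Your non-flatness normalisation is correct and arguably makes the contradiction logically cleaner, but the citation is the weak point: the classification in \cite{LambertLotaySchulze}, as it is used later in the paper (Lemma \ref{lem:blow.up.case.2}), concerns exact, almost calibrated ancient solutions of Lagrangian \emph{surfaces} in $\mathbb{C}^2$ with hypotheses on the blow-down, and you have not verified the analogous hypotheses (exactness, area-ratio bounds, blow-down a multiplicity-one line) for your one-dimensional limit; the classical curve shortening flow result gives exactly what is needed here without those checks and is the intended reference. Finally, both your write-up and the paper's share the same point-picking subtlety as to whether the limit is genuinely eternal rather than merely ancient — you at least flag this, and an ancient limit in fact suffices for either version of the concluding step.
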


\begin{proof}
Suppose that $x_\infty$ is not $p_1$ or $p_2$.  Then, also using Lemma \ref{lem:avoid.sings}, there exists some $\delta>0$ so that for $i$ sufficiently large, we have that $B_{1/\sqrt{\lambda_i}}(x_i)$ is disjoint from $\cup_{j=1}^k\overline{B_{\delta}(p_j)}$.   Restricting to the balls $B_{1/\sqrt{\lambda_i}}(x_i)$, our curves $\hat{\gamma}^i_t$ are defined on the ball of radius $B_{\sqrt{\lambda_i}}(0)$.  Therefore, $|\lambda_i^{-1}\hat{\gamma}^i_t|\to 0$ on this sequence of balls, and so (passing to a subsequence as necessary), from \eqref{eq:limit.flow} we will obtain an eternal solution $\hat{\gamma}^{\infty}_t$ to
$$
\frac{\partial}{\partial t}\hat{\gamma}^{\infty}_t=\left(m+\sum_{j=1}^k\frac{1}{|x_{\infty}-p_j|}\right)^{-1}(\hat{\gamma}^{\infty})''=c(\hat{\gamma}^{\infty}_t)''$$
on $\mathbb{R}^2$, where $c>0$ is constant.
Since $\hat{\gamma}^{\infty}$ satisfies the curve shortening flow on $\mathbb{R}^2$ up to a constant reparametrization of time,  the blow-up analysis  for the curve shortening flow is valid (see e.g.~\cite{GageHamilton}) and shows that a singularity can only develop if the variation of the angle of $\hat{\gamma}^{\infty}_t$ is at least $\pi$.  
  This possibility is excluded by Lemma \ref{lem:almost.calibrated} and so the result follows.
\end{proof}

Now we know, by Lemma \ref{lem:p1p2}, that any singularity of the flow must occur, without loss of generality, at $p_1$.  By changing coordinates we can assume  $p_1=0$. Therefore, our points $x_i$ must satisfy $|x_i|\to 0$ as $i\to\infty$, and  thus $\phi(x_i)^{-1}\approx |x_i|$ for $i$ large.  However, we also have $\phi(x_i)^{-1}\kappa_{t_i}(s_i)^2\to\infty$.  There are therefore two possibilities: there is a subsequence so that $\phi(x_i)^{-1}|\kappa_{t_i}(s_i)|\to\infty$ (i.e.~the points move ``slowly'' towards the singularity) or $\phi(x_i)^{-1}|\kappa_{t_i}(s_i)|$ is bounded (so the points $x_i$ move ``quickly'' towards the singularity).  We now exclude each case in turn.

\begin{lemma}\label{lem:blow.up.case.1} 
In the setting above, there does not exist a subsequence so that $\phi(x_i)^{-1}|\kappa_{t_i}(s_i)|\to\infty$.  
\end{lemma}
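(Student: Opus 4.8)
The strategy is to rule out the "slow" approach to the singularity $p_1=0$ by performing the blow-up analysis with the rescaling factor $\lambda_i = \phi(x_i)^{-1}|\kappa_{t_i}(s_i)|$ (rather than $\phi(x_i)^{-1/2}|\kappa_{t_i}(s_i)|$), and showing that the limiting flow is an ancient solution of genuine Lagrangian mean curvature flow in $\mathbb{R}^4$ with controlled Lagrangian angle, which by the recent classification of \cite{LambertLotaySchulze} must be a plane — contradicting the fact that the rescaled curvature is $1$ at the origin by construction. First I would recall that $L_t = \pi^{-1}(\gamma_t)$ is Lagrangian mean curvature flow in $X$, and that near $p_1 = 0$ the metric is modelled (with errors controlled by $|x|$) on flat $\mathbb{R}^4$ since $\phi(x)^{-1} \approx |x|$ there: indeed, as in Example \ref{ex:flat}, the Gibbons--Hawking metric with $\phi = \tfrac{1}{2r}$ is exactly flat $\mathbb{R}^4$, and adding the regular part of $\phi$ is a lower-order perturbation. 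So, rescaling $X$ around $\pi^{-1}(0)$ by a factor tending to infinity gives flat $\mathbb{R}^4$ in the limit.

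Next I would set up the rescaled flow. Suppose for contradiction that $\phi(x_i)^{-1}|\kappa_{t_i}(s_i)| \to \infty$ along a subsequence. Write $L_t^i$ for the mean curvature flow in $X$ rescaled by $\mu_i := \phi(x_i)^{-1}|\kappa_{t_i}(s_i)|$ and recentred at the point of $\pi^{-1}(x_i)$ realising the maximum curvature, translated in time so that the singularity time becomes $0$. Since we are in the "slow" case, $|x_i| \to 0$ and $\mu_i |x_i|^{1/2}$ — which measures the ratio between the blow-up scale and the distance to $p_1$ in the ambient metric — needs to be examined: the point is that after rescaling by $\mu_i$, the nearest singularity $p_1$ recedes to infinity (this is exactly where $\phi(x_i)^{-1}|\kappa| \to \infty$, equivalently $\mu_i \gg \phi(x_i)^{-1/2}$ up to the curvature normalisation... more precisely one checks that the rescaled ambient distance to $\pi^{-1}(p_1)$ diverges, so that in the $C^\infty_{loc}$ limit the ambient space is flat $\mathbb{R}^4$ with no collapsed circle anywhere). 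By the choice of $(s_i,t_i)$ as a space-time maximum of curvature (as arranged before the lemma), the rescaled flows $L_t^i$ have uniformly bounded second fundamental form and, being a Type II blow-up (as established just before the lemma, since Type I is excluded), exist on $(-\infty, 0]$. Passing to a subsequence, $L^i_t$ converges in $C^\infty_{loc}$ to an ancient solution $L^\infty_t$ of Lagrangian mean curvature flow in flat $\mathbb{R}^4$, which is circle-invariant hence of the form $\pi^{-1}(\hat\gamma_t)$ for planar curves $\hat\gamma_t$ in $\mathbb{R}^2 \subset \mathbb{R}^3$, with $|\hat\gamma''| = 1$ at the spacetime origin so $L^\infty$ is non-flat.

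The final step invokes the classification of ancient solutions. The Lagrangian angle of $L^\infty_t$ is a limit of (translated) Lagrangian angles of $L_t$, which by Lemma \ref{lem:almost.calibrated} have variation strictly less than $\pi$, so $L^\infty_t$ is almost calibrated. An almost calibrated, ancient, circle-invariant Lagrangian mean curvature flow in $\mathbb{R}^4$ with bounded second fundamental form is classified in \cite{LambertLotaySchulze}: the only such flow is a static plane (alternatively, one can observe directly that the curve $\hat\gamma_t$ is an ancient solution of genuine curve shortening flow in $\mathbb{R}^2$ with total curvature variation $< \pi$, so by the classification of ancient convex/embedded curve-shortening solutions — e.g.\ Grayson-type arguments, as used in the proof of Lemma \ref{lem:p1p2} — it must be a straight line). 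This forces $|\hat\gamma_0''| = 0$, contradicting $|\hat\gamma_0''| = 1$. Hence no such subsequence exists. The main obstacle is controlling the geometry of the rescaled ambient metric carefully enough to be sure that the limit is genuinely flat $\mathbb{R}^4$ (and not Taub-NUT $\mathbb{R}^4$ or a flow near a collapsing circle) — i.e.\ verifying that in the "slow" regime the blow-up scale beats the distance to $p_1$ so the circle fibre does not collapse in the limit; once that is in place, the argument is a clean application of Lemma \ref{lem:almost.calibrated} and the ancient-solution classification.
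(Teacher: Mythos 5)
Your choice of rescaling factor is where the argument breaks down. By Proposition \ref{prop:curve.flow.blow.up} (together with Lemma \ref{lem:log.bounded}, which controls the $\nabla^{\perp}\log\phi$ terms), the norm of the second fundamental form of $L_{t_i}=\pi^{-1}(\gamma_{t_i})$ at the blow-up point is comparable to $\lambda_i=\phi(x_i)^{-1/2}|\kappa_{t_i}(s_i)|$, not to your $\mu_i=\phi(x_i)^{-1}|\kappa_{t_i}(s_i)|$. If you rescale the ambient flow by $\mu_i$, the second fundamental form of the rescaled flows at the basepoint is of order $\lambda_i/\mu_i=\phi(x_i)^{1/2}\to\infty$, so the claim that the rescaled flows have uniformly bounded second fundamental form is false, there is no $C^{\infty}_{\mathrm{loc}}$ limit, and the normalisation $|\hat\gamma''|=1$ at the spacetime origin fails as well (with your factor the rescaled curvature diverges rather than being $1$). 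The two properties your argument needs simultaneously, bounded curvature of the blow-ups and curvature exactly one at the origin, force the factor $\lambda_i$ for the ambient flow (equivalently $|\kappa_{t_i}(s_i)|$ for the Euclidean curve), and these differ from $\mu_i$ by powers of $\phi(x_i)^{1/2}\to\infty$. Moreover, the reason you give for changing the factor is unnecessary: already under the rescaling by $\lambda_i$, the rescaled distance from the basepoint to the fixed fibre $\pi^{-1}(p_1)$ is comparable to $\lambda_i\,\phi(x_i)^{-1/2}=\phi(x_i)^{-1}|\kappa_{t_i}(s_i)|$, which tends to infinity precisely by the hypothesis of this ``slow'' case, so the singular fibre recedes to infinity automatically.

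For comparison, the paper's proof of this lemma is more elementary and never invokes the ancient-solution classification (that is reserved for the complementary case, Lemma \ref{lem:blow.up.case.2}). It rescales the planar curve by $|\kappa_{t_i}(s_i)|$, so the rescaled curves have curvature $1$ at the origin and the rescaled balls $B(0,\tfrac{1}{2}|x_i\kappa_{t_i}(s_i)|)$ exhaust $\mathbb{R}^2$ because $|x_i|\,|\kappa_{t_i}(s_i)|\approx\phi(x_i)^{-1}|\kappa_{t_i}(s_i)|\to\infty$; since $|\kappa_{t_i}(s_i)|=\phi(x_i)^{1/2}\lambda_i$ exceeds the second-fundamental-form scale, the correspondingly rescaled Lagrangians flatten and converge to a plane, whose projection would be a straight line, contradicting curvature $1$ at the origin. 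If you want to keep your ancient-solution strategy, you should blow up by the paper's $\lambda_i$: then the limit is an ancient solution in flat $\mathbb{R}^4$ which splits as (ancient curve shortening flow)$\times\mathbb{R}$ and is almost calibrated by Lemma \ref{lem:almost.calibrated}; but you would still owe a proof that such a product ancient solution with bounded curvature is a static plane, and this does not follow by simply quoting \cite{LambertLotaySchulze} (whose hypotheses concern the blow-down, which you have not identified here) or by an appeal to ``Grayson-type arguments''. So as written both the compactness step and the final classification step have genuine gaps.
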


\begin{proof}
Suppose not.  Then consider the balls $B(x_i,\frac{1}{2}|x_i|)$ for $i$ sufficiently large so that they do not contain $p_1=0$ (which is clear by construction) or $p_2$.   Translating $x_i$ to the origin and rescaling by $|\kappa_{t_i}(s_i)|$ (not by $\lambda_i$), we obtain balls $B(0,\frac{1}{2}|x_i\kappa_{t_i}(s_i)|)$ which will cover $\mathbb{R}^2$ as $i\to\infty$ since $|x_i|\approx\phi(x_i)^{-1}$.  Moreover, using $|\kappa_{t_i}(s_i)|$ in our blow-up sequence instead of $\phi(x_i)^{-1/2}|\kappa_{t_i}(s_i)|$ we obtain a sequence of curves $\hat{\gamma}^i$ with curvature $1$ at the origin for all $i$.  Taking the limit as $i\to\infty$ then gives a smooth curve $\hat{\gamma}^{\infty}$ in $\mathbb{R}^2$ with curvature $1$ at $0$.  However, the corresponding blow-up sequence $\hat{L}^i$ of Lagrangian mean curvature flows (using $|\kappa_{t_i}(s_i)|$) will have a limit $\hat{L}^{\infty}$ which is a plane, since we are blowing up at a faster rate (since $\phi(x_i)^{-1/2}\to 0$ as $i\to\infty$) than the norm of the second fundamental form, recalling the proof of Lemma \ref{prop:curve.flow.blow.up}.  This contradicts the fact that the projection of $\hat{L}^{\infty}$ to $\mathbb{R}^2$, which is $\hat{\gamma}^{\infty}$, is not a straight line.
\end{proof}

\begin{lemma}\label{lem:blow.up.case.2}
In the notation above, we cannot have that $\phi(x_i)^{-1}|\kappa_{t_i}(s_i)|$ is bounded.
\end{lemma}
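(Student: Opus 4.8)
The plan is to carry out a Type II blow-up of the Lagrangian mean curvature flow $L_t=\pi^{-1}(\gamma_t)$ at the spacetime points $(z_i,t_i)$, where $z_i\in\pi^{-1}(x_i)$, rescaling the ambient metric by $\lambda_i$. By Lemma \ref{lem:log.bounded} and the decomposition of $|A|^2$ from the proof of Proposition \ref{prop:curve.flow.blow.up}, the quantity $\lambda_i$ is comparable to $\sup\{|A|(L_t):t\le t_i\}$, since the $|\nabla^\perp_{\mathbb{R}^3}\log\phi|$ contribution to $|A|^2$ stays bounded while $\phi^{-1}\kappa^2$ blows up; thus $\lambda_i$ is the genuine Type II rate. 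The crucial structural point is that the hypothesis that $\phi(x_i)^{-1}|\kappa_{t_i}(s_i)|$ is bounded is precisely the statement that the rescaled circle orbits have bounded length $2\pi\lambda_i\phi(x_i)^{-1/2}=2\pi\,\phi(x_i)^{-1}|\kappa_{t_i}(s_i)|$. Combined with the fact that the singularity occurs at $p_1$ (Lemma \ref{lem:p1p2}), near which $\phi\approx\frac1{2r}$ so that the metric is, up to rescaling, the flat metric on $\mathbb{R}^4$ with its standard Hopf circle action (Example \ref{ex:flat}), this forces the rescaled metrics to converge smoothly on compact sets to the flat metric on $\mathbb{C}^2=\mathbb{R}^4$ with the standard circle action; concretely, the rescaled defining function takes the form $\tilde\phi_i(y)=\frac1{2|y+\lambda_i^2 x_i|}$ and converges to $\tilde\phi_\infty(y)=\frac1{2|y-q|}$, where $q=-\lim\lambda_i^2x_i$ is a subsequential limit (which exists because $\lambda_i^2d_i$ stays bounded, $d_i$ denoting the Euclidean distance from $x_i$ to $p_1$). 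This is in contrast with the case treated in Lemma \ref{lem:blow.up.case.1}, where the rescaled fibers open up and the limiting model is translation-invariant.

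Next I would extract the limit flow. Since the singularity is Type II, the rescaled flows $\hat L^i_t=\lambda_i(L_{t_i+\lambda_i^{-2}t}-z_i)$ are defined on time intervals exhausting $\mathbb{R}$, and with $|A|\le 1$ and $|A|=1$ at the base point of $\hat L^i_0$ the standard compactness theory yields a subsequential limit $\hat L^\infty_t$ which is an \emph{eternal} solution of Lagrangian mean curvature flow in flat $\mathbb{C}^2$. It is circle-invariant (each $\hat L^i$ is, and the circle actions converge to the standard one), Lagrangian for the limiting Kähler form, and almost calibrated with angle variation at most $\pi-\delta$ (the Lagrangian angle is scale-invariant, and Lemma \ref{lem:almost.calibrated} bounds its variation uniformly along the flow). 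By Lemma \ref{lem:Lagrangian} applied to flat $\mathbb{C}^2$, $\hat L^\infty=\pi^{-1}(\hat\gamma^\infty)$ for a planar curve $\hat\gamma^\infty$, and the curvature of $\hat\gamma^\infty$ at the base point equals $\lim\kappa_{t_i}(s_i)/\lambda_i^2=\lim\bigl(\phi(x_i)^{-1}|\kappa_{t_i}(s_i)|\bigr)^{-1}\neq 0$; in particular $\hat L^\infty$ is \emph{not} flat. (If this limit is $+\infty$, i.e.\ $q=0$, one instead records directly that $|A|(\hat L^\infty)=1$ at the origin of $\mathbb{C}^2$.)

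Finally I would invoke the classification of ancient solutions to almost calibrated Lagrangian mean curvature flow in $\mathbb{C}^2$ of \cite{LambertLotaySchulze}. An eternal solution is in particular ancient, so the classification applies to $\hat L^\infty$ and forces it to be a (static) special Lagrangian. Being circle-invariant, $\hat L^\infty=\pi^{-1}(\hat\gamma^\infty)$ with $\hat\gamma^\infty$ then a straight line, by the analogue of Example \ref{ex:SL.straight} for the flat $\mathbb{C}^2$ with defining function $\tilde\phi_\infty$; but a straight line has vanishing curvature, contradicting the nonzero curvature of $\hat\gamma^\infty$ at the base point found above. In the degenerate subcase $q=0$, the only circle-invariant special Lagrangian through the fixed point of $\mathbb{C}^2$ is a flat complex line, again contradicting $|A|(\hat L^\infty)=1$ there. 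Either way we reach a contradiction, which shows that $\phi(x_i)^{-1}|\kappa_{t_i}(s_i)|$ cannot be bounded.

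The main obstacle I anticipate is the blow-up analysis at the collapsing orbit $\pi^{-1}(p_1)$: one must check carefully that, under the Type II rescaling, the Gibbons--Hawking metrics converge to flat $\mathbb{C}^2$ together with its circle action (rather than degenerating or opening up to a translation model, the latter being exactly what the boundedness hypothesis excludes), that the bound of Lemma \ref{lem:log.bounded} together with the almost calibrated condition gives enough uniform geometric control on the $\hat L^i$ to pass to a smooth eternal limit, and that $\hat L^\infty$ genuinely meets the hypotheses of the Lambert--Lotay--Schulze classification. The remaining inputs — scale-invariance of the Lagrangian angle, preservation of circle-invariance and of the Lagrangian condition under the flow (Lemma \ref{lem:planar.curves}), and the identification of circle-invariant special Lagrangians with lifts of straight lines — are all already available from earlier in the paper.
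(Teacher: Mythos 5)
Your overall strategy is the paper's: under the boundedness hypothesis you perform a Type II rescaling at the collapsing endpoint $p_1$ (Lemma \ref{lem:p1p2}), use the boundedness of $\phi(x_i)^{-1}|\kappa_{t_i}(s_i)|$ exactly where the paper does — to keep the fixed point at bounded rescaled distance from the marked point (the paper records this as $|y_i|\to 0$ for the image of $p_1$; you record it as finiteness of $q=-\lim\lambda_i^2x_i$) — and extract a non-trivial, smooth, embedded, almost calibrated, circle-invariant ancient solution in flat $\mathbb{C}^2$, which is then killed by the Lambert--Lotay--Schulze input. Two genuine differences: your bookkeeping (ambient rescaling by $\lambda_i$, hence moment-map rescaling by $\lambda_i^2$, with explicit limit potential $\tfrac{1}{2|y-q|}$) is a cleaner, internally consistent version of the paper's argument with the balls $B(x_i,A\phi(x_i)|\kappa_{t_i}(s_i)|^{-1})$; and your endgame replaces the paper's topological exclusion of the Lawlor neck (the limit contains the point $\pi^{-1}(0)$, so it cannot be $S^1\times\mathbb{R}$) by the observation that any circle-invariant special Lagrangian projects to a straight line, contradicting the nonzero curvature of $\hat\gamma^\infty$ at the base point (or $|A|=1$ at the fixed point in your degenerate subcase). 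Both endgames work, though yours additionally needs $C^2$ convergence of the projected curves near the base point, which is fine away from $q$. Minor point: the limit is only ancient, not eternal, since the curvature bound coming from the choice of $(s_i,t_i)$ holds only for rescaled times $t\le 0$; this is harmless as you only use ancientness.

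The genuine gap is in the decisive citation. There is no classification in \cite{LambertLotaySchulze} of arbitrary almost calibrated ancient solutions in $\mathbb{C}^2$; their theorem classifies \emph{exact}, almost calibrated ancient solutions \emph{whose blow-down is a multiplicity-one plane or a pair of transversely intersecting planes}, with conclusion plane, pair of planes, or Lawlor neck. So before you may assert that ``the classification forces $\hat L^\infty$ to be a static special Lagrangian'' you must supply (i) exactness of $\hat L^\infty$ (the paper quotes this from \cite{LambertLotaySchulze}) and (ii) the blow-down hypothesis, which the paper derives in one short step from the almost calibrated condition: since the angle variation of $\hat\gamma^\infty$ is less than $\pi$ (Lemma \ref{lem:almost.calibrated} passes to the limit), the curve is graphical over a line, so its blow-down is at most two rays of multiplicity one (a multiplicity-two ray being excluded by the angle bound), whence the blow-down of $\hat L^\infty$ is a single plane or a transverse pair of planes. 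You flag ``check the hypotheses of \cite{LambertLotaySchulze}'' as an anticipated obstacle but never carry it out, and as written your appeal overstates the cited theorem. The missing step is short and you already have all the ingredients, so once (i) and (ii) are inserted your argument closes and is essentially equivalent to the paper's proof.
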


\begin{proof}
If $\phi(x_i)^{-1}|\kappa_{t_i}(s_i)|$ is bounded then for $i$ sufficiently large, there is some constant $A$ so that 
\begin{equation}\label{eq:xi.bound}
|x_i|<\max\left\{A|\kappa_{t_i}(s_i)|^{-1}
,1\right\},
\end{equation}
since $|x_i|\approx \phi(x_i)^{-1}$.  
We may then consider the balls $B(x_i,A\phi(x_i)|\kappa_{t_i}(s_i)|^{-1})$ which will contain the origin since, by \eqref{eq:xi.bound}, $|x_i|<1$ whereas $A\phi(x_i)|\kappa_{t_i}(s_i)|^{-1}>1$ for $i$ sufficiently large.  Rescaling by $\lambda_i=\phi(x_i)^{-1/2}|\kappa_{t_i}(s_i)|$ and translating by $x_i$, we obtain balls $B(0,A\phi(x_i)^{1/2})$ which will cover $\mathbb{R}^2$ as $i\to\infty$. Moreover, the image $y_i$ of $0$ under this transformation will lie at distance $\phi(x_i)^{-1/2}|x_i||\kappa_{t_i}(s_i)|$ from $0$, which tends to $0$ as $i\to\infty$ by \eqref{eq:xi.bound}. 

By construction, above the origin the norm of the second fundamental form of the blow-up sequence $\hat{L}^i=\mu^{-1}(\hat{\gamma}^i)$ of Lagrangian mean curvature flows is $1$.  Sending $i\to\infty$ we obtain, as a limit, a non-trivial, smooth, embedded, almost calibrated, ancient solution $\hat{L}^{\infty}$ to Lagrangian mean curvature flow in $\mathbb{R}^4$, which projects to a curve $\hat{\gamma}^{\infty}$ through $0$.  Moreover,   $\mu^{-1}(y_i)$ is a point in $\hat{L}^i$ for all $i$ and $|y_i|\to 0$ as $i\to\infty$. Hence, after rescaling, $\mu$ is the radially extended Hopf fibration, as in Example \ref{ex:flat}, and $\mu^{-1}(0)\in \hat{L}^{\infty}$ is a point. 
Thus, $\hat{L}^{\infty}$ cannot topologically be a cylinder $S^1\times\mathbb{R}$ as its projection passes through the origin. We also know that $\hat{L}^{\infty}$ must be an exact Lagrangian by \cite{LambertLotaySchulze}.

We now observe that, since $\hat{\gamma}^{\infty}$ is almost calibrated, it must be graphical over a straight line, and its blow-down must be at most two (multiplicity one) rays emanating from the origin.  In particular, the blow-down cannot be a multiplicity two ray by the almost calibrated condition. This means that the blow-down of $\hat{L}^{\infty}$ is either a single plane or a pair of planes intersecting transversely at a point.  The exact, almost calibrated, ancient solutions to Lagrangian mean curvature flow in $\mathbb{R}^4$ whose blow-downs are planes were classified in \cite{LambertLotaySchulze}.  The only such ancient solutions whose blow-down is a single plane is the plane itself, which is ruled out because $\hat{L}^{\infty}$ is non-trivial.  If instead the blow-down is a pair of transverse planes, the only possibilities are the Lawlor neck (a special Lagrangian described, for example, in \cite{LambertLotaySchulze}) or the pair of planes themselves.  The pair of planes is again excluded because $\hat{L}^{\infty}$ is non-trivial, and the Lawlor neck is excluded because it is topologically $S^1\times\mathbb{R}$.

We have therefore reached our desired contradiction, and the result follows.
\end{proof}

Combining our lemmas gives our long-time existence and convergence result.

\begin{corollary}\label{cor:flow.exists.all.time}
The flow \eqref{eq:Mean_Curvature_Flow_E_2} starting at a flow stable curve with endpoints $p_1$ and $p_2$ exists for all time and converges smoothly to the straight line connecting $p_1$ and $p_2$.
\end{corollary}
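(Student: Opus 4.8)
The strategy is to collect the pieces already assembled: long-time existence for the flow \eqref{eq:Mean_Curvature_Flow_E_2} starting at a flow stable curve, and then a convergence argument. First I would argue long-time existence. By Lemma \ref{lem:avoid.sings} the flow $\gamma_t$ stays a definite distance $\delta>0$ away from all singularities $p_i$ with $i>2$, and by that same lemma the flow exists as long as $\phi^{-1}\kappa^2$ remains bounded. Lemmas \ref{lem:p1p2}, \ref{lem:blow.up.case.1} and \ref{lem:blow.up.case.2} together rule out every possible blow-up: an interior singularity is excluded because the rescaled limit would be a curve shortening flow solution on $\mathbb{R}^2$ developing a singularity, forcing angle variation $\geq\pi$, contradicting the almost calibrated condition preserved by Lemma \ref{lem:almost.calibrated}; and a singularity at an endpoint $p_1$ or $p_2$ is excluded by the two-case analysis of how fast the blow-up points approach the singularity (Lemmas \ref{lem:blow.up.case.1}--\ref{lem:blow.up.case.2}), using the classification of exact almost calibrated ancient solutions to Lagrangian mean curvature flow in $\mathbb{R}^4$ from \cite{LambertLotaySchulze}. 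Hence $\phi^{-1}\kappa^2$ stays bounded for all finite times, and the flow exists for all $t\in[0,\infty)$.

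\textbf{Convergence.} The second and more substantial part is showing that $\gamma_t$ converges smoothly to the straight line $\ell$ joining $p_1$ and $p_2$ as $t\to\infty$. The flow $\gamma_t$ stays in a fixed compact region $K$ of the plane $P$ (by Lemma \ref{lem:flow.exist}) avoiding a $\delta$-neighbourhood of all other singularities of $\phi$, so on $K$ the conformal factor $\phi^{-1}$ is bounded above and below away from zero; thus \eqref{eq:Mean_Curvature_Flow_E_2} is uniformly parabolic on $K$ with fixed endpoints $p_1,p_2$. Since area is decreasing along Lagrangian mean curvature flow, $\mathrm{Length}(\gamma_t)$ (which equals $\tfrac{1}{2\pi}\mathrm{Area}(\pi^{-1}(\gamma_t))$ by \eqref{eq:Area_Length}) is monotone decreasing and bounded below, so $\int_0^\infty\!\int_{\gamma_t}|H|^2\,dt<\infty$; combined with the uniform curvature bounds just established and standard parabolic interior estimates, a subsequence $\gamma_{t_j}$ converges smoothly to a curve $\gamma_\infty$ in $P$ with $\gamma_\infty''=0$ and the same endpoints, i.e.\ $\gamma_\infty=\ell$. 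One then upgrades subsequential convergence to full convergence: because $\ell$ corresponds to the unique circle-invariant special Lagrangian $\pi^{-1}(\ell)$ in the homology class $[L]$ (Lemma \ref{lem:SL.stable.3}), which is a calibrated, hence strictly area-minimizing, representative, and since the flow cannot oscillate between distinct limits of equal area, the limit is unique. Alternatively, one can invoke that the straight line $\ell$ is a stable fixed point of \eqref{eq:Mean_Curvature_Flow_E_2} (the flow is an almost-calibrated graphical flow over $\ell$ once close, and such graphical flows converge — c.f.\ the arguments in \cite{ThomasYau}), giving smooth convergence $\gamma_t\to\ell$ as $t\to\infty$.

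\textbf{Main obstacle.} I expect the delicate point to be ruling out \emph{all} finite-time singularities rigorously near the endpoints $p_1,p_2$, i.e.\ the content of Lemmas \ref{lem:blow.up.case.1}--\ref{lem:blow.up.case.2}: the flow equation \eqref{eq:limit.flow} is singular there since $\phi\to\infty$, so the blow-up rate competes with the degeneration of the metric, and one has to carefully track whether blow-up points approach $p_i$ ``slowly'' or ``quickly'' and in each case extract the correct limiting object (a plane, a cylinder, a Lawlor neck, or a genuine ancient solution) to derive a contradiction with the almost calibrated hypothesis and the classification in \cite{LambertLotaySchulze}. Once the flow is known to exist for all time with uniformly controlled geometry, the convergence to $\ell$ is comparatively routine, using the monotone area/length decay, the uniqueness and calibrated property of $\pi^{-1}(\ell)$, and standard smoothing estimates for the uniformly parabolic flow on the fixed compact region.
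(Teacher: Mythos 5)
Your long-time existence argument is exactly the paper's: Lemmas \ref{lem:avoid.sings}--\ref{lem:blow.up.case.2}, fed by the preservation of the almost calibrated condition in Lemma \ref{lem:almost.calibrated}, rule out any finite-time singularity, and this is all the paper says for that half of the corollary. For the convergence you take a genuinely different, more self-contained route: the paper simply invokes ``the standard argument as in \cite{ThomasYau}'', namely that the Lagrangian angle satisfies the heat equation and circle-invariance is preserved, so $L_t$ converges smoothly to a circle-invariant special Lagrangian, which Theorem \ref{thm:SL.stable} then identifies as the lift of the straight line. Your route instead uses monotonicity of length via \eqref{eq:Area_Length}, the consequent bound $\int_0^\infty\int_{\gamma_t}|H|^2\,dt<\infty$, uniform parabolicity of \eqref{eq:Mean_Curvature_Flow_E_2} on the compact region kept away from the singularities $p_i$, $i>2$, by Lemma \ref{lem:avoid.sings}, and the fact that the straight segment is the unique stationary curve with endpoints $p_1,p_2$. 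That scheme is sound and in some ways more explicit than the paper's one-line appeal to \cite{ThomasYau}; your uniqueness-of-critical-point upgrade to full convergence is also fine, and is cleaner phrased as: every subsequential limit is a stationary curve with the fixed endpoints, hence equals $\ell$, which already forces full convergence (the remark about ``not oscillating between limits of equal area'' is unnecessary and vague).

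The one genuine gap is the phrase ``combined with the uniform curvature bounds just established''. What the blow-up analysis establishes is the absence of a singularity at any \emph{finite} time $T$, i.e.\ $\sup_{t\le T}\phi^{-1/2}|\kappa|<\infty$ for each $T<\infty$; it does not by itself give a bound uniform over $t\in[0,\infty)$, and such a uniform bound is precisely what your parabolic interior estimates and smooth subconvergence along $t_j\to\infty$ require. The gap is fillable: either observe that the same rescaling argument applies verbatim with $T=\infty$ (the Type I alternative is vacuous there, since $\lambda_i^2(T-t_i)=\infty$ automatically, so one again extracts an eternal/ancient limit and contradicts Lemma \ref{lem:almost.calibrated}, Lemmas \ref{lem:p1p2}--\ref{lem:blow.up.case.2} and the classification of \cite{LambertLotaySchulze}), or argue as the paper does, using the heat equation satisfied by the Lagrangian angle (so that $|H|=|\nabla\beta|$ is controlled for large times) together with the avoidance of the other singularities of $\phi$. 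As written, though, the assertion of uniform-in-time curvature bounds is unjustified and should be repaired before the convergence argument goes through.
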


\begin{proof}
The long-time existence of the flow \eqref{eq:Mean_Curvature_Flow_E_2} of curves $\gamma$ follows directly from Lemmas \ref{lem:avoid.sings}--\ref{lem:blow.up.case.2}.

It is then a standard argument, as one may see in \cite{ThomasYau}, using the fact that the Lagrangian angle satisfies the heat equation and that circle-invariance is preserved along Lagrangian mean curvature flow, to deduce that the Lagrangian mean curvature flow $L=\mu^{-1}(\gamma)$ converges smoothly to a circle-invariant special Lagrangian $L_{\infty}=\mu^{-1}(\gamma_{\infty})$.  Theorem \ref{thm:SL.stable} then gives that $\gamma_{\infty}$ is the straight line between $p_1$ and $p_2$.
\end{proof}

Corollary \ref{cor:flow.exists.all.time} completes the proof of Theorem \ref{thm:LMCF.stable}.

\appendix

\section{Curvature of special Lagrangian spheres}\label{sec:Positive_Curvature}\label{app:spheres}

In this appendix, we prove the following sufficient condition for positive curvature of the induced metric of a special Lagrangian sphere in a hyperk\"ahler 4-manifold $X$ given by the Gibbons--Hawking ansatz, with harmonic function $\phi$ as in Examples \ref{ex:MultiEH} or \ref{ex:MultiTN} with collinear singularities $p_1,\ldots,p_k$.

\begin{proposition}\label{prop:Positive_Curvature}
Let $\gamma \subseteq \mathbb{R}^3$ be a straight line between $p_1$ and $p_2$, let $2d=\text{\emph{Length}}(\gamma)$, and let $q$ by the midpoint of $\gamma$. Suppose that the Euclidean distance from $p_i$ to $q$ is strictly greater than $s d$ for $s\geq \max \lbrace 4, \sqrt{(k-2)/2} \rbrace$ and all $i>2$. 
	Then $\mu^{-1}(\gamma)$ is a smooth minimal $2$-sphere and its induced metric  is positively curved.
\end{proposition}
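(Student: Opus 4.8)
The strategy is to compute the Gauss curvature of the induced metric on $\Sigma = \pi^{-1}(\gamma)$ directly, using the structure equations of Lemma \ref{lem:Covariant_Derivatives}, and then to estimate the relevant derivatives of $\phi$ along $\gamma$ using the hypothesis that the extra singularities $p_i$ ($i>2$) are far from the segment. First I would set up coordinates so that $\gamma$ lies along, say, the $\mu_1$-axis with midpoint $q$ at the origin, so that $\Sigma$ is parametrised by the circle fibre coordinate and arclength along $\gamma$. Since $\gamma$ is a straight line, $\gamma'' = 0$, and by Example \ref{ex:SL.straight} the surface $\Sigma$ is special Lagrangian, hence minimal; the first assertion (smooth minimal $2$-sphere) is then immediate from Lemma \ref{lem:min.surf} together with the hypothesis that $\gamma$ meets no $p_i$ for $i>2$ (the bound $sd$ with $s \geq 4$ guarantees this).

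**Computing the curvature.** The induced metric on $\Sigma$ is $g|_\Sigma = \phi^{-1}\eta^2 + \phi\, d\mu_1^2$ restricted to the relevant circle bundle over $\gamma$; writing $\phi = \phi(\mu_1)$ along $\gamma$ (the off-axis derivatives of $\phi$ vanish on the axis by symmetry of the dominant terms, but I would keep track of the genuine restriction), this is a warped-product-type metric on $S^1 \times (\text{interval})$ with collapsing fibres at the endpoints. For such a metric one has an explicit formula for the Gauss curvature in terms of $\phi$ and its first two derivatives with respect to the $g|_\Sigma$-arclength parameter; alternatively, one extracts it from the components of the Riemann tensor of $g$ via Lemma \ref{lem:Covariant_Derivatives} and the Gauss equation, using that $\Sigma$ is totally geodesic-up-to-known-second-fundamental-form (in fact $H=0$, and the full second fundamental form was essentially computed in the proof of Proposition \ref{prop:curve.flow.blow.up}). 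The upshot should be an expression of the schematic form
$$
K_\Sigma = \frac{1}{\phi^3}\Bigl( c_1\, \phi\, \phi'' - c_2\, (\phi')^2 + c_3\, \phi^2 \cdot(\text{lower order})\Bigr)
$$
with explicit positive constants $c_i$, where $'$ is Euclidean $\mu_1$-differentiation. The task is then to show this is positive under the hypothesis.

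**Estimating $\phi$ and its derivatives.** Write $\phi = m + \phi_0 + \psi$ where $\phi_0 = \tfrac{1}{2|x-p_1|} + \tfrac{1}{2|x-p_2|}$ is the two-point contribution and $\psi = \sum_{i>2} \tfrac{1}{2|x-p_i|}$ is the remainder. On $\gamma$, the function $\phi_0$ and its derivatives are explicit and comparable to $1/d$ near $q$ and blow up like $1/(\text{dist to endpoint})$ near $p_1, p_2$; crucially $\phi_0$ is convex and one has precise control of $\phi_0'/\phi_0$ and $\phi_0''/\phi_0$. The key analytic input is that on the segment of length $2d$ centred at $q$, since each $p_i$ with $i>2$ has distance $> sd$ from $q$, the perturbation satisfies $|\psi| \leq \tfrac{k-2}{2(s-1)d}$, $|\psi'| \leq \tfrac{k-2}{2((s-1)d)^2}$, and $|\psi''| \leq \tfrac{k-2}{((s-1)d)^3}$ (comparing to $1/r^2$, $2/r^3$ with $r \geq (s-1)d$). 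The condition $s \geq \max\{4, \sqrt{(k-2)/2}\}$ is precisely what is needed to absorb these error terms: the $s \geq 4$ controls the geometry near the endpoints (where $\phi_0$ dominates overwhelmingly), and the $s \geq \sqrt{(k-2)/2}$ controls the middle region near $q$ (where $\phi_0 \sim 1/d$ and one needs $\psi$ to be a genuinely small fraction of it). I would split the segment into a "central" region (say $|\mu_1| \leq d/2$) and two "endpoint" regions, and verify positivity of $K_\Sigma$ in each by feeding these estimates into the curvature formula.

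**Main obstacle.** The hard part is the bookkeeping in the curvature formula: getting the constants $c_i$ right and then checking, in the endpoint regions, that the genuinely singular behaviour of $\phi$ (blowing up at $p_1, p_2$, which is where the fibre collapses) still yields positive curvature — this is the smooth-closure / orbifold-point analysis, where one must confirm $K_\Sigma$ extends continuously and positively across the poles of the sphere. Near those poles the two-point function $\phi_0$ behaves like $\tfrac{1}{2r} + O(1)$ in the distance $r$ to the pole, which is exactly the flat $\mathbb{R}^4$ model (Example \ref{ex:flat}) up to bounded corrections, so the curvature there is dominated by a manifestly positive term coming from $\phi_0$ and the small perturbations $\psi$ and the bounded part of $\phi_0$ are swallowed using $s \geq 4$; making this rigorous — i.e.\ choosing the radius of the endpoint region and tracking that $\psi/\phi_0 \to 0$ fast enough — is where the real care is needed. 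I would therefore organise the proof as: (1) reduce to the curvature formula; (2) central region estimate using $s \geq \sqrt{(k-2)/2}$; (3) endpoint region estimate using $s \geq 4$ and the flat-model comparison; (4) conclude positivity everywhere and smoothness at the poles.
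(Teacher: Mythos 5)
Your overall strategy coincides with the paper's: minimality and smoothness of $\pi^{-1}(\gamma)$ are immediate (Lemmas \ref{lem:min.surf} and \ref{lem:min.calibrated}, Example \ref{ex:SL.straight}), and the real content is the intrinsic Gauss curvature of the warped metric $\phi^{-1}\eta^2+\phi\,d\mu_3^2$ along the line, which is exactly $K=-\tfrac12\,\partial^2_{\mu_3}\bigl(\phi^{-1}\bigr)=\frac{\phi\,\phi''-2(\phi')^2}{2\phi^3}$, to be made positive by splitting $\phi$ into the two-endpoint part plus the far-singularity part and invoking the distance hypothesis. (A small correction to your setup: the normal derivatives of $\phi$ along $\gamma$ never enter the intrinsic curvature, since $d\eta$ restricts to zero on the surface, so no symmetry consideration is needed there.)

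The genuine gap is that the decisive quantitative step --- showing that the specific thresholds $s\geq\sqrt{(k-2)/2}$ and $s\geq 4$ force $\phi\,\phi''-2(\phi')^2>0$ --- is entirely deferred, and the way you propose to organise it has a soft spot. Your endpoint analysis claims the curvature near the poles is ``dominated by a manifestly positive term coming from $\phi_0$'' via comparison with the flat model $\phi\sim\frac{1}{2r}$; but for the single-centre flat model $1/\phi$ is linear along the ray, so that model has $K\equiv 0$ and provides no positive term to dominate. Positivity at the poles comes from the full two-endpoint function (equivalently the bounded part of $\phi$ there) entering with a favourable sign, and has to be extracted from the exact expression rather than from the local model. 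The paper avoids any region splitting: writing $\phi^{-1}=\frac{a^2-\mu_3^2}{a+\tilde\phi(a^2-\mu_3^2)}$ with $\tilde\phi=m+\sum_l\frac{1}{2r_l}$, it computes $\partial^2_{\mu_3}(\phi^{-1})$ exactly and isolates a manifestly negative part plus the two dangerous terms $2(\partial_{\mu_3}\tilde\phi)^2(a^2-\mu_3^2)^3$ and $8a\mu_3(\partial_{\mu_3}\tilde\phi)(a^2-\mu_3^2)$; the factors $(a^2-\mu_3^2)$ automatically kill these at the poles, and the bound $|\partial_{\mu_3}\tilde\phi|\leq\sum_l 1/(2r_l^2)$ from Lemma \ref{lem:phi_Slag_Convex} together with the distance hypothesis lets the two thresholds absorb them uniformly along the whole segment --- $\sqrt{(k-2)/2}$ for the quadratic term, $4$ for the linear one. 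In particular your heuristic assignment ($s\geq 4$ for the endpoints, $s\geq\sqrt{(k-2)/2}$ for the middle) does not reflect how the inequality actually closes; to complete your route you would still have to carry out the analogue of this absorption with your $(s-1)d$ bounds, which is precisely the bookkeeping you flag as the main obstacle but do not perform.
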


Up to a rigid motion of $\mathbb{R}^3$ we may assume that 
$\gamma$ is a straight line connecting $p_{\pm}=(0,0,\pm a)$ and we re-label the remaining singularities of $\phi$ by $q_1, \ldots , q_{k-2}$. Then, using $r_l$ to denote the Euclidean distance to the point $q_l$, at the points of $\gamma$, where $\mu_1=0=\mu_2$ and $\mu_3 \in (-a,a)$, we find that
\begin{equation}\label{eq:tilde.phi}
\phi = m + \sum_{l=1}^{k-2} \frac{1}{2r_l} + \frac{1}{2(\mu_3+a)} - \frac{1}{2(\mu_3-a)} = \tilde{\phi} + \frac{a}{a^2 - \mu_3^2}. 
\end{equation}

\begin{lemma}\label{lem:phi_Slag_Convex}
	On $\gamma$, the function $\tilde{\phi}= m + \sum_{l=1}^{k-2} \frac{1}{2r_l}$ satisfies
	$$ | \del_{\mu_3} \tilde{\phi} | \leq \sum_{l=1}^{k-2} \frac{1}{2r_l^2} , 
	\quad\text{and}\quad
	\del^2_{\mu_3} \tilde{\phi} = \sum_{l=1}^{k-2}  \frac{1}{r_l^3}>0 .$$
\end{lemma}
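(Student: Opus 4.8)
The plan is to use that $\gamma$ is a segment of the $\mu_3$-axis, so that along $\gamma$ the operators $\partial_{\mu_3}$ and $\partial^2_{\mu_3}$ are ordinary one-variable derivatives of the restriction of $\tilde\phi$ to $\gamma$ (the $\mu_1$- and $\mu_2$-derivatives play no role). Writing $b_l^2=(q_l^1)^2+(q_l^2)^2$ for the squared distance from $q_l$ to the axis and $c_l=q_l^3$, on $\gamma$ we have $r_l=r_l(\mu_3)=\bigl(b_l^2+(\mu_3-c_l)^2\bigr)^{1/2}$, so the whole statement reduces to elementary calculus of the functions $\mu_3\mapsto \tfrac12\, r_l(\mu_3)^{-1}$.

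First I would prove the gradient bound. Since $\tfrac{d}{d\mu_3}r_l=(\mu_3-c_l)/r_l$, we get $\partial_{\mu_3}\bigl(\tfrac{1}{2r_l}\bigr)=-\tfrac{\mu_3-c_l}{2r_l^3}$, hence $\partial_{\mu_3}\tilde\phi=-\sum_l\tfrac{\mu_3-c_l}{2r_l^3}$. Using $|\mu_3-c_l|\le r_l$ termwise together with the triangle inequality gives $|\partial_{\mu_3}\tilde\phi|\le\sum_l\tfrac{|\mu_3-c_l|}{2r_l^3}\le\sum_l\tfrac{1}{2r_l^2}$, which is the first assertion. Differentiating once more yields $\partial^2_{\mu_3}\bigl(\tfrac{1}{2r_l}\bigr)=\tfrac{3(\mu_3-c_l)^2-r_l^2}{2r_l^5}$, and summing over $l$ computes $\partial^2_{\mu_3}\tilde\phi$. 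Since on $\gamma$ one has $0\le(\mu_3-c_l)^2\le r_l^2$, each numerator obeys $-r_l^2\le 3(\mu_3-c_l)^2-r_l^2\le 2r_l^2$, so each term has size at most $r_l^{-3}$ and $\sum_l r_l^{-3}>0$ because there is at least one remaining singularity ($k\ge3$). When the extra singularities lie on the line carrying $\gamma$, so that $b_l=0$ and $(\mu_3-c_l)^2=r_l^2$, this sharpens to the exact identity $\partial^2_{\mu_3}\tilde\phi=\sum_l r_l^{-3}$, exactly as in the collinear computation in Proposition~\ref{prop.collinear}.

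There is no genuine obstacle here: the statement is one-variable calculus once the axis-parallel reduction is made. The only points requiring a little care are checking that restriction-then-differentiation agrees with taking $\partial_{\mu_3}$ of the three-variable function and then restricting (immediate, since $\gamma$ is parallel to the $\mu_3$-axis) and the trivial sign estimate $|3s-r^2|\le 2r^2$ for $0\le s\le r^2$ used in the second bound. I would present it in exactly this order: the reduction to $\mu_3\mapsto r_l(\mu_3)$, the first derivative and its bound via $|\mu_3-c_l|\le r_l$, the second derivative together with the two-sided estimate on its numerator, and finally positivity of $\sum_l r_l^{-3}$.
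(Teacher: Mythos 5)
Your reduction to one\hbox{-}variable calculus along the $\mu_3$-axis and your first-derivative estimate are exactly the paper's argument, and they are correct. The issue is the second assertion: your own formula $\partial^2_{\mu_3}\bigl(\tfrac{1}{2r_l}\bigr)=\tfrac{3(\mu_3-q_l^3)^2-r_l^2}{2r_l^5}$ shows that the displayed identity $\partial^2_{\mu_3}\tilde{\phi}=\sum_l r_l^{-3}$ holds only when every extra singularity satisfies $(\mu_3-q_l^3)^2=r_l^2$, i.e.\ lies on the line through $p_1,p_2$; in the general configuration of Proposition \ref{prop:Positive_Curvature} the identity fails, and even positivity fails (a single extra singularity $q_l$ off the axis with $q_l^3=\mu_3$ contributes $-\tfrac{1}{2r_l^3}$, so at the midpoint of $\gamma$ one can have $\partial^2_{\mu_3}\tilde{\phi}<0$). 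Your closing sentence, that each term has size at most $r_l^{-3}$ and $\sum_l r_l^{-3}>0$, must therefore not be read as establishing $\partial^2_{\mu_3}\tilde{\phi}>0$: what your estimate actually yields is the two-sided bound $|\partial^2_{\mu_3}\tilde{\phi}|\le\sum_l r_l^{-3}$, which is strictly weaker than what the lemma asserts. So, as a proof of the statement as written, your proposal proves the first inequality and only a corrected version of the second.

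The mismatch is not a defect of your calculus but of the lemma itself: the paper's proof commits precisely the slip your computation exposes, since its final equality $-\sum_l\tfrac{1}{2r_l^3}+\tfrac{3}{2}\sum_l\tfrac{(\mu_3-q_l^3)^2}{r_l^5}=\sum_l\tfrac{1}{r_l^3}$ silently replaces $(\mu_3-q_l^3)^2$ by $r_l^2$, legitimate only in the collinear case. The honest statement is $|\partial^2_{\mu_3}\tilde{\phi}|\le\sum_l r_l^{-3}\le (k-2)/(s^3a^3)$, and this is essentially what the application needs: in the proof of Proposition \ref{prop:Positive_Curvature} the terms $a(\partial^2_{\mu_3}\tilde{\phi})(a^2-\mu_3^2)^2$ and $\tilde{\phi}(\partial^2_{\mu_3}\tilde{\phi})(a^2-\mu_3^2)^3$ then lose their favourable sign, but using $(a^2-\mu_3^2)\le a^2$ and $r_l>sa$ they are bounded by $\tfrac{k-2}{s^3}\,a^2$ and $\tfrac{k-2}{s^3}\,a\tilde{\phi}(a^2-\mu_3^2)$ respectively, with $\tfrac{k-2}{s^3}\le\tfrac{2}{s}\le\tfrac12$ under the stated hypotheses, so they can be absorbed alongside $m_1$ and $m_2$ (at worst after a slight increase of the lower bound on $s$). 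I would therefore rewrite your second step so that the statement you prove is the inequality, say explicitly that no positivity is claimed, and note that the exact identity is special to the collinear configuration; in that form your argument is complete and in fact repairs the lemma rather than reproducing it.
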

\begin{proof}
	Using $\del_{\mu_3} r_l=\tfrac{\mu_3-q_l^3}{r_l}$, where $q_l^3$ denotes the $\mu_3$-component of $q_l$, we find from \eqref{eq:tilde.phi} that
\begin{equation*}
\del_{\mu_3} \tilde{\phi} = - \sum_{l=1}^{k-2} \frac{\del_{\mu_3} r_l}{2r_l^2}  =  - \sum_{l=1}^{k-2} \frac{\mu_3-q_l^3}{2r_l^3}.
\end{equation*}
Given that $r_l = \sqrt{(q_l^1)^1 + (q_l^2)^2 + (\mu_3 - q_l^3)^2} \geq |\mu_3 - q^3_l|$ on $\gamma$, we deduce that
\begin{equation*}
| \del_{\mu_3} \tilde{\phi} | \leq  \sum_{l=1}^{k-2} \frac{|\mu_3-q_l^3|}{2r_l^3} \leq \sum_{l=1}^{k-2} \frac{1}{2r_l^2}
\end{equation*}	
as claimed. As for the second derivatives of $\tilde{\phi}$, we find 
	\begin{equation*}
	\del^2_{\mu_3} \tilde{\phi}  = - \sum_{l=1}^{k-2} \left( \frac{1}{2r_l^3}  - 3 \frac{\mu_3-q^3_l}{2r_l^4} \del_{\mu_3} r_l\right) 
	 = - \sum_{l=1}^{k-2}  \frac{1}{2r_l^3}  +  \frac{3}{2}  \sum_{l=1}^{k-2} \frac{(\mu_3-q^3_l)^2}{r_l^5}   = \sum_{l=1}^{k-2}  \frac{1}{r_l^3},
	\end{equation*}
	which gives the result.  
\end{proof}

An orthonormal coframing of $\mu^{-1}(\gamma)$ for the induced metric is given by
$$f^0=\phi^{-1/2} \eta , \ \ f^1=\phi^{1/2}d\mu_3.$$
The Cartan structure equations yield that the connection $1$-form $\alpha$ is determined by
$$df^0=-\alpha \wedge f^1 , \ \ df^1 = \alpha \wedge f^0,$$
and the curvature $2$-form by
$$d\alpha = K f^0 \wedge f^1,$$
where $K$ is the Gauss curvature of $\mu^{-1}(\gamma)$. We compute $$df^0=-\tfrac{1}{2}\phi^{-3/2} (\del_{\mu_3}\phi) d\mu_3 \wedge \eta = \tfrac{1}{2}\phi^{-2} (\del_{\mu_3}\phi ) \eta \wedge f^1\quad\text{and}\quad df^1=0.$$ We deduce that
$$\alpha = -\frac{1}{2\phi^{2}} \frac{\del \phi}{\del \mu_3} \eta.$$
Thus
$$d \alpha =- \frac{\del}{\del \mu_3} \left( \frac{1}{2\phi^{2}} \frac{\del \phi}{\del \mu_3} \right) d\mu_3 \wedge \eta = - \frac{\del^2}{\del \mu_3^2} \left( \frac{1}{2\phi} \right) f^0 \wedge f^1,$$
and so the Gaussian curvature of $\mu^{-1}(\gamma)$ is given by
\begin{equation}\label{eq:Keq}
K=- \frac{\del^2}{\del \mu_3^2} \left( \frac{1}{2\phi} \right) .
\end{equation}
Using our formula \eqref{eq:tilde.phi} for $\phi$ we find that
$$\phi^{-1}= \frac{a^2-\mu_3^2}{a + \tilde{\phi} (a^2-\mu_3^2)}.$$
From this we compute
\begin{align*}
\frac{\del}{\del \mu_3} \left( \frac{1}{\phi} \right) & = \frac{-2\mu_3 (a + \tilde{\phi} (a^2-\mu_3^2)) +2\tilde{\phi}\mu_3 (a^2-\mu_3^2) - (\del_{\mu_3}\phi) (a^2-\mu_3^2)^2 }{(a + \tilde{\phi} (a^2-\mu_3^2))^2} \\
& = \frac{-2a\mu_3 - (\del_{\mu_3}\tilde{\phi}) (a^2-\mu_3^2)^2}{(a + \tilde{\phi} (a^2-\mu_3^2))^2}
\end{align*}
and 
\begin{align*}
\frac{\del^2}{\del \mu_3^2} \left( \frac{1}{\phi} \right) & = \frac{(-2a- \del_{\mu_3} ( (\del_{\mu_3}\tilde{\phi} ) (a^2-\mu_3^2)^2) )(a + \tilde{\phi} (a^2-\mu_3^2))^2  }{(a + \tilde{\phi} (a^2-\mu_3^2))^4} \\
& \ \ \ \ + \frac{ (4 a \mu_3 + 2 (\del_{\mu_3}\phi) (a^2-\mu_3^2)^2 )( -2 \tilde{\phi} \mu_3 +(\del_{\mu_3}\tilde{\phi})(a^2-\mu_3^2) )(a + \tilde{\phi} (a^2-\mu_3^2)) }{(a + \tilde{\phi} (a^2-\mu_3^2))^4}\displaybreak[0]\\
& = \frac{(-2a- \del_{\mu_3} ( (\del_{\mu_3}\tilde{\phi} ) (a^2-\mu_3^2)^2) )(a + \tilde{\phi} (a^2-\mu_3^2)) }{(a + \tilde{\phi} (a^2-\mu_3^2))^3} \\
& \ \ \ \ + \frac{ (4 a \mu_3 + 2 (\del_{\mu_3}\tilde{\phi}) (a^2-\mu_3^2)^2 )( -2 \tilde{\phi} \mu_3 +(\del_{\mu_3}\tilde{\phi}) (a^2-\mu_3^2 )) }{(a + \tilde{\phi} (a^2-\mu_3^2))^3}\\
& = \frac{-2a^2  -2a \tilde{\phi} (a^2-\mu_3^2) - a \del_{\mu_3} ( (\del_{\mu_3}\tilde{\phi} ) (a^2-\mu_3^2)^2)  -  \tilde{\phi} (a^2-\mu_3^2) \del_{\mu_3} ( (\del_{\mu_3}\tilde{\phi} ) (a^2-\mu_3^2)^2) }{(a + \tilde{\phi} (a^2-\mu_3^2))^3} \\
& \ \ \ \ + \frac{- 8 a \tilde{\phi} \mu_3^2 +4a\mu_3(a^2-\mu_3^2) \del_{\mu_3} \tilde{\phi} -4\mu_3 \tilde{\phi} (\del_{\mu_3} \tilde{\phi}) (a^2-\mu_3^2)^2 + 2 (\del_{\mu_3}\tilde{\phi})^2 (a^2-\mu_3^2)^3 }{(a + \tilde{\phi} (a^2-\mu_3^2))^3} \displaybreak[0]\\
& = -\frac{2a^2 +2a \tilde{\phi} (a^2-\mu_3^2) + a   (\del_{\mu_3}^2 \tilde{\phi} ) (a^2-\mu_3^2)^2 +  \tilde{\phi}  (\del_{\mu_3}^2\tilde{\phi} ) (a^2-\mu_3^2)^3 + 8 a \tilde{\phi} \mu_3^2 }{(a + \tilde{\phi} (a^2-\mu_3^2))^3 } \\
& \ \ \ \ + \frac{ 2 (\del_{\mu_3}\tilde{\phi})^2 (a^2-\mu_3^2)^3+8a\mu_3 (\del_{\mu_3} \tilde{\phi}) (a^2-\mu_3^2)}{(a + \tilde{\phi} (a^2-\mu_3^2))^3}.\displaybreak[0]
\end{align*}
Hence, 
$$\frac{\del^2}{\del \mu_3^2} \left( \frac{1}{\phi} \right) = M+N$$
where
\begin{equation}\label{eq:Neq}
N=-\frac{2a^2 +2a \tilde{\phi} (a^2-\mu_3^2) + 8 a \tilde{\phi} \mu_3^2  + a   (\del_{\mu_3}^2 \tilde{\phi} ) (a^2-\mu_3^2)^2 +  \tilde{\phi}  (\del_{\mu_3}^2\tilde{\phi} ) (a^2-\mu_3^2)^3}{(a + \tilde{\phi} (a^2-\mu_3^2))^3 }<0
\end{equation}
by Lemma \ref{lem:phi_Slag_Convex} and the fact that $\mu_3\in(-a,a)$, and the mixed sign term $M$ is determined by $(a + \tilde{\phi} (a^2-\mu_3^2))^3M = m_1 + m_2$ where
\begin{equation}\label{eq:m1m2}
m_1= 2 (\del_{\mu_3}\tilde{\phi})^2 (a^2-\mu_3^2)^3 , \ \ \ m_2 =8a\mu_3 (\del_{\mu_3} \tilde{\phi}) (a^2-\mu_3^2).
\end{equation}
To prove Proposition \ref{prop:Positive_Curvature} we need to show that $N+M<0$ by \eqref{eq:Keq}.

Note that the assumptions in Proposition \ref{prop:Positive_Curvature} imply that $r_l>sa$.  For $m_1$, we see that
$$|\del_{\mu_3} \tilde{\phi}| \leq \sum_{l=1}^{k-2} \frac{1}{2r_l^2}  \leq \max_{l \in \lbrace 1 , \ldots , k-2 \rbrace } \frac{1}{r_l^2} \frac{k-2}{2} \leq \frac{k-2}{2s^2a^2}$$
by Lemma \ref{lem:phi_Slag_Convex}, and hence, by \eqref{eq:m1m2},
$$0\leq m_1 \leq 2a^2\left( \frac{k-2}{2s^2} \right)^2 .$$
Since we are assuming that $s\geq \sqrt{(k-2)/2}$, the term given by $m_1$ in $M$ can be absorbed in the first term in $N$ in \eqref{eq:Neq} so that the sum remains negative.

To analyze $m_2$ in \eqref{eq:m1m2}, we observe by Lemma \ref{lem:phi_Slag_Convex} that
$$|\mu_3(\del_{\mu_3} \tilde{\phi})| \leq \sum_{l=1}^{k-2} \frac{|\mu_3|}{2r_l^2}  \leq \max_{l \in \lbrace 1 , \ldots , k-2 \rbrace } \frac{|\mu_3|}{r_l} \tilde{\phi}  \leq \frac{|\mu_3|\tilde{\phi}}{sa} \leq \frac{\tilde{\phi}}{ s},$$
where we have used $r_l >s a$ and $|\mu_3|\leq a$. Then, inserting this inequality in $m_2$ in \eqref{eq:m1m2} gives:
$$|m_2| \leq 8a|\mu_3| (\del_{\mu_3} \tilde{\phi}) (a^2-\mu_3^2) \leq 8s^{-1} a \tilde{\phi} (a^2-\mu_3^2).$$
As $s\geq 4$ by assumption, we see that the term in $M$ given by $m_2$ can be absorbed into the second term appearing in $N$ in \eqref{eq:Neq} so that the sum remains negative.

We conclude that 
$\del^2_{\mu_3} \phi^{-1}=M+N < 0$ and so,  by \eqref{eq:Keq} , $K>0$
and Proposition \ref{prop:Positive_Curvature} is proved.

\section{Hessian of circle-invariant functions}\label{app:hessian}

Let $X$ be   a hyperk\"ahler 4-manifold given by the Gibbons--Hawking ansatz and let $f:X \rightarrow \mathbb{R}$ be a smooth $\U(1)$-invariant function. The Hessian of $f$ at a point $p$ is given by $\Hess(f) =\nabla df$, so if $\lbrace e_{\mu} \rbrace_{\mu=0}^3$ denotes an orthonormal frame on $X$ as in Lemma \ref{lem:Covariant_Derivatives}, we have
\begin{equation}\label{eq:hessf}
\Hess(f) (e_\mu , e_\nu) = \nabla_{e_\mu} \nabla_{e_\nu} f - (\nabla_{e_\mu} e_\nu ) \cdot f 
\end{equation}
and $e_0\cdot f=0$ by the $\U(1)$-invariance of $f$.
From the formulae for $\nabla_{e_\mu} e_\nu$ given in Lemma \ref{lem:Covariant_Derivatives}, we find (recalling the permutation symbol $\epsilon_{ijk}$ and the fact that we are using summation convention):
\begin{align}
(\nabla_{e_0} e_0) \cdot f & = \frac{1}{2\phi^{2}}   
\frac{\partial \phi}{\partial \mu_i} \frac{\partial f}{\partial \mu_i},    \label{eq:hessf0.1}\\
(\nabla_{e_i} e_0) \cdot f &  = \frac{1}{2\phi^{2}} \epsilon_{ijk} \frac{\partial \phi}{\partial \mu_j}  \frac{\partial f}{\partial \mu_k}=(\nabla_{e_0} e_i) \cdot f ,  \label{eq:hessf0.2}\\
(\nabla_{e_i} e_j) \cdot f & = \frac{1}{2\phi^{2}} \left( \frac{\partial f}{\partial \mu_i} \frac{\partial \phi}{\partial \mu_j}  - \delta_{ij} \frac{\partial \phi}{\partial \mu_k} \frac{\partial f}{\partial \mu_k} \right).\label{eq:hessf0.4}
\end{align}
We may also compute
\begin{align}\label{eq:hessf0.5}
\nabla_{e_i} \nabla_{e_j} f  = \nabla_{e_i} \frac{1}{\phi^{1/2}} \frac{\partial f}{\partial \mu_j}  =   \frac{1}{\phi } \frac{\partial^2 f}{\partial \mu_i \partial \mu_j}- \frac{1}{2\phi^{2}}  \frac{\partial \phi}{\partial \mu_i} \frac{\partial f}{\partial \mu_j}.
\end{align}
Moreover, since both $f$ and $\phi$ are $\U(1)$-invariant, we have that \begin{equation}\label{eq:hessf0.6}
\nabla_{e_0} \nabla_{e_\mu} f = 0 = \nabla_{e_\mu} \nabla_{e_0}f.
\end{equation}
Inserting \eqref{eq:hessf0.1}--\eqref{eq:hessf0.6} in \eqref{eq:hessf} yields:
\begin{align}
\Hess(f)_{00} & = -\frac{1}{2\phi^{2}} 
\frac{\partial \phi}{\partial \mu_i} \frac{\partial f}{\partial \mu_i}, \label{eq:hessf.1}\\
\Hess(f)_{i0} & = - \frac{1}{2\phi^{2}} \epsilon_{ijk} \frac{\partial \phi}{\partial \mu_j}  \frac{\partial f}{\partial \mu_k}=\Hess(f)_{0i}, \label{eq:hessf.2}\\
\Hess(f)_{ij} & =  \frac{1}{\phi } \frac{\partial^2 f}{\partial \mu_i \partial \mu_j} +\frac{ \delta_{ij}}{2\phi^{2}} \frac{\partial \phi}{\partial \mu_k} \frac{\partial f}{\partial \mu_k}  - \frac{1}{2\phi^{2}} \left( \frac{\partial f}{\partial \mu_i} \frac{\partial \phi}{\partial \mu_j} +  \frac{\partial \phi}{\partial \mu_i} \frac{\partial f}{\partial \mu_j} \right) \label{eq:hessf.3}.
\end{align}

In particular, if we let $f=r^2=\mu_1^2+\mu_2^2+\mu_3^2$ then \eqref{eq:hessf.1}--\eqref{eq:hessf.3} give:
\begin{align}
\Hess(r^2)_{00}&=-\frac{\mu_i}{\phi^2}\frac{\partial\phi}{\partial\mu_i},\label{eq:hessr2.1}\\
\Hess(r^2)_{i0}&=\epsilon_{ijk}\frac{\mu_j}{\phi^2}\frac{\partial\phi}{\partial \mu_k}=\Hess(r^2)_{0i},\label{eq:hessr2.2}\\
\Hess(r^2)_{ij}&=\delta_{ij}\left(\frac{2}{\phi}+\frac{\mu_k}{\phi^2}\frac{\partial\phi}{\partial\mu_k}\right)-\frac{1}{\phi^2}\left(\mu_i\frac{\partial\phi}{\partial\mu_j}+\mu_j\frac{\partial\phi}{\partial\mu_i}\right).\label{eq:hessr2.3}
\end{align}

As an application of this computation we give the following result.

\begin{example}[No compact minimal submanifolds in Euclidean or Taub--NUT $\mathbb{R}^4$]\label{ex:no.min.TN}
	Let $\phi$ be as in \eqref{eq:phi.flat} or \eqref{eq:phi.TN}, which gives Euclidean or Taub--NUT $\mathbb{R}^4$.  Then 
	$$\frac{\partial\phi}{\partial\mu_i}=-\frac{\mu_i}{2 r^3}.$$
	Inserting this in the equations \eqref{eq:hessr2.1}--\eqref{eq:hessr2.3} for the Hessian of $r^2$ gives:
	\begin{align*}
	\Hess(r^2)_{00}&=\frac{1}{2r\phi^2},\\
	\Hess(r^2)_{i0}&=-\epsilon_{ijk}\frac{\mu_j\mu_k}{2r^3\phi^2}=0=\Hess(r^2)_{0i},\\
	\Hess(r^2)_{ij}&=\delta_{ij}\left(\frac{2}{\phi}-\frac{1}{2r\phi^2}\right)+\frac{\mu_i\mu_j}{r^3\phi^2}=\frac{\delta_{ij}}{2r\phi^2}(4r\phi-1)+\frac{\mu_i\mu_j}{r^3\phi^2}
	\end{align*}
	One may now explicitly compute that $\Hess(r^2)$ is a positive definite matrix for $r>0$, and so $r^2$ is a strictly convex function for $r>0$.  Hence, there are no compact minimal submanifolds (including geodesics) in Euclidean or Taub--NUT $\mathbb{R}^4$.  
\end{example}


\begin{bibdiv}
\begin{biblist}

\bib{Atiyah}{book}{
      author={Atiyah, M.~F.},
      author={Hitchin, N.},
       title={The geometry and dynamics of magnetic monopoles},
   publisher={Princeton University Press},
        date={2014},
      volume={11},
      note={MR934202, Zbl 0671.53001},
}

\bib{Anderson1989}{article}{
      author={Anderson, M.~T.},
      author={Kronheimer, P.~B.},
      author={LeBrun, C.},
       title={Complete {R}icci-flat {K}\"{a}hler manifolds of infinite
  topological type},
        date={1989},
        ISSN={0010-3616},
     journal={Comm. Math. Phys.},
      volume={125},
      number={4},
       pages={637\ndash 642},
       note={MR1024931, Zbl 0734.53051}
}

\bib{Bielawski}{article}{
      author={Bielawski, R.},
       title={Complete hyper-{K}\"{a}hler {$4n$}-manifolds with a local
  tri-{H}amiltonian {$\mathbf{R}^n$}-action},
        date={1999},
        ISSN={0025-5831},
     journal={Math. Ann.},
      volume={314},
      number={3},
       pages={505\ndash 528},
       note={MR1704547, Zbl 0952.53024}
}

\bib{Donaldson}{incollection}{
      author={Donaldson, S.~K.},
       title={Moment maps and diffeomorphisms},
        date={2000},
   booktitle={Surveys in differential geometry},
      series={Surv. Differ. Geom.},
      volume={7},
   publisher={Int. Press, Somerville, MA},
       pages={107\ndash 127},
       note={MR1919424, Zbl 1106.53037}
}

\bib{DonaldsonGluing}{incollection}{
      author={Donaldson, S.~K.},
       title={Calabi-{Y}au metrics on {K}ummer surfaces as a model gluing
  problem},
        date={2012},
   booktitle={Advances in geometric analysis},
      series={Adv. Lect. Math. (ALM)},
      volume={21},
   publisher={Int. Press, Somerville, MA},
       pages={109\ndash 118},
       note={MR3077251, Zbl 1341.32020}
}

\bib{FoscoloGluing}{article}{
      author={Foscolo, L.},
       title={A{LF} gravitational instantons and collapsing {R}icci-flat
  metrics on the {$K3$} surface},
        date={2019},
        ISSN={0022-040X},
     journal={J. Differential Geom.},
      volume={112},
      number={1},
       pages={79\ndash 120},
       note={MR3948228, Zbl 1417.53054}
}

\bib{GageHamilton}{article}{
      author={Gage, M.},
      author={Hamilton, R.~S.},
       title={The heat equation shrinking convex plane curves},
        date={1986},
        ISSN={0022-040X},
     journal={J. Differential Geom.},
      volume={23},
      number={1},
       pages={69\ndash 96},
       note={MR0840401, Zbl 0621.53001},
}

\bib{Grayson}{article}{
      author={Grayson, M.~A.},
       title={The heat equation shrinks embedded plane curves to round points},
        date={1987},
        ISSN={0022-040X},
     journal={J. Differential Geom.},
      volume={26},
      number={2},
       pages={285\ndash 314},
       note={MR0906392, Zbl 0667.53001},
}

\bib{Gibbons}{article}{
      author={Gibbons, G.W.},
      author={Rychenkova, P.},
      author={Goto, R.},
       title={Hyper-{K}\"{a}hler quotient construction of {BPS} monopole moduli
  spaces},
        date={1997},
        ISSN={0010-3616},
     journal={Comm. Math. Phys.},
      volume={186},
      number={3},
       pages={581\ndash 599},
       note={MR1463814, Zbl 0886.58011},
}

\bib{GrossWilson}{article}{
      author={Gross, M.},
      author={Wilson, P.~M.~H.},
       title={Large complex structure limits of {$K3$} surfaces},
        date={2000},
        ISSN={0022-040X},
     journal={J. Differential Geom.},
      volume={55},
      number={3},
       pages={475\ndash 546},
       note={MR1863732, Zbl 1027.32021},
}

\bib{JoyceCounting}{incollection}{
      author={Joyce, D.},
       title={On counting special {L}agrangian homology 3-spheres},
        date={2002},
   booktitle={Topology and geometry: commemorating {SISTAG}},
      series={Contemp. Math.},
      volume={314},
   publisher={Amer. Math. Soc., Providence, RI},
       pages={125\ndash 151},
       note={MR1941627, Zbl 1060.53059},
}

\bib{JoyceConjectures}{article}{
      author={Joyce, D.},
       title={Conjectures on {B}ridgeland stability for {F}ukaya categories of
  {C}alabi-{Y}au manifolds, special {L}agrangians, and {L}agrangian mean
  curvature flow},
        date={2015},
        ISSN={2308-2151},
     journal={EMS Surv. Math. Sci.},
      volume={2},
      number={1},
       pages={1\ndash 62},
       note={MR3354954, Zbl 1347.53052},
}

\bib{LambertLotaySchulze}{article}{
      author={Lambert, B.},
      author={Lotay, J.~D.},
      author={Schulze, F.},
       title={Ancient solutions in {L}agrangian mean curvature flow},
       journal={Ann. Sc. Norm. Super. Pisa Cl. Sci},
           volume={22},
           number={3},
     year={2021},
       pages={1169--1205},
       note={MR4334316, Zbl 07417799},
}

\bib{LotayPacini}{article}{
      author={Lotay, J.~D.},
      author={Pacini, T.},
       title={From {L}agrangian to totally real geometry: coupled flows and
  calibrations},
     journal={Comm. Anal. Geom.},
       pages={607--675},
       year={2020},
       volume={28},
       note={MR4124139, Zbl 07229676},
}

\bib{LotaySchulze}{article}{
      author={Lotay, J.~D.},
      author={Schulze, F.},
       title={Consequences of strong stability of minimal submanifolds},
     journal={Int. Math. Res. Not.},
       pages={2352--2360},
       year={2020},
       volume={2020},
       note={MR4090742, Zbl 1437.53072},
}

\bib{Manton}{article}{
      author={Manton, N.~S.},
       title={A remark on the scattering of {BPS} monopoles},
        date={1982},
        ISSN={0370-2693},
     journal={Phys. Lett. B},
      volume={110},
      number={1},
       pages={54\ndash 56},
       note={MR0647883, Zbl 1190.81087},
}

\bib{Morse2014}{book}{
      author={Morse, M.},
      author={Cairns, S.~S.},
       title={Critical point theory in global analysis and differential
  topology: An introduction},
   publisher={Academic Press},
        date={2014},
      volume={33},
      note={MR0245046, Zbl 0177.52102}
}

\bib{NevesSingularities}{article}{
      author={Neves, A.},
       title={Finite time singularities for {L}agrangian mean curvature flow},
        date={2013},
     journal={Ann. of Math.},
      volume={177},
       pages={1029\ndash 1076},
       note={MR3034293, Zbl 1315.53074}
}

\bib{Seidel2}{article}{
      author={Seidel, P.},
       title={Graded {L}agrangian submanifolds},
        date={2000},
        ISSN={0037-9484},
     journal={Bull. Soc. Math. France},
      volume={128},
      number={1},
       pages={103\ndash 149},
       note={MR1765826, Zbl 0992.53059},
}

\bib{Seidel}{article}{
      author={Seidel, P.},
       title={Lagrangian two-spheres can be symplectically knotted},
        date={1999},
        ISSN={0022-040X},
     journal={J. Differential Geom.},
      volume={52},
      number={1},
       pages={145\ndash 171},
       note={MR1743463, Zbl 1032.53068}
}

\bib{Smoczyk}{article}{
      author={Smoczyk, K.},
       title={A canonical way to deform a {L}agrangian submanifold},
       pages={arXiv:dg\ndash ga/9605005},
}

\bib{Solomon}{article}{
      author={Solomon, J.~P.},
       title={Curvature of the space of positive {L}agrangians},
        date={2014},
        ISSN={1016-443X},
     journal={Geom. Funct. Anal.},
      volume={24},
      number={2},
       pages={670\ndash 689},
       note={MR3192038, Zbl 1296.53157},
}

\bib{Thomas}{incollection}{
      author={Thomas, R.~P.},
       title={Moment maps, monodromy and mirror manifolds},
        date={2001},
   booktitle={Symplectic geometry and mirror symmetry ({S}eoul, 2000)},
   publisher={World Sci. Publ., River Edge, NJ},
       pages={467\ndash 498},
       note={MR1882337, Zbl 1076.14525},
}

\bib{Trinca}{article}{
      author={Trinca, F.},
       title={Barrier methods for minimal submanifolds in {G}ibbons--{H}awking ansatz},
       pages={arXiv:2010.01322},
}

\bib{TsaiWangFlows}{article}{
      author={Tsai, C.-J.},
      author={Wang, M.-T.},
       title={Mean curvature flows in manifolds of special holonomy},
        date={2018},
        ISSN={0022-040X},
     journal={J. Differential Geom.},
      volume={108},
      number={3},
       pages={531\ndash 569},
       note={MR3770850, Zbl 1385.53061}
}

\bib{Tsai}{article}{
      author={Tsai, C.-J.},
      author={Wang, M.-T.},
       title={A strong stability condition on minimal submanifolds and its
  implications},
     journal={J. Reine Angew. Math.},
number = {764},
volume = {2020},
year = {2020},
pages = {111--156},
note={MR4116634, Zbl 07225383},
}

\bib{ThomasYau}{article}{
      author={Thomas, R.~P.},
      author={Yau, S.-T.},
       title={Special {L}agrangians, stable bundles and mean curvature flow},
        date={2002},
        ISSN={1019-8385},
     journal={Comm. Anal. Geom.},
      volume={10},
      number={5},
       pages={1075\ndash 1113},
       note={MR1957663, Zbl 1115.53054},
}

\bib{Wang}{article}{
      author={Wang, M.-T.},
       title={Mean curvature flow of surfaces in {E}instein four-manifolds},
        date={2001},
        ISSN={0022-040X},
     journal={J. Differential Geom.},
      volume={57},
      number={2},
       pages={301\ndash 338},
       note={MR1879229, Zbl 1035.53094},
}

\end{biblist}
\end{bibdiv}

\end{document}